\DeclareFontFamily{OT1}{rsfs}{}
\DeclareFontShape{OT1}{rsfs}{n}{it}{<-> rsfs10}{}
\DeclareMathAlphabet{\mathscr}{OT1}{rsfs}{n}{it}
\newtheorem{thm}{Theorem}[subsection]
\newtheorem{lem}[thm]{Lemma}
\newtheorem{prop}[thm]{Proposition}
\newtheorem{propdef}[thm]{Proposition-Definition}
\newtheorem{cor}[thm]{Corollary}
\newtheorem{rems}[thm]{Remarks}
\theoremstyle{definition}
  \newtheorem{defi}[thm]{Definition}
  \newtheorem{rem}[thm]{Remark}
  \newtheorem{ack}{Acknowledgements}   
\numberwithin{equation}{thm}
\newcommand{\Cref}[1]{Corollary~\textup{\ref{#1}}}
\newcommand{\Dref}[1]{Definition~\textup{\ref{#1}}}
\newcommand{\Lref}[1]{Lemma~\textup{\ref{#1}}}
\newcommand{\Pref}[1]{Proposition~\textup{\ref{#1}}}
\newcommand{\Rref}[1]{Remark~\textup{\ref{#1}}}
\newcommand{\Sref}[1]{Section~\textup{\ref{#1}}}
\newcommand{\Ssref}[1]{Subsection~\textup{\ref{#1}}}
\newcommand{\Tref}[1]{Theorem~\textup{\ref{#1}}}
\def\bilap#1{\hbox to 0pt{\hss#1\hss}}
  \def\Rarrow#1{\bilap{\hbox to#1{\rightarrowfill}}}
  \def\Larrow#1{\bilap{\hbox to#1{\leftarrowfill}}}
  \def\Equals#1{\bilap
                   {\raise 4pt\hbox
                     {\vrule width#1 height.5pt}%
                    \kern-#1\raise 1pt\hbox
                     {\vrule width#1 height.5pt}%
                   }}
\newcommand{\EQAL}[1]%
{\,\begin{picture}(#1,0)%
\put(0,3){\line(1,0){#1}}%
\put(0,1){\line(1,0){#1}}%
\end{picture}\,}%
\newcommand{\vlto}[1]%
{\,\begin{picture}(#1,3)%
\put(0,2){\vector(1,0){#1}}%
\end{picture}\,}%
\newcommand{\vllarrow}[1]%
{\,\begin{picture}(#1,3)%
\put(#1,2){\vector(-1,0){#1}}%
\end{picture}\,}%
\newcommand{\dirlm}[1]%
   {
      {\lim\hskip-1.58em\lower.65ex
        \hbox{$
                 {}_{\stackrel{\lower1ex\hbox
                                         {$\scriptstyle -\!\!\!\longrightarrow$}
                                       }{\vbox to0pt{\vss\vskip.6ex
                                             \hbox{$\scriptstyle{}^{#1}$}\vss}}
                    }
             $}
      }
\:}
\newcommand{\subdirlm}[1]%
   {
      {\lim\hskip-1.5em\lower.6ex
        \hbox{$
                    {}_{\stackrel{\lower1ex\hbox
                                            {$\scriptstyle\longrightarrow$}
                                 }{ ^{#1} }
                       }
              $}
      }
\:}
\newcommand{\inlm}[1]%
    {
       {\lim\hskip-1.58em\lower.65ex
         \hbox{$
                  {}_{\stackrel{\lower1ex\hbox
                                         {$\scriptstyle \longleftarrow\!\!\<-$}
                               }{\vbox to0pt{\vss\vskip.6ex
                                             \hbox{$\scriptstyle{}^{#1}$}\vss}}
                     }
              $}
       }
\:}
\def\hz#1{{\hbox to 0pt{#1}}}
\def\Iso{\vbox to 0pt{\vss\hbox{$\widetilde{\phantom{nn}}$}\vskip-7pt}}
\def\>{\mspace {1mu}}
\def\<{\mspace{-1mu}}
\def\({{\textup(}}
\def\){{\textup)}}
\def\bigl#1{{\textup{\begin{large}#1\end{large}}}}
\def\bigr#1{{\textup{\begin{large}#1\end{large}}}}
\newcommand{\btrg}{\blacktriangle}
\newcommand{\fm}{{\mathfrak{m}}}
\newcommand{\fp}{{\mathfrak{p}}}
\newcommand{\X}{{\mathscr X}}
\newcommand{\Y}{{\mathscr Y}}
\newcommand{\Z}{{\mathscr Z}}
\newcommand{\V}{{\mathscr V}}
\newcommand{\U}{{\mathscr U}}
\newcommand{\W}{{\mathscr W}}
\newcommand{\I}{{\mathscr I}}
\newcommand{\J}{{\mathscr J}}
\newcommand{\eC}{{\mathscr C}}
\newcommand{\eE}{{\mathscr E}}
\newcommand{\eH}{{\mathscr H}}
\newcommand{\eM}{{\mathscr M}}
\newcommand{\eN}{{\mathscr N}}
\newcommand{\co}{{\mathscr O}}
\newcommand{\eF}{{\mathscr F}}
\newcommand{\eG}{{\mathscr G}}
\newcommand{\Spec}{{\mathrm {Spec}}}
\newcommand{\Spf}{{\mathrm {Spf}}}
\newcommand{\ares}[1]{\ush{\boldsymbol {\mathrm{res}}}_{\<\<\<\<\<\<\<\<\<{}_#1}}
\newcommand{\A}{{\mathcal A}}
\newcommand{\F}{{\mathcal F}}
\newcommand{\Hr}{{\mathrm H}}
\newcommand{\Rr}{{\mathrm R}}
\newcommand{\M}{{\mathcal M}}
\newcommand{\N}{{\mathcal N}}
\newcommand{\cO}{{\mathcal O}}
\newcommand{\eK}{{\mathscr K}}
\newcommand{\D}{{\mathbf D}}
\newcommand{\K}{{\mathbf K}}
\newcommand{\C}{{\mathbf C}}
\newcommand{\bbG}{{\mathbb G}}
\newcommand{\vc}{{\vec{\mathrm{c}}}}
\newcommand{\Dqc}{\D_{\mkern-1.5mu\mathrm {qc}}}
\newcommand{\wDqc}{ \widetilde
          {\vbox to6.5pt{\vss\hbox{$\mathbf D$}}}
    _{\mkern-1.5mu\mathrm {qc}} }
\newcommand{\wDqcp}{\wDqc^{\lower.5ex\hbox{$\scriptstyle+$}}}
\newcommand{\Dvc}{\D_{\<\vc}}
\newcommand{\Dqct}{\D_{\mkern-1.5mu\mathrm{qct}}}
\newcommand{\Dc}{\D_{\mkern-1.5mu\mathrm c}}
\newcommand{\qc}{{\mathrm{qc}}}
\newcommand{\R}{{\mathbf R}}
\newcommand{\Rp}[1]{{{\Rr}'}_{\<\<\<{{#1}}}}
\newcommand{\Rfs}{{\mathbf R f_{\!*}}}
\newcommand{\bL}{{\mathbf L}}
\newcommand{\Hom}{{\mathrm {Hom}}}
\newcommand{\Homb}{{\mathrm {Hom}}^{\bullet}}
\newcommand{\Ac}{\A_{\mathrm c}}
\newcommand{\Aqc}{\A_{\qc}}
\newcommand{\Avc}{\A_{\vec {\mathrm c}}}
\newcommand{\Aqct}{\A_{\mathrm {qct}}\<}
\newcommand{\ft}{f_{\mathrm t}^\times}
\newcommand{\shr}[1]{{#1}^{\boldsymbol{!}}}
\newcommand{\ush}[1]{{#1^{\textup{\texttt\#}}}}
\newcommand{\Tr}[1]{{\mathrm {Tr}}_{#1}}
\newcommand{\fTr}[1]{{\mathrm {Tr}}^{\btrg}_{{#1}}}
\newcommand{\ttr}[1]{{\boldsymbol{\ush{\tau}_{\!\!\!\!{}_{#1}}}}}
\newcommand{\tin}[1]{{\ush{\mathrm{tr}}_{\!\!{{#1}}}}}
\newcommand{\omgs}[1]{{\ush{\omega_{\<\<{#1}}}}}
\newcommand{\wnor}[1]{{{\eN}}^r_{{#1}}}
\newcommand{\wI}[2]{({\wedge_{#1}^r{#2}/{#2}^2})^*}
\newcommand{\Ext}{\operatorname{\eE{\mathit{xt}}}}
\newcommand{\ext}{\operatorname{Ext}}
\newcommand{\sHom}{\eH{om}}
\newcommand{\sHomb}{\eH{om}^{\bullet}}
\newcommand{\iGp}[1]{{\varGamma_{\<\!#1}'}}
\newcommand{\iG}[1]{{\varGamma_{\<\!#1}^{\phantom\prime}}}
\newcommand{\wid}[1]{\widehat{#1}}
\newcommand{\wit}[1]{\widetilde{#1}}
\newcommand{\set}{\!:=}
\newcommand{\lra}{\longrightarrow}
\newcommand{\iso}%
{{\mkern8mu\longrightarrow \mkern-25.5mu{}^\sim\mkern17mu}}
\newcommand{\osi}%
{{\mkern8mu\longleftarrow \mkern-24.5mu{}^\sim\mkern16mu}}
\newcommand{\Otimes}{\underset
   {\vbox to 0pt {\vskip-1ex\hbox{$\scriptscriptstyle=$}\vss}}
     {\otimes}\vadjust{\kern.4pt}}
\newcommand{\BL}{{\boldsymbol\Lambda}}
\newcommand{\smcirc}%
   {{\raise.15ex\hbox to.7em{$\hss \scriptstyle\circ\hss$}}}
\newcommand{\bLambda}{{\mathbf \Lambda}}
\title[Transitivity for formal schemes I]%
{Grothendieck duality and transitivity I: Formal schemes}
\author[S.\,Nayak]{Suresh Nayak}%
{\smash{}}
\address{Indian Statistical Institute \\
8th Mile, Mysore Road, Bangalore \\
Karnataka-560059, INDIA}
\email{snayak@isibang.ac.in}
\author[P.\,Sastry]{Pramathanath Sastry}%
{\smash{}}
\address{Chennai Mathematical Institute\\
                 Sipcot IT park, Siruseri\\
                 Kanchipuram Dist TN, 603103, INDIA}
\email{pramath@cmi.ac.in}
\date{\today}
\begin{document}

\begin{abstract}{For a proper map $f\colon X\to Y$ of noetherian 
ordinary schemes, one has a well-known natural transformation, 
$\bL f^*({\boldsymbol{-}})\overset{\bL}{\otimes}f^!\co_Y\to f^!$,
 obtained via the projection formula, which extends, using Nagata's 
compactification, to the case where $f$ is separated and of finite type. In this paper we extend this
transformation to the situation where $f$ is a pseudo-finite-type map of noetherian
formal schemes which is a composite of compactifiable maps, and
show it is compatible with the pseudofunctorial structures involved. This natural transformation 
has implications for the abstract theory of residues and traces, giving
Fubini type results for iterated maps. These abstractions are rendered concrete 
in a sequel to this paper.}
\end{abstract}

\maketitle



All schemes are assumed to be noetherian. They could be formal or ordinary. For any
formal scheme $\X$, $\A(\X)$ is the category of $\co_\X$-modules and $\D(\X)$ its
derived category. The torsion functor $\iGp{\X}$ on $\co_\X$-modules
is defined by the formula
\[\iGp{\X} \set \dirlm{n}\sHom_{\co_\X}(\co_\X/\I^n, -)\] 
where $\I$ is any ideal of definition of the formal scheme $\X$. A {\emph{torsion module}}
$\eF$ is an object in $\A(\X)$ such that $\iGp{\X}\eF=\eF$.
The category $\Ac(\X)$ (resp.~$\Avc(\X)$, resp.~$\Aqc(\X)$, resp.~$\Aqct(\X)$) 
is the category of coherent (resp.~direct limit of coherent, resp.~quasi-coherent, resp.~quasi-coherent
and torsion) $\co_\X$-modules. $\Dc(\X)$ (resp.~$\Dvc(\X)$, resp.~$\Dqc(\X)$, resp.~$\Dqct(\X)$)
is the subcategory of $\D(\X)$ of complexes having homology in $\Ac(\X)$
(resp.~$\Avc(\X)$, resp.~$\Aqc(\X)$, resp.~$\Aqct(\X)$),
while  $\Dc^*(\X)$, (resp.~$\Dvc^*(\X)$, resp.~$\Dqc^*(\X)$, resp.~$\Dqct^*(\X)$) 
for $*$ in $\{ b, +, - \}$ denotes the 
corresponding full subcategory whose homology is additionally, bounded, or bounded below,
or bounded above, accordingly. 

We will be using the notions of pseudo-proper, pseudo-finite-type, pseudo-finite maps used
in \cite{dfs}. For example if $f\colon \X\to \Y$ is a map of formal schemes, it is {\emph{pseudo-proper}}
if the map of ordinary schemes $f_{\<\<{}_0}\colon X\to Y$ obtained by quotienting
the structure sheaves $\co_\X$ and $\co_\Y$ by ideals of definition for $\X$ 
and $\Y$, is proper. If this is true
for one pair of defining ideals $(\I,\,\J)$ with $\J\subset \co_\Y$ and $\J\co_\X\subset \I\subset\co_\X$, 
then it is true for all such pairs. The definitions of pseudo-finite-type, pseudo-finite are analogous.

We assume familiarity with the viewpoint of duality that was initiated by Deligne in \cite{del},
especially as laid out  by Lipman in \cite[Chapter 4]{notes}. In particular, we assume
familiarity with the main properties of the 
{\emph{inverse-image pseudofunctor}}~$\shr{\boldsymbol{(-)}}$. 
\pagebreak
This is a $\Dqc^+$-valued contravariant pseudofunctor on the category of schemes and separated essentially finite-type maps.

\marginpar{\phantom{X}}

\section{\bf Introduction}\label{s:intro}
We begin by recalling the notion of a (contravariant) pseudofunctor.  A
{\emph{pseudofunctor}} $\boldsymbol{(-)^\triangle}$ on a category $\eC$
is an assignment of categories $X^\triangle$, one for each $X\in\eC$, such that for each map
$f\colon X\to Y$ in $\eC$, we have a functor $f^\triangle\colon Y^\triangle \to X^\triangle$,
for each $X\in\eC$, an isomorphism
$\eta^\triangle_X \colon {\bf 1}_X^\triangle\iso {\bf 1}_{X^\triangle}$, for
each pair of composable maps $X\xrightarrow{f} Y \xrightarrow{g} Z$ in $\eC$,
an isomorphism of functors $C^\triangle_{f,g}\colon (gf)^\triangle \iso f^\triangle g^\triangle$, such that 
for a third map $h\colon Z\to S$ in $\eC$, ``associativity" holds, i.e., the following
diagram commutes,
\[
{\xymatrix{
 f^\triangle g^\triangle h^\triangle \ar[d]^{\,\rotatebox{-90}{\makebox[-0.1cm]{\Iso}}}_{C^\triangle_{g,h}}
  \ar[rr]^\Iso_{C^\triangle_{f,g}} & &
(gf)^\triangle h^\triangle \ar[d]_{\,\rotatebox{-90}{\makebox[-0.1cm]{\Iso}}}^{C^\triangle_{gf, h}} \\
f^\triangle(hg)^\triangle  \ar[rr]^{\Iso}_{C^\triangle_{f, hg}} & & (hgf)^\triangle .
}}
\]
and for the compositions ${\bf 1}_Yf = f = f{\bf 1}_X$, the isomorphisms $\eta^\triangle_{-}$ and 
${C^\triangle_{-,-}}$ are compatible in the obvious way.
However, as we shall see, we may need to relax the condition
that $\eta^\triangle_X \colon {\bf 1}_X^\triangle\to {\bf 1}_{X^\triangle}$ be an isomorphism for
objects $X\in \eC$. In such a case, if other conditions are satisfied, $\boldsymbol{(-)^\triangle}$ is
called a {\emph{pre-pseudofunctor}}. For simplicity, we may often call these naturally occurring
pre-pseudofunctors as pseudofunctors.

\subsection{}\label{ss:lip-trans} The principal aim of this paper is to extend results in 
\cite[\S\,4.9]{notes}
to the situation of formal schemes. An example of the kind of result in {\it{loc.cit.}}~that would
interest us is the existence of a map 
\stepcounter{thm}
\begin{equation*}\label{map:lip-trans}\tag{\thethm}
\bL f^* g^!\co_Z\overset{\bL}{\otimes}_{\co_X} f^!\co_Y \lra (gf)^!\co_Z
\end{equation*}
for a pair of finite-type separated maps $X\xrightarrow{f} Y \xrightarrow{g} Z$. This is closely
related to the results in \cite{jag}.  Let us discuss \eqref{map:lip-trans} in some detail to
orient the reader to the kind of questions  we are interested in as well as the difficulties involved in answering them for formal schemes. 

Recall that if $h\colon V\to W$ is a proper map of schemes, then the {\emph{twisted inverse 
image functor}} $h^!\colon \Dqc^+(W)\to \Dqc^+(V)$ is a right adjoint to 
$\R h_*\colon \Dqc^+(V)\to  \Dqc^+(W)$.
We therefore have the co-adjoint unit 
\stepcounter{thm}
\begin{equation*}\label{map:intro-trace}\tag{\thethm}
\Tr{h}\colon \R h_*h^!\to {\bf 1}_{\Dqc^+(W)},
\end{equation*}
 the so-called 
{\emph{trace map}} for $h$, such that the map 
\[\Hom_{\Dqc^+(V)}(\eF,\,h^!\eG) \to \Hom_{\Dqc^+(W)}(\R h_*\eF, \eG)\]
given by $\varphi\mapsto \Tr{h}(\eG)\<\smcirc\<\R h_*(\varphi)$
is an isomorphism \cite[p.204, Theorem\,4.8.1\,(i)]{notes}. Next, if $h$ is \'etale, by part\,(ii) of {\textit{loc.cit.}, we have $h^!=h^*$. Note that if 
$\eF\in \Dqc(X)$ and  $\eG\in\Dqc(Y)$, 
we have the bifunctorial {\emph{projection isomorphism}} of \cite[p.139, Proposition 3.9.4]{notes}
\stepcounter{thm}
\begin{equation*}\label{iso:intro-proj-formula}\tag{\thethm}
\eG\overset{\bL}{\otimes}_{\co_Y}\R h_*\eF\iso \R h_*(\bL h^*\eG\otimes_{\co_X}\eF).
\end{equation*}
Finally, recall that if $h$ is separated and of finite type, by a famous theorem of Nagata \cite{nagata} one can find a compactification of $h$,
i.e., a factorization $h=\bar{h}\<\smcirc\< i$ with $i$ an {\emph{open immersion}} and
$\bar{h}$ a {\emph{proper}} map. Thus, after choosing such a compactification, one could
define $h^!$ for such maps, by the formula $h^!=i^*{\bar{h}}^!$. That this is independent of
the compactification chosen is proven in \cite{del}. The technical difficulties encountered
carrying out this program are formidable, and form the content of \cite{del}, \cite{del-sga}, 
\cite{del-sga'}, and \cite[Chapter 4]{notes}.

The map \eqref{map:lip-trans} is described as follows. First suppose $f$ and $g$
are proper. One has a natural map 
\stepcounter{thm}
\begin{equation*}\label{intro-star}\tag{\thethm}
\R(gf)_*(\bL f^* g^!\co_Z\overset{\bL}{\otimes}_{\co_X} f^!\co_Y) \lra \co_Z
\end{equation*}
given by the composite
\begin{align*}
\R(gf)_*(\bL f^* g^!\co_Z\overset{\bL}{\otimes}_{\co_X} f^!\co_Y)
& \xrightarrow{\phantom{XX}\Iso\phantom{XXX}} \R g_*\Rfs(\bL f^* g^!\co_Z\overset{\bL}{\otimes}_{\co_X} f^!\co_Y)\\
& \xrightarrow{\phantom{X}{\eqref{iso:intro-proj-formula}}^{-1}\phantom{X}\,} 
\R g_*(g^!\co_Z\overset{\bL}{\otimes}_{\co_Y}\Rfs f^!\co_Y) \\
& \xrightarrow{\,\R g_*({\bf 1}\otimes \Tr{f})\,\,} \R g_*g^!\co_Z \\
& \xrightarrow{\phantom{XXX}\Tr{g}\phantom{XX}} \co_Z .
\end{align*}
Since $(gf)^!$ is right adjoint to $\R (gf)_*$, the map \eqref{intro-star} gives rise to \eqref{map:lip-trans}
as the unique map such that 
$\Tr{gf}(\co_Z)\<\smcirc\<\R (gf)_*\eqref{map:lip-trans}=\eqref{intro-star}$.

If $f$ or $g$ is not proper, one can find a compactification of $gf$, say $gf=F\smcirc j$, such 
that $F= \bar{g}\<\smcirc\<\bar{f}$, with $\bar{f}$, $\bar{g}$ proper maps, 
and where these maps embed into a commutative diagram 
\stepcounter{thm}
\[
\begin{aligned}\label{intro-**}
{\xymatrix{
X \ar[d]_f \ar[r] & \overline{X} \ar[r] \ar[dl] \ar[r] & \overline{\overline{X}} \ar[dl]^{\bar f} \\
Y \ar[d]_g \ar[r] & \overline{Y} \ar[dl]^{\bar g} & \\
Z && 
}}
\end{aligned}\tag{\thethm}
\]
with all horizontal arrows open immersions and all south-west pointing arrows proper, with the
composite of the two horizontal arrows on the top row being $j$. The map \eqref{map:lip-trans} for the pair $(f,g)$ can be defined by ``restricting" the
corresponding map for $(\bar{f}, \bar{g})$ to $X$. The map \eqref{map:lip-trans} is independent
the choice of such diagrams --- this is the essential content of 
\cite[pp.\,231--232, Lemma\,4.9.2]{notes}. We provide a proof in this paper for formal schemes
in \Pref{prop:chi-welldef} below.

The map
 \eqref{map:lip-trans} is to be regarded as an abstract form of certain
 transitivity results for differential
forms which are important for duality, e.g.,  property (R4) of residues stated in \cite[p.198]{RD}.
Briefly, if $h\colon V\to W$ is a smooth
map of relative dimension $n$, then one can show that there is an isomorphism
\stepcounter{thm}
\begin{equation*}\label{intro-dag}\tag{\thethm}
\omega_h[n] \iso h^!\co_W, 
\end{equation*}
where $\omega_h=\Omega^n_{V/W}$ is the $n^{\rm th}$
exterior power of the $\co_V$-module of relative differential forms $\Omega^1_{V/W}$
 for the map $h$. There are many descriptions of such isomorphisms (see
 \cite[p.\,397, Theorem 3]{verdier}, \cite[p.\,84, Duality theorem]{hk2} for projective maps with the 
 base which is free of embedded points, 
 \cite[p.\,750, Duality Theorem]{ajm} for maps where the base is free of embedded points).
Thus if $f$ and $g$ are smooth of relative dimensions, say, $m$ and $n$ respectively, then, 
upon taking homology in degree $m+n$, \eqref{map:lip-trans} induces a map of coherent 
$\co_X$-modules,
\stepcounter{thm}
\begin{equation*}\label{intro-ddag}\tag{\thethm}
f^*\omega_g\otimes_{\co_X}\omega_f \to \omega_{gf}. 
\end{equation*}
The above map is an isomorphism since \eqref{map:lip-trans} is in this special case, $f$ being
a perfect map.
How this compares with the usual isomorphism between $f^*\omega_g\otimes_{\co_X}\omega_f$ 
and $\omega_{gf}$ depends on
the choice of \eqref{intro-dag} which
in turn depends on the choice of 
concrete trace maps of the form $\R h_*\omega_h[n]\to \co_W$. 
For the one implicit in \cite{ajm}, the problem is studied in \cite{jag} (see also the correction). 
In a sequel to this paper we will show that when \eqref{intro-dag} 
is chosen to be  the isomorphism given
by Verdier in \cite[p.\,397, Thm.\,3]{verdier}, the map \eqref{intro-ddag} is the map given locally by 
$f^*(\mu)\otimes\nu\mapsto \nu\wedge f^*(\mu)$ where the notation is self-explanatory.

The problem of finding the concrete expression for \eqref{intro-ddag} given \eqref{intro-dag} is best attacked by expanding the scope of our study from ordinary schemes 
to formal schemes. One reason for this is that maps such as \eqref{map:lip-trans} are compatible 
(in a precise sense) with completions
along closed subschemes in $X$, $Y$, and $Z$.  This allows for far greater flexibility than the
method of compactifying and restricting. From this larger point of view, property (R4) of
residues in \cite[p.\,198]{RD} is a concrete manifestation of \eqref{map:lip-trans} for maps
between formal schemes.

 In this paper we lay the foundations for all of this. The generalization of parts of \S 4.9 of
 \cite{notes} (in particular of  the map \eqref{map:lip-trans} above) is carried out in section
 \Sref{s:fubini-1}, especially in \Pref{prop:chi-welldef}.  As for property
(R4) for residues, an abstract form of it for Cohen-Macaulay maps is proved by us
in \Pref{prop:leray-chi} below. We draw the reader's attention
especially to formulas \eqref{eq:res-res-1} and \eqref{eq:res-res-2} which follow from \textit{loc.cit.}



If we move to formal schemes which are not necessarily ordinary, 
\cite[Theorem\,6.1]{dfs} assures us of the existence
of a right adjoint to $\Rfs\colon\Dqct(\X)\to \Dqct(\Y)$ for a pseudo-proper map (and more)
$f\colon \X\to \Y$  and this right adjoint is denoted $f^!$.
However, given a general separated pseudo-finite-type map $f$, we
are no longer assured that $f$ has a compactification, i.e., we are no longer assured that we
have a factorization $f=\bar{f}\<\smcirc\< i$ with $i$ an open immersion, and ${\bar f}$ 
pseudo-proper.\footnote{On the other hand, we do not have an example of a
separated pseudo-finite-type map which does not have a compactification.} We are therefore forced to work in the  category $\bbG$
whose objects are formal schemes and whose morphisms are composites of ``compactifiable"
maps. In \cite{pasting} the first author shows that we have a pre-pseudofunctor 
$\shr{\boldsymbol{(-)}}$
on $\bbG$, which generalises what we have for the category of finite-type separated maps
on ordinary schemes (see \Sref{s:shriek-sharp}).

If $k$ is a field and $A=k[|X_1,\dots, X_d|]$ the ring of power series over $k$ in $d$ 
analytically independent variables, $\fm$ the maximal ideal of $A$,
$\X$ the formal spectrum of $A$ with its $\fm$-adic topology, and $\Y$ the spectrum of $k$,
then the natural map $f\colon \X\to \Y$ is pseudo-proper and 
$f^!\co_\Y$ is the torsion sheaf obtained by sheafifying the $A$-module 
$\Hr^d_\fm(\omega_A)$ where $\omega_A$ is ``the" canonical module of $A$.
From here to recovering local duality for the complete local ring $A$ requires a more
careful examination of the relationship between $f^!\co_\Y$ and the canonical module
$\omega_A$. As it turns out, $\omega_A$,  a finitely generated $A$-module,
can be recovered from the torsion module associated to $f^!\co_\Y$. More generally,
for a pseudo-proper map $f\colon \X\to \Y$ between formal schemes and an object
$\eG\in\Dc^+(\Y)$, there is a deep
relationship between $f^!\eG \in\Dqct(\X)$ and an associated object in $\Dc^+(\X)$, of which the
above mentioned relationship between $\Hr^d_\fm(\omega_A)$ and $\omega_A$ is an example. This
necessitates the development of a second twisted inverse image functor $\ush{f}$ related to
$f^!$. The twisted inverse image $\ush{f}$  was introduced by Alonso, J\'erem\'{\i}as, and Lipman
in \cite{dfs} and the relationship between $f^!$ and $\ush{f}$ is one of the many important
portions of that work. 

This paper is mainly concerned with a pre-pseudofunctor $\ush{\boldsymbol{(-)}}$ on 
the category $\bbG$ such that for $f$ pseudo-proper, $\ush{f}$ is the functor mentioned
in the last paragraph. The pseudofunctor $\ush{\boldsymbol{(-)}}$ is 
one of two ways that the twisted inverse image pseudofunctor $\shr{\boldsymbol{(-)}}$ on
the category of ordinary schemes and separated finite-type maps generalizes to $\bbG$,
the other being the pre-pseudofunctor $\shr{\boldsymbol{(-)}}$ on $\bbG$ discussed
above. The map \eqref{map:lip-trans} can 
be defined for formal schemes with $f^!$ and $g^!$ replaced by $\ush{f}$ and $\ush{g}$. 
The principal technical issue which creates complications is the lack
of diagrams like \eqref{intro-**} into which a pair of maps $\X\xrightarrow{f} \Y \xrightarrow{g} \Z$
embed. In the next sub-section we give brief introduction to $\shr{\boldsymbol{(-)}}$
and $\ush{\boldsymbol{(-)}}$ on $\bbG$.

\subsection{Two twisted inverse images}\label{ss:2-twisted}
The duality pseudofunctors $\ush{\boldsymbol{(-)}}$ and $\shr{\boldsymbol{(-)}}$
on $\bbG$ are explained in \Sref{s:shriek-sharp} of this paper, but for the purposes of this 
introduction we say a few quick words. For a pseudo-proper map $f\colon\X\to\Y$, the
functor $f^!\colon \Dqct^+(\Y)\to \Dqct^+(\X)$ is right adjoint to $\Rfs\colon \Dqct^+(\X)\to
\Dqct^+(\Y)$.  In fact $f^!$ extends to a larger category $\wDqcp(\Y)$ which contains
$\Dqct^+(\Y)$, namely the full subcategory of $\D(\Y)$ of objects $\eG$ such that
$\R\iGp{\Y}(\eG)\in\Dqc^+(\Y)$, the extended functor
being $f^!\<\<\smcirc\<\R\iGp{\Y}$. Note that this extended $f^!$ continues to take values
in $\Dqct^+(\X)$.  
There is another duality functor associated with the pseudo-proper map $f$, namely 
$\ush{f}\colon\wDqcp(\Y)\to \wDqcp(\X)$, which is right adjoint to 
 $\Rfs\R\iGp{\X}\colon \wDqcp(\X)\to \wDqcp(\Y)$. 
 
  The functors $\ush{f}$ and $\shr{f}$ are related via the formulas $\shr{f}\cong\R\iGp{\X}\ush{f}$
 and $\ush{f}\cong \BL_{\X}f^!$, where $\BL_{\X}(-) = \R\sHom(\R\iGp{\X}\co_{\X}, -)$.
  
 {\scriptsize{As an example, if $k$ is a field, $\X$ the formal spectrum of the power series ring
 $A=k[|X_1,\dots, X_d|]$ (given the $\fm$-adic topology, where $\fm$ is the maximal
 ideal of $A$), $\Y=\Spec{\,k}$, and $f\colon \X\to \Y$ the map of formal schemes corresponding
 to the obvious $k$-algebra map $k\to k[|X_1,\dots, X_d|]$, then $f$ is pseudo-proper. 
 Identifying $A$-modules with their associated sheaves on $\X$, and writing
 $\widehat{\Omega}^d_{A/k}$ for the universally
 finite module of $d$-forms for the algebra $A/k$, we have 
 $f^!(k)=\Hr^d_\fm(\widehat{\Omega}^d_{A/k})[0]$ and
 $\ush{f}(k)=\widehat{\Omega}^d_{A/k}[d]$.}}
 
 This is for pseudo-proper maps, the original setting for defining $f^!$ in \cite{dfs}. However,
 in \cite{pasting}, the first author was able to show that
 $f^!\colon \wDqcp(\Y)\to \wDqcp(\X)$ can be defined when $f\colon \X\to \Y$ is in $\bbG$,
 even when it is not pseudo-proper, in such a way that (a) when $f$ is an open immersion,
 $f^!\cong f^*\R\iGp{\Y}=\R\iGp{\X}f^*$, and (b) such that the resulting variance theory
 $\shr{\boldsymbol{(-)}}$ is a pre-pseudofunctor. In fact, ${\bf 1}_{\X}^! \iso \R\iGp{\X}$,
 and the latter functor is not isomorphic to the identity functor.
 
 For a map $f\colon \X\to \Y$ in $\bbG$, we set $\ush{f}=\BL_\X(f^!)$. The source of $\ush{f}$
is  $\wDqcp(\Y)$ and its target is $\wDqcp(\X)$, so that $\ush{f}\colon\wDqcp(\Y)\to \wDqcp(\X)$. 
If $f$ is pseudo-proper this definition of $\ush{f}$ agrees 
 with the earlier one (as the functor which is right adjoint to $\Rfs\R\iGp{\X}$). The variance theory
 $\ush{\boldsymbol{(-)}}$ is a pre-pseudofunctor with $\ush{\bf 1_\X} \iso \BL_\X$. Our $\ush{f}$
 agrees with the one in \cite{dfs} only when $f$ is pseudo-proper.
 
 The paper is organized as follows. The definitions of $\shr{\boldsymbol{(-)}}$ and 
 $\ush{\boldsymbol{(-)}}$ and their first properties are given in \Sref{s:shriek-sharp}. 
 Sections \ref{s:CM} and \ref{s:CM-tr} deal with Cohen-Macaulay maps. The meat of the
 paper is in Sections \ref{s:fubini-1} and \ref{s:fubini-res}. There is an appendix which
 contains a number of results useful in the main body of the text. We have placed these results
 in the appendix so that the main narrative is not broken into disconnected bits.
 
%
%

\section{\bf Notations and basics on formal schemes}\label{s:notations}

\subsection{} 
We discuss some basic matters on formal schemes and the derived categories of complexes on them. 
Most of what we say here can be found with more details in \cite{dfs}. Also, for the basic conventions
on derived functors we refer to \cite{notes}.

For any formal scheme $\X$ and any coherent ideal $\I$ in $\co_{\X}$, $\iG{\I}$ denotes the functor that 
assigns to any $\co_{\X}$-module $\F$, the submodule of sections of $\F$ annihilated locally by some power
of $\I$. The torsion functor $\iGp{\X}$ is the one that assigns to any~$\F$
the submodule of sections of $\F$ annihilated locally by some open ideal in~$\co_{\X}$. Thus for any 
defining ideal $\I$ for $\X$, $\iGp{\X} = \iG{\I}$. A torsion module is a module~$\F$ satisfying
$\iGp{\X}\F  = \F$. 

For any formal scheme $\X$, the abelian categories $\A_{?}(\X)$ for $?$ in 
$\{\text{c, } \vec{\text{c}}, \text{qc, qct}\}$ are defined as in the beginning of this paper
and the same applies to the definition of derived categories $\D_?^*(\X)$.

We use the notation $\R F$ (resp.~$\bL F$) to denote the right (resp.~left) derived functor associated 
to any (triangulated) functor $F$ between derived categories, and for the derived tensor product
we use $\overset{\bL}{\otimes}$.

Let $\wDqc(\X)$ denote the triangulated full subcategory of  $\D(\X)$ whose objects
consist of complexes $\F$ such that $\R\iGp{\X}\F \in \Dqc(\X)$ (and hence $\R\iGp{\X} \F\in \Dqct(\X)$).
Similarly, $\wDqcp(\X)$ denotes the subcategory consisting of complexes $\F$ such that 
$\R\iGp{\X}\F \in \Dqc^+(\X)$. Thus there are full subcategories  
$\Dqct^+(\X) \subset \Dqc^+(\X) \subset \wDqcp(\X)$
which are all equal when $\X$ is an ordinary scheme. 

The functor $\R\iGp{\X} \colon \D(\X) \to \D(\X)$ has a right adjoint given by
\[
\BL_{\X}(-) = \R\sHom(\R\iGp{\X}\co_{\X}, -).
\]
Via canonical maps $\R\iGp{\X} \to 1 \to \BL_{\X}$, both $\R\iGp{\X}$ and $\BL_{\X}$ are idempotent
functors and in fact there are natural isomorphisms
\[
\R\iGp{\X}\R\iGp{\X} \iso \R\iGp{\X} \iso \R\iGp{\X}\BL_{\X}, \qquad \qquad
\BL_{\X}\R\iGp{\X} \iso \BL_{\X} \iso \BL_{\X}\BL_{\X}.
\]
In particular, $\BL_{\X}(\wDqcp(\X)) \subset \wDqcp(\X)$ and therefore, the restriction of
$\R\iGp{\X}$ to $\wDqcp(\X)$ and of $\BL_{\X}$ to $\Dqct^+(\X)$ also constitute an 
adjoint pair.

For $\eF \in \Dc(\X)$, the canonical map $\eF \to \BL_{\X}\eF$ is an isomorphism 
by Greenlees May duality \cite[Prop 6.2.1]{dfs}.
More generally, if $\Dvc(\X)$ is the subcategory of $\Dqc(\X)$ consisting of 
complexes whose homology sheaves are direct limits of coherent ones, 
then the restriction of $\BL_{\X}$ to $\Dvc(\X)$ is isomorphic to
the left-derived functor of the completion functor $\Lambda_{\X}$ which assigns to any
sheaf~$\F$, the inverse limit $\inlm{n} \F/\I^n\F$ where $\I$ is any defining ideal in $\co_{\X}$.
In contrast, note that $\R\iGp{\X}$ does not preserve coherence of homology in general.

Let $f \colon \X \to \Y$ be a map of noetherian formal schemes. 
Then there are natural isomorphisms (see \cite[Proposition 5.2.8]{dfs})
\stepcounter{thm}
\begin{gather*}\label{iso*gamma-lambda}\tag{\thethm}
\R\iGp{\X}\bL f^* \R\iGp{\Y} \iso \R\iGp{\X}\bL f^* \iso \R\iGp{\X}\bL f^* \BL_{\Y}, \\
\BL_{\X}\bL f^* \R\iGp{\Y} \iso \BL_{\X}\bL f^* \iso \BL_{\X}\bL f^* \BL_{\Y}. 
\end{gather*}
Here the first isomorphism in the first line follows easily from the fact that for any coherent ideal 
$\I \subset \co_{\Y}$, we have $\bL f^*\R\iG{\I} \cong \R\iG{\I\co_{\X}}\bL f^*$, a fact which 
can be checked locally using stable Koszul complexes, see \eqref{iso:k-infty-gam} in Appendix below for instance. 
The remaining isomorphisms in~\eqref{iso*gamma-lambda} result from the first one by pre-composing 
with~$\BL_{\X}$ or post-composing with~$\BL_{\Y}$.

For $f \colon \X \to \Y$ as above, $f^*$ sends torsion $\co_\Y$-modules to torsion $\co_\X$-modules
and hence we have $\bL f^*(\Dqct(\Y)) \subset \Dqct(\X)$ (in addition to the usual inclusions
$\bL f^*(\Dqc(\Y)) \subset \Dqc(\X), \bL f^*(\Dc(\Y)) \subset \Dc(\X)$). By  \eqref{iso*gamma-lambda}
we also deduce that $\bL f^*(\wDqc(\Y)) \subset \wDqc(\X)$.

Unlike the case of ordinary (noetherian) schemes, $\R f_*$ does not map $\Dqc^+(\X)$ (or even $\Dc^+(\X)$)
inside $\Dqc^+(\Y)$ in general. Under additional torsion conditions we do get the desired 
behaviour. Thus we have $\R f_*(\Dqct^+(\X)) \subset \Dqct^+(\Y)$ and therefore 
$\R f_*\R\iGp{\X}(\wDqcp(\X)) \subset \Dqct^+(\Y)$. 

For any morphism $f \colon \X \to \Y$ as above and for any closed subset $Z \subset \X$,
we set $\R_Zf_* \set \R f_*\R\iG{Z}$. Likewise we set
$\Rp{\X}f_* \set \R f_*\R\iGp{\X}$. For any integer $r$ we use 
$\R_Z^rf_* \set H^r\R_Zf_*$ and $\Rp{\X}^rf_* \set H^r\R\iGp{\X}f_*$.

\marginpar{}

\section{\bf The duality pseudofunctors over formal schemes} \label{s:shriek-sharp}

We mainly work with the category $\bbG$ of composites of open immersions 
and pseudo-proper maps between noetherian formal schemes. 
By Nagata's compactification theorem, every 
separated finite-type map of \emph{ordinary} schemes lies in $\bbG$.

\subsection{} 

The results in \cite{dfs} and \cite{pasting} extend the
theory of $\shr{(-)}$ over ordinary schemes to that
over~$\bbG$. Thus, there is a contravariant pseudofunctor
$(-)^!$ on~$\bbG$ with values in~$\Dqct^+(\X)$ for any formal scheme $\X$,
such that if $f \colon \X \to \Y$ is pseudo-proper, there exists a functorial map
$t_f\colon \Rfs f^! \to {\bf 1}_{\Dqct^+(\Y)}$ such that $(f^!,\,t_f)$ is a right adjoint
to $\R f_* \colon \Dqct^+(\X) \to \Dqct^+(\Y)$ while if~$f$ is an open immersion or more
generally, if~$f$ is adic \'etale, then $f^! = f^*$ pseudofunctorially. 
For a formally \'etale map~$f$ in~$\bbG$, 
(e.g., a completion map $\X \to X$ where~$X$ is an ordinary scheme
and~$\X$ its completion along some coherent ideal in $\cO_X$), we have $f^! \iso \R\iGp{\X}f^*$
(again pseudofunctorially), see \cite[Theorem 7.1.6]{pasting}.
Note that~$f^!$ does not preserve coherence of homology in general.

There is an extension of $(-)^!$ that we find convenient to use. 
For any $f \colon \X \to \Y$ in~$\bbG$ the composite $f^!\R\iGp{\Y}$ sends $\wDqcp(\Y)$ 
to $\Dqct^+(\X) \subset \wDqcp(\X)$ and is isomorphic to $f^!$ when restricted to $\Dqct^+(\Y)$.
The extended functor is also denoted as~$f^!$. 
However, this extension $(-)^!$ only forms a pre-pseudofunctor (see beginning of~\S\ref{s:intro}).
Thus, for $\X \xrightarrow{f} \Y \xrightarrow{g} \Z$ in~$\bbG$, the comparison isomorphism
$C_{f,g}^! \colon (gf)^! \iso f^!g^!$ is induced by the usual one over~$\Dqct^+(-)$ 
and by the isomorphisms
\[
(gf)^!\R\iGp{\Z} \iso {f^!}{g^!}\R\iGp{\Z}  \iso {f^!}\R\iGp{\Y}{g^!}\R\iGp{\Z}. 
\]
The associativity condition for $C_{-,-}^!$ vis-\'{a}-vis composition of 3 maps easily 
results from the corresponding one over $\Dqct^+(-)$.   
For the identity map~$1_{\X}$ on~$\X$ 
we have a natural map $1_{\X}^! = \R\iGp{\X} \to {\bf 1}$ 
and the comparison isomorphisms
corresponding to composing $f$ on the left or right by identity are the canonical
ones $f^! \iso f^!\R\iGp{\X}$ and $f^! \iso \R\iGp{\X}f^!$.

This extended pre-pseudofunctorial version of $(-)^!$ is what we will use
from now on. It has the following properties.
For $f \colon \X \to \Y$ in $\bbG$, if~$f$ is pseudoproper, then
$f^! \colon \wDqcp(\Y) \to \Dqct^+(\X)$ is right adjoint  
to $\R f_* \colon \Dqct^+(\X) \to \wDqcp(\Y)$,
while if~$f$ is formally \'etale, then~$f^!$ 
is isomorphic to $\R\iGp{\X}f^*\R\iGp{\Y} \iso \R\iGp{\X}f^*$,
(see Theorem~7.1.6 and~\S 7.2 of~\cite{pasting}).

For $\Dc^+$-related questions, it is useful to work with another 
generalization of~$(-)^!$ from ordinary schemes. For $f \colon \X \to \Y$ in $\bbG$ 
we define $\ush{f} \colon \wDqcp(\Y) \to \wDqcp(\X)$ by the formula
\stepcounter{thm}
\begin{equation*}\label{def:sharp}\tag{\thethm}
\ush{f} \set \BL_{\X}f^!.
\end{equation*}
Since $f^! \iso \R\iGp{\X}\ush{f}$, therefore $(-)^!$ and $\ush{(-)}$ determine each other
upto isomorphism.


Note that $\ush{(-)}$ is also a pre-pseudofunctor. 
For maps $\X \xrightarrow{f} \Y \xrightarrow{g} \Z$ in $\bbG$, the comparison isomorphism
$\ush{C_{f,g}} \colon \ush{(gf)} \iso \ush{f}\ush{g} $ is given by the composite 
\[
\ush{(gf)} = \BL_{\X}{(gf)^!} \iso \BL_{\X}{f^!}{g^!}
\iso \BL_{\X}{f^!}\BL_{\Y}{g^!} = \ush{f}\ush{g}
\]
where the last isomorphism is obtained from composite of the following sequence where 
we use $f^!\R\iGp{\Y} \iso f^!$ in the first and the last step:
\[
f^!g^! \osi f^!\R\iGp{\Y}g^! \iso  f^!\R\iGp{\Y}\BL_{\Y}g^! \iso f^!\BL_{\Y}g^!. 
\]
The associativity condition for $\ush{C_{-,-}}$ vis-\'{a}-vis composition of 3 maps easily 
results from the corresponding one for~$(-)^!$.  
For the identity map~$1_{\X}$ on~$\X$, 
there is a map ${\bf 1} \to \ush{(1_{\X})} = \BL_{\X}$ 
and the comparison isomorphisms
corresponding to composing $f$ on the left or right by identity are the canonical
ones $\ush{f} \iso \BL_{\X}\ush{f}$ and $\ush{f} \iso \ush{f}\BL_{\Y}$. 

If $f \colon \X \to \Y$ is a pseudo-proper map (whence a map in $\bbG$), then 
$\ush{f}$ is right adjoint to $\R f_*\R\iGp{\X}$ 
so that we have a co-adjoint unit, the so-called {\emph{trace map}}
\stepcounter{thm}
\begin{equation*}\label{map:Tr-f}\tag{\thethm}
\Tr{f} \colon \R f_*\R\iGp{\X}\ush{f} \to {\bf 1}, 
\end{equation*}
while if $f$ is an open immersion (or more generally, if $f$ is formally \'etale)
then there is a natural isomorphism 
$\ush{f} \iso \BL_{\X}f^*$. Moreover these isomorphisms are 
pre-pseudofunctorial over the corresponding full subcategories of $\bbG$. 

A cautionary remark. In \cite[Prop.~6.1.4]{dfs},
$\ush{f}$ is defined as $\BL_{\X}\ft$ where $\ft$ is the right adjoint to the
restriction of $\Rfs$ to $\Dqct(\X)$. The functor $\BL_{\X}\ft$ is shown to be
be a right adjoint to $\R f_*\R\iGp{\X}$
for \emph{every} $f$. Our definition of $\ush{f}$ agrees with that of 
\cite{dfs} when $f$ is pseudo-proper, but not in general.

For any $f \colon \X \to \Y$ in $\bbG$, 
we have $\ush{f}(\Dc^+(\Y)) \subset \Dc^+(\X)$. This can be
seen by reducing to the special cases when~$f$ is pseudoproper or~$f$ is an open immersion;
in the latter case one uses that $\BL_{\X}\big|_{\Dc(\X)}$ 
is isomorphic to the identity functor, so that $\ush{f} \iso \BL_{\X}f^* \iso f^*$ 
on~$\Dc(\X)$, while the former case
is dealt with in \cite[p.\,89, Proposition 8.3.2]{dfs}. Thus $\ush{(-)}$ gives 
a $\Dc^+$-valued pseudofunctor 
on~$\bbG$. It also follows that if~$f$ is formally \'etale and $\F \in \Dc^+(\Y)$, then we have
an isomorphism
\stepcounter{thm}
\begin{equation*}\label{eq:gm}\tag{\thethm}
f^*\F \iso \ush{f}\F
\end{equation*}
which is pseudofunctorial over the category of formally \'etale maps. If~$\Y$ is a formal scheme,
$\I \subset \co_\Y$ an open coherent ideal, $\W \set \widehat{\Y}$ the completion of~$\Y$ by~$\I$ and 
$\kappa \colon \W \to \Y$ the corresponding completion map,
then~$\kappa$ is both pseudoproper and \'etale. For any $\eF \in \Dc^+(\Y)$,
the isomorphism of~\eqref{eq:gm} is the same (see Lemma~\ref{lem:kappa} in Appendix below) 
as the map adjoint to the natural composite
\[
\R\kappa_*\R\iGp{\W}\kappa^* \iso \R\iG{\I} \to \mathbf{1}.
\]

Nevertheless, while working with $\ush{(-)}$, it is also convenient to work with the larger category 
$\wDqcp$ in addition to $\Dc^+$ since, some of the functors, 
such as $\R f_*\R\iGp{\X}$, the left adjoint to~$\ush{f}$ 
when $f \colon \X \to \Y$ is pseudoproper,
do not preserve coherence of homology in general.

Over ordinary schemes $\ush{(-)}$ and $(-)^!$ are canonically identified 
and so we use both interchangeably.

\subsection{}
\label{subsec:base-ch}
Suppose we have a cartesian square $\mathfrak s$ of noetherian formal schemes
\[
\xymatrix{
\V \ar[d]_{g}\ar[r]^v \ar@{}[dr]|\square& \X \ar[d]^{f} \\
\W \ar[r]_u & \Y
}
\]
with $f$ in $\bbG$ and $u$ flat. The \textit{flat-base-change theorem} 
for $(-)^!$  gives an isomorphism for $\F \in \wDqcp(\Y)$:
\[
\beta^!_{\mathfrak s}(\eF) \colon \R\iGp{\V}v^*f^! \F \iso g^!u^* \eF
\]
(see \cite[p.\,77, Theorem 7.4]{dfs} for the case when $f$ is pseudoproper 
and \cite[p.\,261, Theorem 7.14]{pasting} for the general case and also \cite[\S 7.2]{pasting}).
For $\F \in \wDqcp(\Y)$ the corresponding flat-base-change isomorphism for $\ush{(-)}$ 
\stepcounter{thm}
\begin{equation*}\label{iso:qc-basech}\tag{\thethm}
\ush{\beta_{\mathfrak s}}(\eF) \colon \BL_{\V}v^*\ush{f}(\eF) \iso \ush{g}u^*(\eF)
\end{equation*}
is induced by the following sequence of natural isomorphisms of functors
\[
\BL_{\V}v^*\ush{f} = \BL_{\V}v^*\BL_{\X}f^! \osi \BL_{\V}v^*f^! \osi \BL_{\V}\R\iGp{\V}v^*f^! \iso \BL_{\V}g^!u^*
=\ush{g}u^*
\]
where the last isomorphism is induced by $\beta^!_{\mathfrak s}$ (cf. \cite[p.\,86, Theorem 8.1]{dfs} for $f,g$ pseudoproper).

If $\F \in \Dc^+(\Y)$, or if $u$ is open or if $\V$ is an ordinary scheme,
we have an isomorphism 
\stepcounter{thm}
\begin{equation*}\label{iso:bc-sharp}\tag{\thethm}
v^*\ush{f}\F\iso \ush{g}u^*\F,
\end{equation*}
(see \cite[Theorem 8.1, Corollary 8.3.3]{dfs}).

Further properties of the base-change map are explored in Appendix \ref{ss:more-base-ch}.

\subsection{}\label{ss:trZ}
Let $f\colon X\to Y$ be a separated map of finite-type between ordinary schemes, and $Z$  a closed subscheme
of $X$ which is proper over $Y$.  The completion map $\kappa\colon \X\to X$ of $X$ along $Z$,
is formally \'etale and affine and the composition $\wid{f} \set f\kappa$ is pseudoproper. 
We define the trace map for $f$ along $Z$ 
\stepcounter{thm}
\begin{equation*}\label{map:tr-fZ-intro}\tag{\thethm}
\Tr{f,Z}\colon \R_Zf_*\ush{f} \to {\bf 1}
\end{equation*}
to be the composite
\[\R f_*\R\iG{Z}\ush{f} \iso \Rfs \kappa_*\R\iGp{\X}\kappa^*\ush{f} \iso
 \R{\wid{f}}_*\R\iGp{\X}\ush{\kappa}\ush{f} \iso \R{\wid{f}}_*\R\iGp{\X}\ush{\wid{f}}
\xrightarrow{\Tr{\wid{f}}} {\bf 1} \]
where the first isomorphism is from the canonical isomorphism 
$\R{\iG{Z}} \iso \kappa_*\R\iGp{\X}\kappa^*$ while the remaining maps are the obvious natural ones.

There is an alternate description of $\Tr{f,Z}$ involving compactifications. Let $(u, \bar{f})$ be
a compactification of $f$, i.e., $u\colon X\to \overline{X}$ is an open immersion, 
$f\colon \overline{X}\to Y$ a {\emph{proper}} map such that $f={\bar{f}}\<\smcirc\< u$. A theorem of 
Nagata assures us that compactifications always exist (see \cite{nagata}, \cite{nagata4},
\cite{nagata2}, and \cite{nagata3}). According to
\Lref{lem:indep1}, $\Tr{f,Z}$ can also be described as the composite 
\stepcounter{thm}
\begin{equation*}\label{map:tr-fZ-alt}\tag{\thethm}
\Rfs\R\iG{Z}\ush{f} \iso \R{\bar{f}}_*\R\iG{u(Z)}\ush{\bar{f}} \to 
\R{\bar{f}}_*\ush{\bar{f}} \xrightarrow{\Tr{\bar{f}}} {\bf 1}.
\end{equation*}

\section{\bf Cohen-Macaulay Maps}\label{s:CM}
\subsection{} Recall that a locally finite type map $f\colon X\to Y$
between ordinary schemes is said to be {\it Cohen-Macaulay of relative dimension $r$} if it is
a flat map and all the non-empty fibres are Cohen-Macaulay and of pure dimension $r$. 
This is equivalent to saying that $f$ is flat, $f^!\co_Y$ (which is defined locally if $f$ is
not separated) has homology concentrated in only degree $-r$, and the resulting $\co_X$-module
$\omgs{f}$ obtained by gluing the various local $\Hr^{-r}(f^!\co_Y)$ is coherent and flat
over $Y$. We make the obvious generalization to formal schemes. First we need the following
definition.

\begin{defi} A map of formal schemes $f\colon \X\to \Y$ is said to be {\em locally in $\bbG$} if
for every point $x\in \X$, there exists an  open neighbourhood $U$ of $x$ 
such that the restriction $f_U$ of $f$ to $U$ is in $\bbG$.
\end{defi}

Note that if $f\colon\X\to \Y$ is locally in $\bbG$ and $\eF \in \Dc^+(\Y)$, then locally on $\X$, 
$\ush{f}\eF$ is defined, but it need not be defined globally, 
even though by pseudofunctoriality of~$\ush{(-)}$ over~$\Dc^+$, these local twisted
inverse images are isomorphic on overlaps and these isomorphisms form a descent datum for
the Zariski topology. However, in this case, for every integer $n$, and every
$\eG\in \Dc^+(\Y)$ the sheaves $\Hr^n(\ush{f_U}(\eG))$ do glue to give a coherent
sheaf on $\X$ which we denote as $\Hr^n(\ush{f}\eG)$ (even though there might well be no $\ush{f}\eG$). We are not aware of any example where $\ush{f}$ is defined locally but not globally. 

\begin{defi}\label{def:cmrd}
A map of formal schemes $f\colon \X \to \Y$ is called {\em Cohen-Macaulay \textup{(CM)} of
relative dimension $r$} if it is flat, locally in $\bbG$ with $\Hr^i(\ush{f}\co_\Y)=0$ for
$i\neq -r$ and $\omgs{f}\set \Hr^{-r}(\ush{f}\co_\Y)$ is flat over~$\Y$. The coherent
$\co_\X$-module $\omgs{f}$ is called the \emph{relative dualizing sheaf} for the CM map
$f$. If such a map~$f$ is already in~$\bbG$, we shall make the identification 
$\ush{f}\co_\Y=\omgs{f}[r]$.
\end{defi} 

It is tempting to give alternate definitions for a map to be CM that are more local in nature. 
An extension of the theory of $(-)^!$ and $\ush{(-)}$ to a larger category containing maps that 
are essentially of pseudo-finite type (see \cite[\S 2.1]{lns}) would provide a natural setting for proving 
equivalence between various possible alternate definitions. Since there is no such extension in literature 
and since we do not need such a result here, we do not pursue this matter further. 

A map $f \colon \X \to \Y$ is said to be \emph{smooth} if it is in~$\bbG$ and is formally smooth.
In this case the universally finite module of relative differential forms~$\widehat{\Omega}^1_{\X/\Y}$
is a locally free module of finite rank and if it has constant rank $r$ 
we say that $f$ has relative dimension~$r$, see \cite[\S 2.6]{lns}. We use $\omega_f$ to denote the 
top exterior power of the universally finite module of differentials.



\section{\bf Traces and Residues for Cohen-Macaulay maps}\label{s:CM-tr}

\subsection{Abstract Trace for Cohen-Macaulay maps}\label{ss:tr-cm} 
Suppose $g\colon \X\to \Y$ is Cohen-Macaulay of relative dimension $r$ and
is {\em pseudo-proper}. Since $\omgs{g}[r]=\ush{g}\co_\Y$, therefore, as in \eqref{map:Tr-f},
we have a trace map 
\[\Tr{g}(\co_\Y)\colon \R g_*\R\iGp{\X}\omgs{g}[r] \to \co_\Y.\] 
\begin{defi}
Let $g\colon \X\to \Y$ be as above (i.e., $g$ is pseudo-proper and Cohen-Macaulay of relative
dimension $r$).
The {\em abstract trace map on $\Rp{\X}^rg_*\omgs{g}$} (or simply {\em the trace map on
$\Rp{\X}^rg_*\omgs{g}$}) is the map
\stepcounter{thm}
\begin{equation*}\label{map:tin}\tag{\thethm}
\tin{g}\colon \Rp{\X}^r g_*\omgs{g} \to \co_\Y.
\end{equation*}
given by $\tin{g}=\Hr^0(\Tr{g}(\co_\Y))$.
\end{defi}
%

\begin{thm}\label{thm:local-duality} 
Let $g\colon \X\to \Y$ be pseudo-proper and Cohen-Macaulay of relative
dimension $r$. Then for any quasi-coherent $\co_\X$-module $\eF$
satisfying $\Rp{\X}^j g_*\eF =0$ for $j> r$, 
we have a functorial isomorphism 
\[\Hom_\X(\eF,\,\omgs{g}) \iso \Hom_\Y(\Rp{\X}^r g_*\eF,\,\co_\Y), \]
which is given by sending $\theta\in \Hom_\X(\eF,\,\omgs{g})$ to the composite
\[\Rp{\X}^r g_*\eF \xrightarrow{\Rp{\X}^r g_*(\theta)} \Rp{\X}^r g_* \omgs{g} 
\xrightarrow{\tin{g}} \co_\Y.\]
\end{thm}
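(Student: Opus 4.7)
The plan is to combine the adjunction $(\R g_*\R\iGp{\X},\,\ush{g})$ afforded by pseudo-properness of $g$ with the CM identification $\ush{g}\co_\Y=\omgs{g}[r]$, and then use a truncation argument on the target side to pass from Hom in $\D(\Y)$ back to Hom in the module category.

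Since $g$ is pseudo-proper, $\ush{g}\colon \wDqcp(\Y)\to \wDqcp(\X)$ is right adjoint to $\R g_*\R\iGp{\X}\colon \wDqcp(\X)\to \wDqcp(\Y)$, with counit the trace $\Tr{g}$. The module $\eF\in\Aqc(\X)$, viewed as a complex in degree $0$, lies in $\wDqcp(\X)$, as does its shift $\eF[r]$. Using shift-invariance, the identification $\omgs{g}=\ush{g}\co_\Y[-r]$, and the adjunction yields a chain of natural isomorphisms
\[
\Hom_\X(\eF,\,\omgs{g}) = \Hom_{\D(\X)}(\eF[r],\,\ush{g}\co_\Y) \iso \Hom_{\D(\Y)}\bigl(\R g_*\R\iGp{\X}\eF[r],\,\co_\Y\bigr),
\]
under which $\theta\colon \eF\to\omgs{g}$ corresponds to $\Tr{g}(\co_\Y)\smcirc \R g_*\R\iGp{\X}(\theta[r])$.

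Set $C\set \R g_*\R\iGp{\X}\eF$. The vanishing hypothesis says $H^j(C)=0$ for $j>r$, so $C[r]$ has cohomology in degrees $\le 0$ with $H^0(C[r])=\Rp{\X}^rg_*\eF$. Applying $\Hom_{\D(\Y)}(-,\,\co_\Y)$ to the canonical truncation triangle
\[
\tau_{<0}(C[r])\to C[r]\to \Rp{\X}^rg_*\eF[0]\to \tau_{<0}(C[r])[1]
\]
produces an exact sequence whose two outer terms vanish: for any $E\in\D(\Y)$ with $H^j(E)=0$ for $j\ge 0$, pick a bounded-below injective (hence K-injective) resolution $\co_\Y\to I^\bullet$ and observe that chain maps $\tau_{<0}E\to I^\bullet$ vanish degreewise, since $(\tau_{<0}E)^i=0$ for $i\ge 0$ while $I^i=0$ for $i<0$. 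Thus
\[
\Hom_{\D(\Y)}\bigl(C[r],\,\co_\Y\bigr)\iso \Hom_\Y\bigl(\Rp{\X}^rg_*\eF,\,\co_\Y\bigr).
\]

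Finally, I would trace $\theta$ through the composite isomorphism: its image is the $H^0$ of $\Tr{g}(\co_\Y)\smcirc \R g_*\R\iGp{\X}(\theta[r])$, which by functoriality of $H^0$ and the definition $\tin{g}=H^0(\Tr{g}(\co_\Y))$ equals $\tin{g}\smcirc \Rp{\X}^rg_*(\theta)$, as required. The only step requiring real care is the outer-term vanishing used above; the rest is adjunction bookkeeping together with the CM shift.
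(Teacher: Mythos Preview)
Your proof is correct and follows essentially the same approach as the paper: adjunction of $\ush{g}$ with $\R g_*\R\iGp{\X}$, the CM identification $\ush{g}\co_\Y=\omgs{g}[r]$, and then a truncation argument to pass from $\Hom_{\D(\Y)}(\Rp{\X}g_*\eF[r],\co_\Y)$ to $\Hom_\Y(\Rp{\X}^rg_*\eF,\co_\Y)$. The only cosmetic difference is that the paper invokes \cite[p.\,37, Prop.\,1.10.1]{notes} for the truncation step, whereas you spell it out via the canonical triangle and the degreewise vanishing of chain maps into a nonnegatively graded injective resolution.
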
 
\begin{proof}
By adjointness we have a natural isomorphism
\[
\R\Hom_\X(\eF,\,\omgs{g}[r]) \iso \R\Hom_\Y(\Rp{\X} g_*\eF,\,\co_\Y),
\]
hence there are natural isomorphisms
\begin{align*}
\Hom_\X(\eF,\,\omgs{g}) &= \Hom_{\D(\X)}(\eF,\,\omgs{g}) \\
&= H^{-r}\R\Hom_\X(\eF,\,\omgs{g}[r]) \\
&\cong H^{-r}\R\Hom_\Y(\Rp{\X} g_*\eF,\,\co_\Y) \\
&= \Hom_{\D(\Y)}(\Rp{\X} g_*\eF[r],\,\co_\Y) \\
&\cong \Hom_{\D(\Y)}(\Rp{\X}^r g_*\eF,\,\co_\Y) \qquad (\text{see \cite[p.\,37, Prop.\,1.10.1]{notes}})\\
& = \Hom_\Y(\Rp{\X}^r g_*\eF,\,\co_\Y).
\end{align*}
\end{proof}

In the above proof, we don't know if $\omgs{g}$ satisfies the hypotheses required of $\eF$. We 
are interested in special cases when this is true. This leads to the following.

\begin{cor}\label{cor:local-duality} If  $\Rp{\X}^jg_*(\eF)=0$ for every $j>r$ and every $\eF\in\Ac(\X)$ (resp.~$\eF\in\Avc(\X)$, resp.~$\eF\in\Aqc(\X)$), then $(\omgs{g},\,\tin{g})$ represents the 
functor $\Hom_\Y({\Rp{\X}^rg_*(\boldsymbol{-}}),\, \co_\Y)$
on $\Ac(\X)$ (resp.~$\Avc(\X)$, resp.~$\Aqc(\X)$). 
In particular if $\Y=Y$ is an ordinary scheme, $f\colon X\to Y$ 
a Cohen-Macaulay map of ordinary schemes of relative dimension $r$, 
$Z$ a closed subscheme of $X$ such that
the resulting map $Z\to Y$ is finite and flat, $\X=X_{/Z}$ the completion of~$X$ along~$Z$,
and $g\colon \X\to \Y$ the map induced by $f$, then  $(\omgs{g},\,\tin{g})$ represents the functor 
$\Hom_\Y({\Rp{\X}^rg_*(\boldsymbol{-}}),\, \co_\Y)$ on $\Avc(\X)$.
\end{cor}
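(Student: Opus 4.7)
The first assertion is a direct application of \Tref{thm:local-duality}: for any $\eF$ in $\Ac(\X)$, $\Avc(\X)$, or $\Aqc(\X)$, the vanishing hypothesis of the corollary is exactly what the theorem demands, yielding the natural isomorphism $\Hom_\X(\eF,\omgs{g}) \iso \Hom_\Y(\Rp{\X}^r g_*\eF,\co_\Y)$ given explicitly by $\theta \mapsto \tin{g}\smcirc\Rp{\X}^rg_*(\theta)$. Naturality in $\eF$ is transparent from this formula, so the pair $(\omgs{g},\tin{g})$ represents the prescribed functor.

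For the ``in particular'' clause, the task is to verify that $\Rp{\X}^jg_*\eF = 0$ for every $\eF\in\Avc(\X)$ and every $j>r$. Since $\Rp{\X}g_* = \R g_*\R\iGp{\X}$ commutes with filtered colimits on the noetherian formal scheme $\X$, the plan is first to reduce to $\eF \in \Ac(\X)$, then to factor $g = f\smcirc\kappa$ with $\kappa\colon\X \to X$ the completion map, which is both affine and pseudo-proper. Affineness gives $\R\kappa_* = \kappa_*$ on quasi-coherent modules, so $\Rp{\X}g_*\eF \cong \R f_*\bigl(\kappa_*\R\iGp{\X}\eF\bigr)$.

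Next, each cohomology sheaf $\kappa_* H^q(\R\iGp{\X}\eF)$ is quasi-coherent on $X$ and annihilated by some power $J^n$ of the defining ideal $J$ of $Z$, hence pushed forward from the thickening $Z_n := V(J^n)$. Because $Z\to Y$ is finite, every $Z_n\to Y$ is finite as well --- by induction on $n$, using that the successive quotients $J^k/J^{k+1}$ are coherent $\co_Z$-modules and hence finite over $Y$ --- so $\R^pf_*$ vanishes for $p>0$ on any such sheaf. The resulting Grothendieck spectral sequence therefore degenerates into the identification $\Rp{\X}^jg_*\eF \cong f_*\kappa_* H^j(\R\iGp{\X}\eF)$ for every $j$.

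The remaining --- and principal --- obstacle is to show $H^j(\R\iGp{\X}\eF)=0$ for $j>r$, i.e.~that $\iGp{\X}$ has cohomological dimension at most $r$ on $\X$. Working locally on $X$ at a point of $Z$, the plan is to exhibit $r$ sections of $J$ whose joint zero locus coincides set-theoretically with $Z$: a length-$r$ regular sequence in $J$ is available via the Cohen-Macaulay hypothesis combined with the height computation forced by the dimension formulas for $B_p$ and~$(B/J)_p$, while the finite-flat structure of $B/J$ over $A$ ensures that this regular sequence has the same radical as $J$ in $B_p$. The stable Koszul/\v Cech complex on these $r$ sections then computes $\R\iGp{\X}$ and delivers the desired vanishing.
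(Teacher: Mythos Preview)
Your argument is essentially correct and follows the same route as the paper: factor through the completion $\kappa\colon\X\to X$, reduce to a statement on the ordinary scheme $X$, then use that the cohomology of $\R\iG{Z}$ is supported on the finite $Y$-scheme $Z$ to collapse the spectral sequence and isolate the local-cohomology vanishing in degrees $>r$. The paper's write-up is considerably terser: rather than reducing from $\Avc$ to $\Ac$ by direct limits, it invokes the fact (from \cite[Prop.~3.1.1]{dfs}) that every $\eF\in\Avc(\X)$ is of the form $\kappa^*\eG$ for some $\eG\in\Aqc(X)$, whence $\R g_*\R\iGp{\X}\eF\iso\R f_*\R\iG{\I}\eG$ directly, and then stops---leaving the vanishing of $\R^j f_*\R\iG{\I}\eG$ for $j>r$ to the reader. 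You go further and actually sketch this last step.

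One imprecision: the cohomology sheaves $\kappa_* H^q(\R\iGp{\X}\eF)$ are $J$-power-torsion but are \emph{not} annihilated by a single power $J^n$ (think of $H^1_{(t)}(k[[t]])$). This is harmless, since what you need is only that they are set-theoretically supported on $Z$, so that $R^pf_*$ vanishes on them for $p>0$; alternatively, write each as $\varinjlim_n$ of sheaves pushed forward from $Z_n$ and use that $R^pf_*$ commutes with filtered colimits. Your reduction to $\Ac$ is then unnecessary. As for the ``principal obstacle,'' your outline is right and can be made precise: lift a system of parameters from the CM fibre $B_p/qB_p$ to $t_1,\dots,t_r\in J_p$; the local criterion for flatness shows $B_p/(t_1,\dots,t_r)$ is flat (hence finite free) over~$A_q$, and then Nakayama on $J_p/(t_1,\dots,t_r)$ gives $\sqrt{(t_1,\dots,t_r)}=\sqrt{J_p}$, so the stable Koszul complex of length~$r$ computes $\R\iG{J_p}$.
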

\proof 
The first assertion holds since, if $g$ is Cohen-Macaulay, then $\omgs{g} \in \Ac(\X)$.
The second assertion follows from the first since, if $\I$ is a coherent ideal defining $Z$ in $X$,
$\eF\in\Avc(\X)$ and $\kappa$ denotes the canonical map $\X \to X$, 
then $\eF \iso \kappa^*\eG$ for some $\eG \in \Aqc(X)$ (see \cite[p.\,31, Prop.\,3.1.1]{dfs}), and hence
$\R g_*\R\iGp{\X}\eF \iso \R f_*\kappa_*\R\iGp{\X}\kappa^*\eG \iso \R f_*\R\iG{\I}\eG$
(see \cite[\S5]{dfs}).
\qed

\medskip

\begin{rem}\label{rem:joe}
In a slightly different direction, Lipman observed the following (private communication). First note that
according to \cite[p.\,39, Prop.\,3.4.3]{dfs}, since all our schemes are noetherian, if $f\colon\X\to\Y$
is a map of schemes (possibly formal), the functor $\Rfs$ is bounded above on $\Dvc(\X)$. In
other words, there is an integer $e\ge 0$ such that if $\eH\in\Dvc(\X)$ and $H^i(\eH)=0$
for $i\ge i_0$, then $H^i(\Rfs\eH)=0$ for all $i\ge i_0+e$. Next, by computing local cohomologies
using stable Koszul complexes (see \eqref{iso:k-infty-gam}) on affine open subschemes of $\X$ and
using quasi-compactness of the noetherian scheme $\X$, we see that there is an integer
$t$ such that $H^j(\R\iGp{\X}\eF)=0$ for $\eF\in\Avc(\X)$ and $j>t$. 
It is then not hard to see that if $r=e+t$,
and if  $\eH\in\Dvc(\X)$ is such that $H^i(\eH)=0$ for $i > i_0$, then $H^j(\Rfs\R\iGp{\X}(\eH))=0$
for $j>i_0+r$. Now suppose $f$ is {\emph{pseudo-proper}}.
By the argument given in
\cite[p.\,165, Lemma\,4.1.8]{notes}, we see that if $\eG\in\Avc(\Y)$ is such that
$\ush{f}\eG\in\Dvc(\X)$ then $H^j\ush{f}\eG=0$ for every any $j< -r$. 
Let $\omgs{f}=H^{-r}(\ush{f}\co_\Y)$. Then as we argued earlier,
$\omgs{f}\in\Ac(\X) \subset \Dvc(\X)$. The proof of \Tref{thm:local-duality}
applies and we have a functorial isomorphism (without any Cohen-Macaulay hypotheses)
\[\Hom_\X(\eF,\,\omgs{f})\iso \Hom_\Y(\Rp{\X}^r\eF,\,\co_\Y)\]
for every $\eF\in\Avc(\X)$. We point out that $\Rp{\X}^j\vert_{\Avc(\X)} =0$ for $j>r$. In
particular, in this argument, $\Rp{\X}^j\omgs{f}=0$ for $j>r$. We could not guarantee this
for the $r$ used in the Theorem.
\end{rem}

\subsection{Abstract Residue for Cohen-Macaulay maps} Throughout this subsection
\[f\colon X\to Y\]
is a {\em finite-type Cohen-Macaulay} map between ordinary schemes of relative dimension $r$.
Suppose $Z\hookrightarrow X$ is a closed subscheme of $X$, {\em proper over $Y$}. 
Let $\X=X_{/Z}$ be the formal completion of $X$ along $Z$, and  $\kappa\colon\X\to X$ the 
completion map. Let $\wid{f}\colon \X\to Y$ be the composite $f\smcirc\kappa$. We have
$\ush{\wid{f}}\iso \ush{\kappa}\ush{f} \iso \kappa^*\ush{f}$, whence $\Hr^j(\ush{\wid{f}})\iso
\kappa^*\Hr^j(\ush{f})$.
Note that $\wid{f}$ is 
pseudo-proper and Cohen-Macaulay of relative dimension $r$ and therefore ${\tin{\wid{f}}}$ is defined.
In \cite[p.\,742, (3.2)]{cm} a residue map along $Z$ is defined using local compactifications.
Here is a reformulation of that definition in terms of $\kappa$.
\begin{defi}\label{def:a-res} Let $Z$ and $f$ be as above. The {\em abstract residue along $Z$} 
\stepcounter{thm}
\begin{equation*}\label{def:ares}\tag{\thethm}
\ares{Z}\colon \Rr^r_Zf_*\omgs{f} \to \co_Y
\end{equation*}
is the composite 
\[\Rr^r_Zf_*\omgs{f} \xrightarrow[\eqref{iso:iGZ-iGpX}]{\Iso} \Rp\X^r{\wid{f}}_*\omgs{\wid{f}} 
\xrightarrow{\tin{\wid{f}}} \co_Y.\]
\end{defi}
It is worth unravelling the first isomorphism in the above composite a little more.
The isomorphism $\kappa_*\R\iGp{\X}\kappa^*\iso \R\iG{Z}$ gives rise to 
isomorphisms (one for every $j$)
\stepcounter{thm}
\begin{equation*}\label{iso:loc-coh}\tag{\thethm}
\Rr^j_Z f_*\eF \iso \Hr^j({\R\wid{f}}_*\R\iGp{\X}\kappa^*\eF)=\Rp{\X}^j{\wid{f}}_*\kappa^*\eF
\end{equation*}
which are functorial in $\eF$ varying over quasi-coherent $\co_X$-modules. 
In affine terms, if $X=\Spec{\,R}$, $M$ an $R$-module, and $Z$ is given
by the ideal $I$, then writing $\wid{R}$ for the $I$-adic completion of $R$, and $J=I\wid{R}$,
the above isomorphism is the well-known one
\[\Hr^j_I(M) \iso\Hr^j_J(M\otimes_R\wid{R}).\]
The isomorphism 
$\Rr^r_Zf_*\omgs{f} \iso \Rp\X^r{\wid{f}}_*\omgs{\wid{f}}$
induced by \eqref{iso:iGZ-iGpX}
is the composite of the map \eqref{iso:loc-coh}, i.e.,
 $\Rr^r_Zf_*\omgs{f} \iso \Rp\X^r{\wid{f}}_*\kappa^*\omgs{f}$, and the
 isomorphism induced by \eqref{eq:gm}, i.e., 
$\Rp\X^r{\wid{f}}_*\kappa^*\omgs{f}\iso \Rp\X^r{\wid{f}}_*\ush{\kappa}\omgs{f}\iso \Rp\X^r{\wid{f}}_*\omgs{\wid{f}}$ .

In \cite[p.742, (3.2)]{cm} a different, but equivalent, definition is given of 
$\ares{Z}$. In that situation $f$
is {\em separated,} and therefore has a compactification by a result of Nagata, say 
 $u\colon X\hookrightarrow \bar{X}$ of $X$ over $Y$. Let $\bar{f}\colon \bar{X}\to Y$ be the 
 structure morphism (by definition of a compactification, a proper 
map) of $\bar{X}$.  In {\em loc.\:cit.}, the residue along $Z$
is defined as the composite
\begin{align*}
\Rr^r_Z f_*\omgs{f} = \Hr^0(\Rfs\R\iG{Z}\omgs{f}[r])=\Hr^0(\Rfs\R\iG{Z}f^!\co_Y)
&\xrightarrow[\phantom{\Hr^0(\Tr{f})}]{\Iso} \Hr^0(\R{\bar f}_*\R\iG{u(Z)}\bar{f}^!\co_Y)\\
&\xrightarrow{\phantom{\Hr^0(\Tr{f})}} \Hr^0(\R{\bar f}_*\bar{f}^!\co_Y)\\
& \xrightarrow{\Hr^0(\Tr{\bar{f}})} \co_Y.
\end{align*}
By \Lref{lem:indep1} the two definitions coincide in the situation considered in \cite{cm}
and therefore the definition in \cite[p.742, (3.2)]{cm} is independent of the compactification 
$(u,\,\bar{f})$. This gives another proof of \cite[p.742, Proposition 3.1.1]{cm}. 

If $f$ is {\em proper}, it follows that there is a commutative diagram:
\stepcounter{thm}
\[
\begin{aligned}\label{diag:res-int1}
\xymatrix{
\Rr^r_Z f_*\omgs{f} \ar[dr]_{\ares{Z}} \ar[r] & \Rr^rf_*\omgs{f} \ar[d]^{\tin{f}} \\
& \co_Y 
}
\end{aligned}\tag{\thethm}
\]

\begin{rem}\label{rem:ILN} In \cite[p.\,746, Remark 2.3.4]{iln}, Iyengar, Lipman, and Neeman
give a generalization of the residue
map in \cite{cm}. Suppose $f\colon X\to Y$ is a separated essentially finite type 
map of ordinary schemes, $W$ {\emph{a union of closed subsets}} of $X$ to each of which the 
restriction of $f$ is proper. (Note that $W$ need not be closed in $X$.)
Then one has an integer $d$ such that $\Hr^{-e}(f^!\co_Y)=0$ for all $e>d$,
while $\omega_f\set \Hr^{-d}(f^!\co_Y)\neq 0$. Iyengar, Lipman, and Neeman then define
a natural map
\stepcounter{thm}
\begin{equation*}\label{map:ILN-1}\tag{\thethm}
\Hr^d\Rfs\R\iG{W}(\omega_f) \lra \co_Y
\end{equation*}
denote by them as $\int_W$, which generalizes the map denoted $\rm{res}_{W}$ in \cite[\S 3.1]{cm}.
In greater detail, if $\Dqc(X)_W$ denotes the essential image of $\R\iG{W}$ in $\Dqc(X)$,
then in \cite[p.\,746, Corollary 2.3.3]{iln} it is shown that for $E$ in $\Dqc(X)_W$ and
$G$ in $\Dqc^+(Y)$, we have a functorial isomorphism
\[
\Hom_{\D(Y)}(\Rfs E,\,G) \iso \Hom_{\D(X)}(E,\,\R\iG{W}f^!G).
\]
In particular one has a counit
\stepcounter{thm}
\begin{equation*}\label{map:ILN-2}\tag{\thethm}
\Rfs\R\iG{W}f^!\co_Y \lra \co_Y.
\end{equation*}
The map \eqref{map:ILN-1} is defined as the composite
\begin{align*}
\Hr^d\Rfs \R\iG{W}(\omega_f) & = \Hr^0\Rfs \R\iG{W}(\omega_f[d])\\
& \lra \Hr^0\Rfs \R\iG{W}(f^!\co_Y) \xrightarrow{\Hr^0\eqref{map:ILN-2}} \Hr^0\co_Y=\co_Y.
\end{align*}
\end{rem}

\subsection{Traces for finite Cohen-Macaulay maps.}
We begin with a global construction. Suppose we have a commutative diagram of ordinary
schemes
\stepcounter{thm}
\[
\begin{aligned}\label{diag:i-f-h}
\xymatrix{
Z\, \ar@{^(->}[r]^i \ar[dr]_h & X \ar[d]^f \\
& Y
}
\end{aligned}\tag{\thethm}
\]
with $f$ Cohen-Macaulay of relative dimension $r$, $h$ a finite surjective map, $i$ a
closed immersion, the $\co_X$-ideal $\I$ of $Z$ generated by 
$t_1,\dots, t_r\in\Gamma(X,\co_X)$ such that ${\bf t}=(t_1,\dots,\,t_r)$ is $\co_{X,z}$-regular
for every $z\in Z\subset X$. Note that $h$ is necessarily flat and is Cohen-Macaulay of 
relative dimension $0$. Define
\stepcounter{thm}
 \begin{equation*}\label{map:finflat-h}\tag{\thethm}
 \ttr{h}(=\ttr{h,f,i})\colon h_*(i^*\omgs{f}\otimes_{\co_Z}\wI{\co_Z}{\I}) \xrightarrow{\phantom{XXX}} \co_Y
 \end{equation*}
as the unique map which fills the dotted arrow to make the diagram below commute 
where $\eta'_i$ is induced by \eqref{iso:eta'-i}. 
\[ \xymatrix{
 h_*(i^*\omgs{f}\otimes_{\co_Z}\wI{\co_Z}{\I})\ar[r]^-{\Iso}_-{\eta'_i}
 \ar@{.>}[dd]_{\ttr{h}} & h_*\Hr^0(i^!f^!\co_Y) \ar[r]^-{\Iso}  & h_*\Hr^0(h^!\co_Y)
 \ar@{=}[dd] \\
 &&\\
 \co_Y &  & \Hr^0(h_*h^!\co_Y) \ar[ll]^{\tin{h}}
} \]

We would like to show that $\ttr{h}$ factors through $\ares{Z}\colon \Rr^r_Zf_*\omgs{f}\to \co_Y$.
To that end, we make the following definition. First, as in \eqref{def:btrg}, let 
$i^\btrg \set \bL i^*(\boldsymbol{-})\overset{\bL}{\otimes}_{\co_Z} (\wnor{i}[-r])$. Next,
for a quasi-coherent $\co_X$-module $\eF$, let 
\stepcounter{thm}
\begin{equation*}\label{map:sh-foo}\tag{\thethm}
\psi=\psi(\eF) \colon h_*(i^*\eF\otimes_{\co_Z}\wI{\co_Z}{\I}) \xrightarrow{\phantom{XXXX}} 
\Rr^r_Zf_*\eF
\end{equation*}
be defined by applying $\Hr^0$ to the composite
\stepcounter{thm}
\begin{equation*}\label{map:sh-foo2}\tag{\thethm}
h_*i^\btrg\eF[r] \iso \Rfs i_*i^\btrg\eF[r] \xrightarrow[\eqref{thm:eta-i}]{\Iso} \Rfs i_*i^\flat\eF[r] \lra
\Rfs \R\iG{Z}\eF[r].
\end{equation*}
The map \eqref{map:sh-foo} is a sheafied version of \eqref{foo}, 
as \eqref{diag:foo-M} shows. Moreover it is functorial in $\eF$.

The formal-scheme version is as follows. Let $\kappa\colon \X\to X$ be the completion of $X$
along $Z$, $j\colon Z\to\X$ the natural closed immersion (so that $\kappa\smcirc j=i$) and
$\wid{f}=f\smcirc\kappa$. As before, let $j^\btrg$ be as in \eqref{def:btrg}.
For $\eG\in \Avc(\X)$ we have a map
\stepcounter{thm}\stepcounter{subsubsection}
\begin{equation*}\label{map:formal-foo}\tag{\thethm}
\wid{\psi}\colon h_*(j^*\eG\otimes_{\co_Z}\wnor{j})
\xrightarrow{\phantom{XXX}}  \Rp\X^r{\wid{f}}_*\eG
\end{equation*}
defined by applying $\Hr^0$ to the composite
\stepcounter{thm}\stepcounter{subsubsection}
\begin{equation*}\label{map:sh-foo3}\tag{\thethm}
h_*j^\btrg\eG[r] \iso \R{\wid{f}}_*j_*j^\btrg\eG[r] 
\iso\R{\wid{f}}_*j_*\ush{j}\eG[r]\xrightarrow{\Tr{j}}\R{\wid{f}}_*\R\iGp{\X}\eG[r].
\end{equation*}
Since the composite $i_*i^\flat \to \R\iG{Z} \to {\bf 1}$ is ``evaluation at one", i.e., it is the
trace map (if one identifies $i^\flat$ with $i^!$), 
it is easy to see that the diagram
\stepcounter{thm}
\[
\begin{aligned}\label{diag:psi-psi}
{\xymatrix{
h_*i^\btrg\eF[r] \ar[rrr]^-{\eqref{map:sh-foo2}} \ar@{=}[d]
&&& \Rfs\R\iG{Z}\eF[r] \\
h_*j^\btrg\kappa^*\eF[r] \ar[rr]^-{\eqref{map:sh-foo3}}
&& \R{\wid{f}}_*\R\iGp{\X}\kappa^*\eF[r]
\ar[r]^-\Iso & \Rfs \kappa_*\R\iGp{\X}\kappa^*\eF[r]
\ar[u]_{\>\rotatebox{90}{\makebox[-0.1cm]{\Iso}}}
}}
\end{aligned}\tag{\thethm}
\]
commutes where the upward arrow on the right is induced by the isomorphism
$\kappa_*\R\iGp{\X} \kappa^*\iso \R\iG{Z}$. 

\begin{thm}\label{thm:sh-foo} In the situation of \eqref{diag:i-f-h},
the following diagram commutes.
\[
{\xymatrix{
 h_*(i^*\omgs{f}\otimes_{\co_Z}\wI{\co_Z}{\I}) \ar[d]_{\psi(\omgs{f})} \ar[rr]^-{\ttr{h}}
 & & \co_Y\ar@{=}[d]\\
\Rr^r_Zf_*\omgs{f} \ar[rr]^{\ares{Z}} & & \co_Y 
}}
\]
\end{thm}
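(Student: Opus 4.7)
The plan is to translate both sides of the identity into the formal-scheme setting via the completion $\kappa\colon \X\to X$ of $X$ along $Z$, and then deduce commutativity from the pseudofunctoriality of $\ush{(-)}$ together with the compatibility of traces with composition of pseudo-proper maps, applied to the factorization $h=\widetilde f\smcirc j$, where $j\colon Z\hookrightarrow\X$ is the induced closed immersion (so $\kappa\smcirc j=i$) and $\widetilde f=f\smcirc\kappa$. Applying \eqref{diag:psi-psi} with $\eF=\omgs f$, using $\kappa^*\omgs f\iso\omgs{\widetilde f}$ (which holds by \eqref{eq:gm} since $\kappa$ is formally \'etale), and noting that the right-hand vertical of \eqref{diag:psi-psi} is the isomorphism used in \Dref{def:a-res}, reduces the theorem to proving
\[
\tin{\widetilde f}\smcirc H^0\bigl(\widetilde\psi(\omgs{\widetilde f})\bigr)=\ttr h
\]
in the formal setting.

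Next, $\widetilde f$ is pseudoproper and CM of relative dimension $r$, while $j$ is a closed immersion (in particular pseudoproper). Pseudofunctoriality of $\ush{(-)}$ on $\bbG$ gives $\ush h\iso\ush j\,\ush{\widetilde f}$, and the standard compatibility of the trace for a composite of pseudoproper maps yields a factorization
\[
\Tr{h}\colon \R h_*\R\iGp{Z}\ush h\iso \R\widetilde f_*\R j_*\R\iGp{Z}\ush j\,\ush{\widetilde f}\xrightarrow{\R\widetilde f_*(\Tr{j})}\R\widetilde f_*\R\iGp{\X}\ush{\widetilde f}\xrightarrow{\Tr{\widetilde f}}\mathbf{1}.
\]
Taking $H^0$ at $\co_Y$ and using $\ush{\widetilde f}\co_Y=\omgs{\widetilde f}[r]$, this exhibits $\tin h$ as $\tin{\widetilde f}$ precomposed with $H^0\bigl(\R\widetilde f_*(\Tr{j})(\omgs{\widetilde f}[r])\bigr)$.

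On the other hand, unwinding \eqref{map:sh-foo3} with $\eG=\omgs{\widetilde f}$, the map $\widetilde\psi(\omgs{\widetilde f})$ is precisely $H^0$ of the composite displayed there, whose final arrow is also $\R\widetilde f_*(\Tr{j})$. Thus the identity reduces to checking that the isomorphism $h_*(j^*\omgs{\widetilde f}\otimes_{\co_Z}\wnor j)\iso h_*H^0(\ush h\co_Y)$ implicit in $\widetilde\psi$ (which uses $j^\btrg\iso\ush j$, the formal-scheme analogue of $\eta_j$ applied to the regular closed immersion $j$) agrees, under $\kappa$, with the isomorphism $\eta'_i$ used in \eqref{map:finflat-h} to define $\ttr h$. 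This last point is a naturality statement for the $\eta$-isomorphism with respect to the formally \'etale base change~\eqref{eq:gm}.

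The main obstacle is this last bookkeeping step: one must verify that $\eta'_i$ (constructed from $i^\btrg\iso i^\flat$ and Grothendieck duality for the composite $f\smcirc i$) coincides with the pull-back under $\kappa_*$ of the corresponding identification $\omgs{\widetilde f}|_Z\otimes\wnor j\iso H^0(\ush j\,\ush{\widetilde f}\co_Y)$ built from $j^\btrg\iso\ush j$. This follows from the pseudofunctorial compatibility $\ush i\iso\ush j\,\ush\kappa$, the identification $\ush\kappa\iso\kappa^*$ on $\Dc^+$ from \eqref{eq:gm}, and the naturality of the $\eta$-isomorphism in both arguments. Once those compatibilities are recorded, stitching the trace factorization of the second paragraph with the definition of $\widetilde\psi$ finishes the proof by a direct diagram chase.
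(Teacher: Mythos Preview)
Your proposal is correct and follows essentially the same route as the paper: reduce to the formal completion via \eqref{diag:psi-psi} and the definition of $\ares{Z}$, then use the factorization $h=\widetilde f\smcirc j$ together with the compatibility of traces under composition of pseudoproper maps, leaving only the identification of $\eta'_i$ with the $\kappa$-pullback of $\eta'_j$. The paper organizes this as a single five-region diagram (upper trapezium, two triangles, lower trapezium) rather than a linear chain, and the ``main obstacle'' you flag is exactly \Lref{lem:eta-kappa} in the appendix, which records the needed compatibility $\eta'_i = \eta'_j\smcirc(\kappa^*\!\iso\ush{\kappa})$ under $\ush{j}\ush{\kappa}\iso\ush{i}$.
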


\proof 
The diagram in the statement of the theorem
can be realized as the transpose of the border of the following one.
\[
\xymatrix{
 h_*(i^*\omgs{f}\otimes\eN^r_i) \ar[dd] \ar[rd] \ar[rrrr]^{\psi} 
&&&& \Rr^r_Zf_*\omgs{f} \ar[ld] \ar[dd]^{\ares{Z}}\\
& \hspace{-3em} h_*(j^*\omgs{\wid{f}}\otimes \eN^r_j) \ar[ld] \ar[rr]^{\wid{\psi}}  && 
\Rp\X^r{\wid{f}}_*\omgs{\wid{f}} \ar[rd]^{\tin{\wid{f}}} & \\
h_*\omgs{h} \ar[rrrr]_{\tin{h}}  &&&& \co_Y}
\]
We have to show the above diagram commutes. Applying $\Hr^0$ to \eqref{diag:psi-psi},
with $\eF=\omgs{f}$,
and using the isomorphism $\kappa^*\omgs{f}\iso \omgs{\wid{f}}$\,, we see
that the upper trapezium commutes. The triangle on the right commutes by definition of $\ares{Z}$
(see \Dref{def:a-res}), while the one on the 
left corresponds to the natural isomorphisms 
$\ush{i}\ush{f} \iso \ush{j}\ush{\wid{f}} \iso \ush{h}$
(after applying $\Hr^0$ and $h_*$). Finally, the lower trapezium corresponds to $\Hr^0$ of the 
outer border of the following diagram. 
\[
\xymatrix{
h_*j^\btrg\ush{\wid{f}}\co_Y \ar[rr]^{\Iso} \ar[d]^{\eqref{iso:eta'-i}}_{\,\rotatebox{-90}{\makebox[-0.1cm]{\Iso}}}
 && 
\wid{f}_*j_*j^\btrg\ush{\wid{f}}\co_Y \ar[d]^{\eqref{iso:eta'-i}}_{\,\rotatebox{-90}{\makebox[-0.1cm]{\Iso}}}
  \\
h_*\ush{j}\ush{\wid{f}}\co_Y \ar[rr]^{\Iso} \ar[d]_{\,\rotatebox{-90}{\makebox[-0.1cm]{\Iso}}}
 && \wid{f}_*j_*\ush{j}\ush{\wid{f}}\co_Y  \ar[r]_{\Tr{j}} & \wid{f}_*\R\iGp{\X}\ush{\wid{f}}\co_Y \ar[d]^{\Tr{\wid{f}}} \\
h_*\ush{h}\co_Y \ar[rrr]_{\Tr{h}}  &&& \co_Y
}
\]
The upper rectangle commutes trivially while the lower one results from the
identification of the adjoint $\ush{h}$ with the composition of the adjoints $\ush{j}\ush{\wid{f}}$. 
\qed

\begin{rem}\label{map:psi-local} The map $\psi$ in \eqref{map:sh-foo} is compatible with
open immersions in $X$ containing $Z$. In greater detail, suppose $i$ factors as
\[Z\xrightarrow{u} U \xrightarrow{x} X\]
with $x\colon U\to X$ an open immersion, and $u$ (necessarily) a closed immersion. 
Then
\[
{\xymatrix{
h_*i^*\eF \otimes \wnor{i} \ar[rr]^{\eqref{map:sh-foo}} 
\ar@{=}[d] && \Rr^r_Zf_*\eF  
\ar[d]^{\rotatebox{90}{\makebox[0.1cm]{\Iso}}\!} \\
h_*u^*(x^*\eF)\otimes \wnor{u} \ar[rr]_{\eqref{map:sh-foo}} && \Rr^r_Z (fx)_*x^*\eF
}}
\]
 commutes. We leave the verification to the reader, but point out that one method is
 to move to formal schemes, using \eqref{diag:psi-psi}, noting that the completion of
 $X$ along $Z$ is the same as the completion of $U$ along $Z$. This means $\ttr{h}$
 is unaffected if $X$ is replaced by $U$.
\end{rem}

\subsection{A residue formula for Cohen-Macaulay maps}
Consider again Diagram \eqref{diag:i-f-h}. Suppose now that $X$, $Y$, and $Z$ are affine,
say $X=\Spec{\,R}$, $Y=\Spec{\,A}$ and $Z=\Spec{\,B}$. In other
words $A\to R$ is a finite-type map of rings which is Cohen-Macaulay of relative
dimension $r$, we have an ideal $I$ in $R$ generated by a quasi-regular sequence ${\bf t}=(t_1,\dots, t_r)$ in $R$,  and $B=R/I$.  Assume
as before that $h$ is a finite (and hence flat) surjective map. 

Let us write $\omgs{R/A}=\Gamma(X,\,\omgs{f})$, $\omgs{B/A}=\Gamma(Z,\,\omgs{h})$, 
$\tin{B/A}=\Gamma(Y,\,\tin{h})$. The global sections of $\ttr{h}$ give us an $A$-linear map
\stepcounter{thm}
 \begin{equation*}\label{map:finflat-B}\tag{\thethm}
\ttr{B/A}(=\ttr{B/A,R})\colon \omgs{R/A}\otimes_R\wI{B}{I} \lra A
\end{equation*}
such that the following diagram commutes
\[
\xymatrix{
\omgs{R/A}\otimes_R \wI{B}{I} \ar[d]_{\ttr{B/A}} \ar[rr]^-{\Iso} && \omgs{B/A} \ar[d]^{\tin{B/A}} \\
A \ar@{=}[rr] && A}
\]
where the horizontal isomorphism on the top row is the global sections of the composite
\[h_*(i^*\omgs{f}\otimes_{\co_Z}\wI{\co_Z}{\I})\xrightarrow[{\eta'_i}]{\Iso}
h_*\Hr^0(i^!f^!\co_Y) = \Hr^0h_*(i^!f^!\co_Y) \xrightarrow{\Iso}
\Hr^0(h_*h^!\co_Y).\]

\setcounter{subsubsection}{\value{thm}}
\subsubsection{}\stepcounter{thm}{\bf Notation.}
In an obvious extension of our notational philosophy, we should use
the symbol $\ares{I}$
for the global sections of the residue map 
$\ares{Z}$ in \eqref{def:ares}. However, for psychological reasons we will continue
to use the symbol $\ares{Z}$ to denote this map. Thus we have
\[\ares{Z} \colon \Hr^r_I(\omgs{R/A})\to A.\]
In what follows, elements of $\Hr^r_I(\omgs{R/A})$ are denoted by generalized fractions
\[
\begin{bmatrix}\nu\\
t_1,\,\dots,\,t_r
\end{bmatrix}
\]
as in \Ssref{ss:stabkoz} (see especially \eqref{iso:k-infty-gam} and \eqref{eq:2gen-fracs}
and the discussions around them). 

Finally, define
\stepcounter{thm}
\begin{equation*}\label{def:1/t}\tag{\thethm}
\frac{\bf 1}{\bf t}\in\wI{B}{I} 
\end{equation*}
as the element which sends $(t_1+I^2)\wedge\dots\wedge (t_r+I^2)\in \wedge_B^r{I/I^2}$ 
to $1$.

\begin{prop}\label{prop:res-form1} With the above notations, 
for any $\nu \in \omgs{R/A}$ we have
\[\ares{Z}\begin{bmatrix} \nu\\
t_1,\,\dots,\,t_r
\end{bmatrix} = \ttr{B/A}\Bigl(\nu\otimes \frac{\bf 1}{\bf t}\Bigr)\]
where $\frac{\bf 1}{\bf t}$ is as in \eqref{def:1/t}.
\end{prop}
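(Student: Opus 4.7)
The plan is to reduce the proposition to \Tref{thm:sh-foo} and then to an explicit calculation of~$\psi(\omgs{f})$ on the element $\nu \otimes \frac{\bf 1}{\bf t}$. Taking global sections of the commutative diagram in \Tref{thm:sh-foo}, and invoking the defining property of $\ttr{B/A}$ (the global sections of $\ttr{h}$ factored through the isomorphism $h_*(i^*\omgs{f} \otimes \wI{\co_Z}{\I}) \iso h_*\omgs{h}$), it suffices to verify that $\psi(\omgs{f})$, evaluated on global sections, sends $\nu \otimes \frac{\bf 1}{\bf t}$ to the generalized fraction $\begin{bmatrix}\nu \\ t_1,\ldots,t_r\end{bmatrix}$ in $\Hr^r_I(\omgs{R/A})$.

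To accomplish this, I would unravel the definition~\eqref{map:sh-foo2}. On the affine scheme $X = \Spec R$, use the stable Koszul complex $\check{K}^\bullet_\infty({\bf t})$ to realize $\R\iG{Z}\omgs{f}[r]$ concretely, so that $\Hr^0$ is $\Hr^r_I(\omgs{R/A})$ presented via generalized fractions as recalled in~\Ssref{ss:stabkoz}. The composite defining $\psi$ factors through the fundamental local isomorphism $\eta'_i\colon i^*\omgs{f} \otimes \wI{\co_Z}{\I} \iso \Hr^0(i^\btrg\omgs{f}[r])$ identified with $\Hr^0(i^\flat\omgs{f}[r])$, followed by the canonical ``evaluation at one'' map $i_*i^\flat \to \R\iG{Z}$. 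Under these identifications, the element $\nu \otimes \frac{\bf 1}{\bf t}$ corresponds, by the very definition of $\frac{\bf 1}{\bf t}$ as the dual of $\bar t_1 \wedge \dots \wedge \bar t_r$, to the generator given by $\nu$ relative to the Koszul presentation; and the Koszul-theoretic trace $i_*i^\flat \to \R\iG{Z}$ sends that generator in top degree to the fraction $\begin{bmatrix}\nu \\ t_1,\ldots,t_r\end{bmatrix}$.

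The main obstacle is precisely this last explicit identification: one must check that the composition of $\eta'_i$ with the trace $i_*i^\flat \to \R\iG{Z}$, when written out in terms of the Koszul resolutions $\co_X/(t_1^n,\ldots,t_r^n)$ of $i_*\co_Z$ (passed to the limit), produces the generalized fraction on the nose, without extraneous signs or powers of $t_i$. This is a local calculation on $Z$, and is naturally carried out by fixing the Koszul resolution compatibly with the chosen generator of $\wedge^r(I/I^2)$, noting that the canonical isomorphism $\wI{B}{I} \iso \Hr^0(i^\flat\co_X)$ is set up so that $\frac{\bf 1}{\bf t}$ maps to the class of the dual generator of $K^r({\bf t})$, which in turn pairs with $\nu$ to give $\begin{bmatrix}\nu \\ t_1,\ldots,t_r\end{bmatrix}$.

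Once the identification $\psi(\omgs{f})(\nu \otimes \frac{\bf 1}{\bf t}) = \begin{bmatrix}\nu \\ t_1,\ldots,t_r\end{bmatrix}$ is established, applying $\ares{Z}$ to both sides and invoking \Tref{thm:sh-foo} yields $\ttr{B/A}(\nu \otimes \frac{\bf 1}{\bf t})$ on the right-hand side, which is precisely the claimed formula.
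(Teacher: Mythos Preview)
Your approach is correct and essentially identical to the paper's: invoke \Tref{thm:sh-foo} to reduce to the explicit identification of $\psi(\omgs{f})(\nu\otimes\frac{\bf 1}{\bf t})$ with the generalized fraction, then carry out that Koszul computation. The paper has already packaged exactly the computation you describe as \Lref{lem:gen-frac} in the appendix (the map \eqref{foo} there is the affine version of $\psi$, as noted after \eqref{map:sh-foo2}), so the proof collapses to two citations rather than the explicit unraveling you sketch.
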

\proof

According to \Tref{thm:sh-foo}, the following diagram commutes.
\[
\xymatrix{
\omgs{R/A}\otimes_R \wI{B}{I} \ar[rd]_{\ttr{B/A}} \ar[rr]^{\textup{(\ref{foo})}} && \Hr^r_I(\omgs{R/A}) \ar[ld]^{\ares{Z}} \\
& A}
\] 
The Proposition then follows from  \Lref{lem:gen-frac}. 
\qed

\begin{thm}\label{thm:denom} Suppose $J$ is another ideal in $R$ such that
 $I\subset J$ and $J$ is generated by a quasi-regular sequence ${\bf g}=(g_1,\dots,g_r)$. Let
 $t_i=\sum_ju_{ij}g_j$, $u_{ij}\in R$. Let $W=\Spec{\,R/J}$. Then, for any $\nu\in\omgs{R/A}$
 \[
 \ares{Z}\begin{bmatrix}\det(u_{ij})\nu\\
t_1,\,\dots,\,t_r
\end{bmatrix} =
\ares{W}\begin{bmatrix}\nu\\
g_1,\,\dots,\,g_r
\end{bmatrix}
 \]
\end{thm}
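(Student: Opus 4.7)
The plan is to separate the identity into two independent assertions: (a) a naturality of the abstract residue under the inclusion $W\subset Z$, and (b) a transformation formula for generalized fractions under change of regular sequences. Since $I\subset J$ we have $W\subset Z$, and the inclusion of torsion functors $\iG{J}\hookrightarrow\iG{I}$ derives to $\R\iG{J}\to\R\iG{I}$, yielding a canonical map
\[
\alpha\colon \Rr^r_W f_*\omgs{f}\to \Rr^r_Z f_*\omgs{f}.
\]
The theorem will follow at once from the two assertions
\[
\ares{W}=\ares{Z}\smcirc\alpha \qquad\text{and}\qquad \alpha\begin{bmatrix}\nu\\ g_1,\dots,g_r\end{bmatrix} = \begin{bmatrix}\det(u_{ij})\nu\\ t_1,\dots,t_r\end{bmatrix}.
\]

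For the first assertion I would exploit the formal-scheme rewriting of $\ares{-}$ given in \Dref{def:a-res}. Set $\X_Z\set X_{/Z}$ and $\X_W\set X_{/W}$, with completion maps $\kappa_Z,\kappa_W$. Because $W\subset Z$, completing $X$ along $W$ factors as completion along $Z$ followed by further completion of $\X_Z$ along the formal subscheme corresponding to $W$; this produces a formally \'etale and pseudo-proper map $\kappa'\colon\X_W\to\X_Z$ satisfying $\kappa_W=\kappa_Z\smcirc\kappa'$ and $\wid{f}_W=\wid{f}_Z\smcirc\kappa'$. Pseudofunctoriality of $\ush{(-)}$ gives $\ush{\wid{f}_W}\iso\ush{\kappa'}\ush{\wid{f}_Z}$, under which the trace $\Tr{\wid{f}_W}$ decomposes as $\Tr{\wid{f}_Z}$ composed with an appropriate pushforward of $\Tr{\kappa'}$, by transitivity of $\Tr{-}$ along composition of pseudo-proper maps. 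Applying $\Hr^0$ and using the canonical identifications $\R(\kappa_Z)_*\R\iGp{\X_Z}\kappa_Z^*\iso\R\iG{I}$ and the analogous one for $W$, this transitivity identifies $\alpha$ with the $\kappa_Z$-pushforward of $\Tr{\kappa'}$ and produces the desired triangle $\ares{W}=\ares{Z}\smcirc\alpha$.

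For the second assertion, local cohomology of an $R$-module $M$ on $I$ (resp.~$J$) is computed by the stable Koszul complex $K^\bullet_\infty(M;{\bf t})$ (resp.~$K^\bullet_\infty(M;{\bf g})$) via \eqref{iso:k-infty-gam}, and the natural transformation $\R\iG{J}\to\R\iG{I}$ corresponds to the canonical map of stable Koszul complexes induced by the relations $t_i=\sum_j u_{ij}g_j$; in top degree this map acts on the generalized-fraction cocycle representatives of \eqref{eq:2gen-fracs} by multiplication by $\det(u_{ij})$. Combining (a) and (b) then proves the theorem. The main obstacle will be (a): one must track the various identifications between $\R\iG{-}$ and its formal-scheme counterpart $\R(\kappa_-)_*\R\iGp{\X_-}\kappa_-^*$ carefully enough to see that naturality of $\Tr{-}$ for the composite $\wid{f}_W=\wid{f}_Z\smcirc\kappa'$ descends to the triangle for $\ares{-}$; (b) is then a routine Koszul calculation.
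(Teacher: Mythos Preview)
Your decomposition into (a) and (b) is correct and both assertions hold; (b) is exactly \Tref{thm:dir-im}(ii), and your outline for (a) via the factorization $\wid{f}_W=\wid{f}_Z\smcirc\kappa'$ and transitivity of traces works (the bookkeeping you flag is essentially what \Pref{prop:iterated-trace} does in a closely related situation). A much quicker route to (a), though, is available from \Lref{lem:indep1}: choose a compactification $f=\bar f\smcirc u$; then both $\ares{W}$ and $\ares{Z}$ factor as $\Rr^r_{-}f_*\omgs{f}\iso \Rr^r_{u(-)}\bar f_*\omgs{\bar f}\to \Rr^r\bar f_*\omgs{\bar f}\xrightarrow{\tin{\bar f}}\co_Y$, and the triangle $\ares{W}=\ares{Z}\smcirc\alpha$ is then immediate from the obvious compatibility of $\R\iG{u(W)}\to\R\iG{u(Z)}\to{\bf 1}$. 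This avoids all the formal-scheme bookkeeping.

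The paper takes a genuinely different route: it never proves your naturality statement (a) at all. Instead it invokes \Pref{prop:res-form1} twice to rewrite each side as a finite-flat trace, $\ares{W}[\nu;{\bf g}]=\ttr{C/A}(\nu\otimes{\bf 1}/{\bf g})$ and $\ares{Z}[\det(u_{ij})\nu;{\bf t}]=\ttr{B/A}(\det(u_{ij})\nu\otimes{\bf 1}/{\bf t})$, and then appeals to \Tref{thm:dir-im}(i): the map $\phi_\beta$ induced by $\Tr{\beta}$ for the closed immersion $\beta\colon W\hookrightarrow Z$ satisfies $\phi_\beta(\nu\otimes{\bf 1}/{\bf g})=\det(u_{ij})\nu\otimes{\bf 1}/{\bf t}$, and by construction $\ttr{C/A}=\ttr{B/A}\smcirc\phi_\beta$ (since $h'=h\beta$ and $\phi_\beta$ is defined via $\Tr{\beta}$). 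Your approach is more conceptual---you get the full functoriality statement (a), valid for arbitrary elements of $\Hr^r_J$---whereas the paper's approach is shorter given the machinery already in place, trading the formal-scheme argument for the algebra of the fundamental local isomorphism and finite traces.
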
 
\proof
This is an immediate consequence of \Tref{thm:dir-im} and \Pref{prop:res-form1}.
\qed

\section{\bf Base change for residues}\label{s:res-bc}

\subsection{Hypotheses}\label{ss:hyp} Throughout this section (i.e., \S\,\ref{s:res-bc}), we fix
a commutative diagram of ordinary schemes
\stepcounter{thm} \stepcounter{subsubsection}
\[
\begin{aligned}\label{diag:basic}
\xymatrix{
Z' \ar[rr]^w \ar[d]_{j} \ar@/_3.5pc/[dd]_{h'} \ar@{}[drr]|\square
& & Z \ar[d]^{i}
\ar@/^3.5pc/[dd]^{h}\\
X' \ar[rr]^v \ar[d]_g \ar@{}[drr]|\square && X
\ar[d]^{f} \\
Y' \ar[rr]_u && Y
}
\end{aligned}\tag{\thethm}
\]
with $f$ separated 
Cohen-Macaulay of relative dimension $r$, the rectangles cartesian, $i\colon Z\to X$
a closed immersion such that  $h=f\smcirc i\colon Z\to Y$ is finite and
the quasi-coherent ideal sheaf $\I$ of $Z$ is generated by global sections
$t_1,\dots, t_r \in \Gamma(X,\,\co_X)$ with the property that ${\bf t}=(t_1,\,\dots,\,t_r)$ is 
$\co_{X,z}$-regular for every $z\in Z\subset X$. (Note that
$Z\to Y$ is flat  by \cite[$0_{\rm{IV}}$, 15.1.16]{ega}.) We also use the following additional
notations: $\J=v^*\I$  is the ideal sheaf of $Z'$,  $\eN=\wI{\co_Z}{\I}$, and
$\eN'=\wI{\co_{Z'}}{\J}=w^*\eN$.

\subsection{Base change for direct image with supports} 
Since $f$ is {\emph{Cohen-Macaulay of relative dimension $r$}}, therefore, according to
\cite[p.\,740,\,Theorem\,2.3.5\,(a)]{cm}, we have a base-change isomorphism
\[\theta_u^f\colon v^*\omgs{f}\iso \omgs{g}.\]
The principal aim of this section is to show that the composite
\[u^*h_*(i^*\omgs{f}\otimes_{\co_Z}\eN) \iso h'_*(j^*v^*\omgs{f}\otimes_{\co_{Z'}}\eN')
\xrightarrow[\theta_u^f]{\>\Iso\>}h'_*(j^*\omgs{g}\otimes_{\co_{Z'}}\eN') \xrightarrow{\>\ttr{h'}\>} \co_{Y'}
\]
is $u^*\ttr{h}$, i.e., speaking informally, $\ttr{h}$ is stable under base change
(here the first isomorphism results from the fact that $h$ is an affine map). 
We would also like to show that the result in \Tref{thm:sh-foo} is stable under
base change. Indeed, that is how we will prove that $\ttr{h}$ is stable under 
base change. To set things up, we now discuss, very briefly,
base change for cohomology with
supports, at least for the situation we are in.

In our situation, we have  base-change maps (see, for example, \cite[p.\,768, (A.5)]{cm}),
one for each $k$
\stepcounter{thm}
\begin{equation*}\label{map:res-bc}\tag{\thethm}
b(u,f)=b(u,f,k)\colon u^*\Rr^k_Zf_* \lra \Rr^k_{Z'}g_*v^*.
\end{equation*}
These are natural transformation of functors on quasi-coherent sheaves on $X$. In the event
$u$ is {\emph{flat}}, $b(u, f)$ is an isomorphism. In fact, in this case, $b(u, f, k)$ is
  $H^k(\boldsymbol{-})$ applied to the composite of
natural isomorphisms
\[u^*\Rfs\smcirc\R\iG{Z} \iso \R g_*v^*\smcirc \R\iG{Z} \iso \R g_*\smcirc\R\iG{Z'}v^*.\]

It is useful for us to recast $b(u,f)$ in terms of the formal completions of $X$ and $X'$.
To that end, let $\kappa\colon \X\to X$ (resp.~$\kappa'\colon \X'\to X'$) be the completion of $X$ along
$Z$ (resp.~ of $X'$ along $Z'$) and let $\alpha \colon Z\to \X$, $\beta\colon Z'\to \X'$ be the
natural closed immersions, so that $i=\kappa\smcirc \alpha$ and $j=\kappa'\smcirc\beta$.
Let $\wid{f}=f\smcirc\kappa$, $\wid{g}=g\smcirc\kappa'$, and finally
 let $\wid{v}\colon \X'\to \X$ be the natural map induced by $v$, so that the following
 diagram is cartesian:
 \stepcounter{thm}
\[
\begin{aligned}\label{map:tr-bc}
{\xymatrix{
\X' \ar[r]^{\wid{v}} \ar[d]_{\wid{g}} \ar@{}[dr]|{\square} & \X \ar[d]^{\wid{f}} \\
Y' \ar[r]_u & Y
}}
\end{aligned}\tag{\thethm}
\]
The maps $b(u, f, k)$ in \eqref{map:res-bc} give rise, in a natural way, maps 
\stepcounter{thm}
\begin{equation*}\label{map:res-bc2}\tag{\thethm}
b(u, \wid{f})=b(u, \wid{f}, k)\colon u^*\Rp{\X}^k\wid{f}_* \lra \Rp{\X'}^k\wid{g}_*\>\wid{v}\>\>^*
\end{equation*}
induced by \eqref{iso:iGZ-iGpX} applied to $\kappa$ and to $\kappa'$. 
In the event $u$ is flat, then as in the case of ordinary
schemes, $b(u, \wid{f}, k)$ is an isomorphism and  is, in fact, 
$H^k(\boldsymbol{-})$ applied to the natural composite
\[ u^*\R\wid{f}\smcirc\R\iGp{\X} \iso \R\wid{g}\>\>\>\wid{v}\>\>^*\smcirc\R\iGp{\X} 
\iso \R\wid{g}\smcirc\R\iGp{\X'}\wid{v}\>\>^*.\]

We will in fact show that when $k=r$, the map $b(u, f, k)$
is an isomorphism \emph{even when $u$ is not flat}.

\begin{prop}\label{prop:i-bc} Suppose $u$ is a flat map and $\eF\in\Dqc(X)$. Then the following diagram commutes,
where the unlabelled arrows arise from the natural maps (``evaluation at $1$") $i_*i^\flat \to \R\iG{Z}$
and $j_*j^\flat\to \R\iG{Z'}$:
\[
{\xymatrix{
j_*j^\btrg v^*\eF\ar[rrr]^\Iso_{\eqref{thm:eta-i}} \ar@{=}[d]
&& & j_*j^\flat v^*\eF \ar[r]
& \R\iG{Z'}v^*\eF \\
j_*w^*i^\btrg\eF \ar@{=}[r] 
&v^*i_*i^\btrg\eF \ar[rr]^\Iso_{\eqref{thm:eta-i}}
& & v^*i_*i^\flat\eF \ar[r]
& v^*\R \iG{Z}\eF \ar[u]_{\>\rotatebox{90}{\makebox[-0.1cm]{\Iso}}}
}}
\]
\end{prop}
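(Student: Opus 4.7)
The plan is to split the square at the middle column, introducing $j_*j^\flat v^*\eF$ naturally identified with $v^*i_*i^\flat\eF$ via flat base change for the closed immersion $i$ (recalling that $v$ is flat as the base change of the flat map $u$). The identifications in the left column of the proposition's diagram are of the same nature: $v^*i_* \cong j_*w^*$ is flat base change for a closed immersion, and $j^\btrg v^* \cong w^*i^\btrg$ follows from $\bL j^* v^* \cong w^*\bL i^*$ together with the compatibility $\wnor{j} = w^*\wnor{i}$ noted in \S\ref{ss:hyp}. After these identifications, the proposition's diagram factors as two sub-squares.

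The left sub-square asserts that the fundamental local isomorphism \eqref{thm:eta-i} commutes with flat base change. I would verify this by reducing to the affine setting, where $i\colon Z \hookrightarrow X$ is cut out by the regular sequence ${\bf t}=(t_1,\dots,t_r)$ and \eqref{thm:eta-i} admits an explicit description in terms of the Koszul resolution $K_\bullet({\bf t})$ of $\co_Z$ over $\co_X$. Since $v$ is flat, $v^*{\bf t}$ remains regular with vanishing locus $Z'$, and $v^*K_\bullet({\bf t}) = K_\bullet(v^*{\bf t})$; the chain-level comparison is then direct, and sheafifies to give the assertion for the left sub-square.

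The right sub-square asserts that the evaluation-at-one map $i_*i^\flat \to \R\iG{Z}$ is compatible with flat base change. Using the stable Koszul complex $\check{K}^\infty({\bf t})$ from \eqref{iso:k-infty-gam} to compute $\R\iG{Z}$, and realizing $i_*i^\flat\eF$ as the contribution of the top-degree piece of $K_\bullet({\bf t})$ appropriately shifted, the map becomes an explicit chain-level projection onto that top-degree piece. Flat pullback commutes with the formation of the stable Koszul complex, $v^*\check{K}^\infty({\bf t}) \cong \check{K}^\infty(v^*{\bf t})$, and this identification is exactly the right-hand vertical isomorphism $v^*\R\iG{Z}\eF \iso \R\iG{Z'}v^*\eF$ appearing in the proposition. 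Commutativity of the right sub-square follows.

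The principal obstacle is pinning down the Koszul-theoretic descriptions of \eqref{thm:eta-i} and of the evaluation map $i_*i^\flat \to \R\iG{Z}$ precisely enough to make the chain-level compatibility with flat pullback transparent, and then gluing the affine-local verifications to recover the global assertion. Once these descriptions are in hand, both sub-squares collapse to standard functoriality of Koszul complexes under flat ring maps.
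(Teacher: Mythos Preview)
Your proposal is correct, and the overall strategy of reducing to explicit Koszul descriptions in the affine setting is exactly what the paper does. The route differs in one respect, though. You split the rectangle at the middle column into a left sub-square (compatibility of $\eta_i$, $\eta_j$ with flat base change) and a right sub-square (compatibility of the evaluation maps $i_*i^\flat \to \R\iG{Z}$, $j_*j^\flat \to \R\iG{Z'}$ with flat base change), verifying each separately via Koszul and stable-Koszul models. The paper instead observes that the map $i_*i^\flat \to \R\iG{Z}$ and the full trace $\Tr{i}^\flat\colon i_*i^\flat \to {\bf 1}$ determine each other, so one may replace the right column by traces to ${\bf 1}$; then it \emph{combines} the two factors into the single composite $\Tr{i}^\btrg = \Tr{i}^\flat \circ i_*\eta_i$ and reduces, via the projection-formula structure recorded in \eqref{diag:1-tensor-Tr}, to checking the one identity $\Tr{A/R}^\btrg(R)\otimes_R R' = \Tr{A'/R'}^\btrg(R')$ for the structure sheaf. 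That identity is immediate from the explicit formula $\Tr{A/R}^\btrg(R) = \pi_{\bf t}\circ\varphi_{\bf t}^{-1}$ in \eqref{map:Tr-concrete}. Your approach is slightly more hands-on (you carry a general $\eF$ through both verifications), while the paper's reduction to $\eF=\co_X$ is slicker; but both are valid and land on the same Koszul-level content.
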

\noindent{\emph{Remark:}} The maps \eqref{thm:eta-i} make sense for $\eF\in\Dqc(X)$ because
$X$ is an ordinary scheme (see discussion in \Ssref{sss:Tr-concrete}).
Flat base change works in this case for all $\eF\in\Dqc(X)$ without
boundedness hypotheses because $i_*$ takes perfect complexes to perfect complexes (see
\cite[p.\,197, Thm.\,4.7.4]{notes}).
\proof Let $\Tr{i}^\flat\colon i_*i^\flat \to {\bf 1}$  be as in \eqref{def:Tr-i-flat},
i.e., $\Tr{i}^\flat$ is the composite
$ i_*i^\flat \to \R\iG{Z} \to {\bf 1}$ (see the
discussion in \Ssref{subsec:closed-imm} with $\Z=Z$ and $\X=X$).
Then $\Tr{i}^\flat$ is simply evaluation at $1$, and hence equals
the composite 
\[i_*i^\flat \iso i_*i^! \xrightarrow{\Tr{i}} {\bf 1}.\]
The two maps, $i_*i^\flat \to \R\iG{Z}$ and $\Tr{i}^\flat$, determine each other and hence we have show that the diagram
\[
\begin{aligned}
{\xymatrix{
j_*j^\btrg v^* \ar[rr]^\Iso_{\eqref{thm:eta-i}} \ar@{=}[d] && j_*j^\flat v^* \ar[rr]^{\Tr{j}^\flat} 
&& v^* \ar@{=}[d]\\
v^*i_*i^\btrg \ar[rr]^\Iso_{\eqref{thm:eta-i}} && v^*i_*i^\flat \ar[rr]^{v^*(\Tr{i}^\flat)} && v^*
}}
\end{aligned}\tag{\dag}
\]
commutes. 

The composite $i_*i^\btrg \xrightarrow{\eqref{thm:eta-i}} i_*i^\flat \xrightarrow{\Tr{i}^\flat} {\bf 1}$
is clearly the same as the
composite $i_*i^\btrg \xrightarrow{\eqref{iso:eta'-i}} i_*i^! \xrightarrow{\Tr{i}} {\bf 1}$. We will denote
the common value by 
\[\Tr{i}^\btrg\colon i_*i^\btrg \lra {\bf 1}.\]
We have to show that
\[v^*\smcirc\Tr{i}^\btrg = \Tr{j}^\btrg\smcirc v^*.\]
The question in local on $X$ and $X'$ and hence we assume that $X=\Spec{\,R}$, $Z=\Spec{\,A}$,
$X'=\Spec{\,R'}$, $Z'=\Spec{\,A'}$ where $A'=A\otimes_RR'$. We write $I$ for the ideal in $R$
generated by ${\bf t}$, $J$ for its extension to $R'$, $N$ for the $A$-module $\wI{A}{I}$, 
and $N'$ for $N\otimes_AA'=\wI{A'}{J}$. Finally, let $\Tr{A/R}^\btrg$ and $\Tr{A'/R'}^\btrg$
be the maps in $\D({\mathrm{Mod}}_R)$ and $\D(\mathrm{Mod}_{R'})$ whose ``sheafified" 
versions are
$\Tr{i}^\btrg$ and $\Tr{j}^\btrg$ respectively. The discussions in \Rref{rem:Tr-tensor} and
in \Ssref{sss:Tr-concrete} apply. In particular, from the commutative diagram \eqref{diag:1-tensor-Tr},
we only have to show:
\[\Tr{A/R}^\btrg(R)\otimes_RR'=\Tr{A'/R'}^\btrg(R'). \leqno{(*)}\]
This follows from the explicit description of $\Tr{A/R}(R)$ in \eqref{map:Tr-concrete}, for
the maps $\varphi_{\bf t}$ and $\pi_{\bf t}$ occuring in \textit{loc.cit.}~are compatible with base
change. In greater detail, if $t_i'$ are the images in $R'$ of $t_i$ and ${\bf t}' =(t_1', \dots, t_r')$,
then $\varphi_{\bf t}\otimes_RR'= \varphi_{{\bf t}'}$ and $\pi_{\bf t}\otimes_RR'= \pi_{{\bf t}'}$.
Since $\Tr{A/R}(R) = \pi_{\bf t}\smcirc\varphi_{\bf t}^{-1}$ and 
$\Tr{A'/R'}(R')=\pi_{{\bf t}'}\smcirc \varphi_{{\bf t}'}^{-1}$, the relation asserted in $(*)$ is true.
\qed

\begin{rem}\label{rem:bc-reg-imm} Formula $(*)$ in the above proof
is true in greater generality. Suppose
$Z=\Spec{\,A}$, $X=\Spec{\,R}$, and we have a regular immersion $i\colon Z\hookrightarrow X$
given by an $R$-sequence ${\bf t}=(t_1, \dots, t_r)$. Then $(*)$ remains valid without the
assumption that $X$ be Cohen-Macaulay over another scheme. In fact, by checking locally
one easily deduces that if $i\colon Z\hookrightarrow X$ is a regular immersion (not necessarily
of affine schemes, and not necessarily given globally by the vanishing of a sequence of the
form ${\bf t}$) and we have a cartesian diagram
\[
{\xymatrix{
Z' \ar[r]^w \ar[d]_j \ar@{}[dr]|\square & Z \ar[d]^i \\
X' \ar[r]_v & X 
}}
\]
with $u$ flat and if $\Theta\colon w^*i^!\co_X \iso j^!\co_{X'}$
is the flat base change isomorphism, then the following diagram commutes;
\[
{\xymatrix{
w^*\wnor{i}[-r]  \ar@{=}[r] \ar[d]_{\eta'_j} & \wnor{j}[-r] \ar[d]^{\eta'_i} \\
w^*i^!\co_X \ar[r]^{\Iso}_{\Theta} & j^!\co_{X'}
}}
\]
The flatness hypothesis on $v$ can be relaxed, since $(*)$ works even when $R'$ is not
flat over $R$, but for now, we leave matters as they are.
\end{rem}

\subsection{Base-change theorems}
We now prove that $\ttr{h}$ is stable under arbitrary base change. We embed that result in
a larger set of base-change results, namely in \Tref{thm:good}.
\begin{thm}\label{thm:good} With the hypotheses as in \Ssref{ss:hyp} we have:
\begin{enumerate}
\item[(a)] For a coherent $\co_X$-module $\eF$, the diagram
\[
{\xymatrix{
  h'_*(j^*v^*\eF\otimes_{\co_{Z'}}\eN') \ar[d]_{\eqref{map:sh-foo}}\ar@{=}[rr] &&
 u^*h_*(i^*\eF\otimes_{\co_Z}\eN)\ar[d]^{u^*\eqref{map:sh-foo}}\\
\Rr^r_{Z'}v^*\eF && u^*\Rr^r_Z\eF \ar[ll]^{\eqref{map:res-bc}}  
}}
\]
commutes.
\item[(b)] The map 
\[b(u, f)\colon u^*\Rr^r_Zf_* \lra \Rr^r_{Z'}g_*v^*\]
is an isomorphism.

\item[(c)]  
The diagram 
\[
{\xymatrix{
h'_*(j^*(\omgs{g})\otimes_{\co_{Z'}}\eN')  \ar[dd]_{\ttr{h'}} 
& & h'_*(j^*v^*(\omgs{f})\otimes_{\co_{Z'}}w^*\eN) 
\ar[ll]_{\Iso}^{h'_*(\theta_u^f\otimes{\bf 1})} \ar@{=}[d] \\
&&  h'_*w^*(i^*(\omgs{f})\otimes_{\co_{Z'}}w^*\eN)\ar@{=}[d] \\
\co_{Y'} && u^*h_*(i^*(\omgs{f})\otimes_{\co_Z}\eN) \ar[ll]^{u^*(\ttr{h})}
}}
\]
commutes, where $\theta_u^f\colon v^*\omgs{f}\iso \omgs{g}$ is the base-change
isomorphism of \cite[p.\,740,\,Theorem\,2.3.5\,(a)]{cm}.

\item[(d)] The diagram
\[
{\xymatrix{
u^*\Rr^r_Zf_*\omgs{f} \ar[d]_{u^*(\ares{Z})}\ar[rr]^{\Iso}_{b(u,f)} 
&& \Rr^r_{Z'}g_*v^*\omgs{f} \ar[d]^{\Rr^r_{Z'}g_*(\theta_u^f)} \\
\co_{Y'} && \Rr^r_{Z'}g_*\omgs{g} \ar[ll]^{\ares{{Z'}}}
}}
\]
commutes.

\end{enumerate}
\end{thm}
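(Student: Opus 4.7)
The plan is to establish all four parts simultaneously by reducing to two special cases: (i) the \emph{flat} case, in which $u$ is flat, and (ii) the \emph{good-immersion} case, already treated in \cite{cm}, in which $Y$ is the spectrum of a complete local ring and $Z$ is finite over $Y$. The reduction proceeds by fixing a point $y\in Y$, forming $\wit{Y}=\Spec\wid{\co}_{Y,y}$ with canonical flat map $s\colon\wit{Y}\to Y$, and pulling \eqref{diag:basic} back along $s$ to obtain a cartesian cube whose back face is the given diagram and whose front face is of the complete-local type. The left and right faces of the cube have flat base-change maps ($s$ and $\sigma=s\times_Yu$), and the cocycle relations $b(\sigma,g)\smcirc\sigma^*b(u,f)=b(\wit{u},\wit{f})\smcirc\wit{u}^*b(s,f)$ and the analogous identity for $\theta^f_{-}$ (see \cite[Remark 3.3.2]{cm}) let me deduce the assertion on the back face from those on the left, right, and front faces. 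Since $y$ is arbitrary and the assertions are local on $Y'$, this suffices.

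For the flat case, I would treat (a) first, noting that $\psi(\eF)$ is $H^0$ of the composite \eqref{map:sh-foo2}. The key input is Proposition \ref{prop:i-bc} applied to the flat map $v$, which gives commutativity of $v^*(i_*i^\btrg \to \R\iG{Z})$ with its primed counterpart; pushing forward via $f_*$ (equivalently $g_*v^*$) and taking $H^r$ yields (a). Part (b) in the flat case is the standard flat base-change isomorphism for $\R\iG{Z}$, since $u^*\Rfs\R\iG{Z}$ and $\R g_*\R\iG{Z'}v^*$ are naturally isomorphic when $u$ is flat. Parts (c) and (d) then follow by specializing $\eF$ to $\omgs{f}$ in (a), invoking Theorem \ref{thm:sh-foo}, the definition of $\ttr{-}$ in \eqref{map:finflat-h}, and the compatibility of the base-change isomorphism $\theta^f_u$ with $\ush{(-)}$, as recorded in \cite[pp.\,755--756, Prop.\,6.2.2(b),(c)]{cm}.

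For the front face, $\wit{Y}$ is the spectrum of a complete local ring and $\wit{Z}$ is finite over $\wit{Y}$, hence $\wit{Z}$ is a finite disjoint union of spectra of complete local rings. Working one component at a time, I may assume the closed immersion on the front face is a \emph{good immersion} for $\wit{f}$ in the sense of \cite[p.\,744, Def.\,3.1.4]{cm}. For such good immersions, part (b) is \cite[p.\,770, Prop.\,B.1.3]{cm} and part (d) is \cite[pp.\,755--756, Prop.\,6.2.2(b),(c)]{cm}; parts (a) and (c) are then recovered by unwinding $\ares{Z}$ through the relation of Theorem \ref{thm:sh-foo} and the definition of $\ttr{h}$.

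The main obstacle will be the ordering of the deductions: (b) must be established before (c) and (d) can be stated as equalities of maps with iso source, and in the cube chase one must first know that $b(s,f,r)$ is an isomorphism. This is arranged as follows. On the front face (b) is available from the good-immersion case, and on the left, right, and bottom faces it holds by the flat subcase. The cocycle identity then forces $\sigma^*b(s,f,r)$ to be an isomorphism, hence $b(s,f,r)$ is an isomorphism on a Zariski neighbourhood of $u^{-1}(y)$; varying $y$ gives (b) globally. With (b) in hand, the same cube chase promotes the commutativities of (a), (c), (d) from the three auxiliary faces to the back face, completing the proof.
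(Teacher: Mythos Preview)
Your cube strategy is essentially the paper's approach for parts (c) and (d), but there is a genuine gap in your treatment of (a), and the paper handles (a) and (b) quite differently.

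The gap is in your claim that on the front face (the good-immersion case) ``parts (a) and (c) are then recovered by unwinding $\ares{Z}$ through the relation of Theorem \ref{thm:sh-foo}''. This does not work for (a): the map $\psi$ of \eqref{map:sh-foo} is not an isomorphism (it is the natural map $\Ext^r \to \Hr^r_Z$ coming from one term of the direct system), so knowing (d) and Theorem \ref{thm:sh-foo} cannot recover the base-change compatibility of $\psi$. Moreover (a) is a statement about an \emph{arbitrary} coherent $\eF$, whereas the residue/trace unwinding only sees $\eF=\omgs{f}$. Without (a) on the front face, your cube chase for (a) does not close.

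The paper instead proves (a) \emph{directly} for arbitrary $u$ (no flatness or good-immersion hypothesis), by computing both vertical maps via the Koszul resolution $K_\bullet({\bf t})$ of $\co_Z$: one checks that the fundamental local isomorphism \eqref{iso:fli} commutes with the Ext base-change map \eqref{map:ext-bc}, and that the latter is compatible with $b(u,f)$ via \eqref{diag:ext-res-bc}. This is an elementary Koszul calculation and it immediately yields (a), and also shows that \eqref{map:ext-bc} is an isomorphism on coherent $\eF$. Part (b) then follows by replacing $Z$ with $Z_n$ (defined by $t_1^n,\dots,t_r^n$) and taking a direct limit. Only at this point does the paper invoke the cube, and only to get that (d) commutes after applying $\sigma^*$; it then uses the already-proved (a) together with Theorem \ref{thm:sh-foo} to deduce that (c) commutes after $\sigma^*$, and since the sheaves in (c) are coherent this gives (c) globally. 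Finally (d) is obtained from (c) by the same $Z_n$ direct-limit trick, avoiding a local-to-global step for the non-coherent sheaf $\Rr^r_Zf_*\omgs{f}$.

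So: keep your cube argument for (c) and (d), but replace your attempted cube proof of (a) with the direct Koszul/Ext argument, and derive (b) and (d) from (a) and (c) respectively by passing to the direct limit over $Z_n$. (Also, in your last paragraph ``$\sigma^*b(s,f,r)$'' should read ``$\sigma^*b(u,f,r)$''.)
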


\proof 
From the definitions, we may, without loss of generality, assume that $Y=\Spec{\,A}$ and
$Y'=\Spec{\,A'}$. Consider the composite natural transform of functors of quasi-coherent
$\co_X$-modules:
\[
\mathrm{Ext}^r_A(\co_Z,\,\boldsymbol{-}) \lra 
\mathrm{Ext}^r_A(\co_Z,\,v_*v^*(\boldsymbol{-})) \lra 
\mathrm{Ext}^r_{A'}(\co_{Z'},\,v^*(\boldsymbol{-}))
\]
giving a base change map
\stepcounter{sth}
\begin{equation*}\label{map:ext-bc}\tag{\thesth}
A'\otimes_A\mathrm{Ext}^r_A(\co_Z,\,\boldsymbol{-}) \lra 
\mathrm{Ext}^r_{A'}(\co_{Z'},\,v^*(\boldsymbol{-}))
\end{equation*}
Fron the definition of \eqref{map:res-bc} it is easy to see that
\stepcounter{sth}
\[
\begin{aligned}\label{diag:ext-res-bc}
{\xymatrix{
A'\otimes_A\mathrm{Ext}^r_A(\co_Z,\,\boldsymbol{-}) \ar[rr]^{\eqref{map:ext-bc}} 
\ar[d]&&
\mathrm{Ext}^r_{A'}(\co_{Z'},\,v^*(\boldsymbol{-}))\ar[d]\\
A'\otimes_A\Hr^r_Z(X,\,\boldsymbol{-}) \ar[rr]_{\eqref{map:res-bc}} &&
\Hr^r_{Z'}(X',\,v^*(\boldsymbol{-}))
}}
\end{aligned}\tag{\thesth}
\]
commutes. 

Let $\Ext_f^i(\co_Z,\,\boldsymbol{-})$ be the $i^{\mathrm{th}}$ right
derived functor of $f_*\sHom_X(\co_Z,\,\boldsymbol{-})$. Since $Z$ is affine and
$\Ext_f^i(\co_Z,\,\boldsymbol{-})$ is supported on $Z$, this is simply
$h_*\mathrm{Ext}^i_X(\co_Z,\,\boldsymbol{-})$. 
Similarly, one defines $\Ext^i_g(\co_{Z'}, \boldsymbol{-})$. Using this, and
computing $\Ext^r_X(\co_Z, \boldsymbol{-})$ and $\Ext^r_{X'}(\co_{Z'}, \boldsymbol{-})$
via the Koszul resolutions on ${\bf t}$ of $\co_Z$ and $\co_{Z'}$, we get see easily that
the fundamental local isomorphisms \eqref{iso:fli} is
compatible with \eqref{map:ext-bc}. In other words, the following diagram of functors
of coherent $\co_X$-modules commutes:
\stepcounter{sth}
\[
\begin{aligned}\label{diag:fli-ext-bc}
{\xymatrix{
  h'_*(j^*v^*(\boldsymbol{-})\otimes_{\co_{Z'}}\eN') \ar[d]_{\eqref{iso:fli}}\ar@{=}[rr] &&
 u^*h_*(i^*(\boldsymbol{-})\otimes_{\co_Z}\eN)\ar[d]^{\eqref{iso:fli}}\\
\Ext_g^r(\co_{Z'},\,v^*(\boldsymbol{-})) && u^*\Ext^r_f(\co_Z,\,\boldsymbol{-}) 
\ar[ll]^{\eqref{map:ext-bc}}  
}}
\end{aligned}\tag{\thesth}
\]
This together with \eqref{diag:ext-res-bc} gives part\,(a). In particular, applied to coherent 
$\co_X$-modules, \eqref{map:ext-bc} is an isomorphism.

Applying the fact that \eqref{map:ext-bc} is an isomorphism
to the closed schemes $Z_n$ of $X$ defined by $t_1^n, \dots, t_r^n$, and
taking the direct limit as $n\to\infty$ we get (b) from \eqref{diag:ext-res-bc}.

According to  \cite[pp.\,755--756, Prop.\,6.2.2 (b) and (c)]{cm}, part (d) is true when
either $u$ is flat or when $Z\hookrightarrow X$ is a {\emph{good immersion}} for $f$, i.e., it satisfies:
\begin{itemize}
\item There is an affine open covering $\U=\{U_\alpha =\Spec{\,A_\alpha}\}$ of $Y$, and for
each index $\alpha$ an affine open scheme $V_\alpha=\Spec{\,R_\alpha}$ of $f^{-1}(U_\alpha)$ 
such that $Z\cap f^{-1}(U_\alpha)$ is contained in $V_\alpha$.
\item The closed immersion $i$ is given in $V_\alpha$ by a quasi-regular $R_\alpha$-sequence.
\item $Z\to Y$ is finite.
\end{itemize}
(See also \cite[p.\,744, Def.\,3.1.4]{cm} and \cite[pp.\,77--78, Assumptions\,4.3]{hk1}.) 

Let $\fp$ be a prime ideal of $A$, $y$ the point in $Y$ corresponding to $\fp$,
$\wit{Y}$ the completion of the local ring $A_\fp$, and
$\wit{Y}'$ the completion of $A'_\fp$ with respect to the ideal $\fp A'_\fp$. We then have a 
commutative diagram
\stepcounter{sth}
\[
\begin{aligned}\label{diag:3D-bc}
{\xymatrix{
& X'\ar'[d][dd]_{g} \ar[rr]^{v} & &  X \ar[dd]^f \\
\wit{X}' \ar[ru]^{\vartheta} \ar[rr]^{\phantom{XXXX}\wit{v}} \ar[dd]_{\wit{g}} & & \wit{X} \ar[ru]^{t} 
\ar[dd]_>>>>>>>>>>>>>>>>{\wit{f}} & \\
& Y' \ar'[r][rr]^{u}& & Y\\
\wit{Y}' \ar[ru]^{\sigma} \ar[rr]_{\wit{u}} & & \wit{Y} \ar[ru]_s & 
}}
\end{aligned}\tag{\thesth}
\]
All the lateral faces are cartesian, however the top and bottom faces need not be. We set 
$\wit{Z}=t^{-1}(Z)$ and $\wit{Z}'=\vartheta^{-1}(Z')$.

One checks easily that
\[
b(\sigma,g)\smcirc \sigma^*b(u,f) = b(u\smcirc\sigma, f)= b(\wit{u}, \wit{f})\smcirc\wit{u}^* b(s,f)
\leqno{(*)}
\]
and according to \cite[p.\,747, Remark 3.3.2]{cm}, we have
\[
\theta_{\wit{u}}^{\wit{f}}\smcirc\wit{v}^*\theta_s^f= \theta_{u\sigma}^f=\theta_\sigma^g\smcirc \vartheta^*\theta_u^f. \leqno{(\dag)}
\]
We remark
that Cohen-Macaulay maps of relative dimension $r$ are,  in the
terminology of \textit{ibid}, locally $r$-compactifiable.

From our observations about the compatibility of residues with certain base changes,
 (d) is true for the left, right and front faces of \eqref{diag:3D-bc}. Indeed, $s$ and $\sigma$
 are flat, whereas $\wit{Z}$ is a good immersion for $\wit{f}$. We therefore have:
\[
\begin{aligned}
\ares{{\wit{Z}}}\smcirc \Rr^r_{{\wit{Z}}}{\wit{f}}_*(\theta^f_s)\smcirc b(s,f) 
&= s^*(\ares{Z}) \\
\ares{{\wit{Z}'}}\smcirc \Rr^r_{{\wit{Z}'}}{\wit{g}}_*(\theta^{\wit{f}}_{\wit{u}})\smcirc b(\wit{u},\wit{f}) 
& = \wit{u}^*(\ares{{\wit{Z}}})\\
\ares{{\wit{Z}'}}\smcirc \Rr^r_{{\wit{Z}'}}{\wit{g}}_*(\theta^g_\sigma)\smcirc b(\sigma, g) 
&= \sigma^*(\ares{{Z'}})
\end{aligned}\tag{\ddag} \] 
The formulas $(*)$,  $(\dag)$, and $(\ddag)$ say that
the diagram in part (d) of the statement of the theorem commutes after applying $\sigma^*$.
Now use part (a), which we have proven, to see that the diagram in (c) commutes after
applying $\sigma^*$. Since all the sheaves involved in the diagram are cohenrent, this
means the diagram in (c) commutes in a  Zariski open neighbourhood of $u^{-1}(y)$. This
proves (c)  since $y\in Y$ is arbitrary.

Part\,(d) now follows by replacing  $Z$ by $Z_n$ as before, where $Z_n$ is defined by
$t_1^n,\dots,t_r^n$, applying (c) to $Z_n$, and taking direct limits.
\qed

\section{\bf Iterated traces}\label{s:fubini-1}
An important formula concerning residues is a Fubini like statement for iterated residues (see
\cite[p.198, (R4)]{RD}). To establish this via our approach to residues, i.e., via Verdier's isomorphism,
we have to understand iterated traces (for a composite of pseudo-proper
maps) in various ways. That is the main thrust of this section.
The circle of ideas is sometimes referred to as ``transitivity" (cf.\,\cite{jag}).
In somewhat greater detail suppose
\[\X\xrightarrow{f} \Y \xrightarrow{g} \Z\]
is a pair of pseudo-proper maps. Recall that $\Tr{f}\colon \Rfs\R\iGp{\X}\ush{f} \to {\bf{1}}_{\D(\Y)}$
factors through the natural map $\R\iGp{\Y}\to {\bf{1}}_{\D(\Y)}$. Moreover, we abuse notation
and write $\Tr{f}\colon  \Rfs\R\iGp{\X}\ush{f} \to \R\iGp{\Y}$ for the missing factor in the just mentioned
factorization of $\Tr{f}\colon \Rfs\R\iGp{\X}\ush{f} \to {\bf{1}}_{\D(\Y)}$. 
Given $\eF, \eG \in \wDqcp(\Y)$ the torsion version of the projection isomorphism,
which we shall denote as $p^t_f(\eF, \eG)$,
is the following composition
\[
\eF\overset{\bL}\otimes_{\co_\X} \Rfs\R\iGp{\X}\eG  \iso \Rfs(\bL f^* \eF\overset{\bL}\otimes_{\co_\X} \R\iGp{\X}\eG)
\iso \Rfs\R\iGp{\X}(\bL f^* \eF\overset{\bL}\otimes_{\co_\X} \eG)
\]
where the first isomorphism is induced by projection. 
In this situation, we have the following iterated trace on 
$\R (gf)_*\R\iGp{\X}(\bL f^*\ush{g}\co_\Z\overset{\bL}\otimes \ush{f}\co_\Y$)
where the map labelled~$p$ is the natural one induced by $(p^t_f)^{-1}$ while the 
one labelled $T$ is induced by $\Tr{f}$:
\begin{align*}
\R (gf)_*\R\iGp{\X}(\bL f^*\ush{g}\co_\Z\overset{\bL}\otimes \ush{f}\co_\Y) &
\xrightarrow{\Iso} \R g_*\Rfs \R\iGp{\X}(\bL f^*\ush{g}\co_\Z\overset{\bL}\otimes \ush{f}\co_\Y)\\
&\xrightarrow{\;\;p\;\,} \R g_*(\ush{g}\co_\Z\overset{\bL}\otimes \Rfs \R\iGp{\X}\ush{f}\co_\Y)\\
&\xrightarrow{\;\;T\;} \R g_*(\ush{g}\co_\Z\overset{\bL}\otimes \R\iGp{\Y}\co_\Y)\\
& \xrightarrow{\Iso\,} \R g_*\R\iGp{\Y}\ush{g}\co_\Z \\
& \xrightarrow{\Tr{g}\,} \co_\Z.
\end{align*}
By adjointness, this gives us a map 
\[\chi_{{}_{[g,f]}}\colon \bL f^*(\ush{g}\co_\Z)\overset{\bL}\otimes \ush{f}\co_\Y \to \ush{(gf)}\co_\Z.\]
In fact one does not need $f$ and $g$ to be pseudo-proper to define $\chi_{{}_{[g,f]}}$. Our definition
below works under the assumption that each of them is a composite of compactifiable maps. 

Part of the theme of transitivity
is to work out a concrete formula for $\chi_{{}_{[g,f]}}$ when $f$ and $g$ are smooth, and when 
$\ush{g}\co_\Z$, $\ush{f}\co_\Y$, and $\ush{(gf)}\co_\Z$ are substituted with suitable
differential forms (placed in the appropriate degree) via Verdier's isomorphism \cite{verdier}. That is 
done in a later paper based on the work done here.

%
\subsection{Traces in affine terms}\label{ss:aff-tr}
If $A\to B$ is a  pseudo-finite-type map of adic rings, $I\subset A$ and $J\subset B$ 
defining ideals for the adic rings $A$ and $B$ respectively,
and $f\colon \Spf{\,(B,\,J)} \to \Spf{\,(A,\,I)}$ the resulting map
of formal schemes, then the complex $\ush{f}\co_{\Spf{B}}$ can be represented by a  
bounded-below complex 
\[
\ush\omega_{(B,J)/(A,I)}^\bullet = \ush\omega^\bullet_{B/A} \in \D^+({\mathrm{Mod}}_B)
\]
which has finitely generated cohomology 
modules, where the more elaborate notation $\ush\omega_{(B,J)/(A,I)}^\bullet$ is used only when the
role of the adic structures needs to be emphasised. To simplify notation further, we shall use
$\omega^\bullet_{B/A}$ in place of $\ush\omega^\bullet_{B/A}$ from now on.

It then follows that if $f$ is {\em Cohen-Macaulay} then $\omega^\bullet_{B/A} = \omgs{B/A}[d]$.

Regarding the affine version of traces there are two related situations which we wish to discuss. 

\subsection*{\bf A} Suppose $A\to B/J$ is finite. Recall that the trace map 
\[\Tr{f}\colon \R\iGp{\Spf{(B,J)}}\ush{f}\co_{\Spf{(A,I)}}\to \co_{\Spf{(A,I)}}[0]\] 
factors through the natural map $\R\iGp{\Spf{(A,I)}}\co_{\Spf{(A,I)}}[0] \to \co_{\Spf{(A,I)}}[0]$ and that
the map $\R\iGp{\Spf{(B,J)}}\ush{f}\co_{\Spf{(A,I)}}\to \R\iGp{\Spf{(A,I)}}\co_{\Spf{(A,I)}}[0]$ 
inducing this trace map
is also called the trace map, and is also denoted $\Tr{f}$. Corresponding to these maps $\Tr{f}$
we have, at the affine level, two maps, again denoted by the same symbol
$\Tr{B/A}(=\Tr{(B,J)/(A,I)})$ 
\stepcounter{thm}
\begin{equation*}\label{map:Tr-B/A-I}\tag{\thethm}
\Tr{B/A}=\Tr{(B,J)/(A,I} \colon \R\Gamma_J \omega^\bullet_{B/A} \to \R\Gamma_IA[0].
\end{equation*}
and
\stepcounter{thm}
\begin{equation*}\label{map:Tr-B/A}\tag{\thethm}
\Tr{B/A}=\Tr{(B,J)/(A,I)} \colon \R\Gamma_J \omega^\bullet_{B/A} \to A[0].
\end{equation*}
Note that the two uses of the symbol $\Tr{B/A}$ occur in the following commutative diagram:
\[
{\xymatrix{
\R\Gamma_J \omega^\bullet_{B/A} \ar[dr]^{\Tr{B/A}} \ar[d]_{\Tr{B/A}}& \\
\R\Gamma_IA[0] \ar[r] & A[0]
}}
\]

\subsection*{B} Next suppose $A$ and $B$ both have discrete topology, and we have a finite-type 
map $A\to B$.
Suppose $J$ is an ideal in $B$ such that $A\to B/J$ is finite. Let ${\widehat{B}}$ be the
completion of $B$ with respect to $J$. Note that if 
$\kappa\colon \Spf{({\widehat{B}},J{\widehat{B}})} \to \Spec{\,B}$ is the completion map,
then the canonical isomorphism $\kappa^*\ush{f}  \iso \ush{(f\kappa)}$ results in a canonical
isomorphism $\omega^\bullet_{B/A}\otimes_B{\widehat{B}}
\iso \omega^\bullet_{\widehat{B}/A}$.
Define
\stepcounter{thm}
\begin{equation*}\label{map:Tr-J}\tag{\thethm}
\Tr{J} \colon \R\Gamma_J \omega^\bullet_{B/A} \to A[0]
\end{equation*}
as the composite
\begin{align*}
\R\Gamma_J \omega^\bullet_{B/A} & \xrightarrow{\,\,\,\,\Iso\,\,\,\,}
\R\Gamma_{J\widehat{B}}(\omega^\bullet_{B/A}\otimes_B\widehat{B})\\
& \xrightarrow{\,\,\,\,\Iso\,\,\,\,} \R\Gamma_{J\widehat{B}}\omega^\bullet_{\widehat{B}/A} \\
& \xrightarrow[\Tr{{\widehat{B}}/A}]{} A[0]
\end{align*}

\setcounter{subsubsection}{\value{thm}}
\subsubsection{} \stepcounter{thm}\label{ss:IJ-trans}
There is potential for confusion over the symbol $\omega^\bullet_{B/A}$ in a situation 
we will be in and we would like to clarify the issues here. Let $(A,I)$ and
$(B,J)$ be adic rings. Let $L=IB+J\subset B$, and assume further that $B$ is also
$L$-adically complete. Suppose there is a ring homomorphism $A \to B$ such that 
the induced map $A\to B/J$ is finite. Then $A\to B/L$ is also finite and the formal-scheme maps 
$\Spf{(B,\,J)}\xrightarrow{p} \Spec{\,A}$ and $\Spf{(B,\,L)}\xrightarrow{f} \Spf{(A,\,I)}$ are both 
pseudo-finite. Moreover we have a cartesian square as follows.
\[
{\xymatrix{
\Spf{(B,\,L)} \ar[d]_f \ar[r]^{\kappa_{{}_L}} \ar@{}[dr]|\square & \Spf{(B,\,J)} \ar[d]^p \\
\Spf{(A,\,I)} \ar[r]_{\kappa_{{}_I}} & \Spec{\,A}
}}
\]
Since $\kappa_{{}_I}$ is flat, we have 
$\kappa_{{}_L}^*\ush{p}\co_{\Spec{\,A}}\iso \ush{f}\kappa_{{}_I}^*\co_{\Spec{\,A}}
= \ush{f}\co_{\Spf{(A,\,I)}}$. This means we can, and {\em we will}, identify 
$\omega^\bullet_{(B,L)/(A,I)}$ and $\omega^\bullet_{(B,J/)(A,0)}$. Therefore, denoting the common
complex $\omega^\bullet_{B/A}$ in this situation causes no confusion. Thus,
\[\omega^\bullet_{B/A}=\omega^\bullet_{(B,L)/(A,I)}=\omega^\bullet_{(B,J)/(A,0)}.\]
We have two maps $\Tr{L}\colon \R\Gamma_L\omega^\bullet_{B/A} \to \R\Gamma_I(A[0])$
and $\Tr{J}\colon \R\Gamma_J\omega^\bullet_{B/A} \to A[0]$ corresponding to $\Tr{f}$ 
(cf.\,\eqref{map:Tr-B/A-I}) and $\Tr{p}$ (cf.\,\eqref{map:Tr-B/A}) respectively. In these circumstances, 
according to \Pref{prop:iterated-trace} in the appendix, 
the following diagram commutes:
\stepcounter{sth}
\[
\begin{aligned}\label{diag:IJ-trans}
{\xymatrix{
\R\Gamma_I\R\Gamma_J(\omega^\bullet_{B/A}) \ar[d]_{\R\Gamma_I(\Tr{J})}  \ar[rr]^{\Iso}
&& \R\Gamma_L(\omega^\bullet_{B/A}) \ar[d]^{\Tr{J}} \\
\R\Gamma_IA[0] \ar@{=}[rr]  && \R\Gamma_IA[0] 
}}
\end{aligned}\tag{\thesth}
\]

\subsection{Abstract Transitivity}\label{ss:abs-trans}
This section is a digression on setting up a suitable bifunctor for every morphism in~$\bbG$
which will then be used to define an abstract transitivity relation.

For a morphism $f\colon\X\to \Y$ in $\bbG$, and complexes $\eF, \eG \in \wDqcp(\Y)$,
we shall now define a bifunctorial map 
\stepcounter{thm}
\begin{equation*}\label{map:chi-f}\tag{\thethm}
\chi^{f}(\eF,\eG)\colon \bL f^*\eF\overset{\bL}{\otimes}_{\co_\X}\ush{f}\eG \lra 
\ush{f}(\eF\overset{\bL}{\otimes}_{\co_\Y}\eG)
\end{equation*}
which, a-priori, will depend on the choice of a factorization $f = f_nf_{n-1}  \cdots f_1$
where each $f_i$ is either an open immersion or a pseudoproper map. In these two special cases,
there is a simple version of this bifunctorial map and the general case is handled by putting
together these special ones. In Proposition \ref{prop:chi-welldef} below
we prove that $\chi^f(-,-)$ is independent of the choice
of the factorization. 

Since $\ush{(-)}$ is only a pre-pseudofunctor, even for $f$ any identity map, say~$f = 1_{\X}$,
some non-trivial considerations arise.
For $\eF, \eG \in \wDqcp(\X)$, we define
\[
q^{}_{\X}(\eF, \eG) \colon \eF \overset{\bL}{\otimes}_{\co_\X} \BL_{\X}\eG \to \BL_{\X}(\eF\overset{\bL}{\otimes}_{\co_\X}\eG)
\]
to be the map, which, via right adjointness of $\BL_{\X}$ to $\R\iGp{\X}$, corresponds to the composite 
of natural maps
\[
\R\iGp{\X}(\eF \overset{\bL}{\otimes}_{\co_\X} \BL_{\X}\eG) \iso \eF \overset{\bL}{\otimes}_{\co_\X} \R\iGp{\X}\BL_{\X}\eG
\to  \eF \overset{\bL}{\otimes}_{\co_\X} \eG.
\]
Below we shall define $\chi^{1_{\X}}$ to be $q^{}_{\X}$. For now, we collect a few properties of 
$q^{}_{\X}$ that we shall use.

The natural map $1 \to \BL_{\X}$ on 
$\eF \overset{\bL}{\otimes}_{\co_\X} \eG$ factors through $q^{}_{\X}(\eF, \eG)$ : 
\[
\eF \overset{\bL}{\otimes}_{\co_\X} \eG \to \eF \overset{\bL}{\otimes}_{\co_\X} \BL_{\X}\eG \xrightarrow{q_{\X}(\eF,\eG)}
\BL_{\X}(\eF\overset{\bL}{\otimes}_{\co_\X}\eG).
\]
Note that $q^{}_{\X}(\eF, \eG)$ is an isomorphism if both
$\eG$ and $\eF \overset{\bL}{\otimes}_{\co_\X} \eG$ have coherent homology or if $\eF$ is perfect,
i.e., locally isomorphic to bounded complex of finite-rank locally free modules. Also note 
that $\R\iGp{\X}q^{}_{\X}$ is an isomorphism and hence $\BL_{\X}q^{}_{\X}$,
which is isomorphic to $\BL_{\X}\R\iGp{\X}q^{}_{\X}$, is also an isomorphism,
i.e., the natural map is an isomorphism
\[
\BL_{\X}q^{}_{\X}(\eF, \eG) \colon \BL_{\X}(\eF \overset{\bL}{\otimes}_{\co_\X} \BL_{\X}\eG) \iso
\BL_{\X}(\eF\overset{\bL}{\otimes}_{\co_\X}\eG).
\]
Via natural identifications, $q_{\X}(\co_{\X}, \eG)$ identifies with the identity map on $\BL_{\X}\eG$.
Finally, to get a more explicit description of $q_{\X}$, 
if we choose the adjoint pair $(\BL_{\X}, \varepsilon)$ to $\R\iGp{\X}$ to be
$\BL_{\X}\eM = \R\sHom_{\X}(\R\iGp{\X}\cO_{\X}, \eM)$ with $\varepsilon$ being 
the following composition on canonical maps
\[
\R\iGp{\X}\R\sHom_{\X}(\R\iGp{\X}\cO_{\X}, \eM) \iso 
\R\sHom_{\X}(\R\iGp{\X}\cO_{\X}, \eM) \overset{\bL}{\otimes}_{\X} 
\R\iGp{\X}\cO_{\X} \xrightarrow{\text{eval}} \eM,
\]
then $q_{\X}(\eF, \eG)$ can be described as the canonical map
\[
\eF \overset{\bL}{\otimes}_{\co_\X} \R\sHom_{\X}(\R\iGp{\X}\cO_{\X}, \eG) 
\to \R\sHom_{\X}(\R\iGp{\X}\cO_{\X}, \eF\overset{\bL}{\otimes}_{\co_\X}\eG).
\]

We will now set up some notation that will be useful for keeping track of the numerous issues that
arise out handling factorizations of maps into open immersions and pseudoproper maps.

Let  $f\colon\X\to \Y$ be a morphism in $\bbG$ and $f = f_n f_{n-1} \cdots f_1$
a factorization where each $f_i \colon \X_i \to \X_{i+1}$ is an open immersion or a pseudoproper map
with $\X_1 \set \X$ and $\X_{n+1} \set \Y$.
Let us assign to each $f_i$ a label $\lambda_i$, with $\lambda_i$ being one of either
$\mathsf O$ or $\mathsf P$ (where $\mathsf O$ = open immersions and $\mathsf P$ = pseudoproper maps), 
together with the requirement that each~$f_i$ lies in
the subcategory corresponding to $\lambda_i$. We shall denote the labelled map as $f_i^{\lambda_i}$ 
and the above factorization together with the assigned labels will be called 
a labelled factorization (of~$f$). The corresponding sequence $F = (f_1^{\lambda_1}, \ldots, f_n^{\lambda_n})$ will be 
called a labelled sequence of length~$n$ and~$|F|$ shall 
denote the composite~$f$. To ease notation, the labels shall often be suppressed
and we shall spell them out only when it is necessary. Thus we shall often denote a labelled map $f^{\lambda}$
by the underlying map $f$ itself. 
If $F = (f_1, \ldots, f_n)$ and  $G = (g_1, \ldots, g_m)$ are labelled sequences, 
and if $g_1f_n$ makes sense, then we denote the composite 
labelled sequence $(f_1, \ldots, f_n,g_1, \ldots, g_m)$ as~$(F,G)$, which is evidently a 
labelled factorization of $|(F,G)| = |G||F|$.

For a labelled sequence $F= (f_1, \ldots, f_n)$, set 
\[
F^* \set \bL f_1^*\bL f_2^* \cdots \bL f_n^*, \qquad \qquad  \ush{F} \set \ush{f_1}\ush{f_2} \cdots \ush{f_n}.
\]
With $|F| = f$, there are canonical pseudofunctorial isomorphisms 
$F^* \iso \bL f^*$ and $\ush{F} \iso \ush{f}$. 
If $F, G$ are labelled sequences such that the composite $(F,G)$ exists, then
$\ush{(F, G)} = \ush{F}\ush{G}$ and $(F,G)^* = F^*G^*$.

For a labelled sequence $F = (f_1, \ldots, f_n)$ factoring $f\colon\X\to \Y$ and for 
complexes $\eF, \eG \in \wDqcp(\Y)$, we recursively define 
\[
\chi^{}_F(\eF,\eG)\colon F^*\eF\overset{\bL}{\otimes}_{\co_\X}\ush{F}\eG \lra 
\ush{F}(\eF\overset{\bL}{\otimes}_{\co_\Y}\eG)
\]
as follows. 
If $n=1$, then
$F = f^{\mathsf O}$ or $F = f^{\mathsf P}$
and moreover $F^* = \bL f^* = f^*, \ush{F} = \ush{f}$. 
If $F = f^{\mathsf O}$, so that $f$ is an open immersion, then
using $\ush{f} = \BL_{\X} f^* \iso f^* \BL_{\Y} $, we take $\chi_F(\eF,\eG)$ to be the composite
along the top row of the following commutative diagram
\stepcounter{thm}
\[
\begin{aligned}\label{eq:chi-f^O}
\xymatrix{
f^*\eF \overset{\bL}{\otimes}_{\co_\X} \BL_{\X} f^*\eG 
\ar[d]_-{\rotatebox{-90}{\makebox[-0.1cm]{\Iso}\,\,}} \ar[r]^{q^{}_{\X}}  &
\BL_{\X} (f^*\eF \overset{\bL}{\otimes}_{\co_\X} f^*\eG) \ar[r]^{\Iso} 
& \BL_{\X} f^*(\eF \overset{\bL}{\otimes}_{\co_\Y} \eG) 
\ar[d]^-{\rotatebox{-90}{\makebox[-0.1cm]{\Iso}}\,\,\,} \\
f^*\eF \overset{\bL}{\otimes}_{\co_\X}  f^*\BL_{\Y}\eG  \ar[r]^{\Iso}  &
f^*(\eF \overset{\bL}{\otimes}_{\co_\X} \BL_{\Y}\eG) \ar[r]^{f^*q_{\Y}} 
& f^*\BL_{\Y}(\eF \overset{\bL}{\otimes}_{\co_\Y} \eG) 
}
\end{aligned}\tag{\thethm}
\]
where $q^{}_-$ is defined above. (The commutativity of this diagram, which will only be used later,
follows easily from the explicit description of $q^{}_-$ above.)
If $F = f^{\mathsf P}$, so that $f$ is pseudoproper, we set $\chi_F(\eF,\eG)$ to be the map 
adjoint to the composite
\[
\R f_*\R\iGp{\X}(\bL f^*\eF\overset{\bL}{\otimes}_{\co_\X}\ush{f}\eG) 
\xrightarrow[\cong]{\text{via }(p^t_f)^{-1}} \eF \overset{\bL}{\otimes}_{\co_\Y} \R f_*\R\iGp{\X}\ush{f}\eG 
\xrightarrow{\text{via }\Tr{f}} \eF\overset{\bL}{\otimes}_{\co_\Y}\eG
\]
where $p^t_-$ is the torsion projection isomorphism as defined in the beginning of~\S\ref{s:fubini-1}.
In general, if $n > 1$, we decompose $F$ as $\X \xrightarrow{f_1} \X_2 \xrightarrow{g} \Y$ 
where $G= (f_2, \ldots, f_n)$ gives a labelled factorization of $g$ while $f_1$ is
naturally a labelled sequence of length~1. 
Assuming $\chi^{}_G(\eF,\eG)$ is already defined, we define $\chi^{}_F(\eF,\eG)$ to be the composite
(with ${\otimes}_{\X} = {\otimes}_{\cO_\X}$, ${\otimes}_{\Y} = {\otimes}_{\cO_\Y}$)
\begin{align*}
F^*\eF\overset{\bL}{\otimes}_{\X}\ush{F}\eG =
f_1^*G^*\eF\overset{\bL}{\otimes}_{\X}\ush{f_1}\ush{G}\eG
&\xrightarrow{\chi^{}_{f_1}(G^*\eF, \ush{G}\eG))} \ush{f_1}(G^*\eF\overset{\bL}{\otimes}_{\X_2}\ush{G}\eG) \\
&\xrightarrow{\ush{f_1}\chi^{}_G(\eF, \eG)} \ush{f_1}\ush{G}(\eF\overset{\bL}{\otimes}_{\Y}\eG)
= \ush{F}(\eF\overset{\bL}{\otimes}_{\Y}\eG).
\end{align*}
In more concise terms, we may write that if $F = (f_1, G)$, then 
$\chi^{}_F = (\ush{f_1}\chi^{}_G) \circ \chi^{}_{f_1}$. 
It follows from the 
recursive nature of the definition that for any decomposition $F = (F_1,F_2)$ 
we have $\chi^{}_F = \chi^{}_{(F_1,F_2)} = (\ush{F_1}\chi^{}_{F_2}) \circ \chi^{}_{F_1}$.

For $\eF = \co_{\X}$, via the obvious natural identifications $F^*\co_{\Y}\overset{\bL}{\otimes}_{\X}\ush{F}\eG \iso 
\ush{F}\eG$ and $\ush{F}(\co_{\Y}\overset{\bL}{\otimes}_{\Y}\eG) \iso \ush{F}\eG$ 
we see that $\chi_F(\co_{\X}, \eG)$ identifies with the identity map on~$\ush{F}\eG$.

The identity map $1_{\X}$ for any formal scheme $\X$, being in both $\mathsf O$ and $\mathsf P$,
forms a factorization of length 1 of itself for any of the two labels. With either label,  
we see that $\chi^{}_{1_{\X}}$ equals $q^{}_{\X}$ defined above. 

More generally, $f \colon \X \to \Y$ is in $\mathsf O$ and $\mathsf P$ iff $f$ is an isomorphism of $\X$ onto a connected
component of $\Y$ and so any such 
$f$ is a length-one factorization of itself with either label. 
\begin{lem}
\label{lem:chi-iso}
If $f \colon \X \to \Y$ is an isomorphism, then $\chi^{}_{f^{\mathsf P}} = \chi^{}_{f^{\mathsf O}}$.
\end{lem}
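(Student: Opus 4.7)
For an isomorphism $f\colon \X \to \Y$, both labels give the same underlying
functor $\ush{f} \iso \BL_{\X}f^* \iso f^*\BL_{\Y}$: for the $\mathsf O$-label
this is built into \eqref{eq:chi-f^O}, while for the $\mathsf P$-label it
follows from \eqref{def:sharp} together with $f^! \iso f^*\R\iGp{\Y}$ for adic
\'etale maps. The plan is to show that, modulo these canonical identifications,
both $\chi^{}_{f^{\mathsf O}}$ and $\chi^{}_{f^{\mathsf P}}$ equal
$f^*q^{}_{\Y}(\eF, \eG)$.

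The case of $\chi^{}_{f^{\mathsf O}}$ is immediate: by the commutativity of
\eqref{eq:chi-f^O}, the top row (defining $\chi^{}_{f^{\mathsf O}}$) equals
the bottom row, and the bottom row is visibly the composite
$f^*q^{}_{\Y}(\eF, \eG)$ up to the canonical monoidal isomorphisms.
For $\chi^{}_{f^{\mathsf P}}$, observe that for $f$ an isomorphism the adjunction
$(\R f_*\R\iGp{\X},\,\ush{f})$ identifies with $(\R\iGp{\Y}f_*,\,f^*\BL_{\Y})$
and factors as the composition of the adjoint pairs $(f_*,\, f^*)$ and
$(\R\iGp{\Y},\, \BL_{\Y})$. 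Consequently, the trace $\Tr{f}$ identifies with the
composite
\[
f_*\R\iGp{\X}f^*\BL_{\Y} \iso f_*f^*\R\iGp{\Y}\BL_{\Y} \iso \R\iGp{\Y}\BL_{\Y}
\iso \R\iGp{\Y} \to {\bf 1},
\]
the last arrow being the canonical counit of $(\R\iGp{\Y},\BL_{\Y})$, while the
torsion projection isomorphism $p_f^t$ reduces to the evident projection
$\eF\overset{\bL}{\otimes}_{\co_\Y}f_*(\boldsymbol{-}) \iso
f_*(f^*\eF \overset{\bL}{\otimes}_{\co_\X}\boldsymbol{-})$.

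Substituting these into the composite defining $\chi^{}_{f^{\mathsf P}}$ by
right-adjointness, the composite collapses, after transport across the
equivalence $f_*$, to
\[
\R\iGp{\Y}(\eF\overset{\bL}{\otimes}_{\co_\Y}\BL_{\Y}\eG) \iso
\eF\overset{\bL}{\otimes}_{\co_\Y}\R\iGp{\Y}\BL_{\Y}\eG \iso
\eF\overset{\bL}{\otimes}_{\co_\Y}\R\iGp{\Y}\eG \to
\eF\overset{\bL}{\otimes}_{\co_\Y}\eG,
\]
which is precisely the composite whose $(\R\iGp{\Y}, \BL_{\Y})$-adjoint
is, by definition, $q^{}_{\Y}(\eF,\eG)$. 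Uniqueness of adjoints therefore
gives $\chi^{}_{f^{\mathsf P}} = f^*q^{}_{\Y}(\eF,\eG) = \chi^{}_{f^{\mathsf O}}$.
The main technical point is the coherence of the various canonical isomorphisms
$f^*\R\iGp{\Y} \iso \R\iGp{\X}f^*$, $f^*\BL_{\Y} \iso \BL_{\X}f^*$, and
$f_*\R\iGp{\X} \iso \R\iGp{\Y}f_*$ with the two adjunctions and with the
projection isomorphism; once that coherence is granted, the identification is
a direct unwinding.
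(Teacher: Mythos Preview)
Your proposal is correct, and its underlying content is the same as the paper's, but the organization differs slightly. The paper works entirely on the $\X$-side with $\BL_{\X}$: it writes down a single diagram whose left column traces $\chi^{}_{f^{\mathsf O}}$ through $q_{\X}$ and whose right side unravels the adjoint of $\chi^{}_{f^{\mathsf P}}$ via the unit ${\bf 1}\to \BL_{\X}f^*f_*\R\iGp{\X}$, the projection isomorphism, and the counit $f_*f^*\iso{\bf 1}$; the comparison is then a direct cell-by-cell check. You instead transport to the $\Y$-side and use $f^*q_{\Y}$ as a common intermediary, invoking the commutativity of \eqref{eq:chi-f^O} as a black box for the $\mathsf O$-case and factoring the adjunction $(\R f_*\R\iGp{\X},\ush{f})$ as $(\R\iGp{\Y},\BL_{\Y})\circ(f_*,f^*)$ for the $\mathsf P$-case.

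Your route is conceptually clean --- it essentially reduces to $q_{\Y}$ for the identity on $\Y$ --- but the ``main technical point'' you flag (coherence of $f^*\R\iGp{\Y}\iso\R\iGp{\X}f^*$, $f_*\R\iGp{\X}\iso\R\iGp{\Y}f_*$, $\BL_{\X}f^*\iso f^*\BL_{\Y}$ with the adjunctions and with $p^t_f$) is exactly what the paper's diagram makes explicit. Neither approach is shorter; the paper's has the advantage that all the needed coherences are visible in one place, while yours makes the conceptual reason (both maps are $f^*$ applied to $q_{\Y}$) more transparent.
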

\begin{proof}
In this case, $f^* = \bL f^*$ and $f_* = \R f_*$ are both left and right adjoint to each other and the unit/counit
maps for either adjoint pair is given by the canonical isomorphisms $f_*f^* \cong 1$, $f^*f_* \cong 1$. 
Since $\ush{f} = \BL f^*$, the result 
follows from the commutativity of the outer border of the following diagram for $\eF, \eG \in \wDqcp(\Y)$, 
where to reduce clutter, the derived functors are denoted by their 
non-derived counterparts and moreover $\BL = \BL_{\X}, \iGp{}= \R\iGp{\X}$.
\[
\xymatrix{
& (f^*\eF \otimes \BL f^*\eG) \ar[ld] \ar[d] \\
\BL\iGp{}(f^*\eF \otimes \BL f^*\eG) \ar[d] \ar[r] & \BL f^*f_*\iGp{}(f^*\eF \otimes \BL f^*\eG) \ar[d] \\
\BL(f^*\eF \otimes \iGp{}\BL f^*\eG) \ar[r] \ar[d] & \BL f^*f_*(f^*\eF \otimes \iGp{}\BL f^*\eG) \ar[r] \ar[d]
& \BL f^*(\eF \otimes f_*\iGp{}\BL f^*\eG) \ar[d] \\
\BL(f^*\eF \otimes f^*\eG) \ar[r] \ar[rd] & \BL f^*f_*(f^*\eF \otimes f^*\eG) \ar[r] 
& \BL f^*(\eF \otimes f_*f^*\eG) \ar[ld] \\
& \BL f^*(\eF \otimes \eG)
}
\]
The bottom triangle is seen to commute by unravelling the definition of the projection isomorphism.
The remaining parts commute trivially. 
\end{proof}

If $f \colon \X \to \Y$ is in $\bbG$ and $F$ is a labelled factorization of $f$, for $\eF, \eG \in \wDqcp(\Y)$ we set
$\chi^f_F(\eF, \eG)$ to be the composite 
\[
\bL f^*\eF\overset{\bL}{\otimes}_{\co_\X}\ush{f}\eG  \iso F^*\eF \overset{\bL}{\otimes}_{\co_\X}\ush{F}\eG 
\xrightarrow{ \chi^{}_F} \ush{F}(\eF\overset{\bL}{\otimes}_{\co_\Y}\eG) \iso
\ush{f}(\eF\overset{\bL}{\otimes}_{\co_\Y}\eG).
\]
If $f = 1_{\X}$, then $\chi^{f}_{f} = q^{}_{\X}$ for either label as mentioned before.


\begin{propdef}
\label{prop:chi-welldef}
\hfill
\begin{enumerate} 
\item If $F_1$ and $F_2$ are two labelled factorizations of a map $f \colon \X \to \Y$ in $\bbG$, then
$\chi^f_{F_1}(-,-) = \chi^f_{F_2}(-,-)$. We thus define $\chi^f(-,-)$ in \eqref{map:chi-f}  to be 
$\chi^f_F(-,-)$ for any choice of a labelled factorization $F$ of $f$.
\item If $\X \xrightarrow{f} \Y \xrightarrow{g} \Z$ are maps in $\bbG$, then for any complexes
$\eF, \eG \in \wDqcp(\Z)$, the following diagram commutes.  
\[
\xymatrix{
\bL f^*\bL g^*\eF\overset{\bL}\otimes_{\co_\X} \ush{f}\ush{g}\eG \ar[r]^{\chi^f} \ar[d]^{\cong} & 
  \ush{f}(\bL g^*\eF \overset{\bL}\otimes_{\co_\Y} \ush{g}\eG) \ar[r]^{\ush{f}\chi^g} & 
  \ush{f}\ush{g}(\eF \overset{\bL}\otimes_{\co_\Z} \eG) \ar[d]_{\cong} \\
\bL (gf)^*\eF\overset{\bL}\otimes_{\co_\X} \ush{(gf)}\eG  \ar[rr]^{\chi^{gf}} & & \ush{(gf)}(\eF \overset{\bL}\otimes_{\co_\Z} \eG)
}
\]
\end{enumerate}
\end{propdef}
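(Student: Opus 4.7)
The plan is to establish part~(1)---that $\chi^f_F(-,-)$ depends only on~$f$---and to deduce part~(2) directly from the recursive structure of~$\chi$.

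For part~(1), I would call $F'$ a \emph{refinement} of~$F$ if $F'$ is obtained from~$F$ by replacing each labelled entry $f_i^{\lambda_i}$ with a labelled factorization of~$f_i$. Using the identity $\chi^{}_{(F_1,F_2)} = (\ush{F_1}\chi^{}_{F_2}) \circ \chi^{}_{F_1}$ built into the definition, refinement-invariance reduces to a length-one base case: if $f$ is pseudoproper (resp.\ an open immersion) and $G$ is any labelled factorization of~$f$, one needs $\chi^f_G = \chi^f_{(f^{\mathsf P})}$ (resp.\ $\chi^f_{(f^{\mathsf O})}$). This splits into two sub-claims: (a)~within-type pseudofunctoriality, where a labelled sequence consisting only of $\mathsf P$-entries (resp.\ only $\mathsf O$-entries) collapses correctly to its one-entry counterpart, which follows from pseudofunctoriality of the trace~$\Tr{-}$ together with naturality of the torsion projection isomorphism $p^t_-$ (resp.\ from pseudofunctoriality of~$\BL_{-}$ and the commutativity of diagram~\eqref{eq:chi-f^O}); and (b)~cross-type compatibility, comparing a length-two factorization $(u^{\mathsf O}, p^{\mathsf P})$ of $pu$ with a swapped factorization $({p'}^{\mathsf P}, {u'}^{\mathsf O})$ of the same composite arising from a cartesian square.

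Sub-claim~(b) is the principal obstacle. For a cartesian square
\[
\xymatrix{
\V \ar[r]^{u'} \ar[d]_{p'} \ar@{}[dr]|\square & \X \ar[d]^{p} \\
\W \ar[r]_{u} & \Y
}
\]
with $u,u'$ open immersions and $p,p'$ pseudoproper, I would assemble a commuting diagram relating $\chi^{pu'}_{({u'}^{\mathsf O}, p^{\mathsf P})}$ and $\chi^{up'}_{({p'}^{\mathsf P}, u^{\mathsf O})}$. The inputs are the flat base-change isomorphism $\ush{\beta_{\mathfrak s}}$ from~\eqref{iso:qc-basech}, the compatibility of~$\Tr{p}$ with flat base change along~$u$ (from~\cite{dfs} and the Appendix), and naturality of the projection isomorphism under $\R\iGp{-}$ and pullback. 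Once~(b) is available, any two labelled factorizations $F_1, F_2$ of~$f$ can be linked by a chain of such swaps and refinements---produced iteratively via fibre products of intermediate formal schemes---yielding a common refinement and hence $\chi^f_{F_1} = \chi^f_{F_2}$. The technical challenge lies entirely in matching the two realizations of~$\chi$ in~(b): one as an adjoint to a trace map, the other built from~$\BL_{-}$.

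Part~(2) then follows directly: for labelled factorizations $F$ of~$f$ and $G$ of~$g$, the concatenation $(F,G)$ is a labelled factorization of~$gf$, and the recursive formula gives $\chi^{gf}_{(F,G)} = (\ush{F}\chi^g_G) \circ \chi^f_F$ after the natural identifications of $F^*$ with~$\bL f^*$, $\ush{F}$ with~$\ush{f}$, and similarly for~$G$. Part~(1) identifies the left-hand side with~$\chi^{gf}$, establishing the commutativity of the diagram in~(2).
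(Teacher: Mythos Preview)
Your ingredients are right---the same-label collapse (your sub-claim (a), which is the paper's \Lref{lem:chi-samelabel}) and the cartesian-square swap (your sub-claim (b), the paper's \Lref{lem:chi-cartesian}) are indeed the two basic moves. Your treatment of part~(2) via $\chi_{(F,G)} = (\ush{F}\chi_G)\circ\chi_F$ is also what the paper does. The gap is in how you combine the moves to finish part~(1).

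You assert that ``any two labelled factorizations $F_1,F_2$ of $f$ can be linked by a chain of such swaps and refinements \ldots\ yielding a common refinement.'' For ordinary schemes this is how Deligne and Lipman proceed, using that compactifications are cofiltered. For formal schemes in~$\bbG$ no such domination result is available---this is exactly the obstruction flagged in the introduction around diagram~\eqref{intro-**}. So the ``common refinement'' step does not go through as stated. Separately, your length-one base case (``$f$ carries a single label and $G$ is an \emph{arbitrary} factorization'') is not obviously handled by (a)$+$(b): sub-claim (b) only compares the two routes around a \emph{given} cartesian square, and there is no a priori square relating an open--proper factorization $(u,p)$ of a pseudoproper $f$ to the trivial factorization $(f^{\mathsf P})$.

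The paper closes this gap with a diagonal trick that avoids common refinements entirely. Form $\X\times_\Y\X$ with projections $p_1,p_2$ and diagonal $\Delta$ (labelled~$\mathsf P$). Pulling back $F_1,F_2$ along the projections gives factorizations $F_1',F_2'$ of $p_1,p_2$. The cartesian-swap lemma then gives $\chi^f_{(\Delta,F_1',F_2)}=\chi^f_{(\Delta,F_2',F_1)}$. To connect these to $\chi^f_{F_1}$ and $\chi^f_{F_2}$ one needs that $(\Delta,F_i')$, being a factorization of $1_\X$, contributes nothing: this is the key \Lref{lem:chi-fundamental}, proved by its own descending induction on the position of the first $\mathsf O$-label, again using a diagonal construction together with \Lref{lem:chi-iso}. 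That identity-factorization lemma is the missing bridge in your outline; once you have it (plus the easy \Lref{lem:chi-parts}), the one-line chain
\[
\chi^f_{F_2}=\chi^f_{(1_\X,F_2)}=\chi^f_{(\Delta,F_1',F_2)}=\chi^f_{(\Delta,F_2',F_1)}=\chi^f_{(1_\X,F_1)}=\chi^f_{F_1}
\]
finishes the proof.
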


Part (ii) of  Proposition \ref{prop:chi-welldef} is the transitivity property for $\chi$. 
It is an easy consequence of part (i) and the relation $\chi^{}_{(F,G)} = (\ush{F}\chi^{}_{G}) \circ \chi^{}_{F}$
where $F, G$ are labelled factorizations of~$f,g$ respectively so that $(F,G)$ can be chosen as a labelled 
factorization of $fg$.
 
The proof of Proposition \ref{prop:chi-welldef}(i) is somewhat long and proceeds via several special cases 
of both parts (i) and (ii) first. We tackle these in the next few lemmas. In all these proofs, to reduce
clutter in numerous diagrams, we shall use the following shorthand notation where $f$ is a generic name for a map
and $\X$ for a formal scheme.
\[
f^* = \bL f^*, \qquad \otimes = \overset{\bL}\otimes, \qquad \iGp{\X} = \R\iGp{\X}, \qquad f^t_* = \R f_*\R\iGp{\X}
\]

\begin{lem}
\label{lem:chi-parts}
Let $\X \xrightarrow{f} \Y$ be a map in $\bbG$. 
\begin{enumerate}
\item If $F_1, F_2$ are labelled 
factorizations of $f$ such that $\chi^f_{F_1} = \chi^f_{F_2}$, then for any maps $\W \xrightarrow{h} \X$ and
$\Y \xrightarrow{g} \Z$ in $\bbG$ and 
labelled factorizations $G, H$ of $g,h$ respectively, we have
$\chi^{gf}_{(F_1, G)} = \chi^{gf}_{(F_2, G)}$ and $\chi^{fh}_{(H, F_1)} = \chi^{fh}_{(H, F_2)}$.
\item For any labelled factorization $F$ of $f$ we have 
$\chi^f_{(1_{\X}, F)} = \chi^f_F = \chi^f_{(F,1_{\Y})}$.
\end{enumerate}
\end{lem}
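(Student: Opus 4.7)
The plan for both parts is to exploit the decomposition formula $\chi^{}_{(F_1, F_2)} = (\ush{F_1}\chi^{}_{F_2}) \circ \chi^{}_{F_1}$ noted right after the definition of $\chi^{}_F$, together with the bifunctoriality of $\chi^{}_F$ in its two variables (built into the definition and propagating by induction from that of $q^{}_\X$ and of $\chi^{}_f$ for a single pseudoproper $f$).

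For part (i), first I would rewrite
\[
\chi^{}_{(F_i, G)}(\eF, \eG) = (\ush{F_i}\chi^{}_G(\eF, \eG)) \circ \chi^{}_{F_i}(G^*\eF, \ush{G}\eG),
\]
and wrap this composite with the pseudofunctorial isomorphisms $\bL(gf)^* \iso F_i^* G^*$ and $\ush{(gf)} \iso \ush{F_i}\ush{G}$ to produce $\chi^{gf}_{(F_i, G)}$. The dependence on $F_i$ is entirely localized in the factor $\chi^{}_{F_i}(G^*\eF, \ush{G}\eG)$; further wrapping with the comparisons $\bL f^* \iso F_i^*$ and $\ush{f} \iso \ush{F_i}$ turns this factor into $\chi^f_{F_i}(G^*\eF, \ush{G}\eG)$, and the hypothesis applied at the arguments $(G^*\eF, \ush{G}\eG)$ closes the argument. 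The second identity is analogous: $\chi^{}_{(H, F_i)} = (\ush{H}\chi^{}_{F_i}(\eF, \eG)) \circ \chi^{}_H(F_i^*\eF, \ush{F_i}\eG)$, where the $F_i$-dependence of the right-hand factor disappears after transport along the pseudofunctorial isos on $\bL f^*$ and $\ush{f}$, and the left-hand factor $\chi^{}_{F_i}(\eF, \eG)$ is controlled by the hypothesis applied directly at $(\eF, \eG)$.

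For part (ii), for $(1_\X, F)$ I would use $\chi^{}_{1_\X} = q^{}_\X$ to write
\[
\chi^{}_{(1_\X, F)}(\eF, \eG) = (\BL_\X\chi^{}_F(\eF, \eG)) \circ q^{}_\X(F^*\eF, \ush{F}\eG).
\]
Since each $\ush{f_i}$ factors through $\BL_{\X_i}$, the complex $\ush{F}\eG$ is $\BL_\X$-local, so the canonical map $\ush{F}\eG \to \BL_\X\ush{F}\eG$ is an isomorphism. The triangle
\[
F^*\eF \otimes \ush{F}\eG \to F^*\eF \otimes \BL_\X\ush{F}\eG \xrightarrow{q^{}_\X} \BL_\X(F^*\eF \otimes \ush{F}\eG),
\]
whose composite is the canonical map $\mathbf{1} \to \BL_\X$ (noted in the discussion preceding the definition of $\chi^{}_F$), combined with the naturality of $\mathbf{1} \to \BL_\X$ applied to $\chi^{}_F$, shows that $(\BL_\X\chi^{}_F) \circ q^{}_\X$ equals $\chi^{}_F(\eF, \eG)$ post-composed with the canonical map $\ush{F}(\eF \otimes \eG) \to \BL_\X\ush{F}(\eF \otimes \eG)$. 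This last map is exactly the pseudofunctorial identification $\ush{F} \iso \BL_\X\ush{F}$ appearing in the definition of $\chi^f_{(1_\X, F)}$, and one concludes $\chi^f_{(1_\X, F)} = \chi^f_F$. The case $(F, 1_\Y)$ is dual: from $\chi^{}_{(F, 1_\Y)} = (\ush{F}q^{}_\Y) \circ \chi^{}_F(\eF, \BL_\Y\eG)$, bifunctoriality of $\chi^{}_F$ in its second argument, applied to the canonical $\iota \colon \eG \to \BL_\Y\eG$, followed by the triangle identity for $q^{}_\Y$, reduces the composite to $\chi^{}_F(\eF, \eG)$ post-composed with $\ush{F}$ of the canonical map $\eF \otimes \eG \to \BL_\Y(\eF \otimes \eG)$, which is exactly the pseudofunctorial identification $\ush{F} \iso \ush{F}\BL_\Y$ used to define $\chi^f_{(F, 1_\Y)}$.

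The argument is essentially formal; the main obstacle is consistently tracking the pseudofunctorial identifications $\bL f^* \iso F^*$, $\ush{f} \iso \ush{F}$ together with the canonical transformations $\mathbf{1} \to \BL_\X$ and $\mathbf{1} \to \BL_\Y$ through the recursive definition of $\chi^{}_F$. Once naturality of $\chi^{}_F$ and the triangle property of $q^{}_-$ are correctly accounted for, both assertions reduce to short formal diagram manipulations.
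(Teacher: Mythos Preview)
Your proposal is correct and follows essentially the same approach as the paper. In both parts you use the decomposition $\chi^{}_{(F_1,F_2)} = (\ush{F_1}\chi^{}_{F_2})\circ\chi^{}_{F_1}$ together with naturality of $\chi^{}_F$ and the triangle property of $q^{}_-$; the paper encodes precisely this argument in two small commutative diagrams, while you spell it out in words with somewhat more detail.
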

\begin{proof}
(i) To prove that $\chi^{gf}_{(F_1, G)} = \chi^{gf}_{(F_2, G)}$ it suffices to check that for any complexes
$\eF, \eG \in \wDqcp(\Z)$  the outer border of the following diagram commutes.
\[
\xymatrix{
F_1^*G^*\eF \otimes \ush{F_1}\ush{G}\eG \ar[d]^{\cong} \ar[r] & \ush{F_1}(G^*\eF \otimes \ush{G}\eG) 
\ar[d]^{\cong} \ar[r] &  \ush{F_1}\ush{G}(\eF \otimes \eG) \ar[d]^{\cong} \\
f^*g^*\eF \otimes \ush{f}\ush{g}\eG   \ar[d]^{\cong} & \ush{f}(G^*\eF \otimes \ush{G}\eG) \ar[d]^{\cong} & 
\ush{f}\ush{g}(\eF \otimes \eG) \ar[d]^{\cong}  \\
F_2^*G^*\eF \otimes \ush{F_2}\ush{G}\eG \ar[r] & \ush{F_2}(G^*\eF \otimes \ush{G}\eG) \ar[r]
& \ush{F_2}\ush{G}(\eF \otimes \eG) 
}
\]
Along the leftmost and the rightmost columns, the composite of maps remains unchanged if in the 
objects of the middle row, $g^*, \ush{g}$ are replaced by $G^*, \ush{G}$ respectively. Thus  
the left half commutes if $\chi^f_{F_1} = \chi^f_{F_2}$ while the right one commutes for 
functorial reasons. The proof of the other relation is similar.

(ii) For $\eF, \eG \in \wDqcp(\Y)$, the following diagram, where $1 = 1_{\X}$, is easily seen to 
commute keeping in mind the isomorphisms $\ush{F} \iso \BL_{\X}\ush{F} = \ush{1}\ush{F}$.
\[
\xymatrix{
f^*\eF \otimes \ush{f}\eG \ar[d]_{\cong}\ar[r]^{\cong} & F^*\eF \otimes \ush{F}\eG \ar[ld]_{\cong}\ar[d]\ar[r] 
& \ush{F}(\eF \otimes \eG) \ar[d]_{\cong}\ar[r]^{\cong} & \ush{f}(\eF \otimes \eG) \ar[ld]^{\cong} \\
1^*F^*\eF \otimes \ush{1}\ush{F}\eG \ar[r] & \ush{1}(F^*\eF \otimes \ush{F}\eG) \ar[r] & 
\ush{1}\ush{F}(\eF \otimes \eG)
}
\] 
From the outer border we get that $\chi^f_{(1_{\X},F)} = \chi^f_F$.
The other relation is proved similarly.
\end{proof}

\begin{lem}
\label{lem:chi-samelabel}
Let $f \colon \X \to \Y$ be a map in $\bbG$ and 
$F = (f_1, \ldots, f_n)$ a labelled sequence factoring~$f$ such that all the $f_i$'s have the same label, say~$\lambda$,
so that~$f$ is also in~$\lambda$. Then $\chi^f_{F}  = \chi^f_{f^{\lambda}}$. 
\end{lem}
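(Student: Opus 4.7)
The plan is to proceed by induction on $n$, the length of $F$. The base case $n=1$ is immediate from the definition of $\chi^f_{f^\lambda}$. For the inductive step, decompose $F = (f_1, G)$ where $G = (f_2,\dots,f_n)$ is a labelled sequence of length $n-1$ factoring some map $g \colon \X_2 \to \Y$. All maps in $G$ have label $\lambda$, so by induction $\chi^g_G = \chi^g_{g^\lambda}$. Using $\chi^{}_F = (\ush{f_1}\chi^{}_G)\circ \chi^{}_{f_1}$, together with Lemma \ref{lem:chi-parts}(i) (applied to $G$ in place of $F_1,F_2$), this reduces the problem to the case $n=2$: it suffices to prove that for $\X \xrightarrow{f_1} \X_2 \xrightarrow{f_2} \Y$ both of label $\lambda$, with $f = f_2 f_1$ (necessarily also in the subcategory corresponding to $\lambda$), the composite $(\ush{f_1}\chi^{f_2^\lambda})\circ\chi^{f_1^\lambda}$ equals $\chi^{f^\lambda}$ after suitable identification of $\ush{f_1}\ush{f_2}$ with $\ush{f}$ and $f_1^*f_2^*$ with $f^*$.

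For $\lambda = \mathsf{O}$, both $f_1, f_2$, and hence $f$, are open immersions. Unwinding diagram \eqref{eq:chi-f^O}, each $\chi^{f_i^{\mathsf{O}}}$ is built from the natural transformation $q^{}_{-}$ and from commutations of $f_i^*$ with $\BL_{-}$ and with tensor products. I would paste together the two instances of \eqref{eq:chi-f^O} for $f_1$ and $f_2$ into one large diagram and reduce the claim to two facts: (a) pseudofunctoriality of $\bL(-)^*$ applied to $\BL_{\Y}$, and (b) the compatibility $\BL_{\X}q^{}_{\X}(f_1^*-,f_1^*-) \circ f_1^* q^{}_{\X_2}(-,-)$ identifies with $q^{}_{\X}$ applied after pulling back, which follows from the explicit description of $q^{}_{-}$ as the canonical map $\eF \otimes \R\sHom(\R\iGp{-}\co,\eG) \to \R\sHom(\R\iGp{-}\co,\eF\otimes\eG)$ together with the isomorphisms \eqref{iso*gamma-lambda}. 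None of this is deep; it is a diagram chase using the universal property of $q^{}_-$.

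For $\lambda = \mathsf{P}$, both $f_1, f_2$, and $f$ are pseudoproper, so $\chi^{f_i^{\mathsf{P}}}$ and $\chi^{f^{\mathsf{P}}}$ are defined via adjunction using the trace maps $\Tr{f_i}, \Tr{f}$. By adjointness it is equivalent to check that $\R f_* \R\iGp{\X}$ applied to the composite $(\ush{f_1}\chi^{f_2})\circ \chi^{f_1}$, followed by the counit associated with the adjunction, agrees with the defining composite of $\chi^f$. Expanding both sides in terms of the torsion projection isomorphism $p^t_{-}$ and unwinding, one sees that the claim comes down to two standard facts: transitivity of the projection isomorphism under composition, and pseudofunctoriality of traces, namely that $\Tr{f}$ decomposes as $\Tr{f_2} \circ \R(f_2)_* \R\iGp{\X_2}(\Tr{f_1})$ under the identification $\ush{f} \iso \ush{f_1}\ush{f_2}$. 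The latter is built into the pre-pseudofunctorial structure of $\ush{(-)}$ restricted to pseudoproper maps, where $\ush{f}$ is right adjoint to $\R f_* \R\iGp{\X}$.

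The main obstacle, and thus where I would expend most care, is verifying the pseudoproper case: specifically, making sure the definition of $\chi^{f_1}$ via the torsion projection formula for $f_1$ applied to the coefficients $(G^*\eF, \ush{G}\eG)$ interacts correctly, after applying $\ush{f_1}\chi^{f_2}$ and pushing down by $\R f_* \R\iGp{\X}$, with the single-step projection formula for~$f$. This is a pasting of two projection isomorphisms against a trace composition, and the bookkeeping with $\R\iGp{-}$ is delicate; the isomorphisms in \eqref{iso*gamma-lambda} and the identity $f^! \iso f^!\R\iGp{\Y}$ are the technical ingredients that make the two sides agree.
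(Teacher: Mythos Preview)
Your proposal is correct and follows essentially the same approach as the paper. Both reduce to the length-$2$ case via Lemma~\ref{lem:chi-parts}(i), though you fold the tail $G=(f_2,\dots,f_n)$ first by induction and then handle $(f_1,g^\lambda)$, whereas the paper folds the head $(f_1,f_2)$ into $f_2f_1$ and then inducts; both are equivalent. For $n=2$ the paper does exactly what you outline: in the $\mathsf O$ case a diagram chase whose one nontrivial square is precisely \eqref{eq:chi-f^O}, and in the $\mathsf P$ case the adjoint diagram whose commutativity rests on pseudofunctoriality of trace and on transitivity of the projection isomorphism (the paper cites \cite[p.~125, Prop.~3.7.1]{notes} for the latter).
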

\begin{proof}
It suffices to prove the case $n=2$ for then, by Lemma \ref{lem:chi-parts}(i),
in the general case we have 
$\chi^f_{(f_1, \ldots, f_n)} = \chi^f_{(f_2f_1, \ldots, f_n)}$, whence the result follows by induction.

In effect, we have reduced to proving Proposition \ref{prop:chi-welldef} (ii), 
with $\chi^f, \chi^g, \chi^{gf}$ replaced by $\chi^{}_f, \chi^{}_g, \chi^{}_{gf}$ respectively. 
For the rest of the proof, we use the notation from there. Let $\eF,\eG \in \wDqcp(\Z)$.

If $\lambda = \mathsf O$, then the result follows from 
the outer border of the following commutative diagram where $(\ddag)$ commutes by \eqref{eq:chi-f^O}
and the remaining parts commute for trivial reasons.
\begin{small}
\[
\xymatrix{
f^*g^*\eF \otimes \BL_{\X}f^*\BL_{\Y}g^*\eG \ar[d] \ar[r] & \BL_{\X}(f^*g^*\eF \otimes f^*\BL_{\Y}g^*\eG) 
\ar[d]^{\hspace{6em}(\ddag)} \ar[r]  & \BL_{\X}f^*(g^*\eF \otimes \BL_{\Y}g^*\eG) \ar[d] \\
f^*g^*\eF \otimes \BL_{\X}f^*g^*\eG \ar[d] \ar[r] \ar[rd] & \BL_{\X}(f^*g^*\eF \otimes \BL_{\X}f^*g^*\eG) \ar[d]
& \BL_{\X}f^*\BL_{\Y}(g^*\eF \otimes g^*\eG) \ar[ldd] \ar[d]  \\
(gf)^*\eF \otimes \BL_{\X}(gf)^*\eG \ar[d] & \BL_{\X}(f^*g^*\eF \otimes f^*g^*\eG) \ar[ld] \ar[d] 
& \BL_{\X}f^*\BL_{\Y}g^*(\eF \otimes \eG) \ar[dd]  \\
\BL_{\X}((gf)^*\eF \otimes (gf)^*\eG) \ar[d] &  \BL_{\X}f^*(g^*\eF \otimes g^*\eG) \ar[rd] \\
\BL_{\X}(gf)^*(\eF \otimes \eG) \ar[rr] & &  \BL_{\X}f^*g^*(\eF \otimes \eG)
}
\]
\end{small}

If $\lambda = \mathsf P$, then 
by adjointness it suffices to check that the outer border of the following diagram commutes
where $p^t_-$ is the torsion projection isomorphism as defined in the beginning of~\S\ref{s:fubini-1}.
\[
\xymatrix{
g^t_*f^t_*(f^*g^*\eF \otimes \ush{f}\ush{g}\eG) \ar[r]^{(p^t_f)^{-1}} \ar[d] & g^t_*(g^*\eF \otimes f^t_*\ush{f}\ush{g}\eG)
\ar[r]^{\Tr{f}} \ar[d]^{(p^t_g)^{-1}}  &g^t_*(g^*\eF \otimes \ush{g}\eG) \ar[d]^{(p^t_g)^{-1}} \\
g_*f^t_*(f^*g^*\eF \otimes \ush{f}\ush{g}\eG)  \ar[d]^{\cong} & \eF \otimes g^t_*f^t_*\ush{f}\ush{g}\eG
\ar[r]^{\Tr{f}} \ar[d]^{\cong}  & \eF \otimes g^t_*\ush{g}\eG \ar[d]^{\Tr{g}} \\
(gf)^t_*((gf)^*\eF \otimes \ush{(gf)}\eG) \ar[r]^{\quad (p^t_{gf})^{-1}} & \eF \otimes (gf)^t_*\ush{(gf)}\eG
\ar[r]^{\Tr{gf}}  & \eF \otimes \eG  
}
\]
The upper rectangle on the right commutes trivially, while the lower one on the right commutes because of the 
way the composite of adjoints is identified as an adjoint pseudofunctorially. 
Commutativity of the diagram on the left 
follows easily from the outer border of the following one with obvious natural maps 
where we use $\eE = \ush{f}\ush{g}\eG$.
\[
\xymatrix{
g_*^tf_*^t(f^*g^*\eF \otimes \eE) \ar[r] \ar[d] &  g_*^tf_*(f^*g^*\eF \otimes \iGp{\X}\eE) \ar[r] \ar[ldd]
& g_*^t(g^*\eF \otimes f_*\iGp{\X}\eE) \ar[ld] \ar[d] \\
g_*f_*^t(f^*g^*\eF \otimes \eE) \ar@/_5pc/[dd]_{} \ar[d]  & g_*(g^*\eF \otimes f_*\iGp{\X}\eE) \ar[d]  \ar[ld]
& g_*(g^*\eF \otimes \iGp{\Y}f_*\iGp{\X}\eE) \ar[l] \ar[d] \\
g_*f_*(f^*g^*\eF \otimes \iGp{\X}\eE)\makebox[0pt]{\hspace{5em} $(\dagger)$} 
\ar[rd] & \eF \otimes g_*f_*\iGp{\X}\eE  \ar[rd] 
& \eF \otimes g^t_*f_*\iGp{\X}\eE \ar[l] \ar[d] \\
(gf)^t_*((gf)^*\eF \otimes \eE) \ar[r] &  (gf)_*((gf)^*\eF \otimes \iGp{\X}\eE) \ar[r] & \eF \otimes (gf)_*\iGp{\X}\eE
}
\]
Here $(\dagger)$ commutes by \cite[p.~125, Prop.~3.7.1]{notes}. Commutativity of the remaining parts is obvious.
\end{proof}

Consider a cartesian diagram in $\bbG$ as follows.
\stepcounter{thm}
\[
\begin{aligned}
\label{eq:fiber-sq}
\xymatrix{
\W \ar[d]_g \ar[r]^v & \X \ar[d]^f \\
\Z \ar[r]_u & \Y
}
\end{aligned}\tag{\thethm}
\]
Pick labelled factorizations $F = (f_1, \ldots, f_n)$ and $U = (u_1, \ldots, u_m)$ of $f,u$ respectively so that 
these in turn, induce corresponding ones $G,V$ of $g,v$ by base change in the obvious manner. 
Thus the composite map $h = fv = ug$
admits two labelled factorizations, namely, $(V,F)$ and $(G,U)$.
\begin{lem} 
\label{lem:chi-cartesian}
In the above setup, $\chi^h_{(V,F)} = \chi^h_{(G,U)}$.
\end{lem}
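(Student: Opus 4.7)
The plan is to reduce to the case $n = m = 1$ by induction on $n + m$, and then dispatch the four subcases based on the labels $(\lambda, \mu)$ of $F = f^\lambda$ and $U = u^\mu$.

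For the inductive reduction I would take, say, $n \geq 2$ and split $F = (f_1, F'')$ where $F''$ factors a map $f'' \colon \X_2 \to \Y$, so that $f = f'' \circ f_1$. Correspondingly the cartesian square decomposes as two stacked cartesian squares through $\W_2 \set \X_2 \times_\Y \Z$, and the base-changed sequence $V$ is naturally expressed as the composite of the $U$-base-change along $f''$ (factoring some $v'' \colon \W_2 \to \X_2$) and then along $f_1$, while $G = (g_1, G'')$. Applying the recursive identity $\chi_{(A,B)} = (\ush{A}\chi_B) \circ \chi_A$ to each side, invoking Lemma \ref{lem:chi-parts}(i) to ``peel off" $f_1$ (resp.\ $g_1$), and using the inductive hypothesis on each of the two smaller cartesian squares, we reduce to $n = 1$. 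A symmetric argument reduces $m$ to $1$.

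For the base case $n = m = 1$, cases $(\mathsf O, \mathsf O)$ and $(\mathsf P, \mathsf P)$ are immediate: in each of them both $(V, F)$ and $(G, U)$ are uniform-label length-$2$ factorizations of $h$, and $h$ itself lies in the corresponding subcategory, so Lemma \ref{lem:chi-samelabel} gives $\chi^h_{(V,F)} = \chi^h_{h^\lambda} = \chi^h_{(G,U)}$.

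The essential content lies in the mixed cases. Consider $(\lambda, \mu) = (\mathsf O, \mathsf P)$, so $f$ is an open immersion, $u$ is pseudo-proper, $v$ is pseudo-proper, and $g$ is an open immersion. Here $\chi^h_{(v^P, f^O)}$ first applies $\chi_{v^P}$ (an adjoint to a trace-projection composite on $v$) and then $\ush{v}\chi_{f^O}$ (built from $q_\X$ and the base-change-along-$f$ isomorphism), whereas $\chi^h_{(g^O, u^P)}$ does these in the opposite order on the $(u, g)$ side. To see they agree I would pass to the adjoint formulations by applying $\R v_*\R\iGp{\W}$ (equivalently $\R g_* \R\iGp{\W} u^*$ via flat base change through the open immersion $f$) and check commutativity of the resulting diagram piece by piece; the individual pieces are (a) flat base change $\bL f^* \R u_* \R\iGp{\Z} \iso \R v_* \R\iGp{\W} \bL g^*$, (b) compatibility of $\Tr{u}$ with base change along the flat map $f$, (c) naturality of the torsion projection isomorphism $p^t_{-}$ under base change, and (d) compatibility of $q_\W$ with pullback along the open immersion $g$ (or along $\bL f^*$). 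The case $(\mathsf P, \mathsf O)$ is completely symmetric.

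The main obstacle is precisely these mixed cases: the $\mathsf P$-side description of $\chi$ is via a trace-and-projection-formula adjunction while the $\mathsf O$-side description is via the map $q_\X$ together with pullback through $\BL$, and reconciling them requires unfolding each one into its adjoint form and verifying a fairly large natural diagram whose sub-pieces are the base-change compatibilities just listed. Everything else (the inductive step and the uniform-label cases) is essentially bookkeeping using Lemmas \ref{lem:chi-parts} and \ref{lem:chi-samelabel}.
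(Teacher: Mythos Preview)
Your proposal is correct and follows essentially the same route as the paper. The paper also reduces to $m=n=1$ by decomposing one factorization (it splits $U$ rather than $F$, but this is immaterial) and invoking Lemma~\ref{lem:chi-parts}(i), then disposes of the uniform-label cases via Lemma~\ref{lem:chi-samelabel}, and finally treats a single mixed case by passing to the adjoint and verifying a large diagram whose nontrivial piece is exactly the projection-formula/base-change compatibility you list. The only cosmetic difference is that the paper writes out the case where $f,g$ carry label $\mathsf P$ and $u,v$ carry label $\mathsf O$, taking the adjoint along the pseudo-proper $g$; you chose the transposed mixed case and take the adjoint along $v$. Your parenthetical ``equivalently $\R g_*\R\iGp{\W}u^*$'' is garbled as stated (the targets do not match), but this does not affect the argument.
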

\begin{proof}
By decomposing the factorizations $F$ and $U$, let us first reduce to the case $m=n=1$.
For instance,
a decomposition $U = (U', U'')$, induces a corresponding one $V = (V', V'')$ and we have a
horizontally decomposed cartesian diagram as follows.
\[
\xymatrix{
\W \ar[d]_g \ar[r]^{v'} &  \W' \ar[d]^{g'} \ar[r]^{v''}  & \X \ar[d]^f \\
\Z \ar[r]_{u'} & \Z' \ar[r]_{u''} & \Y
}
\]
Set $h' \set u'g = g'v'$ and $h'' \set u''g' = fv''$. 
If $G'$ is the induced factorization of $g'$, then by Lemma \ref{lem:chi-parts}(i), it suffices to prove that  
$\chi^{h'}_{(G, U')} = \chi^{h'}_{(V',G')}$ and $\chi^{h''}_{(G', U'')} = \chi^{h''}_{(V'', F)}$. Thus 
we inductively reduce to the case $m=1$. A similar argument further reduces it to $n=1$. 
Moreover, after assuming $m=n=1$, by Lemma \ref{lem:chi-samelabel} it suffices to consider the case when $f,g$ 
have label $\mathsf P$ while $u,v$ have label $\mathsf O$. 

For this special case, using the identifications $\ush{u} = \BL_{\Z}u^*, \ush{v} = \BL_{\W}v^*$, 
proving the relation $\chi^h_{(v,f)} = \chi^h_{(g,u)}$
amounts to proving that the following diagram commutes for $\eF, \eG \in \wDqcp(\Y)$.
\begin{equation}
\begin{aligned}
\label{lem:chi-cartesian:eq1}
\xymatrix{
v^*f^*\eF \otimes \BL_{\W}v^*\ush{f}\eG \ar[d] \ar[r]^{\quad\Iso} & h^*\eF \otimes \ush{h}\eG \ar[r]^{\Iso\qquad}
& g^*u^*\eF \otimes \ush{g}\BL_{\Z}u^*\eG \ar[d] \\
\BL_{\W}v^*(f^*\eF \otimes \ush{f}\eG) \ar[d] & & \ush{g}(u^*\eF \otimes \BL_{\Z}u^*\eG) \ar[d] \\
\BL_{\W}v^*\ush{f}(\eF \otimes \eG) \ar[r]^{\quad\Iso} & \ush{h}(\eF \otimes \eG) \ar[r]^{\Iso\qquad} 
&  \ush{g}\BL_{\Z}u^*(\eF \otimes \eG) 
}
\end{aligned}
\end{equation}
The composite along each of the two rows is induced by the composite isomorphism
$\ush{v}\ush{f} \iso \ush{h} \iso \ush{g}\ush{u}$ which in fact, 
identifies with the base-change isomorphism
$\ush{\beta} \colon \BL_{\W}v^*\ush{f} \iso \ush{g}\BL_{\Z}u^*$.
Using the adjointness of $\ush{g}$ to $g^t_* = g_*\iGp{\W}$, we consider the adjoint of \eqref{lem:chi-cartesian:eq1}.
The adjoint diagram is expanded below, which, for convenience, is broken into two parts.
Thus the rightmost column of \eqref{lem:chi-cartesian:eq2} is the same as the left column of 
\eqref{lem:chi-cartesian:eq3} and the outer border of the conjoined diagram is the adjoint of \eqref{lem:chi-cartesian:eq1}. 
The maps are natural ones induced by isomorphisms
\[
\iGp{?}(\M \otimes \N) \iso \iGp{?}\M \otimes \N, \quad \iGp{?}\BL \iso \iGp{?}, \quad \iGp{\Z}u^* \iso u^*\iGp{\Y}, 
\quad \iGp{\W}v^* \iso v^*\iGp{\X},
\]
and also the isomorphisms $\iGp{\X}\ush{f} \iso f^!\iGp{\Y}$, $\iGp{\W}\ush{g} \iso g^!\iGp{\Z}$.
\begin{equation}
\label{lem:chi-cartesian:eq2}
\begin{small}
\begin{aligned}
\xymatrix{
g^t_*(v^*f^*\eF \otimes \BL_{\W}v^*\ush{f}\eG) \ar[dd] \ar[rd] \ar[rr] & 
& g^t_*(g^*u^*\eF \otimes \ush{g}\BL_{\Z}u^*\eG) \ar[d] \\
& g_*(v^*f^*\eF \otimes v^*f^!\iGp{\Y}\eG) \ar[d] \ar[r] & g_*(g^*u^*\eF \otimes g^!u^*\iGp{\Y}\eG) \ar[d] \\
g^t_*\BL_{\W}v^*(f^*\eF \otimes \ush{f}\eG) \ar[d] \ar[r] 
& g_*v^*(f^*\eF \otimes f^!\iGp{\Y}\eG)  \ar[d]^{\hspace{6em} \blacksquare} & u^*\eF \otimes g_*g^!u^*\iGp{\Y}\eG \ar[d] \\
g^t_*\BL_{\W}v^*\ush{f}(\eF \otimes \eG) \ar[d] & g_*v^*f^!(\eF \otimes \iGp{\Y}\eG) \ar[d] 
& u^*\eF \otimes u^*\iGp{\Y}\eG \ar[d] \\
g_*v^*f^!\iGp{\Y}(\eF \otimes \eG) \ar[ru] & u^*f_*f^!(\eF \otimes \iGp{\Y}\eG) \ar[r] & u^*(\eF \otimes \iGp{\Y}\eG)
}
\end{aligned}
\end{small}
\end{equation}
\begin{equation}
\label{lem:chi-cartesian:eq3}
\begin{aligned}
\xymatrix{
g^t_*(g^*u^*\eF \otimes \ush{g}\BL_{\Z}u^*\eG) \ar[d] \ar@{=}[r]
& g^t_*(g^*u^*\eF \otimes \ush{g}\BL_{\Z}u^*\eG) \ar[d] \\
g_*(g^*u^*\eF \otimes g^!u^*\iGp{\Y}\eG) \ar[d] \ar[r] & g_*(g^*u^*\eF \otimes \iGp{\W}\ush{g}\BL_{\Z}u^*\eG) \ar[d] \\
u^*\eF \otimes g_*g^!u^*\iGp{\Y}\eG \ar[d] \ar[r] & u^*\eF \otimes  g_*\iGp{\W}\ush{g}\BL_{\Z}u^*\eG \ar[d] \\
u^*\eF \otimes u^*\iGp{\Y}\eG \ar[d] \ar[r] & u^*\eF \otimes u^*\BL_{\Y}\eG\ar[d]  \\
u^*(\eF \otimes \iGp{\Y}\eG) \ar[r] & \BL_{\Z}(u^*\eF \otimes u^*\eG)
}
\end{aligned}
\end{equation}
For the commutativity of the diagram labelled as {\small{$\blacksquare$}} in \eqref{lem:chi-cartesian:eq2} we refer to
the diagram at the bottom of \cite[p.~196]{notes}. The commutativity of the remaining parts is straightforward.
\end{proof}

\begin{lem}
\label{lem:chi-fundamental}
For any labelled factorization $F$ of the identity map $1_{\X}$, we have 
$\chi^{1_{\X}}_F = \chi^{1_{\X}}_{1_{\X}} = q^{}_{\X}$.
\end{lem}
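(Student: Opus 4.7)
The plan is to argue by induction on the length $n$ of the labelled sequence $F=(f_1,\ldots,f_n)$. For the base case $n=1$, both labels yield $\chi^{1_\X}_{1_\X}=q^{}_\X$; this is the observation recorded immediately before Lemma \ref{lem:chi-iso}, obtained from \eqref{eq:chi-f^O} in the open case and from Lemma \ref{lem:chi-iso} in the pseudoproper case.

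For the inductive step with $n\ge 2$, I would first use Lemma \ref{lem:chi-samelabel} to collapse every pair of consecutive $f_i,f_{i+1}$ sharing a common label into a single map carrying that label; this preserves $\chi^{1_\X}_F$ and strictly shortens $F$. One may therefore assume that the labels of $F$ strictly alternate. Decompose $F=(f_1,G)$ with $G$ a labelled factorization of some $g\colon\X_2\to\X$ satisfying $gf_1=1_\X$. Since every map in $\bbG$ is a composite of open immersions and pseudoproper maps (all separated), $g$ is separated, and hence its section $f_1$ is automatically a closed immersion. Consider first the case where $f_1$ carries label $\mathsf O$: then $f_1$ is simultaneously open and closed, so $\X_2$ splits as a disjoint union $\X_2=f_1(\X)\sqcup\X_2''$, on which $g$ restricts to $1_\X$ on the first summand. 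All the functors $\bL f_i^*$ and $\ush{f_i}$ occurring in the recursive definition of $\chi^g_G$ split summand-wise, and pulling $\chi^g_G$ back along $f_1$ extracts the $f_1(\X)$-component, which is recognized as $\chi^{1_\X}_{G'}$ for a shorter labelled factorization $G'$ of $1_\X$ (the componentwise restriction of $G$ to $f_1(\X)$). The induction hypothesis gives $\chi^{1_\X}_{G'}=q^{}_\X$, and then a short diagram chase using \eqref{eq:chi-f^O} for $\chi^{f_1}_{f_1}$ together with the fact (noted in~\S\ref{ss:abs-trans}) that $\BL_\X q^{}_\X$ is an isomorphism identifies $\chi^{1_\X}_F$ with $q^{}_\X$. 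The remaining case, where $f_1$ carries label $\mathsf P$, is reduced to the previous one via the alternation: by parity the last map $f_n$ is either an open immersion $\X_n\to\X$ admitting the section $f_{n-1}\cdots f_1$ (forcing $f_n$ to be an isomorphism, after which Lemma \ref{lem:chi-iso} relabels it as $\mathsf P$ and Lemma \ref{lem:chi-samelabel} shortens $F$), or a pseudoproper map, in which case decomposing $F=(F',f_n)$ from the right and applying Lemma \ref{lem:chi-cartesian} to the cartesian square determined by the section reduces matters to the clopen-decomposition case.

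The principal obstacle is verifying that the clopen restriction of $G$ to $f_1(\X)$ genuinely defines a labelled factorization of $1_\X$ and that the $\chi$-construction is compatible with this restriction: one must check that each step of $G$ remains open or pseudoproper (with the same label) after restriction to the clopen component $f_1(\X)\cong\X$, and that the trace maps $\Tr{f_i}$ and torsion-projection isomorphisms $p^t_{f_i}$ used to build $\chi^{f_i}_{f_i}$ respect the direct-sum decomposition of $\wDqcp(\X_2)$.
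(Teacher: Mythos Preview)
Your outline is honest about its own gap: the ``principal obstacle'' you name---compatibility of the $\chi$-construction with clopen restriction---is exactly where the argument is incomplete, and it is not routine. Even granting that $G' = (f_2|_{f_1(\X)}, f_3, \ldots, f_n)$ is a well-defined labelled factorization of~$1_\X$ and that $f_1^*\chi^{}_G$ matches $\chi^{}_{G'}$ componentwise, your diagram chase must still show that the pseudofunctorial identification $\ush{F}\iso\ush{1_\X}$ agrees, through the clopen identification $\ush{f_1}\ush{G}\cong\BL_\X\ush{G'}$, with the composite $\BL_\X\ush{G'}\iso\BL_\X\BL_\X\iso\BL_\X$. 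Already for $n=2$ this is the assertion $\chi^{1_\X}_{(f_1^{\mathsf O},f_2^{\mathsf P})}=q_\X$ for a clopen section $f_1$ of a pseudoproper~$f_2$, which is not an instance of any of Lemmas~\ref{lem:chi-iso}--\ref{lem:chi-cartesian}. Your $f_1=\mathsf P$, $f_n=\mathsf P$ reduction is also too vague: you do not say which cartesian square is meant, nor how swapping through it via Lemma~\ref{lem:chi-cartesian} lands you in the clopen case.

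The paper avoids all of this by a different device. It first settles directly the length-two case $F=(f_1^{\mathsf P},f_2)$: if $f_2$ is labelled $\mathsf P$ apply Lemma~\ref{lem:chi-samelabel}; if $\mathsf O$ then $f_2$, admitting the section~$f_1$, is a surjective open immersion, hence an isomorphism, and Lemma~\ref{lem:chi-iso} lets one relabel it. For general $F$ of length~$n$ the paper runs \emph{descending} induction on $r(F)$, the number of initial $\mathsf P$-labels. With $g=f_{n-1}\cdots f_1\colon\X\to\X_n$ (a section of~$f_n$, hence a closed immersion, hence pseudoproper) one forms the fibre square $\X\times_{\X_n}\X$ over $g$ on both sides, with diagonal $\Delta$ and projections $p_1,p_2$. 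Labelling $\Delta$ and one copy of~$g$ by~$\mathsf P$, and base-changing the factorization $G=(f_1,\ldots,f_{n-1})$ of the other copy of~$g$ to a factorization $G'$ of~$p_2$, the sequence $(\Delta,G')$ factors $1_\X$ with $r(\Delta,G')=r(F)+1$, so the inductive hypothesis applies. A chain
\[
\chi^{1_\X}_F=\chi^{1_\X}_{(1_\X,G,f_n)}=\chi^{1_\X}_{(\Delta,p_1,G,f_n)}=\chi^{1_\X}_{(\Delta,G',g,f_n)}=\chi^{1_\X}_{(1_\X,g,f_n)}=\chi^{1_\X}_{(g,f_n)}=q_\X
\]
via Lemmas~\ref{lem:chi-parts}, \ref{lem:chi-cartesian} and the length-two case finishes the proof. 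No clopen decomposition, and no compatibility of $\chi$ with restriction, is needed.
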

\begin{proof}
First we consider the special case where the length of $F$ is 2 , say $F = (f_1, f_2)$,
and where the label of $f_1$ is $\mathsf P$. If the label of $f_2$ is also $\mathsf P$ then the result follows from 
Lemma \ref{lem:chi-samelabel} while if the label is $\mathsf O$, then $f_2$, which is necessarily surjective
(as $f_2f_1 = 1_{\X}$), is an isomorphism. By Lemma \ref{lem:chi-iso}, $\chi^{1_{\X}}_F$ does not change
if we replace the label of $f_2$ by $\mathsf P$, and upon doing so, the result follows from Lemma \ref{lem:chi-samelabel}.

In general, fix an integer $n \ge 2$ and let $F = (f_1, \ldots, f_n)$ be a factorization of~$1_{\X}$ 
so that with  $f_i \colon \X_i \to \X_{i+1}$ we have $\X_1 = \X = \X_{n+1}$.
Let $r(F)$ be the largest integer between 1 and $n$ 
such that if $1 \le i \le r$ then $f_i$ has label $\mathsf P$. 
We prove the result by descending induction on $r(F)$. If $r(F)=n$, then the result follows by 
Lemma~\ref{lem:chi-samelabel}. Let $r(F) = k$ and assume that the result is true for any complex
for which $r > k$.
Consider the following diagram containing a fibered square 
\[
\xymatrix{
\X \ar[r]^{\Delta\qquad} & \X \times_{\X_n} \X \ar[r]^{\quad p_2} \ar[d]^{p_1} & \X \ar[d]^g \\
& \X \ar[r]^g & \X_n \ar[r]^{f_n} & \X
}
\]
where $g = f_{n-1}\cdots f_1$ is pseudoproper, $\Delta$ is the diagonal map and $p_i$
are the usual projections. For the map $g$ which is drawn in parallel  to $p_2$ we choose the 
factorization $G = (f_1, \ldots, f_{n-1})$ while for the other one we choose the 
length 1 factorization~$g$ itself with label $\mathsf P$. The parallel edges pick up 
corresponding labelled factorizations by base change: for $p_2$ we denote it as
$G' = (f'_1, \ldots, f'_{n-1})$ while for $p_1$ it is~$p_1$ itself with label $\mathsf P$.
Finally, we assign to $\Delta$, the label $\mathsf P$.

By the special case considered in the first para, 
$\chi^{1_{\X}}_{(\Delta, p_1)} = \chi^{1_{\X}}_{1_{\X}} = \chi^{1_{\X}}_{(g, f_n)}$.
Since $(\Delta, G')$ has length $n$  and $r(\Delta, G') = k+1$, hence by induction, 
$\chi^{1_{\X}}_{(\Delta, G')} =  \chi^{1_{\X}}_{1_{\X}}$.
Therefore, by Lemma \ref{lem:chi-parts} and Lemma \ref{lem:chi-cartesian},
\begin{align*}
\chi^{1_{\X}}_F = \chi^{1_{\X}}_{(G, f_n)} = 
\chi^{1_{\X}}_{(1_{\X}, G, f_n)} &= \chi^{1_{\X}}_{(\Delta, p_1, G, f_n)} \\
&= \chi^{1_{\X}}_{(\Delta, G', g, f_n)} = \chi^{1_{\X}}_{(1_{\X}, g, f_n)} = 
\chi^{1_{\X}}_{(g, f_n)} = \chi^{1_{\X}}_{1_{\X}}. 
\end{align*}
\end{proof}

Using the above lemmas, Proposition \ref{prop:chi-welldef} is proved as follows.

\begin{proof}[Proof of \textup{\ref{prop:chi-welldef}(i)}]
Consider the following diagram with a fibered square.
\[
 \xymatrix{
 \X \ar[r]^{\Delta\qquad} & \X \times_{\Y} \X \ar[d]_{p_1} \ar[r]^{\quad p_2} & \X \ar[d]^{f} \\
 & \X \ar[r]_{f} & \Y
 }
 \]
We choose for the map $f$ which is drawn parallel to $p_1$, the factorization $F_1$,
and denote the induced factorization of $p_1$ by $F'_1$, while for the other $f$ we choose $F_2$
as a factorization and denote the induced one on $p_2$ by $F_2'$. Assigning to~$\Delta$, 
the label~$\mathsf P$, by Lemma \ref{lem:chi-parts}, Lemma \ref{lem:chi-cartesian} 
and Lemma \ref{lem:chi-fundamental} we have 
\[
\chi^{f}_{F_2} = \chi^{f}_{(1_{\X}, F_2)} = \chi^{f}_{(\Delta, F_1', F_2)} =
\chi^{f}_{(\Delta, F'_2, F_1)} = \chi^{f}_{(1_{\X}, F_1)} = \chi^{f}_{F_1}. 
\]
\end{proof}

\begin{proof}[Proof of \textup{\ref{prop:chi-welldef}(ii)}]
Let us pick labelled factorizations $F, G$ of $f,g$ respectively so that $(F,G)$ is a 
factorization for $gf$. 
It suffices to prove that the outer border of the following diagram of obvious natural maps commutes.
\[
\xymatrix{
f^*g^*\eF \otimes \ush{f}\ush{g}\eG \ar[d]^{\hspace{5em} \square_f} \ar[r] 
& \ush{f}(g^*\eF \otimes \ush{g}\eG) \ar[d] \ar[r] & \ush{f}\ush{g}(\eF \otimes \eG) \ar[d] \\
F^*g^*\eF \otimes \ush{F}\ush{g}\eG \ar[d] \ar[r] 
& \ush{F}(g^*\eF \otimes \ush{g}\eG) \ar[d]^{\hspace{5em} \square_g} \ar[r] & \ush{F}\ush{g}(\eF \otimes \eG) \ar[d] \\
F^*G^*\eF \otimes \ush{F}\ush{G}\eG \ar[d]^{\hspace{10em} \square_{gf}} \ar[r] 
& \ush{F}(G^*\eF \otimes \ush{G}\eG) \ar[r] & \ush{F}\ush{G}(\eF \otimes \eG) \ar[d] \\
(gf)^*\eF \otimes \ush{(gf)}\eG \ar[rr] &  & \ush{(gf)}(\eF \otimes \eG) 
}
\]
Here $\square_f, \square_g, \square_{gf}$ commute by definition of $\chi^f_F, \chi^g_G, \chi^{gf}_{(F,G)}$
respectively while the rest of the diagram commutes trivially.
\end{proof}

Here are some additional properties of $\chi$ that we need below.
We begin with compatibility with flat base change. For simplicity we shall assume that 
the complexes have coherent homology.
\begin{prop}\label{prop:chi-bc}
Suppose $\sigma$ is a cartesian square as follows
\[
{\xymatrix{
\V \ar[d]_{g} \ar[r]^{v} \ar@{}[dr]|\square & \X \ar[d]^{f}\\
\U \ar[r]_u  & \Y
}}
\]
with $f$ and $g$ in $\bbG$ and $u$ {\em flat}. Then 
for any $\eF, \eG \in \Dc^+(\Y)$ the diagram $D_{\sigma}$ given as follows, 
commutes:
\[
{\xymatrix{
v^*(f^*\eF\overset{\bL}\otimes\ush{f}\eG) \ar@{=}[d] \ar[rr]^{v^*\chi^{f}} &&
v^*\ush{f}(\eF\overset{\bL}\otimes\eG) \ar[d]^-{\rotatebox{-90}{\makebox[-0.1cm]{\Iso}}\,\,\,} \\
g^*u^*\eF\overset{\bL}\otimes v^*\ush{f}\eG \ar[d]_-{\,\,\,\rotatebox{-90}{\makebox[-0.1cm]{\Iso}}}  & & 
\ush{g}u^*(\eF\overset{\bL}\otimes \eG)  \ar@{=}[d] \\
g^*u^*\eF\overset{\bL}\otimes \ush{g}u^*\eG \ar[rr]_{\chi^{g}} && 
\ush{g}(u^*\eF\overset{\bL}\otimes u^*\eG)
}}
\]
where the two isomorphisms displayed are the ones arising from the flat base change 
isomorphism $\ush{\beta_\sigma} \colon v^*\ush{f}\iso \ush{g}u^*$. 
\end{prop}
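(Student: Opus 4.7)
The plan is to use \Pref{prop:chi-welldef}(ii) (transitivity of $\chi$) together with the pseudofunctoriality of the flat-base-change isomorphism $\ush{\beta_{(-)}}$ to reduce to the case where $f$ (and hence $g$) has a length-one labelled factorization, and then to verify the two resulting special cases directly.

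First I would choose a labelled factorization $F = (f_1, \dots, f_n)$ of $f$. Base-changing $F$ along $u$ produces a compatible labelled factorization $G = (g_1, \dots, g_n)$ of $g$, together with intermediate cartesian squares $\sigma_i$ relating $f_i$ to $g_i$, whose horizontal composite is $\sigma$. Transitivity of $\chi$ for both $f$ and $g$, combined with the pseudofunctoriality of $\ush{\beta_{(-)}}$ across composites of cartesian squares, identifies $D_\sigma$ with the vertical stacking of the diagrams $D_{\sigma_i}$. Hence it suffices to prove the proposition when $f$ is either a single open immersion or a single pseudoproper map.

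If $f$ (and hence $g$) is an open immersion, then on $\Dc^+$ the formally \'etale isomorphism \eqref{eq:gm} identifies $\ush{f}$ with $f^*$ and $\ush{g}$ with $g^*$. The explicit description of $\chi^{f^{\mathsf O}}$ in \eqref{eq:chi-f^O} reduces $D_\sigma$ to a diagram of obvious pullback-tensor and $q$-map compatibilities; on $\Dc^+$, $q_{\X}$ is itself an isomorphism corresponding to the canonical identification $\BL_{\X}(\eF\otimes\eG)\iso \eF\otimes\eG$, and commutativity then follows by functoriality of pullback and $\BL$.

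If $f$ is pseudoproper, I would check $D_\sigma$ by passing to its adjoint under $(\R g_*\R\iGp{\V},\ush{g})$. The adjoint diagram is assembled from three compatibilities: (a) the torsion projection isomorphism $p^t_f$ and its analogue $p^t_g$, whose base-change compatibility along $u$ follows from \cite[Prop.\,3.7.3]{notes} applied to $\R\iGp{\X}\eN$ in place of $\eN$ together with $v^*\R\iGp{\X} \iso \R\iGp{\V}v^*$; (b) the base-change isomorphism $\theta^t \colon u^*\R f_*\R\iGp{\X} \iso \R g_*\R\iGp{\V}v^*$; and (c) the compatibility $u^*\Tr{f} = \Tr{g}\smcirc\theta^t$ of trace maps with flat base change, which is equivalent (by adjunction) to the flat-base-change isomorphism $\beta^!_\sigma$ and is recorded in \cite[Thm.\,7.4, Thm.\,8.1]{dfs}. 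The resulting adjoint diagram then reduces to a bookkeeping diagram chase closely parallel to the one carried out in the proof of \Lref{lem:chi-cartesian}, with $u$ a general flat map rather than an open immersion.

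The main obstacle will be the pseudoproper case: marshalling the various base-change isomorphisms (for $\R f_*$, for $\R\iGp{\X}$, for the projection formula, and for $f^!$) into a single coherent diagram. The required coherence is precisely what is encoded in the flat-base-change theorem for $(-)^!$ of \Ssref{subsec:base-ch} together with the torsion-projection avatar of \cite[Prop.\,3.7.3]{notes}; once these are in hand the verification is a diagram chase of the same flavour as \eqref{lem:chi-cartesian:eq2}--\eqref{lem:chi-cartesian:eq3}, with the extra simplification that the coherent homology hypothesis lets us suppress $\BL$ everywhere via $v^*\ush{f} \iso \ush{g}u^*$ in the form \eqref{iso:bc-sharp}.
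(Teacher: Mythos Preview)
Your proposal is correct and follows essentially the same route as the paper: reduce via transitivity of $\chi$ (\Pref{prop:chi-welldef}(ii)) and vertical pseudofunctoriality of $\ush{\beta}$ (\Pref{prop:basech-h-trans}(ii)) to the length-one case, dispatch the open-immersion case by the formally-\'etale identification $\ush{f}\iso f^*$ on $\Dc^+$, and for the pseudoproper case invoke the adjoint-diagram verification already carried out in \eqref{lem:chi-cartesian:eq2}--\eqref{lem:chi-cartesian:eq3}. The paper is terser---it simply points back to the proof of \eqref{lem:chi-cartesian:eq1} and draws the decomposition diagram explicitly for $f=f_2f_1$---but the substance is the same; your remark that the \Lref{lem:chi-cartesian} argument goes through for flat~$u$ (not just open immersions) via the flat-base-change theorem for $(-)^!$ and the torsion version of \cite[Prop.~3.7.3]{notes} is exactly what the paper is tacitly using.
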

\begin{proof}
If $f$ is an open immersion, the base-change isomorphism $\ush{\beta_\sigma}$ is induced, via canonical
identifications, by the pseudofunctoriality of $(-)^*$ and the same is true of $\chi^f, \chi^g$,
hence the result is obvious in this case. If $f$ is pseudoproper, the result follows from the proof of 
commutativity of \eqref{lem:chi-cartesian:eq1}. In general, suppose $f = f_2f_1$ where $f_i \in \bbG$,
so that $\sigma$ can be correspondingly expanded into a diagram as follows.
\[
\xymatrix{
\V \ar[d]_{g_1} \ar[r]^{v} \ar@{}[dr]|{\sigma_1} & \X \ar[d]^{f_1}\\
\W \ar[d]_{g_2} \ar[r]^{w} \ar@{}[dr]|{\sigma_2} & \Z \ar[d]^{f_2}\\
\U \ar[r]_u  & \Y
}
\]
Then checking commutativity of $D_{\sigma}$ reduces to checking that of 
the outer border of the following diagram of obvious natural maps.
\[
\xymatrix{
v^*\!f_1^*f_2^*\eF\overset{\bL}\otimes v^*\!\ush{f_1}\ush{f_2}\eG \ar[r] \ar[d] &  
v^*(f_1^*f_2^*\eF\overset{\bL}\otimes\ush{f_1}\ush{f_2}\eG) \ar[r] \ar@{}[d]|{D_{\sigma_1}} &  
v^*\!\ush{f_1}(f_2^*\eF\overset{\bL}\otimes \ush{f_2}\eG) \ar[d] \ar[ldd] \\
g_1^*w^*\!f_2^*\eF\overset{\bL}\otimes \ush{g_1}w^*\!\ush{f_2}\eG \ar[r] \ar[d] & 
\ush{g_1}(w^*f_2^*\eF\overset{\bL}\otimes w^*\ush{f_2}\eG) \ar[d] \ar[ldd] & 
v^*\!\ush{f_1}\ush{f_2}(\eF\overset{\bL}\otimes\eG) \ar[d] \\
g_1^*g_2^*u^*\eF\overset{\bL}\otimes\ush{g_1}\ush{g_2}u^*\eG \ar[d] & 
\ush{g_1}w^*(f_2^*\eF\overset{\bL}\otimes \ush{f_2}\eG) \ar[r] \ar@{}[d]|{D_{\sigma_2}} & 
\ush{g_1}w^*\!\ush{f_2}(\eF\overset{\bL}\otimes\eG) \ar[d] \\ 
\ush{g_1}(g_2^*u^*\eF\overset{\bL}\otimes\ush{g_2}u^*\eG) \ar[r] & 
\ush{g_1}\ush{g_2}(u^*\eF\overset{\bL}\otimes u^*\eG) \ar[r] & 
\ush{g_1}\ush{g_2}u^*(\eF\overset{\bL}\otimes\eG)
}
\]
Since the unlabelled parts commute trivially, we reduce to checking commutativity of
$D_{\sigma_1}, D_{\sigma_2}$. Thus if we fix a labelled factorization of $f$ 
then proceeding inductively we reduce to the already-resolved case of when the length of the factorization is~1.
\end{proof}

\begin{lem}
\label{lem:chi-kappa} 
Let $\Y$ be a formal scheme and $\I$ a coherent open 
$\co_\Y$-ideal. Let $\kappa\colon \X\to \Y$ be the completion of $\Y$ with respect to $\I$.
Let $\eF, \eG \in \Dc^+(\Y)$. Then the following diagram commutes where
the vertically drawn maps are induced by the natural isomorphisms 
$\kappa^* \iso \ush{\kappa}$ on $\Dc^+(\Y)$ while the map in the top row is the 
obvious isomorphism.
\[
\xymatrix{
\kappa^*\eF\overset{\bL}{\otimes}_{\co_\X}\kappa^*\eG 
\ar[d] \ar[r] & \kappa^*(\eF\overset{\bL}{\otimes}_{\co_\Y}\eG) \ar[d] \\
\kappa^*\eF\overset{\bL}{\otimes}_{\co_\X}\ush{\kappa}\eG 
\ar[r]^{\chi^{\kappa}}  & \ush{\kappa}(\eF\overset{\bL}{\otimes}_{\co_\Y}\eG) 
}
\]
\end{lem}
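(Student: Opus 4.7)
Since $\kappa$ is pseudoproper (as noted in the discussion immediately preceding~\eqref{eq:gm}), we may use the length-one factorization $\kappa^{\mathsf P}$ to compute $\chi^\kappa$. By definition, $\chi^\kappa(\eF,\eG)$ is then the map adjoint, under the pair $(\R\kappa_*\R\iGp{\X},\,\ush{\kappa})$, to
\[
\R\kappa_*\R\iGp{\X}(\kappa^*\eF \overset{\bL}\otimes \ush{\kappa}\eG) \xrightarrow[\Iso]{(p^t_\kappa)^{-1}} \eF \overset{\bL}\otimes \R\kappa_*\R\iGp{\X}\ush{\kappa}\eG \xrightarrow{1\otimes\Tr{\kappa}} \eF \overset{\bL}\otimes \eG.
\]
Moreover, on $\Dc^+(\Y)$ the vertical isomorphism $\gamma \colon \kappa^* \iso \ush{\kappa}$ was characterized (in the paragraph following~\eqref{eq:gm}) as adjoint to the natural composite $\R\kappa_*\R\iGp{\X}\kappa^* \iso \R\iG{\I} \to \mathbf{1}$.

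The plan is to pass to adjoints of the two composites in the target square and verify their equality. The adjoint of the top-right path is
\[
\R\kappa_*\R\iGp{\X}(\kappa^*\eF \overset{\bL}\otimes \kappa^*\eG) \to \R\kappa_*\R\iGp{\X}\kappa^*(\eF\overset{\bL}\otimes\eG) \iso \R\iG{\I}(\eF\overset{\bL}\otimes\eG) \to \eF \overset{\bL}\otimes \eG,
\]
the first map coming from the natural isomorphism $\kappa^*\eF \overset{\bL}\otimes \kappa^*\eG \iso \kappa^*(\eF \overset{\bL}\otimes \eG)$ and the remaining maps from the characterization of~$\gamma$. The adjoint of the left-bottom path is
\[
\R\kappa_*\R\iGp{\X}(\kappa^*\eF \overset{\bL}\otimes \kappa^*\eG) \xrightarrow{1\otimes\gamma_\eG} \R\kappa_*\R\iGp{\X}(\kappa^*\eF \overset{\bL}\otimes \ush{\kappa}\eG) \xrightarrow[\Iso]{(p^t_\kappa)^{-1}} \eF \overset{\bL}\otimes \R\kappa_*\R\iGp{\X}\ush{\kappa}\eG \xrightarrow{1\otimes\Tr{\kappa}} \eF\overset{\bL}\otimes\eG.
\]

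Using naturality of $p^t_\kappa$ in its second argument, together with the defining property of~$\gamma_\eG$ (which says precisely that $\Tr{\kappa}\smcirc\R\kappa_*\R\iGp{\X}\gamma_\eG$ equals $\R\kappa_*\R\iGp{\X}\kappa^*\eG \iso \R\iG{\I}\eG \to \eG$), the left-bottom adjoint collapses to $1\otimes(\R\kappa_*\R\iGp{\X}\kappa^*\eG \iso \R\iG{\I}\eG \to \eG)$ precomposed with $(p^t_\kappa)^{-1}(\eF,\kappa^*\eG)$. The top-right adjoint rearranges into the same expression once one invokes the compatibility between the projection isomorphism and the canonical isomorphism $\R\kappa_*\R\iGp{\X}\kappa^* \iso \R\iG{\I}$.

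The main obstacle is this last compatibility: that the torsion projection isomorphism $p^t_\kappa$ interacts coherently with the unit map $\R\kappa_*\R\iGp{\X}\kappa^* \to \mathbf{1}$ when the second argument is itself a pullback. This reduces to a formal naturality diagram built from the underlying (non-torsion) projection formula for $(\bL\kappa^*, \R\kappa_*)$ together with the identification $\bL\kappa^* \R\iGp{\Y} \iso \R\iGp{\X}\bL\kappa^*$ from~\eqref{iso*gamma-lambda}; once this is in hand, the two adjoints agree, and the lemma follows.
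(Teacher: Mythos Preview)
Your proposal is correct and follows essentially the same approach as the paper: both pass to the adjoint diagram via the pair $(\R\kappa_*\R\iGp{\X},\,\ush{\kappa})$, both reduce the verification to the compatibility of the torsion projection isomorphism $p^t_\kappa$ with the canonical identification $\R\kappa_*\R\iGp{\X}\kappa^* \iso \R\iG{\I}$, and both resolve that compatibility by unwinding it through the ordinary projection formula together with $\R\iGp{\X}\kappa^* \iso \kappa^*\R\iGp{\Y}$. The paper simply draws the two diagrams out explicitly (the outer adjoint diagram with a region $\blacksquare$, then a second $3\times 3$ grid verifying $\blacksquare$), whereas you describe the same content in prose; the substance is the same.
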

\begin{proof}
By adjointness of $\ush{\kappa}$ to $\kappa_*\R\iGp{\X}$, the assertion reduces to 
checking commutativity of the corresponding adjoint diagram which appears as 
the outer border of the following one.
\[
\xymatrix{
\kappa_*\R\iGp{\X}(\kappa^*\eF\overset{\bL}{\otimes}_{\co_\X}\kappa^*\eG) 
\ar[r] \ar[rd] \ar[dd] &  \kappa_*\R\iGp{\X}\kappa^*(\eF\overset{\bL}{\otimes}_{\co_\Y}\eG) 
\ar[r] \ar@{}[d]|{\blacksquare} & \R\iG{\I}(\eF\overset{\bL}{\otimes}_{\co_\Y}\eG) \ar[d] \\
& \eF\overset{\bL}\otimes_{\co_\Y}\kappa_*\R\iGp{\X}\kappa^*\eG \ar[d] \ar[r] 
& \eF\overset{\bL}{\otimes}_{\co_\Y}\R\iG{\I}\eG \ar[d] \\
\kappa_*\R\iGp{\X}(\kappa^*\eF\overset{\bL}{\otimes}_{\co_\X}\ush{\kappa}\eG) 
\ar[r]  & \eF\overset{\bL}\otimes_{\co_\Y}\kappa_*\R\iGp{\X}\ush{\kappa}\eG
\ar[r]  & \eF\overset{\bL}{\otimes}_{\co_\Y}\eG 
}
\]
The unlabelled parts of the above diagram commute trivially, while
commutativity of $\blacksquare$
follows from that of the outer border of the following commutative
diagram of obvious natural isomorphisms, where for convenience, $\R\iGp{-}$ is replaced
by $\iGp{-}$.
\[
\xymatrix{
\eF\overset{\bL}\otimes_{\Y}\kappa_*\iGp{\X}\kappa^*\eG \ar[r] \ar[d] & 
\kappa_*\iGp{\X}(\kappa^*\eF\overset{\bL}{\otimes}_{\X}\kappa^*\eG) \ar[r] \ar[d] &
\kappa_*\iGp{\X}\kappa^*(\eF\overset{\bL}{\otimes}_{\Y}\eG) \ar[d] \\
\eF\overset{\bL}\otimes_{\Y}\kappa_*\iGp{\X}\kappa^*\iGp{\Y}\eG \ar[r] \ar[d] & 
\kappa_*\iGp{\X}(\kappa^*\eF\overset{\bL}{\otimes}_{\X}\kappa^*\iGp{\Y}\eG) \ar[r] \ar[d] &
\kappa_*\iGp{\X}\kappa^*(\eF\overset{\bL}{\otimes}_{\Y}\iGp{\Y}\eG) \ar[d] \\
\eF\overset{\bL}\otimes_{\Y}\kappa_*\kappa^*\iGp{\Y}\eG \ar[r] \ar[rrd]& 
\kappa_*(\kappa^*\eF\overset{\bL}{\otimes}_{\X}\kappa^*\iGp{\Y}\eG) \ar[r] &
\kappa_*\kappa^*(\eF\overset{\bL}{\otimes}_{\Y}\iGp{\Y}\eG) \ar[d] \\
& & \eF\overset{\bL}{\otimes}_{\Y}\iGp{\Y}\eG
}
\]
\end{proof}

For a map $f \colon \X \to \Y$ in $\bbG$, and $\eF, \eG \in \wDqcp(\Y)$, 
we define a conjugate version of ${\chi}^f(\eF, \eG)$, denoted as $\overline{\chi}^f(\eF, \eG)$, to be
the following composite of obvious natural maps
\stepcounter{thm}
\begin{equation*}
\label{eq:chi-bar}\tag{\thethm}
\ush{f}\eF \overset{\bL}{\otimes}_{\X} \bL f^*\eG \iso \bL f^*\eG  \overset{\bL}{\otimes}_{\X} \ush{f}\eF 
\xrightarrow{{\chi}^f} \ush{f}(\eG \overset{\bL}{\otimes}_{\Y} \eF) \iso \ush{f}(\eF \overset{\bL}{\otimes}_{\Y} \eG). 
\end{equation*}

\setcounter{subsubsection}{\value{thm}}
\subsubsection{}\stepcounter{thm}{\bf Transitivity, completions and traces.} 
We apply the abstract results of the previous subsection to relative dualizing modules.

\begin{lem}
\label{lem:completions-trans}
Suppose 
\[\X\xrightarrow{f} \Y_1 \xrightarrow{\kappa} \Y_2 \xrightarrow{g} \Z\]
are maps in $\bbG$ with $\kappa$ a completion map with respect to an ideal
$\I$ of $\co_{\Y_2}$. Then the following diagram commutes where the unlabelled arrows are 
the obvious natural isomorphisms.
\[
\xymatrix{
\bL f^*\ush{\kappa}\ush{g}\co_\Z \overset{\bL}\otimes_{\co_{\X}} \ush{f}\kappa^*\co_{\Y_2}  
\ar[r] \ar[d]_{\kappa^* \cong \;\ush{\kappa}} &
\bL f^*\ush{(g\kappa)}\co_\Z \overset{\bL}\otimes_{\co_{\X}} \ush{f}\co_{\Y_1} \ar[r]^{\qquad{\chi}^f} & 
\ush{f}\ush{(g\kappa)}\co_\Z  \ar[d] \\ 
\bL f^*\kappa^*\ush{g}\co_\Z \overset{\bL}\otimes_{\co_{\X}} \ush{f}\ush{\kappa}\co_{\Y_2} \ar[r] & 
\bL (\kappa f)^*\ush{g}\co_\Z \overset{\bL}\otimes_{\co_{\X}} \ush{(\kappa f)}\co_{\Y_2} 
\ar[r]^{\qquad\quad\chi^{\kappa f}} & \ush{(\kappa f)}\ush{g}\co_\Z
}
\]
\end{lem}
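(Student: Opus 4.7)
The plan is to reduce the claimed commutativity to two already-established facts: the transitivity of $\chi$ (Proposition~\ref{prop:chi-welldef}(ii)), and the identification of $\chi^{\kappa}$ with a canonical map when $\kappa$ is a completion (Lemma~\ref{lem:chi-kappa}). Note that $\ush{g}\co_{\Z}$ lies in $\Dc^+(\Y_2)$ since $\ush{(-)}$ preserves $\Dc^+$, so in particular $\ush{g}\co_{\Z}$ and $\co_{\Y_2}$ are both objects to which Lemma~\ref{lem:chi-kappa} applies.

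First I would apply Proposition~\ref{prop:chi-welldef}(ii) to the two-step composite $\X \xrightarrow{f} \Y_1 \xrightarrow{\kappa} \Y_2$, evaluated on the pair $(\eF,\eG)=(\ush{g}\co_{\Z},\co_{\Y_2})$. This expresses $\chi^{\kappa f}(\ush{g}\co_{\Z},\co_{\Y_2})$, modulo the pseudofunctorial identifications $\bL(\kappa f)^{*}\iso \bL f^{*}\kappa^{*}$ and $\ush{(\kappa f)}\iso \ush{f}\ush{\kappa}$ that appear as the unlabelled isomorphisms in the diagram, as the composite
\[
\bL f^{*}\kappa^{*}\ush{g}\co_{\Z}\overset{\bL}\otimes\ush{f}\ush{\kappa}\co_{\Y_2}
\xrightarrow{\chi^{f}}
\ush{f}\bigl(\kappa^{*}\ush{g}\co_{\Z}\overset{\bL}\otimes\ush{\kappa}\co_{\Y_2}\bigr)
\xrightarrow{\ush{f}\chi^{\kappa}}
\ush{f}\ush{\kappa}\bigl(\ush{g}\co_{\Z}\overset{\bL}\otimes\co_{\Y_2}\bigr).
\]
Thus it suffices to show that, after composing with the canonical identifications $\ush{f}\ush{\kappa}\iso\ush{f}\ush{(g\kappa)}$ and $\ush{\kappa}\ush{g}\iso\ush{(g\kappa)}$, the map $\ush{f}\chi^{\kappa}(\ush{g}\co_{\Z},\co_{\Y_2})$ agrees with the map obtained from the left column of the diagram by functoriality of $\chi^{f}$ in its two arguments.

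This is exactly where Lemma~\ref{lem:chi-kappa} intervenes: applied with $\eF=\ush{g}\co_{\Z}$ and $\eG=\co_{\Y_2}$, it says that under the identification $\kappa^{*}\iso\ush{\kappa}$ on $\Dc^{+}(\Y_2)$, the map $\chi^{\kappa}(\ush{g}\co_{\Z},\co_{\Y_2})$ coincides with the canonical morphism $\kappa^{*}\ush{g}\co_{\Z}\otimes\kappa^{*}\co_{\Y_2}\to\kappa^{*}(\ush{g}\co_{\Z}\otimes\co_{\Y_2})$. Since the right-hand side here is just the standard identification combined with $\kappa^{*}\co_{\Y_2}\cong\co_{\Y_1}$, substituting this into the transitivity formula converts the term $\ush{f}\chi^{\kappa}$ into exactly the composite of canonical isomorphisms that form the right-hand edge of the diagram; the remaining map from top-left to top-right is $\chi^{f}$ applied to the pair $(\ush{\kappa}\ush{g}\co_{\Z},\,\kappa^{*}\co_{\Y_2})$, which by functoriality of $\chi^{f}$ in its arguments is the same as $\chi^{f}$ applied to $(\kappa^{*}\ush{g}\co_{\Z},\,\ush{\kappa}\co_{\Y_2})$ transported by the left column.

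The main obstacle is purely bookkeeping: one must verify that the various canonical identifications —- the pseudofunctorial isomorphisms $\bL(\kappa f)^{*}\iso \bL f^{*}\kappa^{*}$ and $\ush{(\kappa f)}\iso\ush{f}\ush{\kappa}$, the unit-type identifications $\kappa^{*}\co_{\Y_2}\iso\co_{\Y_1}$ and $\eH\otimes\co\iso\eH$, and the identifications $\kappa^{*}\iso\ush{\kappa}$ on $\Dc^{+}$ used in Lemma~\ref{lem:chi-kappa} —- assemble consistently when passing from the decomposition $(g\kappa)\circ f$ used in the top row to the decomposition $g\circ(\kappa f)$ used in the bottom row. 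No new content beyond the coherence axioms of the pre-pseudofunctors $(-)^{*}$ and $\ush{(-)}$ is required; this is the kind of routine diagram-chase carried out throughout Section~\ref{s:fubini-1}, so I would write it out as a single large pasting diagram combining the transitivity square from Proposition~\ref{prop:chi-welldef}(ii) with the square from Lemma~\ref{lem:chi-kappa}.
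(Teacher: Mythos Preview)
Your approach is essentially the same as the paper's: both reduce the diagram to the transitivity of $\chi$ from Proposition~\ref{prop:chi-welldef}(ii) applied to the factorization $\kappa f$, together with Lemma~\ref{lem:chi-kappa} identifying $\chi^{\kappa}$ with the canonical map. The paper packages the argument into a single large pasting diagram using the conjugate map $\overline{\chi}$ of~\eqref{eq:chi-bar} as an organizing device (so that the implicit identifications $\ush{(g\kappa)}\co_\Z\otimes\co_{\Y_1}\cong\ush{(g\kappa)}\co_\Z$ in the top row and $\ush{g}\co_\Z\otimes\co_{\Y_2}\cong\ush{g}\co_\Z$ in the bottom become instances of $\overline{\chi}^{g\kappa}$ and $\overline{\chi}^{g}$ at the structure sheaf, and their compatibility becomes another instance of transitivity), whereas you treat these as bare canonical identifications; but since $\chi^h(\co,\eG)$ is the identity up to the unit isomorphism, this is only a notational difference, and your final ``single large pasting diagram'' would unwind to exactly the paper's.
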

\begin{proof}
It suffices to prove that the following diagram commutes since the outer  border gives us 
the required commutativity. As before, to simplify notation, we use~$f^*$ instead of~$\bL f^*$
and drop the subscripts to~$\otimes$. For the definition of~$\overline\chi$ we refer to~\eqref{eq:chi-bar}.
\[
\xymatrix{
f^*\ush{(g\kappa)}\co_\Z \otimes \ush{f}\co_{\Y_1} \ar[r]^{\chi^f} \ar[d]
& \ush{f}(\ush{(g\kappa)}\co_\Z \otimes \co_{\Y_1}) \ar[r]^{\quad\ush{f}\overline{\chi}^{g\kappa}} 
\ar[d]^{\hspace{6em} \overline{\lozenge}} & \ush{f}\ush{(g\kappa)}\co_\Z \ar[d] \\
f^*\ush{\kappa}\ush{g}\co_\Z \otimes \ush{f}\kappa^*\co_{\Y_2} \ar[r]^{\chi^f} \ar[d]_{\kappa^* \cong \;\ush{\kappa}} & 
\ush{f}(\ush{\kappa}\ush{g}\co_\Z \otimes \kappa^*\co_{\Y_2}) 
\ar[d]^{\quad \;\;\triangle}_{\kappa^* \cong \;\ush{\kappa}} 
\ar[rd]^{\ush{f}\overline{\chi}^{\kappa}} 
& \ush{f}\ush{\kappa}\ush{g}\co_\Z  \ar@/^4pc/[dd]_{} \\
f^*\kappa^*\ush{g}\co_\Z \otimes_{\co_{\X}} \ush{f}\ush{\kappa}\co_{\Y_2} \ar[d] \ar[r]^{\chi^f} &
\ush{f}(\kappa^*\ush{g}\co_\Z \otimes \ush{\kappa}\co_{\Y_2}) \ar[r]^{\ush{f}\chi^{\kappa}} \ar@{}[d]|{\lozenge}
& \ush{f}\ush{\kappa}(\ush{g}\co_\Z \otimes \co_{\Y_2}) \ar[u]^{\ush{f}\ush{\kappa}\overline{\chi}^g} \\
(\kappa f)^*\ush{g}\co_\Z \otimes \ush{(\kappa f)}\co_{\Y_2} \ar[r]^{\chi^{\kappa f}}
& \ush{(\kappa f)}(\ush{g}\co_\Z \otimes \co_{\Y_2}) \ar[r]^{\quad\ush{(\kappa f)}\overline{\chi}^g} \ar[ru] 
& \ush{(\kappa f)}\ush{g}\co_\Z 
}
\]
The unlabelled parts commute for functorial reasons. Both $\lozenge, \overline{\lozenge}$ commute by 
Proposition \ref{prop:chi-welldef} (ii), namely transitivity
of $\chi$ (which also implies transitivity of~$\overline{\chi}$).
Finally for $\triangle$ we use the outer border of the following diagram 
where $\eG = \ush{g}\co_\Z$ and where $\theta$ denotes the canonical isomorphism
$\eM \otimes \eN \iso \eN \otimes \eM$.
\[
\xymatrix{
\ush{\kappa}\eG \otimes \kappa^*\co_{\Y_2} \ar[r]^{\theta} \ar[d] & \kappa^*\co_{\Y_2} \otimes \ush{\kappa}\eG 
\ar[r] \ar[d]^{\qquad\ref{lem:chi-kappa}} & \ush{\kappa}(\co_{\Y_2} \otimes \eG) \ar[d] \\
\kappa^*\eG \otimes \kappa^*\co_{\Y_2} \ar[r]^{\theta} \ar[d]^{\qquad\ref{lem:chi-kappa}} \ar[rrd] & 
\kappa^*\co_{\Y_2} \otimes \kappa^*\eG \ar[r] & \kappa^*(\co_{\Y_2} \otimes \eG) \ar[d] \\
\kappa^*\eG \otimes \ush{\kappa}\co_{\Y_2} \ar[r] & \ush{\kappa}(\eG \otimes \co_{\Y_2}) \ar[r]
& \kappa^*(\eG \otimes \co_{\Y_2}) 
}
\]
\end{proof}

\begin{prop}\label{prop:chi-complete} Suppose
\[
{\xymatrix{
\wid{\X} \ar[d]_{\wid{f}} \ar[r]^{\kappa_2}  & \X \ar[d]^{f}\\
\wid{\Y}  \ar[dr]_{\wid{g}} \ar[r]^{\kappa_1} & \Y \ar[d]^g \\
& \Z
}}
\]
is a commutative diagram of formal schemes with $\kappa_1$ and $\kappa_2$ being completions
with respect to open coherent ideals of $\co_\Y$ and $\co_\X$ respectively. Then, making the 
identifications $\ush{\kappa_i}=\kappa_i^*$, $i=1,2$, the following diagram commutes, with the map
labelled $\alpha$ being the isomorphism arising from 
$\kappa_2^*\ush{f} = \ush{\kappa_2}\ush{f} \iso \ush{\wid{f}}\ush{\kappa_1} = \ush{\wid{f}}\kappa_1^*$.
\[
{\xymatrix{
\kappa_2^*(\bL f^*\ush{g}\co_\Z\overset{\bL}{\otimes}\ush{f}\co_\Y)
\ar@{=}[d] \ar[r]^{\qquad\kappa_2^*\chi^f} & 
\kappa_2^*\ush{f}\ush{g}\co_\Z
 \ar[r] & \kappa_2^*\ush{(gf)}\co_\Z \ar@{=}[d] \\
\kappa_2^*\bL f^*\ush{g}\co_\Z\overset{\bL}{\otimes}\kappa_2^*\ush{f}\co_\Y) 
\ar[d]^-{\alpha}_-{\,\,\,\rotatebox{-90}{\makebox[-0.1cm]{\Iso}}} 
& & 
\ush{\kappa_2}\ush{(gf)}\co_\Z \ar[d]^{\,\rotatebox{-90}{\makebox[-0.1cm]{\Iso}}} \\
\bL\wid{f}^*\kappa_1^*\ush{g}\co_\Z\overset{\bL}{\otimes} \ush{\wid{f}}\kappa_1^*\co_\Y
\ar[d]_-{\,\,\,\rotatebox{-90}{\makebox[-0.1cm]{\Iso}}}  & & \ush{(gf\kappa_2)}\co_\Z \ar@{=}[d] \\
\bL\wid{f}^*\ush{\wid{g}}\co_\Z\overset{\bL}{\otimes} \ush{\wid{f}}\co_{\wid{\Y}}
\ar[r]^{\qquad\chi^{\wid{g},\wid{f}}} & \ush{\wid{f}}\ush{\wid{g}}\co_\Z \ar[r] & \ush{(\wid{g}\wid{f})}\co_\Z
}}
\]
\end{prop}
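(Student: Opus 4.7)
The plan is to reduce Proposition~\ref{prop:chi-complete} to Lemma~\ref{lem:completions-trans} (which handles the case of a single completion inserted in the middle of a two-step composition) together with Lemma~\ref{lem:chi-kappa} and the transitivity of $\chi$ established in Proposition~\ref{prop:chi-welldef}(ii).

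Step one is to apply Lemma~\ref{lem:completions-trans} to the three-step composite $\wid\X \xrightarrow{\wid f} \wid\Y \xrightarrow{\kappa_1} \Y \xrightarrow{g} \Z$. This directly yields the commutativity of a diagram relating $\chi^{\wid f}(\ush{\kappa_1}\ush{g}\co_\Z,\,\kappa_1^*\co_\Y)$ with $\chi^{\kappa_1\wid f}(\ush{g}\co_\Z,\,\co_\Y)$ after the canonical identifications $\kappa_1^* \iso \ush{\kappa_1}$ on $\Dc^+(\Y)$. Since $\kappa_1\wid f = f\kappa_2$ as maps $\wid\X \to \Y$, this second $\chi$-map coincides with $\chi^{f\kappa_2}(\ush{g}\co_\Z,\,\co_\Y)$.

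Step two is to apply the transitivity of $\chi$ (Proposition~\ref{prop:chi-welldef}(ii)) to the factorization $\wid\X \xrightarrow{\kappa_2} \X \xrightarrow{f} \Y$, which expresses $\chi^{f\kappa_2}(\ush{g}\co_\Z,\co_\Y)$ as $\ush{\kappa_2}\chi^f(\ush{g}\co_\Z,\co_\Y)$ pre-composed with $\chi^{\kappa_2}(\bL f^*\ush{g}\co_\Z,\,\ush{f}\co_\Y)$ under the evident pseudofunctorial identifications $\ush{(f\kappa_2)} \iso \ush{\kappa_2}\ush{f}$ and $\bL(f\kappa_2)^* \iso \kappa_2^*\bL f^*$. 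Step three is to apply Lemma~\ref{lem:chi-kappa} to~$\kappa_2$: this says precisely that under the identification $\ush{\kappa_2} \iso \kappa_2^*$ on $\Dc^+(\X)$, the map $\chi^{\kappa_2}$ collapses to the obvious isomorphism coming from the fact that $\kappa_2^*$ commutes with derived tensor product. Pasting these three pieces together produces the commutativity of the full diagram in the statement.

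The main obstacle will not be any single deep step but rather the bookkeeping: several distinct pseudofunctorial isomorphisms of $\ush{(-)}$ come into play on the two sides of the diagram (in particular the combined isomorphism $\kappa_2^*\ush{f} = \ush{\kappa_2}\ush{f} \iso \ush{(f\kappa_2)} = \ush{(\kappa_1\wid f)} \iso \ush{\wid f}\ush{\kappa_1} = \ush{\wid f}\kappa_1^*$ that realizes the map~$\alpha$), and one must check that the pseudofunctorial identifications arising from Lemma~\ref{lem:completions-trans} and from transitivity of $\chi$ match up via the associativity conditions of the pre-pseudofunctor $\ush{(-)}$. This is routine, if tedious, and amounts to commutativity of several diagrams of natural isomorphisms built from the comparison isomorphisms $C^!_{-,-}$ and $\ush{C_{-,-}}$ described in Section~\ref{s:shriek-sharp}.
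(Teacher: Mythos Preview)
Your proposal is correct and follows essentially the same route as the paper's proof: the paper also assembles the result from Lemma~\ref{lem:chi-kappa} applied to~$\kappa_2$, transitivity of~$\chi$ (Proposition~\ref{prop:chi-welldef}(ii)) for the factorization $f\kappa_2$, and Lemma~\ref{lem:completions-trans} applied to the triple $(\wid f,\kappa_1,g)$, the whole thing being displayed as one large diagram whose outer border is the diagram in the statement. Your three steps are precisely these ingredients, just narrated from the bottom of that diagram upward rather than top-down.
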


\begin{proof}
This follows from the outer border of the following diagram where the unlabelled parts 
commute trivially.
\[
\xymatrix{
& & \kappa_2^*(f^*\ush{g}\co_\Z \otimes \ush{f}\co_\Y) 
\ar[r]^{\qquad\kappa_2^*\chi^f} \ar[d]_{\kappa_2^* \cong \ush{\kappa_2}} 
& \kappa_2^*\ush{f}\ush{g}\co_\Z \ar[d]^{\kappa_2^* \cong \ush{\kappa_2}} \\
\kappa_2^*f^*\ush{g}\co_\Z \otimes \kappa_2^*\ush{f}\co_\Y 
\ar[rru] \ar[rrd] \ar[d]_{\alpha} &  \scriptstyle\ref{lem:chi-kappa} & 
\ush{\kappa_2}(f^*\ush{g}\co_\Z \otimes \ush{f}\co_\Y) \ar[r]^{\qquad\ush{\kappa_2}\chi^f}  
& \ush{\kappa_2}\ush{f}\ush{g}\co_\Z \ar[d]_{\textup{\ref{prop:chi-welldef}(ii)}\qquad\quad} \\
\wid{f}^*\kappa_1^*\ush{g}\co_\Z \otimes \ush{\wid{f}}\kappa_1^*\co_\Y 
\ar[d]_{\kappa_1^* \cong \ush{\kappa_1}} \ar[rrd]
& & \kappa_2^*f^*\ush{g}\co_\Z \otimes \ush{\kappa_2}\ush{f}\co_\Y 
\ar[r]_-{\chi^{f\<\kappa_2}} \ar[u]^{\chi^{\kappa_2}} \ar[d] & \ush{(f\kappa_2)}\ush{g}\co_\Z \ar[d] \\
\wid{f}^*\ush{\kappa_1}\ush{g}\co_\Z \otimes \ush{\wid{f}}\kappa_1^*\co_\Y 
\ar[rr]_{\kappa_1^* \cong \ush{\kappa_1}} \ar[d] & &
\wid{f}^*\kappa_1^*\ush{g}\co_\Z \otimes \ush{\wid{f}}\ush{\kappa_1}\co_\Y 
\ar@{}[d]|{\ref{lem:completions-trans}} \ar[r]_-{\chi^{\kappa_1\<\wid{f}}} & \ush{(\kappa_1\wid{f})}\ush{g}\co_\Z \ar[d] \\
\wid{f}^*\ush{\wid{g}}\co_\Z \otimes \ush{\wid{f}}\co_{\wid{\Y}} \ar[rr] & & \ush{\wid{f}}\ush{\wid{g}}\co_\Z \ar[r] &
\ush{\wid{f}}\ush{\kappa_1}\ush{g}\co_\Z
}
\]
\end{proof}

\begin{defi}
\label{def:chi-f-g}
Let $\X \xrightarrow{f} \Y \xrightarrow{g} \Z$ be maps in $\bbG$. We define $\chi_{{}_{[g,f]}}$ to the composite of the following natural maps:
\[
\bL f^*(\ush{g}\co_\Z) \overset{\bL}{\otimes}_{\co_\X} \ush{f}\co_\Y \xrightarrow{\chi^f_{}}
\ush{f}(\ush{g}\co_\Z \overset{\bL}{\otimes}_{\co_\Y} \co_\Y) \iso \ush{f}\ush{g}\co_\Y \iso \ush{(gf)}\co_\Y.
\]
\end{defi}
When, $f,g$ are both pseudoproper, this definition agrees with the one given in the beginning
of \S\ref{s:fubini-1}.

\begin{defi} 
$A\to R\to S$ be pseudo-finite maps between adic rings. 
Let $f \colon \Spf(S) \to \Spf(R)$  and $g \colon \Spf(R) \to \Spf(A)$ denote the resulting maps of
formal schemes. We define 
\[
\chi_{{}_{[S/R/A]}}\colon \omega_{R/A}^\bullet \overset{\bL}{\otimes}_R \omega_{S/R}^\bullet
\to \omega^\bullet_{S/A}
\]
to be the map corresponding to $\chi_{\<\<{}_{[g,f]}}$ of \ref{def:chi-f-g}, where $\omega^\bullet$
is defined as in the beginning of \S\ref{ss:aff-tr}.
\end{defi}

\begin{prop}\label{prop:chi-affine} Let $A\to R\to S$ be a pair of maps of rings, 
$I\subset R$, $J\subset S$ ideals, such that $A\to R/I$ and $R\to S/J$ are finite. Let
$L=IS+J$.
\begin{enumerate}
\item[(a)] Suppose $R$ is complete in the $I$-adic topology and $S$ is complete in the $L$-adic
topology (so that $S$ is then complete in the $J$-adic topology too). The following diagram 
commutes, with $\chi=\chi_{{}_{[S/R/A]}}$:
\[
{\xymatrix{
\R\Gamma_L(\omega_{R/A}^\bullet \overset{\bL}{\otimes}_R \omega_{S/R}^\bullet) 
\ar[rrr]^-{\R\Gamma_L(\chi)} &&& \R\Gamma_L\omega^\bullet_{S/A} \ar[dr]^{\Tr{S/A}} & \\
\R\Gamma_I\R\Gamma_J(\omega_{R/A}^\bullet \overset{\bL}{\otimes}_R \omega_{S/R}^\bullet)
\ar[u]^{\,\rotatebox{-90}{\makebox[-0.1cm]{\Iso}}} & & & & A[0] \\
\R\Gamma_I(\omega_{R/A}^\bullet \overset{\bL}{\otimes}_R \R\Gamma_J(\omega_{S/R}^\bullet))
\ar[u]^{\,\rotatebox{-90}{\makebox[-0.1cm]{\Iso}}} \ar[rrr]^-{\R\Gamma_I({\bf 1}\otimes \Tr{S/R})}
&&& \R\Gamma_I(\omega^\bullet_{R/A}) \ar[ur]_{\Tr{R/A}}
}}
\]
\item[(b)] Suppose the topology on $A$, $R$, and $S$ are discrete, and $A\to R$, $R\to S$
are of finite type. Then the following diagram commutes with $\chi=\chi_{{}_{[S/R/A]}}$:
\[
{\xymatrix{
\R\Gamma_L(\omega_{R/A}^\bullet \overset{\bL}{\otimes}_R \omega_{S/R}^\bullet) 
\ar[rrr]^-{\R\Gamma_L(\chi)} &&& \R\Gamma_L\omega^\bullet_{S/A} \ar[dr]^{\Tr{L}} & \\
\R\Gamma_I\R\Gamma_J(\omega_{R/A}^\bullet \overset{\bL}{\otimes}_R \omega_{S/R}^\bullet)
\ar[u]^{\,\rotatebox{-90}{\makebox[-0.1cm]{\Iso}}} & & & & A[0] \\
\R\Gamma_I(\omega_{R/A}^\bullet \overset{\bL}{\otimes}_R \R\Gamma_J(\omega_{S/R}^\bullet))
\ar[u]^{\,\rotatebox{-90}{\makebox[-0.1cm]{\Iso}}} \ar[rrr]^-{\R\Gamma_I({\bf 1}\otimes \Tr{J})}
&&& \R\Gamma_I(\omega^\bullet_{R/A}) \ar[ur]_{\Tr{I}}
}}
\]
\end{enumerate}
\end{prop}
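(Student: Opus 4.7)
The plan is to derive both parts from the defining adjointness property of $\chi$ together with its compatibility with completions, reducing (b) to (a).

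For part (a), set $\X=\Spf(S,L)$, $\Y=\Spf(R,I)$, $\Z=\Spec A$, with $f\colon\X\to\Y$ and $g\colon\Y\to\Z$ the induced maps, both pseudofinite and hence pseudoproper. Since $f$ and $g$ are affine, $\Rfs\R\iGp{\X}$ applied to the quasi-coherent sheaf associated to $\omgs{R/A}^\bullet\otimes^\bL_R\omgs{S/R}^\bullet$ (viewed as an $S$-module complex via $\bL f^{*}\ush g\co_\Z\otimes_{\co_\X}\ush f\co_\Y$) is computed as $\R\Gamma_J$ of this complex regarded as an $R$-module complex, and similarly for $\R g_*\R\iGp{\Y}$; the torsion projection isomorphism $p^t_f$ corresponds on affine sections to the canonical isomorphism $\R\Gamma_J(\omgs{R/A}^\bullet\otimes^\bL_R\omgs{S/R}^\bullet)\cong\omgs{R/A}^\bullet\otimes^\bL_R\R\Gamma_J\omgs{S/R}^\bullet$. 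By \Dref{def:chi-f-g} together with the construction at the start of \Sref{s:fubini-1}, $\chi_{{}_{[g,f]}}$ is the unique map such that $\Tr{gf}\circ\R(gf)_*\R\iGp{\X}\chi_{{}_{[g,f]}}$ equals the iterated trace $\Tr{g}\circ\R g_{*}(1\otimes\Tr{f})\circ(p^t_f)^{-1}$. Taking global sections, the left composite gives precisely the top path of the diagram in (a), and the right composite, using \Pref{prop:iterated-trace} to identify $\R\Gamma_L\cong\R\Gamma_I\R\Gamma_J$, gives the bottom path. All that remains is the tautological identification of $\Tr{g}$ on affine sections with $\Tr{R/A}$.

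For part (b), we reduce to (a) via completion. Let $\widehat R$ be the $I$-adic completion of $R$ and $\widehat S$ the $L$-adic completion of $S$; then $L\widehat S\supset I\widehat R\cdot\widehat S$, so $(A,0)\to(\widehat R,I\widehat R)\to(\widehat S,L\widehat S)$ satisfies the hypotheses of (a). Let $\kappa_1\colon\Spf(\widehat R,I\widehat R)\to\Spec R$ and $\kappa_2\colon\Spf(\widehat S,L\widehat S)\to\Spec S$ be the completion maps. Applying \Pref{prop:chi-complete} to this cube identifies $\kappa_2^{*}\chi_{{}_{[g,f]}}$ with $\chi_{{}_{[\widehat g,\widehat f]}}$, which on affine sections identifies $\chi_{{}_{[S/R/A]}}\otimes_S\widehat S$ with $\chi_{{}_{[\widehat S/\widehat R/A]}}$. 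By the very definition \eqref{map:Tr-J}, $\Tr{L}$, $\Tr{J}$, $\Tr{I}$ factor through the canonical isomorphisms $\R\Gamma_{(-)}\iso\R\Gamma_{(-)\widehat{(-)}}$ followed by the trace maps for the completed pairs. Each node and arrow in the diagram of (b) is thus matched, via flatness of the completions and the naturality of the projection isomorphism under $-\otimes_R\widehat R$ and $-\otimes_S\widehat S$, to the corresponding node and arrow of the diagram for (a) applied to the adic chain; commutativity of (b) therefore follows from (a).

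The main obstacle is bookkeeping rather than any deep new idea. In (a), the delicate point is matching the adjoint formulation of $\chi_{{}_{[g,f]}}$ (which gives $\Tr{gf}\circ\R(gf)_{*}\R\iGp{\X}\chi_{{}_{[g,f]}}$ as an iterated trace) against the precise shape of the stated diagram, whose bottom row is organized via $\R\Gamma_I\R\Gamma_J$ rather than via $\Rfs\R\iGp{\X}$ followed by $\R g_*\R\iGp{\Y}$; pinning down this identification requires the $\R\Gamma_L\cong\R\Gamma_I\R\Gamma_J$ compatibility of \Pref{prop:iterated-trace}. In (b), the obstacle is ensuring that \Pref{prop:chi-complete} together with \eqref{map:Tr-J} transports every edge of the diagram coherently, in particular that the canonical isomorphisms $\R\Gamma_J\omgs{S/R}^\bullet\cong\R\Gamma_{J\widehat S}\omgs{\widehat S/\widehat R}^\bullet$ and $\R\Gamma_I\omgs{R/A}^\bullet\cong\R\Gamma_{I\widehat R}\omgs{\widehat R/A}^\bullet$ fit compatibly inside the tensor-product expression $\R\Gamma_I(\omgs{R/A}^\bullet\otimes^\bL_R\R\Gamma_J\omgs{S/R}^\bullet)$.
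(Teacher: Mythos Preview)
Your approach matches the paper's for both parts: (a) unwinds the adjoint definition of $\chi_{[g,f]}$ as an iterated trace and invokes \Pref{prop:iterated-trace} (the commutativity of \eqref{diag:IJ-trans}) to pass between $\R\Gamma_L$ and $\R\Gamma_I\R\Gamma_J$; (b) reduces to (a) by completing and applying \Pref{prop:chi-complete}.

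There is one point in (b) where your sketch is too quick. You assert that $\Tr{J}$ factors, by definition \eqref{map:Tr-J}, through $\R\Gamma_{J\widehat S}\,\omgs{\widehat S/\widehat R}^\bullet$. But \eqref{map:Tr-J} routes $\Tr{J}$ through the \emph{$J$-adic} completion of $S$, call it $S'$, not through the $L$-adic completion $\widehat S$; so the factorization you want is not the defining one. The paper handles this by introducing $S'$ explicitly and observing that the square
\[
\xymatrix{
\Spf(\widehat S,\,J\widehat S) \ar[r] \ar[d] & \Spf(S',\,JS') \ar[d] \\
\Spf(\widehat R,\,I\widehat R) \ar[r] & \Spec R
}
\]
is cartesian, so flat base change identifies $\Tr{S'/R}$ (and hence $\Tr{J}$) after $\R\Gamma_I$ with $\Tr{\widehat S/\widehat R}$ after $\R\Gamma_{\widehat I}$. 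This is exactly the ``obstacle'' you flag in your final paragraph, and it is the one substantive step you have not supplied; once you insert that base-change square, your argument is complete and coincides with the paper's.
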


\proof
For part\,(a), first note that the diagram below commutes:
\[
{\xymatrix{
\R\Gamma_L(\omega_{R/A}^\bullet \overset{\bL}{\otimes}_R \omega_{S/R}^\bullet) 
\ar[r]^{\Iso} & \omega_{R/A}^\bullet \overset{\bL}{\otimes}_R \R\Gamma_L(\omega_{S/R}^\bullet)\\
\R\Gamma_I\R\Gamma_J(\omega_{R/A}^\bullet \overset{\bL}{\otimes}_R \omega_{S/R}^\bullet)
\ar[r]^{\Iso} \ar[u]^{\,\rotatebox{-90}{\makebox[-0.1cm]{\Iso}}} 
& 
\omega_{R/A}^\bullet \overset{\bL}{\otimes}_R \R\Gamma_I\R\Gamma_J(\omega_{S/R}^\bullet)
\ar[u]_-{\<\rotatebox{90}{\makebox[-0.1cm]{\Iso}}} \ar[d]^-{\<\rotatebox{-90}{\makebox[-0.1cm]{\Iso}}}\\
&
 \R\Gamma_I(\omega_{R/A}^\bullet \overset{\bL}{\otimes}_R\R\Gamma_J(\omega_{S/R}^\bullet))
\ar[ul]^-{\<\<\rotatebox{-20}{\makebox[-0.1cm]{\Iso}}} 
}}
\]
The assertion now follows from the commutativity of \eqref{diag:IJ-trans} and the definition 
of~$\chi_{{}_{[S/R/A]}}$.

For part\,(b), let $\wid{R}$ be the completion of $R$ with respect to $I$, $S'$ the completion
of $S$ with respect to $J$ and $\wid{S}$ the completion of $S$ with respect to $L$.
Let $\wid{I}=I\wid{R}$, $J'=JS'$, $L'=LS'$, $\wid{L}=L\wid{S}$. Let 
$\X=\Spf{(\wid{S},\wid{L})}$, $X'=\Spf{(S',J')}$, $X=\Spec{\,S}$, 
$\Y=\Spf{(\wid{R},\wid{I})}$, $Y=\Spec{\,R}$, and $Z=\Spec{\,A}$. The various natural relations
between the adic rings can be represented by a commutative diagram of formal schemes:
\[
{\xymatrix{
\X \ar[r]^{\kappa_1} \ar[d]_{\wid{f}} & X'\ar[d]_{f'} \ar[r]^{\kappa_2} & X \ar[dl]^f \\
\Y \ar[r]^{\kappa_3}\ar[dr]_{\wid{g}} & Y \ar[d]^{g} & \\
& Z &
}}
\]
We have:
\[\kappa_1^*\ush{{f'}}\co_Y \iso \ush{(\kappa_1f')}\co_Y
\iso \ush{\wid{f}}\kappa_3^*\co_Y=\ush{\wid{f}}\co_{\Y}.\]
Moreover, $\kappa_2^*\ush{f}\co_Y\iso \ush{{f'}}\co_Y$. We may thus make the following
identifications: 
$\omega^\bullet_{S/R}\otimes_S\wid{S}= \omega^\bullet_{S/R}\otimes_S\wid{S} = 
\omega^\bullet_{\wid{S}/\wid{R}}$, and 
$\omega^\bullet_{S'/R}=\omega^\bullet_{S/R}\otimes_SS'$. The natural isomorphism 
$\kappa_3^*\ush{g}\co_Z\iso \ush{\wid{g}}\co_Z$ allows us to make the identification
$\omega^\bullet_{\wid{R}/A}=\omega^\bullet_{R/A}\otimes_R\wid{R}$.
Let us write $\chi=\chi_{{}_{[S/R/A]}}$ and $\wid{\chi}=\chi_{{}_{[\wid{S}/\wid{R}/A]}}$. The above identifications
and \Pref{prop:chi-complete} gives  $\chi\otimes\wid{S} =\wid{\chi}$, whence
 the following diagram commutes:
\[
{\xymatrix{
\R\Gamma_L(\omega^\bullet_{R/A}\overset{\bL}{\otimes}_R\omega^\bullet_{S/R}) \ar[r]^-{\chi} 
\ar[d]_-{\<\rotatebox{-90}{\makebox[-0.1cm]{\Iso}}} &
\R\Gamma_L\omega^\bullet_{S/A} \ar[d]^-{\,\rotatebox{-90}{\makebox[-0.1cm]{\Iso}}} \\
\R\Gamma_{\wid{L}}
(\omega^\bullet_{\wid{R}/A}\overset{\bL}{\otimes}_{\wid{R}}\omega^\bullet_{\wid{S}/\wid{R}}) 
\ar[r]_-{\wid{\chi}} & \R\Gamma_{\wid{L}}\omega^\bullet_{\wid{S}/A}
}}
\]

By part\,(a), it is therefore enough to prove that the diagram below commutes:
\[
\begin{aligned}
{\xymatrix{
\R\Gamma_L(\omega_{R/A}^\bullet \overset{\bL}{\otimes}_R \omega_{S/R}^\bullet) \ar[r]^{\Iso}
& \R\Gamma_{\wid{L}}(\omega_{\wid{R}/A}^\bullet \overset{\bL}{\otimes}_{\wid{R}} 
\omega_{\wid{S}/\wid{R}}^\bullet) \\
\R\Gamma_I\R\Gamma_J(\omega_{R/A}^\bullet \overset{\bL}{\otimes}_R \omega_{S/R}^\bullet)
\ar[u]^-{\<\rotatebox{90}{\makebox[-0.1cm]{\Iso}}} \ar[r]^{\Iso} &
\R\Gamma_{\wid{I}}\R\Gamma_{\wid{J}}(\omega_{\wid{R}/A}^\bullet \overset{\bL}{\otimes}_{\wid{R}} 
\omega_{\wid{S}/\wid{R}}^\bullet) \ar[u]_-{\<\rotatebox{90}{\makebox[-0.1cm]{\Iso}}} \\
\R\Gamma_I(\omega_{R/A}^\bullet \overset{\bL}{\otimes}_R \R\Gamma_J(\omega_{S/R}^\bullet)) 
\ar[u]^-{\<\rotatebox{90}{\makebox[-0.1cm]{\Iso}}} \ar[r]^{\Iso} \ar[d]_{\Tr{J}} 
& \R\Gamma_{\wid{I}}(\omega_{\wid{R}/A}^\bullet \overset{\bL}{\otimes}_{\wid{R}} 
\R\Gamma_{\wid{J}}(\omega_{\wid{S}/\wid{R}}^\bullet)) 
\ar[u]_-{\<\rotatebox{90}{\makebox[-0.1cm]{\Iso}}} \ar[d]^{\Tr{\wid{S}/\wid{R}}} \\
\R\Gamma_I(\omega^\bullet_{R/A}) \ar[d]_{\Tr{I}} \ar[r]^\Iso & 
R\Gamma_{\wid{I}}(\omega^\bullet_{\wid{R}/A}) \ar[d]_{\Tr{\wid{R}/A}} \\
A[0]  \ar@{=}[r] & A[0]
}}
\end{aligned}
\leqno{(*)}\]
The proof of the commutativity of $(*)$ is as follows. Suppose $F$ is a bounded-below complex of
$R$-modules with finitely generated cohomology and $G$ is a bounded-below complex of
finitely generated $S$-modules, we have a bifunctorial commutative diagram  
(with $\wid{F}=F\otimes_R\wid{R}$, $G'=G\otimes_SS'$, and $\wid{G}=G\otimes_S\wid{S}$):
\[
{\xymatrix{
\R\Gamma_L(F\overset{\bL}{\otimes}_R G) \ar[r]^-{\Iso}
& \R\Gamma_{L'}(F \overset{\bL}{\otimes}_R G')
\ar[r]^{\Iso}
& \R\Gamma_{\wid{L}}(\wid{F} \overset{\bL}{\otimes}_{\wid{R}} 
\wid{G}) \\
\R\Gamma_I\R\Gamma_J(F\overset{\bL}{\otimes}_R G)
\ar[u]^-{\<\rotatebox{90}{\makebox[-0.1cm]{\Iso}}} \ar[r]^{\Iso} & 
\R\Gamma_I\R\Gamma_{J'}(F \overset{\bL}{\otimes}_R G') 
\ar[u]_-{\<\rotatebox{90}{\makebox[-0.1cm]{\Iso}}} \ar[r]^{\Iso}&
\R\Gamma_{\wid{I}}\R\Gamma_{\wid{J}}(\wid{F} \overset{\bL}{\otimes}_{\wid{R}} 
\wid{G}) \ar[u]_-{\<\rotatebox{90}{\makebox[-0.1cm]{\Iso}}} \\
\R\Gamma_I(F \overset{\bL}{\otimes}_R \R\Gamma_J G) 
\ar[u]^-{\<\rotatebox{90}{\makebox[-0.1cm]{\Iso}}} \ar[r]^{\Iso}  & 
\R\Gamma_I(F \overset{\bL}{\otimes}_R 
\R\Gamma_{J'}G') 
\ar[u]_-{\<\rotatebox{90}{\makebox[-0.1cm]{\Iso}}} \ar[r]^{\Iso} &
\R\Gamma_{\wid{I}}(\wid{F} \overset{\bL}{\otimes}_{\wid{R}} 
\R\Gamma_{\wid{J}}\,\wid{G})
\ar[u]_-{\<\rotatebox{90}{\makebox[-0.1cm]{\Iso}}}  \\
}}
\]
This shows that the top two rectangles in $(*)$ commute. For the rest of $(*)$ it is enough to show
that the following diagram commutes:
\[
{\xymatrix{
\R\Gamma_I(\omega_{R/A}^\bullet \overset{\bL}{\otimes}_R \R\Gamma_J(\omega_{S/R}^\bullet)) 
\ar[d]_-{\<\rotatebox{-90}{\makebox[-0.1cm]{\Iso}}}  \ar[rr]^-{\Tr{J}} 
&& \R\Gamma_I(\omega^\bullet_{R/A}) \ar@{=}[d] \ar[dr]^{\Tr{I}}  & \\
\R\Gamma_I(\omega_{R/A}^\bullet \overset{\bL}{\otimes}_R 
\R\Gamma_{J'}(\omega_{S'/R}^\bullet)) 
\ar[d]_-{\<\rotatebox{-90}{\makebox[-0.1cm]{\Iso}}}  \ar[rr]^-{\Tr{S'/R}} &&
\R\Gamma_I(\omega^\bullet_{R/A}) \ar[d]_-{\<\rotatebox{-90}{\makebox[-0.1cm]{\Iso}}}& A[0]  \\
\R\Gamma_{\wid{I}}(\omega_{R/A}^\bullet \overset{\bL}{\otimes}_{\wid{R}} 
\R\Gamma_{\wid{J}}(\omega_{\wid{S}/\wid{R}}^\bullet)) 
 \ar[rr]_-{\Tr{\wid{S}/\wid{R}}} && \R\Gamma_{\wid{I}}(\omega^\bullet_{\wid{R}/A}) 
 \ar[ru]_{\Tr{\wid{R}/A}} &
}}
\]
The rectangle on the top commutes by definition of $\Tr{J}$. The triangle on the right end of the
diagram commutes by definition of $\Tr{I}$. The rectangle at the bottom commutes by
flat base change, since the following diagram is cartesian:
\[
{\xymatrix{
\Spf{(\wid{S},\,\wid{J})} \ar[d] \ar[r] \ar@{}[dr]|\square & \Spf{(S',\,J')} \ar[d] \\
\Spf{(\wid{R},\,\wid{I})} \ar[r] & \Spec{\,R} 
}}
\]
\qed

\section{\bf Iterated residues}\label{s:fubini-res}

\subsection{Comment on Translations}\label{ss:translation} This is more of an orienting remark.
Suppose $M$ and $N$ are $\co_X$-modules on a ringed
space $(X,\,\co_X)$, and $d$, $e$ are integers. According to \cite[pp.28--29, (1.5.4)]{notes}
the functor $F_{N[d]}=(\boldsymbol{-})\otimes N[d]$ on the homotopy category 
of complexes of $\co_X$-modules is triangle preserving 
with the isomorphism $(A^\bullet[1])\otimes N[d] \iso (A^\bullet\otimes N[d])[1]$
being the identity map (``without the intervention of signs" in the language of \cite{conrad}).
Signs do intervene if
the first argument in the tensor product is fixed and the second varies. However, if the fixed first
argument is an $\co_X$-module, i.e., a complex concentrated in the $0^{\mathrm{th}}$-spot, then
signs do not intervene. More precisely,
$G_M=M\otimes (\boldsymbol{-})$ is triangle preserving, for the identity isomorphism
$M\otimes (B^\bullet[1]) \iso (M\otimes B^\bullet)[1]$. The same sign conventions apply 
for the derived tensor product on the derived category, see 
\cite[pp.62--63, (2.5.7)]{notes}.

For complexes of $\co_X$-modules $A^\bullet$ and $B^\bullet$, let 
\[\theta^{ij}\colon (A^\bullet[i])\otimes (B^\bullet[j]) \iso (A^\bullet\otimes B^\bullet)[i+j]\] 
be as in \cite[pp.28--29, (1.5.4)]{notes}. Then the following composite is a composite of {\em identity maps} and hence
is the identity map.
\stepcounter{thm}
\begin{align*}\label{map:[d][e]}\tag{\thethm}
M[e]\otimes N[d] \xrightarrow[\theta^{e0}]{\phantom{X}\Iso\phantom{X}} (M\otimes N[d])[e] 
&\xrightarrow[\theta^{0d}]{\phantom{X}\Iso\phantom{X}} (M\otimes N)[d][e] \\
& \xrightarrow{\phantom{X}=\phantom{X}} (M\otimes N)[d+e].
\end{align*}
(Strictly speaking, \eqref{map:[d][e]} is the identity map when the tensor product is in the ordinary category of complexes; over the derived category, the induced map on the homology in degree
$-(d+e)$ canonically identifies with the identity map. In particular, if either of $M,N$ is flat 
as $\co_X$-modules, then \eqref{map:[d][e]}, viewed as a derived-category map, also canonically identifies with identity.)

Thus, given a map $\bar{\psi}\colon M\otimes N \to T$ of $\co_X$-modules, we get a
map in $\D(X)$
\stepcounter{thm}
\begin{equation*}\label{map:[d][e]2}\tag{\thethm}
\psi\colon M[e]\otimes N[d] \to T[d+e] 
\end{equation*}
given by $(\bar{\psi}[d+e]) \smcirc\eqref{map:[d][e]}$. 
The maps $\bar{\psi}$ and $\psi$ determine each 
other. Indeed, $\bar{\psi}=\Hr^{-(d+e)}(\psi)$.

\subsection{Iterated generalized fractions}\label{ss:leray}
Let $R$ be a (noetherian) ring, $I\subset R$ an ideal generated by 
${\bf u}=(u_1,\dots,u_d)$. For any $R$-module $M$ we have a map of complexes
\stepcounter{thm}
\begin{equation*}\label{map:K-H}\tag{\thethm}
M[d]\otimes_RK^\bullet_\infty({\bf u}) \lra \Hr_I^d(M)[0]
\end{equation*}
defined on $0$-cochains by
\[
m\otimes\frac{1}{u_1^{\alpha_1}\ldots u_d^{\alpha_d}} \mapsto (-1)^d
\begin{bmatrix}
m\\
u_1^{\alpha_1}, \ldots, u_d^{\alpha_d}
\end{bmatrix}.
\]
This is a map of complexes since every $0$-cochain of $M[d]\otimes_RK^\bullet_\infty({\bf u})$
(and of $\Hr^d_I(M)[0]$) is a $0$-cocycle and because $\Hr^d_I(M)[0]$ is a complex concentrated
only in degree $0$. In the event $M$ is a free $R$-module and ${\bf u}$ is a quasi-regular sequence
(or if ${\bf u}$ is locally an $M$-sequence), \eqref{map:K-H} is a quasi-isomorphism. The map
\eqref{map:K-H} is functorial in $M$.
%

The above is a map of complexes, i.e., a morphism in the category $\C({\mathrm{Mod}}_R)$. There is
an analogous map in $\D({\mathrm{Mod}}_R)$ described as follows. Since $\Hr^j_I(M)=0$ for
$j>d$, there is a canonical map in  $\D({\mathrm{Mod}}_R)$,
\stepcounter{thm}
\begin{equation*}\label{gamma-I-H} \tag{\thethm}
\phi_{R,I}(M)\colon \R\Gamma_I(M[d]) \lra \Hr^d(M)[0]
\end{equation*}
such that $\Hr^0(\phi_{R,I}(M))$ is the identity map 
on $\Hr^d_I(M)$. One checks, using the definition of
the generalized fraction $\bigl[\begin{smallmatrix} m\\u_1^{\alpha_1}, \ldots, u_d^{\alpha_d}\bigr]
\end{smallmatrix}$, that the following diagram commutes in $\D({\mathrm{Mod}}_R)$
\stepcounter{thm}
\[
\begin{aligned}\label{diag:K-Gamma-I}
{\xymatrix{
M[d]\otimes_RK^\bullet_\infty({\bf u}) 
\ar[d]^-{\<\rotatebox{-90}{\makebox[-0.1cm]{\Iso}}}_{\eqref{iso:k-infty-gam}}
\ar[drr]^{\,\,\eqref{map:K-H}} && \\
\R\Gamma_I(M[d]) \ar[rr]_{\phi_{R,I}} && \Hr^d_I(M)[0]
}}
\end{aligned}\tag{\thethm}
\]

Next, suppose $S$ is an $R$-algebra and $J\subset S$ is an $S$-ideal generated by 
${\bf v}=(v_1,\ldots,v_e)$. Suppose $N$ is an $R$-module. We have an isomorphism
\stepcounter{thm}
\begin{equation*}\label{iso:leray-lc}\tag{\thethm}
\Hr^{d+e}_{IS+J}(M\otimes_RN) \iso \Hr^d_I(M\otimes_R\Hr^e_J(N))
\end{equation*}
given by
\stepcounter{thm}
\begin{equation*}\label{iso:leray-gf}\tag{\thethm}
\begin{bmatrix}
m\otimes n \\
v_1^{\beta_1}, \ldots, v_e^{\beta_e}, u_1^{\alpha_1}, \ldots, u_d^{\alpha_d}
\end{bmatrix}
\mapsto
\begin{bmatrix}
m\otimes \begin{bmatrix}
n \\
v_1^{\beta_1}, \ldots, v_e^{\beta_e}
\end{bmatrix}\\
u_1^{\alpha_1}, \ldots, u_d^{\alpha_d}
\end{bmatrix}
\end{equation*}
We claim that the following diagram commutes where we identify $M[d]\otimes_RN[e]$ with
$(M\otimes_RN)[d+e]$ as in \eqref{map:[d][e]}:
\stepcounter{thm}
\[
\begin{aligned}\label{diag:trans-infty}
{\xymatrix{
(M[d]\otimes_RN[e])\otimes_SK^\bullet_\infty({\bf{v, u}},S) 
\ar@{=}[d] \ar[r]^-{\eqref{map:K-H}} & \Hr^{d+e}_{IS+J}(M\otimes N)[0] \ar[dd]^{\eqref{iso:leray-lc}}\\
M[d]\otimes_R(N[e]\otimes_SK^\bullet_\infty({\bf v},S))\otimes_RK^\bullet_\infty({\bf u},R) 
\ar[d]_{\eqref{map:K-H}} & \\
M[d]\otimes_R\Hr^e_K(N)[0]\otimes_RK^\bullet_\infty({\bf u},R) 
\ar[r]_-{\eqref{map:K-H}} & \Hr^d_I(M\otimes_R\Hr^e_J(N))[0]
}}
\end{aligned}\tag{\thethm}
\]
Indeed, consider a $0$-cocycle 
$m\otimes n\otimes \frac{1}{v_1^{\beta_1} \ldots 
v_e^{\beta_e}, u_1^{\alpha_1} \ldots u_d^{\alpha_d}}$ of 
$M[d]\otimes N[e]\otimes K^\bullet_\infty({\bf{v, u}}, S)$. Its image along either possible
route (east-followed-by-south or south-followed-by-east) is 
$(-1)^{d+e}\bigl [\begin{smallmatrix}
m\otimes\bigl [\begin{smallmatrix}n\\{\bf v}^\beta\end{smallmatrix} \bigr ]\\ 
{\bf u}^\alpha 
\end{smallmatrix}\bigr]$. This proves that \eqref{diag:trans-infty} commutes.

The above is diagram in the category of complexes $\C({\mathrm{Mod}}_R)$. This can
be upgraded to the following:

\begin{prop}\label{prop:trans-lc} Suppose $S$-modules $N$ and $\Hr^e_J(N)$ are flat over the ring $R$, so that
${\boldsymbol{-}}\overset{\bL}{\otimes}_RN[e]={\boldsymbol{-}}\otimes_RN[e]$ and 
${\boldsymbol{-}}\overset{\bL}{\otimes}_R\Hr^e_J(N)[0] ={\boldsymbol{-}}\otimes_R\Hr^e_J(N)$.
Then the following diagram commutes in $\D({\mathrm{Mod}}_R)$:
\begin{equation*}
{\xymatrix{
\R\Gamma_{IS+J}(M[d]\otimes_RN[e])
 \ar[r]^-{\phi_{S,IS+J}} & \Hr^{d+e}_{IS+K}(M\otimes N)[0] \ar[ddd]^{\eqref{iso:leray-lc}}\\
\R\Gamma_{IS}\R\Gamma_J(M[d]\otimes_RN[e])  
\ar[u]^-{\<\rotatebox{-90}{\makebox[-0.1cm]{\Iso}}} & \\
\R\Gamma_I(M[d]\overset{\bL}{\otimes}_R\R\Gamma_JN[e]) 
\ar[u]^-{\<\rotatebox{-90}{\makebox[-0.1cm]{\Iso}}} \ar[d]_{\phi_{S,J}} & \\
\R\Gamma_I(M[d]\otimes_R\Hr^e_J(N)[0])
\ar[r]_-{\phi_{R,I}} & \Hr^d_I(M\otimes_R\Hr^e_J(N))[0]
}}
\end{equation*}
\end{prop}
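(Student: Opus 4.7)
The plan is to reduce the claimed commutativity in $\D({\mathrm{Mod}}_R)$ to the commutativity of \eqref{diag:trans-infty} in the category of complexes $\C({\mathrm{Mod}}_R)$, which has already been verified. The translation device is the stable Koszul complex representation of $\R\Gamma_{\bullet}$ from \eqref{iso:k-infty-gam}.

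First I would use the flatness of $N$ and $\Hr^e_J(N)$ over $R$ to replace the derived tensor products in the left column by ordinary ones, and then invoke \eqref{iso:k-infty-gam} to represent each of the four objects in the left column concretely: $\R\Gamma_{IS+J}(M[d]\otimes_R N[e])$ by $(M[d]\otimes_R N[e])\otimes_S K^\bullet_\infty({\bf v},{\bf u};S)$; the iterated version $\R\Gamma_{IS}\R\Gamma_J(M[d]\otimes_R N[e])$ by $(M[d]\otimes_R N[e])\otimes_S K^\bullet_\infty({\bf u}S;S)\otimes_S K^\bullet_\infty({\bf v};S)$; the object $\R\Gamma_I(M[d]\otimes_R \R\Gamma_J N[e])$ by $M[d]\otimes_R (N[e]\otimes_S K^\bullet_\infty({\bf v};S))\otimes_R K^\bullet_\infty({\bf u};R)$; and finally $\R\Gamma_I(M[d]\otimes_R \Hr^e_J(N)[0])$ by $M[d]\otimes_R \Hr^e_J(N)[0]\otimes_R K^\bullet_\infty({\bf u};R)$. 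Under these identifications, the two unlabelled vertical isomorphisms on the left become the canonical rearrangements of tensor factors.

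Next I would invoke \eqref{diag:K-Gamma-I} to replace each of the three horizontal $\phi$-arrows ($\phi_{S,IS+J}$, $\phi_{S,J}$, and $\phi_{R,I}$) by the concrete generalized-fraction map \eqref{map:K-H} on the chosen Koszul representatives. After these substitutions the diagram to be proved splits into a top half, which is precisely the commutative diagram \eqref{diag:trans-infty}, and a bottom half, which expresses functoriality of \eqref{map:K-H} in its module argument: applying \eqref{map:K-H} for $(R,I,M\otimes_R\Hr^e_J(N))$ coincides with applying the $(S,J,N)$-version of \eqref{map:K-H} first and then tensoring with $M[d]\otimes_R(-)\otimes_R K^\bullet_\infty({\bf u};R)$.

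The main obstacle I anticipate is routine bookkeeping of two sorts. First, one must verify that the canonical derived-category isomorphism $\R\Gamma_{IS}\R\Gamma_J\iso \R\Gamma_{IS+J}$ corresponds, at the level of the chosen Koszul representatives, to the tautological rearrangement of the tensor factorization $K^\bullet_\infty({\bf v},{\bf u};S) \cong K^\bullet_\infty({\bf u}S;S)\otimes_S K^\bullet_\infty({\bf v};S)$; this is essentially the stable-Koszul incarnation of the iterated-torsion isomorphism and is how that isomorphism is built. Second, one must keep track of the shift-sign conventions of \S\ref{ss:translation}, but this is controlled by \eqref{map:[d][e]}: the identification $M[d]\otimes_R N[e] = (M\otimes_R N)[d+e]$ is the identity in $\C({\mathrm{Mod}}_R)$, and by flatness of $N$ it remains canonically so in $\D({\mathrm{Mod}}_R)$, so no sign intervention arises in passing between rows. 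With these verifications in place, the proposition follows from \eqref{diag:trans-infty}.
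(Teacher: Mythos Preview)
Your proposal is correct and follows essentially the same approach as the paper: the paper's proof simply states that the proposition is a straightforward re-interpretation of the commutativity of \eqref{diag:trans-infty} using \eqref{diag:K-Gamma-I}, and you have spelled out exactly that reduction in detail.
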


\proof This is a straightforward re-interpretation of the commutativity of \eqref{diag:trans-infty} using
the commutativity of \eqref{diag:K-Gamma-I}.
\qed

\begin{rems}\label{rem:leray} {\bf{(1)}} {\em The assumptions on the flatness of $N$ and $\Hr^e_J(N)$
over $R$ are perhaps not necessary in view of \eqref{diag:trans-infty}, but since we are dealing
now with objects and functors in the derived category $\D({\mathrm{Mod}}_R)$, we have to make
sure that the arrows in the diagram in the proposition make sense. 
It was not clear to us that the second and third
arrows in the column on the left are meaningful in the derived category without our assumptions.

{\bf{(2)}} \Pref{prop:trans-lc} can be interpreted
as saying \eqref{iso:leray-lc} is the isomorphism given by the Leray
spectral sequence for the composite functor $\Gamma_{IS+J}=\Gamma_I\smcirc \Gamma_J$.
See \cite[Proposition\,(3.3.1)]{jag} as well as the correction by the second author.
}
\end{rems}

\subsection{Cohen-Macaulay maps and iterated residues}\label{ss:chi-cm}
Suppose $X=\Spec{\,S}$, $Y=\Spec{\,R}$, $Z=\Spec{\,A}$ are affine schemes, and
$f\colon X\to Y$ is Cohen-Macaulay of relative dimension $e$, $g\colon Y\to Z$ is Cohen-Macaulay
of relative dimension $d$. Note that we have finite type maps of rings $A\to R$ and $R\to S$.
Suppose $I \subset R$ and $J\subset S$ are as in \Ssref{ss:leray} with the added condition
that the given generators of $I$ and $J$, namely ${\bf u}=(u_1,\ldots, u_d)$ and 
${\bf v}=(v_1,\ldots, v_e)$ repectively are quasi-regular, and that $A\to R/I$ and $R\to S/J$ are 
finite. 

Since $A\to R$ and $R\to S$ are flat with Cohen-Macaulay fibres, under our hypotheses,
$A\to R/I$ and $R\to S/J$ are finite and flat, i.e., Cohen-Macaulay of relative dimension $0$.
Let $L=IS+J$, $W_1=\Spec{\,S/J}\hookrightarrow X$, $W_2=\Spec{\,R/I}\hookrightarrow Y$,
and $W=W_1\cap f^{-1}(W_2) = \Spec{\,S/L}\hookrightarrow X$.

In what follows, for $M\in{\mathrm{Mod}}_R$ and $N\in{\mathrm{Mod}}_S$ we make the standard
identifications, $\Hr^d_I(M)=\Hr^d_{W_2}(Y,\,\wit{M})$, $\Hr^e_J(N)=\Hr^e_{W_1}(X,\,\wit{N})$, and
$\Hr^{d+e}_L(N)=\Hr^{d+e}_W(\wit{N})$. We remind the reader that 
$\omega_{R/A}^\bullet=\omgs{R/A}[d]$, $\omega_{S/R}^\bullet=\omgs{S/R}[e]$, and
$\omega_{S/A}^\bullet=\omgs{S/A}[d+e]$.

Finally, let $\wid{R}$ be the $I$-adic completion of $R$, $\wid{S}$ the $L$-adic completion of $S$, 
and $S^*$ the $J$-adic completion of $S$. Let $\wid{J}=J\wid{S}$, $\wid{L}=L\wid{S}$, and 
$\wid{I}=I\wid{R}$. Note that $\wid{R}\to \wid{S}/\wid{J}$ is finite. Let 
$\omgs{\wid{S}/A}=\omgs{S/A}\otimes_S\wid{S}$, $\omgs{\wid{S}/\wid{R}}=\omgs{S/R}\otimes_S\wid{S}$ and $\omgs{\wid{R}/A}=\omgs{R/A}\otimes_R\wid{R}$
The maps $\Tr{(\wid{S}, \wid{L})/A}$,
$\Tr{(\wid{S}, \wid{J})}/\wid{R}$, and $\Tr{\wid{R}/A}$ give rise, on applying the cohomology functor
$\Hr^0(\boldsymbol{-})$ to maps $\tin{\wid{S}/A}\colon \Hr^{d+e}_{\wid{L}}(\omgs{\wid{S}/A})\to A$,
$\tin{\wid{S}/\wid{R}}\colon \Hr^{e}_{\wid{J}}(\omgs{\wid{S}/\wid{R}})\to \wid{R}$, and
$\tin{\wid{R}/A}\colon \Hr^{d}_{\wid{I}}(\omgs{\wid{R}/A})\to A$.

\begin{prop}\label{prop:leray-chi} Let notations be as above.
\begin{enumerate}
\item[(a)]
The following diagram commutes, with $\chi=\chi_{{}_{[g,f]}}$:
\[
{\xymatrix{
\Hr^{d+e}_L(\omgs{R/A}\otimes_R\omgs{S/R}) \ar[rr]^-{\Hr^{d+e}_L({\chi})}
\ar[d]_{\eqref{iso:leray-lc}}^{\,\rotatebox{-90}{\makebox[-0.1cm]{\Iso}}} && \Hr^{d+e}_L(\omgs{S/A})
\ar[dd]^{\ares{{W}}}\\
\Hr^d_I(\omgs{R/A}\otimes_R\Hr^e_J(\omgs{S/R})) \ar[d]_{\text{{\em{via}} $\ares{{W_1}}$}} & & \\
\Hr^d_I(\omgs{R/A}) \ar[rr]_{\ares{{W_2}}} && A
}}
\]

\item[(b)]  Let 
$\wid{\chi}=\chi_{{}_{[\wid{S}/\wid{R}/A]}}$. Then the following diagram commutes:
\[
{\xymatrix{
\Hr^{d+e}_{\wid{L}}(\omgs{\wid{R}/A}\otimes_{\wid{R}}\omgs{\wid{S}/\wid{R}}) 
\ar[rr]^-{{\text{{\em via} $\wid{\chi}$}}}
\ar[d]_{\eqref{iso:leray-lc}}^{\,\rotatebox{-90}{\makebox[-0.1cm]{\Iso}}} && 
\Hr^{d+e}_{\wid{L}}(\omgs{\wid{S}/A})
\ar[dd]^{\tin{\wid{S}/A}}\\
\Hr^d_{\wid{I}}(\omgs{\wid{R}/A}\otimes_{\wid{R}}\Hr^e_{\wid{J}}(\omgs{\wid{S}/\wid{R}})) \ar[d]_{\text{{\em{via}} $\tin{\wid{R}/A}$}} & & \\
\Hr^d_{\wid{I}}(\omgs{\wid{R}/A}) \ar[rr]_{\tin{\wid{R}/A}} && A
}}
\]
\end{enumerate}
\end{prop}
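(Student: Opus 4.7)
The plan is to deduce~(b) from Proposition~\ref{prop:chi-affine}(a) and~(a) from Proposition~\ref{prop:chi-affine}(b), in each case by passing to $\Hr^0$ of the given derived-category diagram and converting the iterated local-cohomology isomorphism along the left column into the classical Leray-type~\eqref{iso:leray-lc} by means of Proposition~\ref{prop:trans-lc}.

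For part~(b), apply Proposition~\ref{prop:chi-affine}(a) to the composite $A\to\wid{R}\to\wid{S}$, with $\wid{R}$ in the $\wid{I}$-adic topology and $\wid{S}$ in the $\wid{L}$-adic one. Because $g$ and $f$ are Cohen-Macaulay of relative dimensions $d$ and $e$ (so that $gf$ is Cohen-Macaulay of relative dimension $d+e$), the complexes $\omega^{\bullet}_{\wid{R}/A}$, $\omega^{\bullet}_{\wid{S}/\wid{R}}$, $\omega^{\bullet}_{\wid{S}/A}$ equal $\omgs{\wid{R}/A}[d]$, $\omgs{\wid{S}/\wid{R}}[e]$, $\omgs{\wid{S}/A}[d+e]$, and $\omgs{\wid{S}/\wid{R}}$ is flat over~$\wid{R}$. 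Flatness lets us replace the derived tensor product in Proposition~\ref{prop:chi-affine}(a) by the underived one, and the sign-free identification from~\S\ref{ss:translation} shows that the top row of that diagram, after applying $\Hr^0$, becomes the top row of~(b) followed by $\tin{\wid{S}/A}=\Hr^0(\Tr{\wid{S}/A})$; similar remarks apply to the other two trace maps.

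The remaining step is to identify $\Hr^0$ of the left column of Proposition~\ref{prop:chi-affine}(a) with the left column of~(b). This is exactly what Proposition~\ref{prop:trans-lc} provides when applied with $M=\omgs{\wid{R}/A}$, $N=\omgs{\wid{S}/\wid{R}}$, $R=\wid{R}$, $S=\wid{S}$ and ideals $\wid{I},\wid{J},\wid{L}$: the canonical chain $\R\Gamma_{\wid{L}}\iso \R\Gamma_{\wid{I}}\R\Gamma_{\wid{J}}\iso \R\Gamma_{\wid{I}}(\,\cdot\,\overset{\bL}{\otimes}\R\Gamma_{\wid{J}})$ followed by the truncation $\R\Gamma_{\wid{J}}\omega^{\bullet}_{\wid{S}/\wid{R}}\to\Hr^{e}_{\wid{J}}(\omgs{\wid{S}/\wid{R}})[0]$ reduces, in $\Hr^0$, to~\eqref{iso:leray-lc} followed by the action of $\tin{\wid{S}/\wid{R}}$ on the $\Hr^e_{\wid{J}}$-factor. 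This proves~(b). For~(a) one repeats the argument starting from Proposition~\ref{prop:chi-affine}(b): by~\eqref{map:Tr-J} and Definition~\ref{def:a-res}, $\Hr^0$ of $\Tr{L},\Tr{I},\Tr{J}$ are $\ares{W},\ares{W_2},\ares{W_1}$, and the identical application of Proposition~\ref{prop:trans-lc} handles the left column.

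The principal obstacle will be the flatness hypothesis that Proposition~\ref{prop:trans-lc} imposes on $\Hr^{e}_{\wid{J}}(\omgs{\wid{S}/\wid{R}})$ over $\wid{R}$ (and its analogue for the discrete setup of~(a)). This should follow from flatness of $\omgs{\wid{S}/\wid{R}}$ over $\wid{R}$ together with the fact that $\wid{R}\to\wid{S}/\wid{J}$ is finite and flat: $\wid{J}$ being locally generated by a quasi-regular sequence, the stable Koszul complex computes $\Hr^e_{\wid{J}}$ as a cokernel of maps between $\wid{R}$-flat localizations of $\omgs{\wid{S}/\wid{R}}$.
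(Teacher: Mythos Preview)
Your overall strategy matches the paper's proof exactly: reduce (a) to Proposition~\ref{prop:chi-affine}(b) and (b) to Proposition~\ref{prop:chi-affine}(a), then use Proposition~\ref{prop:trans-lc} to identify the left column after passing to $\Hr^0$. The identifications of the trace maps with $\ares{W}$, $\ares{W_1}$, $\ares{W_2}$ (resp.\ $\tin{\wid{S}/A}$, etc.) are also correct.

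The one genuine gap is your justification of the flatness hypothesis in Proposition~\ref{prop:trans-lc}, namely that $\Hr^e_J(\omgs{S/R})$ is $R$-flat. Your argument---that the stable Koszul complex exhibits this module as a cokernel of a map between $R$-flat modules---does not establish flatness: cokernels of maps of flat modules are not flat in general. The paper's argument is different and does work. With $J_n=(v_1^n,\dots,v_e^n)$, each $S/J_n$ is finite and flat over $R$, hence Cohen--Macaulay of relative dimension~$0$; by the fundamental local isomorphism its relative dualizing module is $\omgs{S/R}\otimes_S(\wedge^e_{S/J_n} J_n/J_n^2)^*$, which is therefore $R$-flat by Definition~\ref{def:cmrd}. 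Then $\Hr^e_J(\omgs{S/R})$ is the direct limit over $n$ of these modules, hence $R$-flat. The same reasoning handles the completed version for part~(b).
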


\proof  
Part\,(a) is mainly a re-statement of \Pref{prop:chi-affine}\,(b),  with Prop.\,\ref{prop:trans-lc} explaining
how \eqref{iso:leray-lc} enters into the picture.
 Before giving more details,
we make some observations. First, let $J_n$ denote the $S$-ideal
generated by $(v_1^n,\dots,v_e^n)$. Then~$S/J_n$ is finite and flat over~$R$, and hence
is Cohen-Macaulay of relative dimension~$0$ over~$R$. This means that the relative dualizing
module  for the algebra $R\to S/J_n$, i.e.,  $\omgs{S/R}\otimes_S\wedge^e_SJ_n/J_n^2$,
is flat over $R$, whence so its direct limit over $n$, namely $\Hr^e_J(\omgs{S/R})$. 
Since $f$, $g$ and $gf$ are Cohen-Macaulay of relative dimensions $e$, $d$, and $d+e$ respectively,
the maps $\phi_{S,J}(\omgs{S/R})$, $\phi_{R,I}(\omgs{R/A})$, and $\phi_{S,L}(\omgs{S/A})$  are
all isomorphisms. Moreover, since $\Hr^e_J(\omgs{S/R})$ is $R$-flat, the map
$\phi_{R,I}(\omgs{R/A}\otimes_R\Hr^e_J(\omgs{S/R}))$ is also an isomorphisms.

Since $\omgs{S/R}$ and $\Hr^e_J(\omgs{S/R})$ are both flat over $R$, 
we can apply \Pref{prop:trans-lc} with $M=\omgs{R/A}$,  $N=\omgs{S/R}$. 
Using the isomorphisms $\phi_{S,J}(\omgs{S/R})$, $\phi_{R,I}(\omgs{R/A})$,  $\phi_{S,L}(\omgs{S/A})$,  
$\phi_{R,I}(\omgs{R/A}\otimes_R\Hr^e_J(\omgs{S/R}))$, and applying \Pref{prop:trans-lc}, our 
assertion is equivalent to the commutativity of the diagram in part\,(b) of \Pref{prop:chi-affine}. 
This proves (a)

The proof of (b) is identical, with part(a) of \Pref{prop:chi-affine} replacing part (b) of loc.cit.
\qed

\bigskip

\Pref{prop:leray-chi} gives rise to two related residue formulas. 
The following is a consequence of part\,(a) of the proposition and the formula for the map 
\eqref{iso:leray-lc} given in \eqref{iso:leray-gf}.
For $\mu\in \omgs{R/A}$ and $\nu\in\omgs{S/R}$ and for integers
$\alpha_l >0$, $\beta_k >0$, $l\in\{1,\,\ldots,\,d\}$, $k\in\{1,\,\ldots,\,e\}$, we have
\stepcounter{thm}
\begin{equation*}\label{eq:res-res-1}\tag{\thethm}
\ares{{W_2}}\begin{bmatrix}
\ares{{W_1}}\begin{bmatrix} \nu \\ 
v_1^{\beta_1},\,\ldots,\,v_e^{\beta_e}
\end{bmatrix} \mu \\ 
u_1^{\alpha_1},\,\ldots,\,u_d^{\alpha_d}
\end{bmatrix}= \ares{W}\begin{bmatrix} \chi_{{}_{[S/R/A]}}(\mu\otimes\nu) \\ 
v_1^{\beta_1},\,\ldots,\,v_e^{\beta_e},\,u_1^{\alpha_1},\,\ldots,\,u_d^{\alpha_d}
\end{bmatrix}
\end{equation*}
Similarly, for $\mu\in \omgs{\wid{R}/A}$ and $\nu\in\omgs{\wid{S}/\wid{R}}$ and $\alpha_l$,
$\beta_k$ as above, we have by part\,(b) of the proposition, and the formula for the map 
\eqref{iso:leray-lc} given in \eqref{iso:leray-gf}, 
\stepcounter{thm}
\begin{equation*}\label{eq:res-res-2}\tag{\thethm}
\tin{\wid{R}/A}\begin{bmatrix}
\tin{\wid{S}/\wid{R}}\begin{bmatrix} \nu \\ 
v_1^{\beta_1},\,\ldots,\,v_e^{\beta_e}
\end{bmatrix} \mu \\ 
u_1^{\alpha_1},\,\ldots,\,u_d^{\alpha_d}
\end{bmatrix}= \tin{\wid{S}/A}\begin{bmatrix} \chi_{{}_{[\wid{S}/\wid{R}/A]}}(\mu\otimes\nu) \\ 
v_1^{\beta_1},\,\ldots,\,v_e^{\beta_e},\,u_1^{\alpha_1},\,\ldots,\,u_d^{\alpha_d}
\end{bmatrix}
\end{equation*}

\begin{rem}\label{rem:leray-pwr}
We will apply part\,(b) of the \Pref{prop:leray-chi} in a later paper in the following situation.
Let $R=A[u_1,\ldots,u_d]$, $S= R[v_1,\ldots,v_e]$ where ${\bf u}=(u_1,\ldots, u_d)$ and 
${\bf v}=(v_1,\ldots,v_e)$ are algebraically independent variables over $A$ and $R$ respectively.
Let $I={\bf u}R$, and $J={\bf v}S$. Then $\wid{R}=A[[{\bf u}]]$ and 
$\wid{S}=R[[{\bf v}]]=A[[{\bf u}, {\bf v}]]$.
\end{rem}

\appendix

\section{Base change and completions}
\subsection{}
\label{ss:more-base-ch}
We gather a few basic properties of the flat base-change map of
\eqref{iso:qc-basech}. By default, we work with complexes in $\wDqcp$.

Consider a cartesian square $\mathfrak s$ of noetherian formal schemes
\[
\xymatrix{
\V \ar[d]_{g}\ar[r]^v \ar@{}[dr]|\square& \X \ar[d]^{f} \\
\W \ar[r]_u & \Y
}
\]
with $f$ in $\bbG$ and $u$ flat so that we have a flat-base-change isomorphism
\[
\ush{\beta_{\mathfrak s}} \colon \bLambda_{\V}v^*\ush{f}\iso \ush{g}u^*
\]
as in \eqref{iso:qc-basech}. If $f$ (and hence $g$) is pseudoproper, then 
another description of $\ush{\beta_{\mathfrak s}}$ is that it is the map
adjoint to the composite of the following natural maps (cf. \cite[Theorem 8.1, p.~86]{dfs}).
\[
\R g_*\R\iGp{\V}\BL_{\V}v^*\ush{f} \iso \R g_*\R\iGp{\V}v^*\ush{f} 
\to \R g_*v^*\R\iGp{\X}\ush{f} \iso u^*\R f_*\R\iGp{\X}\ush{f} \xrightarrow{\Tr{f}} u^*
\]
If $f,g$ are formally \'{e}tale, then we have natural isomorphisms 
$\ush{f} \iso \BL_{\X}f^*$ and $\ush{g} \iso \BL_{\V}g^*$ induced by the corresponding
ones for $(-)^!$, $f^! \iso \R\iGp{\X}f^*$ and $g^! \iso \R\iGp{\V}g^*$ respectively. 
In this case, the base-change map $\beta^!_{\mathfrak s}$ for $(-)^!$ is induced by the composite
of the canonical isomorphisms
\[
\R\iGp{\V}v^*f^! \iso \R\iGp{\V}v^*\R\iGp{\X}f^* \iso \R\iGp{\V}v^*f^* \iso \R\iGp{\V}g^*u^* \iso g^!u^*.
\]
Hence another description of $\ush{\beta_{\mathfrak s}}$ is that it is given by the composite of the following 
isomorphisms
\[
\BL_{\V}v^*\ush{f} \iso \BL_{\V}v^*\BL_{\X}f^* \iso \BL_{\V}v^*f^* \iso \BL_{\V}g^*u^* \iso \ush{g}u^*.
\]
In particular, if $\eF \in \Dc^+(\Y)$, or if~$u$ is open or if~$\V$ is an ordinary scheme, 
then $\ush{\beta_{\mathfrak s}}(\eF)$ is given by the natural composite
\[
v^*\ush{f}\eF \iso v^*f^*\eF \iso g^*u^*\eF \iso \ush{g}u^*\eF.
\]

Next we look at transitivity properties of $\ush{\beta}$ vis-\'{a}-vis extension of the square~$\mathfrak s$
horizontally or vertically. These are also proved by reducing to the corresponding property 
for~$\beta^!$ (see \cite[Theorem\,2.3.2(i)]{pasting}).

\begin{prop}
\label{prop:basech-h-trans}
\hfill
\begin{enumerate}
\item
Consider cartesian squares $\mathfrak s_1$, $\mathfrak s_2$ as follows
\[
\xymatrix{
\V_2 \ar[r]^{v_2} \ar[d]_h^{\quad \;\;\mathfrak s_2} & \V_1 \ar[d]_{g}^{\quad\;\; \mathfrak s_1} \ar[r]^{v_1} 
& \X \ar[d]^{f} \\
\W_2 \ar[r]_{u_2} & \W_1 \ar[r]_{u_1} & \Y
}
\]
where $f,g,h$ are in $\bbG$ and $u_i,v_i$ are flat. Let $u = u_1u_2$ and $v= v_1v_2$ and
let $\mathfrak s$ denote the composite cartesian diagram. 
Then the following diagram of isomorphisms commutes.
\[
\xymatrix{
\BL_{\V_2}v_2^*v_1^*\ush{f} \ar[d] \ar[r] & \BL_{\V_2}v_2^*\BL_{\V_1}v_1^*\ush{f} 
\ar[r]^{\text{via}}_{\ush{\beta_{\mathfrak s_1}}} 
& \BL_{\V_2}v_2^*\ush{g}u_1^* \ar[r]^{\text{via}}_{\ush{\beta_{\mathfrak s_2}}}  & \ush{h}u_2^*u_1^* \ar[d] \\
\BL_{\V_2}v^*\ush{f} \ar[rrr]_{\text{via } \ush{\beta_{\mathfrak s}}}  &&& \ush{h}u^*
}
\]
\item Consider cartesian squares $\mathfrak s_1$, $\mathfrak s_2$ as follows
\[
\xymatrix{
\V_2 \ar[d]_{g_2}^{\quad \;\;\mathfrak s_2} \ar[r]^w & \X_2 \ar[d]^{f_2} \\
\V_1 \ar[d]_{g_1}^{\quad \;\;\mathfrak s_1} \ar[r]^v & \X_1 \ar[d]^{f_1} \\
\W \ar[r]_u & \Y
}
\]
where $f_i,g_i$ are in $\bbG$ and $u,v,w$ are flat. Let $f = f_1f_2$ and $g = g_1g_2$ and 
let $\mathfrak s$ denote the composite cartesian diagram. 
Then the following diagram of isomorphisms commutes.
\[
\xymatrix{
\BL_{\V_2}w^*\ush{f_2}\ush{f_1} \ar[d] \ar[r]^{\text{via}}_{\ush{\beta_{\mathfrak s_2}}}  
& \ush{g_2}v^*\ush{f_1} \ar[r] & \ush{g_2}\BL_{\V_1}v^*\ush{f_1} 
\ar[r]^{\text{via}}_{\ush{\beta_{\mathfrak s_1}}}  & \ush{g_2}\ush{g_1}u^* \ar[d] \\
\BL_{\V_2}w^*\ush{f} \ar[rrr]_{\text{via } \ush{\beta_{\mathfrak s}}} & & & \ush{g}u^*  
}
\]
\end{enumerate}
\end{prop}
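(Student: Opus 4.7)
The plan is to deduce both parts from the corresponding transitivity properties of $\beta^!_{\mathfrak s}$ (namely \cite[Theorem 2.3.2(i)]{pasting}), by unwinding the definition of $\ush{\beta_{\mathfrak s}}$ given in \eqref{iso:qc-basech} and repeatedly invoking the canonical isomorphisms of \eqref{iso*gamma-lambda} together with the idempotence relations $\BL_{-}\R\iGp{-} \iso \BL_{-}$ and $\BL_{-}\BL_{-} \iso \BL_{-}$ recalled in \S\ref{s:notations}. Concretely, each occurrence of $\ush{\beta_{\mathfrak s_i}}$ in the two diagrams expands into a composite whose essential content is $\beta^!_{\mathfrak s_i}$ sandwiched between instances of $\BL_{-}$ and $\R\iGp{-}$; the remaining maps in the composite are canonical and their compatibility with base change of open/pseudoproper/formally \'etale maps is standard.

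For part (i), I would first rewrite the upper horizontal composite
\[
\BL_{\V_2}v_2^*\BL_{\V_1}v_1^*\ush{f} \xrightarrow{\ush{\beta_{\mathfrak s_1}}} \BL_{\V_2}v_2^*\ush{g}u_1^* \xrightarrow{\ush{\beta_{\mathfrak s_2}}} \ush{h}u_2^*u_1^*
\]
as $\BL_{\V_2}$ applied to a sequence built out of $v_2^*$, $v_1^*$, $f^!$, $\R\iGp{-}$ and $\beta^!_{\mathfrak s_1}$, $\beta^!_{\mathfrak s_2}$. Using that $\BL_{\V_2}v_2^*\R\iGp{\V_1} \iso \BL_{\V_2}v_2^*$ and that $\BL_{\V_1}$ can be absorbed into the outer $\BL_{\V_2}v_2^*$ via \eqref{iso*gamma-lambda}, this composite simplifies to $\BL_{\V_2}$ applied to the composite $v_2^*v_1^*f^! \to v_2^*g^!u_1^* \to h^!u_2^*u_1^*$ (up to a $\R\iGp{\V_2}$ on the appropriate object). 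Horizontal transitivity of $\beta^!$ then identifies this with the bottom composite, proving commutativity.

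For part (ii), the additional subtlety is that $\ush{\beta_{\mathfrak s}}$ involves the pseudofunctorial comparison isomorphism $\ush{C_{f_1,f_2}} \colon \ush{f} \iso \ush{f_2}\ush{f_1}$, and similarly for $\ush{g}$. Recall from \S\ref{s:shriek-sharp} that $\ush{C_{f_1,f_2}}$ is built from $C^!_{f_1,f_2}$ by inserting $\BL_{\V_1}$ and using $f_1^!\R\iGp{\X_1} \iso f_1^!$. Thus the strategy is to expand both composites in part (ii) into the form $\BL_{\V_2}(\cdots)$ where the inner expressions involve only $(-)^!$, $\R\iGp{-}$, and $v^*, w^*, u^*$, and then invoke the vertical transitivity of $\beta^!$ together with its compatibility with $C^!_{f_1,f_2}$.

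The main obstacle, as usual in this subject, is bookkeeping: each compact-looking arrow in the stated diagrams unfolds into a composite of a half-dozen canonical morphisms, and one must verify that the numerous intermediate triangles (those expressing, e.g., naturality of $\BL_{-}\R\iGp{-} \iso \BL_{-}$ in the map $u$, or compatibility of $\BL_{-}v^*$ with composition of flat maps in the target) commute. None of these is individually deep, but laying them out coherently --- especially for part (ii), where both the pseudofunctorial comparison $\ush{C_{-,-}}$ and the base-change $\ush{\beta_{-}}$ involve their own expansions through $(-)^!$ --- is the chief work. Once all the pieces are in place, the commutativity reduces, after stripping $\BL_{\V_2}$, to the statement of \cite[Theorem 2.3.2(i)]{pasting}.
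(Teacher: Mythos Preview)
Your approach is essentially the same as the paper's: reduce to the transitivity of $\beta^!$ from \cite[Theorem 2.3.2(i)]{pasting} by expanding $\ush{\beta_{\mathfrak s}}$ via its definition, then using the canonical isomorphisms \eqref{iso*gamma-lambda} and the idempotence relations to strip away the $\BL$'s and $\R\iGp{}$'s. The paper executes this by writing out, for each of (i) and (ii), a single large commutative diagram whose outer border is the (transpose of the) diagram in question and whose one nontrivial cell is exactly the transitivity of $\beta^!$; your identification of the extra wrinkle in (ii) coming from the pseudofunctorial comparison $\ush{C_{-,-}}$ is also on target, and in the paper's diagram this appears as the bottom and top rows relating $g_2^!g_1^!$ to $g^!$ and $f_2^!f_1^!$ to $f^!$.
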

\begin{proof}
(i). For convenience we shall consider the transposed version of the diagram in question.
Using the definitions $\ush{f} = \BL_{\X}f^!$,  $\ush{g} = \BL_{\V_1}g^!$, $\ush{h} = \BL_{\V_2}h^!$
and the isomorphisms in \eqref{iso*gamma-lambda}
we reduce to checking that the outer border of the following diagram of isomorphisms
commutes where to reduce clutter we have dropped the~$\R$'s.
\[
\xymatrix{
\BL_{\V_2}v_2^*v_1^*\BL_{\X}f^! \ar[rrr] \ar[d]  &&& \BL_{\V_2}v^*\BL_{\X}f^! \ar[d] \\
\BL_{\V_2}v_2^*v_1^*f^! \ar[d] \ar[r] & \BL_{\V_2}\iG{\V_2}v_2^*v_1^*f^! \ar[rr] \ar[d] 
&& \BL_{\V_2}\iG{\V_2}v^*f^!  \ar[dd]^{\text{via }\beta_{\mathfrak s}^!}  \\
\BL_{\V_2}v_2^*\BL_{\V_1}\iG{\V_1}v_1^*f^!  \ar[d]_{\text{via } \beta_{\mathfrak s_1}^!}  \ar[r]
& \BL_{\V_2}\iG{\V_2}v_2^*\iG{\V_1}v_1^*f^! \ar[d]_{\text{via } \beta_{\mathfrak s_1}^!} & \boxplus \\
\BL_{\V_2}v_2^*\BL_{\V_1}g^!u_1^* \ar[r] & \BL_{\V_2}\iG{\V_2}v_2^*g^!u_1^* 
\ar[r]_{\text{via } \beta_{\mathfrak s_2}^!} &  \BL_{\V_2}h^!u_2^*u_1^* \ar[r] & \BL_{\V_2}h^!u^* 
}
\]
The unlabelled arrows are obvious natural maps. 
The rectangle~$\boxplus$ commutes by the transitivity of base-change for $(-)^!$. Commutativity
of the remaining parts is obvious. 

(ii). Once again we consider the transpose of the diagram under consideration. Using the isomorphisms 
in~\eqref{iso*gamma-lambda} we reduce to checking that the outer border of the following diagram of
isomorphisms commutes.
\[
\xymatrix{
\BL_{\V_2}w^*\BL_{\X_2}f_2^!\BL_{\X_1}f_1^! \ar[d] \ar[r] 
& \BL_{\V_2}w^*\BL_{\X_2}f_2^!f_1^! \ar[r] \ar[d] & \BL_{\V_2}w^*\BL_{\X_2}f^! \ar[d] \\
\BL_{\V_2}\iG{\V_2}w^*f_2^!\BL_{\X_1}f_1^! \ar[d]_{\text{via } \beta^!_{\mathfrak s_2}} \ar[r] 
& \BL_{\V_2}\iG{\V_2}w^*f_2^!f_1^! \ar[r] \ar[d]_{\text{via } \beta^!_{\mathfrak s_2}} 
& \BL_{\V_2}\iG{\V_2}w^*f^! \ar[dd]^{\text{via } \beta^!_{\mathfrak s}}  \\
\BL_{\V_2}g_2^!v^*\BL_{\X_1}f_1^! \ar[d] \ar[r] & \BL_{\V_2}g_2^!v^*f_1^! \ar[d]^{\qquad\qquad\boxplus}  \\
\BL_{\V_2}g_2^!\BL_{\V_1}\iGp{\V_1}v^*f_1^!  \ar[d]_{\text{via } \beta^!_{\mathfrak s_1}} \ar[r]
& \BL_{\V_2}g_2^!\iGp{\V_1}v^*f_1^! \ar[d]_{\text{via } \beta^!_{\mathfrak s_1}} & \BL_{\V_2}g^!u^* \ar[d] \\
\BL_{\V_2}g_2^!\BL_{\V_1}g_1^!u^* \ar[r] & \BL_{\V_2}g_2^!g_1^!u^* \ar[r] & \BL_{\V_2}g^!u^* 
}
\]
The rectangle $\boxplus$ commutes by transitivity of base-change for $(-)^!$ while the other 
rectangles commute for obvious reasons. 
\end{proof}

Completion maps, being pseudo-proper, formally \'etale, and flat, 
give rise to additional compatibility issues. 
Now we consider some special situations involving completion maps. 

Let $\X$ be a formal scheme and $\I \subset \co_\X$ an open coherent ideal. Let $\W \set \widehat{\X}$
be the completion of $\X$ along $\I$ and $\kappa \colon \W \to \X$ the corresponding completion
map. Then there are canonical isomorphisms 
(see proof of \cite[Lemma 4.1]{gm}, and of \cite[Proposition 5.2.4]{dfs}) 
\stepcounter{thm}
\begin{equation*}\label{eq:kappa-gamma}\tag{\thethm}
\kappa_*\R\iGp{\W}\kappa^* \iso \kappa_*\kappa^*\R\iG{\I} \iso \R\iG{\I}\kappa_*\kappa^* \osi \R\iG{\I}.
\end{equation*}

For the next two results regarding $\ush{(-)}$ for completion maps, we will first need to look at the corresponding 
results for $(-)^!$. For that purpose we recall that 
in~\cite{pasting}, $(-)^!$ is obtained by gluing the pseudofunctor
$(-)^{\times}$ over pseudoproper maps in~$\bbG$ given by 
\[
f^{\times} = \text{ right adjoint to } \R f_*, \qquad (\text{$f$ pseudoproper}),
\]
with the pseudofunctor $(-)^*_t$ over \'etale maps in $\bbG$ given by
\[
f \mapsto \R\iGp{\X}f_*, \qquad (\text{$f \colon \X \to \Y$ \'etale}),
\]
and this gluing utilizes, among other things, the \'etale base-change isomorphisms
associated to cartesian squares involving \'etale base change of a pseudoproper map 
(see \cite[Theorem 7.1.6, \S 7.2.7]{pasting}). 

\begin{lem}
\label{lem:kappa}
For a completion map $\kappa \colon \W \to \X$ by an open coherent ideal $\I \subset \co_\X$ as above
and for $\eF \in \Dc^+(\X)$,
the isomorphism $\kappa^*\eF \to \ush{\kappa}\eF$ of~\eqref{eq:gm} is also the map adjoint 
to the composite $\psi$ given by
\[
\kappa_*\R\iGp{\W}\kappa^* \iso \R\iG{\I} \to \mathbf{1}.
\]
\end{lem}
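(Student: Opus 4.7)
The plan is to use the pseudo-properness of $\kappa$, which provides the adjunction $\R\kappa_*\R\iGp{\W} \dashv \ush{\kappa}$, to compute the adjoint of the isomorphism $\alpha\colon\kappa^*\eF \iso \ush{\kappa}\eF$ of \eqref{eq:gm} and to identify it with $\psi$. First I would unwind $\alpha$: since $\eF \in \Dc^+(\X)$ and $\kappa$ is flat, Greenlees--May duality gives $\kappa^*\eF \iso \BL_\W\kappa^*\eF$, and combined with $\kappa^! \iso \R\iGp{\W}\kappa^*$ (from \cite[Theorem 7.1.6]{pasting}, as $\kappa$ is formally \'etale), $\alpha$ factors as
\[
\kappa^*\eF \osi \BL_\W\kappa^*\eF \osi \BL_\W\R\iGp{\W}\kappa^*\eF \iso \BL_\W\kappa^!\eF = \ush{\kappa}\eF.
\]

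Next, because $\kappa$ is pseudo-proper, the trace $\Tr{\kappa}\colon \R\kappa_*\R\iGp{\W}\ush{\kappa} \to {\bf 1}$ of \eqref{map:Tr-f} factors, via the isomorphisms $\R\iGp{\W}\BL_\W\kappa^! \iso \R\iGp{\W}\kappa^! \iso \kappa^!$ (the second valid because $\kappa^!$ lands in $\Dqct^+(\W)$), as
\[
\R\kappa_*\R\iGp{\W}\BL_\W\kappa^! \iso \R\kappa_*\kappa^! \xrightarrow{t_\kappa} {\bf 1},
\]
where $t_\kappa$ is the classical pseudo-proper trace for $\R\kappa_* \dashv \kappa^!$. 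Assembling these two steps, the adjoint of $\alpha$ under $\R\kappa_*\R\iGp{\W} \dashv \ush{\kappa}$ becomes
\[
\R\kappa_*\R\iGp{\W}\kappa^*\eF \iso \R\kappa_*\kappa^!\eF \xrightarrow{t_\kappa(\eF)} \eF,
\]
the first isomorphism using $\kappa^! \iso \R\iGp{\W}\kappa^*$ once more. It then suffices to show that this composite equals $\psi$, namely $\R\kappa_*\R\iGp{\W}\kappa^*\eF \iso \R\iG{\I}\eF \to \eF$, where the first arrow is \eqref{eq:kappa-gamma}.

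The key remaining step, and the main obstacle, is precisely this last identification: that for a completion map $\kappa$, the pseudo-proper trace $t_\kappa\colon \R\kappa_*\kappa^! \to {\bf 1}$ corresponds, under the formally-\'etale identification $\kappa^! \iso \R\iGp{\W}\kappa^*$, to the canonical ``evaluation'' map $\R\kappa_*\R\iGp{\W}\kappa^* \iso \R\iG{\I} \to {\bf 1}$ of \eqref{eq:kappa-gamma}. This compatibility lies at the heart of how $(-)^!$ is glued together in \cite[\S 7.2]{pasting} from its pseudo-proper and \'etale variants, using the flat-base-change isomorphisms for completion-by-\'etale squares; by the very design of that gluing, the two expressions for the trace coincide on the overlap (maps that are both pseudoproper and formally \'etale, i.e., completion maps). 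With this compatibility in hand, the preceding paragraph delivers $\psi$ on the nose and the lemma follows.
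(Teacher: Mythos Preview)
Your overall strategy matches the paper's: reduce the $\ush{\kappa}$-level claim to the $(-)^!$-level claim that the canonical isomorphism $\phi_\kappa\colon \R\iGp{\W}\kappa^* \iso \kappa^!$ is adjoint to $\psi$, and then check this by comparing $t_\kappa$ with the evaluation map $\R\iG{\I}\to\mathbf{1}$ under $\phi_\kappa$. Your unwinding of $\alpha$ through $\BL_\W$ and your factoring of $\Tr{\kappa}$ through $t_\kappa$ are both fine.

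The gap is at exactly the point you flag as the ``main obstacle'': you assert that the compatibility of $t_\kappa$ with the evaluation map ``lies at the heart of'' the gluing in \cite[\S7.2]{pasting} and therefore holds ``by the very design of that gluing'', but you do not actually verify it. The paper does not leave this step to a general principle. It uses the specific content of \cite[Theorem~7.1.6]{pasting}: the isomorphism $\phi_\kappa$ is, by that theorem, the inverse of the \'etale base-change isomorphism $\beta^!$ associated to the trivial cartesian square
\[
\xymatrix{
\W \ar[r]^{\mathbf{1}} \ar[d]_{\mathbf{1}} & \W \ar[d]^{\kappa} \\
\W \ar[r]_{\kappa} & \X.
}
\]
From this description one obtains a concrete factorization $\phi_\kappa=\alpha_1\alpha_2^{-1}$, where $\alpha_1$ is the canonical $\kappa^*\kappa_*\to\mathbf{1}$ (an isomorphism on $\Dqct^+(\W)$) and $\alpha_2$ arises from the factorization of $\Tr{\kappa}^!$ through $\R\iG{\I}\to\mathbf{1}$. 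The required identity $\Tr{\kappa}^!\circ\kappa_*(\phi_\kappa)=\psi$ then follows from a short diagram chase using the unit $\mathbf{1}\to\kappa_*\kappa^*$. Without this explicit description of $\phi_\kappa$, your final paragraph is an appeal rather than an argument; the gluing framework guarantees the existence of compatible pseudofunctorial data, but the precise trace compatibility you need is something one checks, not something one gets for free.
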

\begin{proof}[Sketch Proof]
It suffices to prove that the corresponding property for $\kappa^!$ holds, i.e.,  
the canonical isomorphism $\phi = \phi_{\kappa} \colon \R\iGp{\W}\kappa^* \iso \kappa^! $ 
is the map adjoint to~$\psi$.
Indeed, as per the proof of \cite[Theorem 7.1.6]{pasting},
the isomorphism $\phi$ equals $(\beta^!)^{-1}$ where 
$\beta^! \colon \bf{1}^*\kappa^! \iso \bf{1}^!\kappa^* = \R\iGp{\W}\kappa^*$ is the base-change 
isomorphism  associated to the cartesian square in the following diagram.
\[
\xymatrix{
\W \ar[r]^{\mathbf{1}} & \W \ar[r]^{\mathbf{1}} \ar[d]_{\mathbf{1}} & \W \ar[d] ^{\kappa} \\
& \W \ar[r]_{\kappa} & \X 
}
\]
Therefore, $\phi = \alpha_1\alpha_2^{-1}$ for $\alpha_i$ as given in the commutative diagram below, 
where $\alpha_1$ is the
canonical map $\kappa^*\kappa_* \to \mathbf{1}$, (which is an isomorphism over $\Dqct^+(\W)$)
while $\alpha_2$ results from the fact that the trace $\Tr{\kappa}^! \colon \kappa_*\kappa^! \to \bf{1}$ 
factors through $\R\iGp{\I} \to \mathbf{1}$. 
\[
\xymatrix{
\kappa^!  & \kappa^*\kappa_*\kappa^! 
\ar[l]_{\Iso}^{\alpha_1} \ar[r]^{\Iso}_{\alpha_2} \ar@/^1.5pc/[rr]^{\kappa^*\Tr{\kappa}^!}
& \R\iGp{\W}\kappa^* \ar[r] & \kappa^*
}
\]
The adjointness of $\phi$ and $\psi$ amounts to proving that $\Tr{\kappa}^!\kappa_*(\phi) = \psi$,
which results from the commutativity of the following.
\[
\xymatrix{
\kappa_*\kappa^!  \ar@{=}[rd] & \kappa_*\kappa^*\kappa_*\kappa^! 
\ar[l]_{\Iso}^{\kappa_*\alpha_1} \ar[rr]^{\Iso}_{\kappa_*\alpha_2} 
&& \kappa_*\R\iGp{\W}\kappa^* \ar[r] & \kappa_*\kappa^*  \\
& \kappa_*\kappa^! \ar[rr]_{\Tr{\kappa}^!} \ar[u]_{\bf{1} \to \kappa_*\kappa^*} 
&& \R\iG{\I} \ar[u]_{\cong} \ar[r] &  \bf{1} \ar[u]
}
\]
\end{proof}

\begin{lem}
\label{lem:bc-gm} 
Consider a cartesian diagram in $\bbG$ as follows
\[
\xymatrix{
\V \ar[d]_{g}\ar[r]^{\bar{\kappa}} \ar@{}[dr]|\square& \X \ar[d]^{f} \\
\W \ar[r]_{\kappa} & \Y
}
\]
where $\kappa, {\bar{\kappa}}$ are 
completion maps by open coherent ideal sheaves. Let $\eF \in \Dc^+(\Y)$.
\begin{enumerate}
\item 
The following diagram of obvious natural isomorphisms commutes.
\[
\xymatrix{
\bar\kappa^*\ush{f}\eF \ar[r]^{\Iso}_{\eqref{iso:bc-sharp}} 
\ar[d]_{\eqref{eq:gm}}^{\,\rotatebox{-90}{\makebox[-0.1cm]{\Iso}}}
& \ush{g}\kappa^*\eF \ar[d]^{\eqref{eq:gm}}_{\,\rotatebox{-90}{\makebox[-0.1cm]{\Iso}}}\\
\ush{\bar\kappa}\ush{f}\eF \ar[r]^-{\Iso} 
& \ush{g}\ush{\kappa}\eF
}
\]
\item If $f$ is flat then the following diagram of obvious natural isomorphisms commutes.
\[
\xymatrix{
g^*\kappa^*\eF \ar[r]^{\Iso} \ar[d]_{\eqref{eq:gm}}^{\,\rotatebox{-90}{\makebox[-0.1cm]{\Iso}}}
& \bar\kappa^*f^*\eF \ar[d]^{\eqref{eq:gm}}_{\,\rotatebox{-90}{\makebox[-0.1cm]{\Iso}}}\\
g^*\ush{\kappa}\eF \ar[r]_-{\eqref{iso:bc-sharp}}^-{\Iso} & \ush{\bar\kappa}f^*\eF
}
\]
\end{enumerate}
\end{lem}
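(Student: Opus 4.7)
Proof proposal. In both parts the strategy is to unwind the isomorphisms \eqref{iso:bc-sharp}, \eqref{eq:gm}, and the pseudofunctorial comparison maps in terms of their $(-)^!$-level counterparts via the defining relations $\ush{f}=\BL_{\X}f^!$ and $\kappa^!\iso \R\iGp{\W}\kappa^*$ for formally \'etale $\kappa$, and then to appeal to the construction of the base-change map $\beta^!_{\mathfrak s}$ for completion maps from \cite[\S 7.2]{pasting}.

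For part (ii), flatness of $f$ forces $g$ to be flat too and puts $f^*\eF$ and $g^*\kappa^*\eF$ in $\Dc^+$, so that both vertical arrows are instances of \eqref{eq:gm}, each identified by \Lref{lem:kappa} as the adjoint of the natural composite $\kappa_*\R\iGp{\W}\kappa^*\iso \R\iG{\I}\to\mathbf 1$ (with $\I$ the relevant coherent ideal of definition). The top horizontal arrow is pseudofunctoriality of $\bL(-)^*$. For the bottom arrow, the relevant square is the transposed cartesian diagram in which $\kappa$ is the (pseudoproper) map in $\bbG$ and $f$ is the flat base-change map; since both $\kappa$ and $\bar\kappa$ are formally \'etale, the explicit formula for $\ush\beta_{\mathfrak s}$ on $\Dc^+$-complexes recalled in \Ssref{ss:more-base-ch} factors the bottom arrow as $g^*\ush\kappa\eF\iso g^*\kappa^*\eF\iso \bar\kappa^*f^*\eF\iso \ush{\bar\kappa}f^*\eF$, with the outer isomorphisms being the two vertical arrows of the square and the middle one being the top arrow. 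Commutativity is thus tautological.

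For part (i), the vertical arrows are $\BL_{\V}$ (resp.~$\BL_{\W}$) applied to the canonical identifications $\bar\kappa^*\iso \R\iGp{\V}\bar\kappa^*\iso \bar\kappa^!$ and $\kappa^*\iso \R\iGp{\W}\kappa^*\iso \kappa^!$ on $\Dc^+$. By the definition of $\ush\beta_{\mathfrak s}$ recalled in \Ssref{ss:more-base-ch}, the top arrow is $\BL_{\V}$ applied to a composite whose essential ingredient is $\beta^!_{\mathfrak s}\colon \R\iGp{\V}\bar\kappa^*f^!\iso g^!\kappa^*$, while the bottom arrow is $\BL_{\V}$ applied to the pseudofunctoriality isomorphism $\bar\kappa^!f^!\iso (f\bar\kappa)^!=(\kappa g)^!\iso g^!\kappa^!$. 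After absorbing $\BL_{\V}$, commutativity reduces to a compatibility identity at the $(-)^!$-level, which is built into the construction of $\beta^!_{\mathfrak s}$ for cartesian squares involving completion maps in \cite[Theorem 7.1.6]{pasting}: the base-change map there is \emph{defined} as the composite arising from $(\kappa g)^!\iso g^!\kappa^!\iso g^!\R\iGp{\W}\kappa^*$ and $(f\bar\kappa)^!\iso \bar\kappa^!f^!\iso \R\iGp{\V}\bar\kappa^*f^!$, coupled with the pseudofunctorial identification $(\kappa g)^!\iso (f\bar\kappa)^!$.

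The main technical obstacle is the bookkeeping in part (i): one must carefully chase the identifications through $\BL_{\V}$ and $\R\iGp{\V}$ and verify that the inner $(-)^!$-level compatibility is precisely what $\beta^!_{\mathfrak s}$ was defined to satisfy. All needed inputs---\Lref{lem:kappa}, the construction of $\beta^!_{\mathfrak s}$ for completion maps, and the idempotence isomorphism $\BL_{\V}\R\iGp{\V}\iso\BL_{\V}$---are already in place, so the argument is essentially diagram chasing with no new conceptual input.
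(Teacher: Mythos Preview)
Your proposal is correct and follows essentially the same route as the paper's own (sketch) proof: both reduce to the $(-)^!$ level via $\ush{(-)}=\BL(-)^!$ and the identification $\kappa^!\iso\R\iGp{}\kappa^*$ for completion maps, then invoke the compatibility of $\beta^!$ with pseudofunctoriality coming from the gluing construction in \cite[Theorem~7.1.6]{pasting}.

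One small difference worth noting: for part~(ii) you short-circuit the argument by using the explicit description of $\ush{\beta_{\mathfrak s}}$ for formally \'etale maps on $\Dc^+$ already recorded in \Ssref{ss:more-base-ch}, which makes the commutativity literally tautological. The paper instead writes out the corresponding $(-)^!$-level square (compatibility of $\beta^!$ with the $(-)^*_t$-pseudofunctor) and then passes to $\ush{(-)}$. Your route is a bit cleaner here, though of course the explicit description you invoke was itself derived from that same $(-)^!$-level compatibility. One caution on part~(i): you say $\beta^!_{\mathfrak s}$ is \emph{defined} as the composite through pseudofunctoriality; the paper more carefully phrases the needed diagram as a \emph{consequence} of the gluing result. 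The distinction does not affect correctness, but since $f$ is an arbitrary map in $\bbG$ (not necessarily \'etale or pseudoproper), the base-change map is in general constructed by other means in \cite{pasting}, and its agreement with the pseudofunctorial composite is a theorem rather than a definition.
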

\begin{proof}[Sketch Proof]
As a consequence of the gluing result in \cite[Theorem 7.1.6]{pasting}, 
via the canonical isomorphisms $\phi_{\bar\kappa} \colon \bar\kappa^! \iso \R\iGp{\V}\bar\kappa^*$ and
$\phi_{\kappa} \colon \kappa^! \iso \R\iGp{\W}\kappa^* $, 
for the situation in (i), we have a commutative diagram of isomorphisms
\[
\xymatrix{
\R\iGp{\V}\bar\kappa^*f^! \ar[r]^{\beta^!} \ar[d] & g^!\R\iGp{\W}\kappa^* \ar[d] \\
\bar\kappa^!f^! \ar[r] & g^!\kappa^!
}
\]
reflecting the compatibility of $\beta$ with the pseudofunctorial structure of $(-)^!$,
while for the one in (ii), we have a commutative diagram of isomorphisms as follows,
\[
\xymatrix{
\R\iGp{\V}g^*\R\iGp{\W}\kappa^*  \ar[r] \ar[d] &  \R\iGp{\V}\bar\kappa^*f^* \ar[d] \\
\R\iGp{\V}g^*\kappa^!  \ar[r]_{\beta^!} & \bar\kappa^!f^*
}
\]
reflecting the compatibility of $\beta$ with the pseudofunctorial structure of $(-)^*_t$
over \'etale maps.
The result now follows by applying $\BL$'s appropriately in each diagram 
and using the pre-pseudofunctorial properties of $\ush{(-)}$.
\end{proof}


\subsection{} 
Let $f\colon \X\to \Y$ be a pseudo-proper map and let $\J$ be an ideal of definition of $\X$.
Suppose $\I$ is an open coherent
ideal in $\co_\Y$ and $\kappa\colon \U\to \Y$ is the completion of $\Y$
with respect to $\I$. Let $\V=\X\times_\Y\U$ and $\kappa'\colon \V\to \X$, $g\colon \V\to \U$ the
projection maps. Note that $\V$ is the completion of $\X$ with respect to the $\co_\X$-ideal 
$\I\co_\X+\J$, and $\kappa'$ is the completion map. We thus have a cartesian square:
\[
{\xymatrix{
\V \ar[d]_{f'} \ar[r]^{\kappa'} \ar@{}[dr]|\square & \X \ar[d]^{f}\\
\U \ar[r]_\kappa  & \Y
}}
\]
By \eqref{eq:kappa-gamma}, we have $\kappa_*\R\iGp{\U}\kappa^*\iso \R\iG{\I}$ and 
${\kappa'}_*\R\iGp{\V}{\kappa'}^*\iso \R\iG{\I\co_\X+\J}$.

\begin{prop}\label{prop:iterated-trace} The following diagram commutes:
\[
{\xymatrix{
\Rfs\kappa'_*\R\iGp{\V}{\kappa'}^*\ush{f} \ar[rr]^{\Iso}_{\gamma} 
\ar[d]_{\,\rotatebox{-90}{\makebox[-0.1cm]{\Iso}}}&& 
\kappa_*\R f'_*\R\iGp{\V}\ush{(f')}\kappa^* \ar[dd]^{\kappa_*\Tr{{f'}}}\\
\Rfs\R\iG{\I\co_\X+\J}\ush{f} \ar[d]_{\,\rotatebox{-90}{\makebox[-0.1cm]{\Iso}}} && \\
\Rfs\R\iG{\I\co_\X}\R\iGp{\X}\ush{f}  \ar[d]_{\,\rotatebox{-90}{\makebox[-0.1cm]{\Iso}}} && 
\kappa_*\R\iGp{\U}\kappa^* \ar[d]_{\,\rotatebox{-90}{\makebox[-0.1cm]{\Iso}}}  \\
\R\iG{\I}\Rfs\R\iGp{\X}\ush{f} 
\ar[rr]_{\Tr{f}}  && \R\iG{\I}
}}
\]
where $\gamma$ is induced by applying the functor
$\Rfs\kappa'_*\R\iGp{\V}{\kappa'}^*$ to the natural isomorphism
${\kappa'}^*\ush{f} \iso \ush{f'}\kappa^*$ and the upward pointing arrow on the southwest
corner is the isomorphism of \cite[Proposition 5.2.8 (d)]{dfs}.
\end{prop}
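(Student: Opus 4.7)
The strategy is to show that both paths agree after composition with the canonical surjection $\R\iG{\I}\to\mathbf{1}_{\D(\Y)}$, at which point the resulting $\mathbf{1}$-valued maps are the trace $\Tr{f\kappa'}=\Tr{\kappa f'}$ for the composite pseudo-proper map, decomposed pseudofunctorially in two different orders; the lift back to $\R\iG{\I}$ then follows because the source is $\R\iG{\I}$-torsion.

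For the down-then-right composite, the isomorphisms $\kappa'_*\R\iGp{\V}\kappa'^*\cong\R\iG{\I\co_{\X}+\J}\cong\R\iG{\I\co_{\X}}\R\iGp{\X}$ from~\eqref{eq:kappa-gamma} and \cite[Prop.~5.2.8\,(d)]{dfs} (using that~$\J$ is an ideal of definition), together with the commutation $\Rfs\R\iG{\I\co_{\X}}\cong\R\iG{\I}\Rfs$, identify the source with $\R\iG{\I}\Rfs\R\iGp{\X}\ush{f}$, and the bottom arrow becomes $\R\iG{\I}(\Tr{f})$ (using $\R\iG{\I}\R\iGp{\Y}\cong\R\iG{\I}$). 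Post-composition with $\R\iG{\I}\to\mathbf{1}$ then yields $\Tr{f}$ pre-composed with the natural inclusion $\Rfs\R\iG{\I\co_{\X}+\J}\ush{f}\to\Rfs\R\iGp{\X}\ush{f}$; by \Lref{lem:kappa} and $\kappa'^*\cong\ush{\kappa'}$, this inclusion is $\Rfs\R\iGp{\X}\Tr{\kappa'}\ush{f}$, and pseudofunctoriality of the trace identifies the total composite with $\Tr{f\kappa'}$. A parallel analysis of the right-then-down composite uses \Lref{lem:bc-gm} to recognize $\gamma$, under $\kappa^*\cong\ush{\kappa}$ and $\kappa'^*\cong\ush{\kappa'}$, as the pseudofunctoriality isomorphism $\ush{\kappa'}\ush{f}\iso\ush{f'}\ush{\kappa}$, and \Lref{lem:kappa} to identify the terminal iso $\kappa_*\R\iGp{\U}\kappa^*\cong\R\iG{\I}$ post-composed with $\R\iG{\I}\to\mathbf{1}$ as $\Tr{\kappa}$; pseudofunctoriality again gives that this composite equals $\Tr{\kappa f'}$. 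Since $f\kappa'=\kappa f'$, the two $\mathbf{1}$-valued composites coincide by the uniqueness of the counit of a composition of adjoints.

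To lift the equality from $\mathbf{1}$ back to $\R\iG{\I}$: the source $\Rfs\R\iG{\I\co_{\X}+\J}\ush{f}\eG$ lies in the essential image of $\R\iG{\I}$ (since $\R\iG{\I}\Rfs\R\iG{\I\co_{\X}+\J}\cong\Rfs\R\iG{\I\co_{\X}}\R\iG{\I\co_{\X}+\J}=\Rfs\R\iG{\I\co_{\X}+\J}$), so by the adjunction of $\R\iG{\I}$ with the inclusion of its essential image into $\D(\Y)$, any two maps from such an object to $\R\iG{\I}\eG$ are determined uniquely by their composites with $\R\iG{\I}\eG\to\eG$; the equality at $\mathbf{1}$ thus forces the equality at $\R\iG{\I}$. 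The main obstacle will be the careful bookkeeping --- in particular, verifying that the base-change $\gamma$ of the statement agrees with the pseudofunctorial iso $\ush{\kappa'}\ush{f}\iso\ush{f'}\ush{\kappa}$ via \Lref{lem:bc-gm} (together with $\Rfs\kappa'_*=\kappa_*\R f'_*$ from pseudofunctoriality of $(-)_*$), and that pseudofunctoriality of the trace --- the counit-of-composition formula --- holds in the pre-pseudofunctorial setting of $\ush{(-)}$ on $\bbG$ as developed in \cite{dfs} and \cite{pasting}.
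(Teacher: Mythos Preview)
Your proposal is correct and follows essentially the same approach as the paper: reduce to equality after post-composing with $\psi\colon\R\iG{\I}\to\mathbf{1}$, identify each composite with the trace of the pseudo-proper composite $f\kappa'=\kappa f'$ via pseudofunctoriality (using, as the paper does, that the base-change isomorphism $\kappa'^*\ush{f}\iso\ush{f'}\kappa^*$ agrees with the pseudofunctorial one $\ush{\kappa'}\ush{f}\iso\ush{f'}\ush{\kappa}$), and conclude. Your explicit justification for the lift from $\mathbf{1}$ back to $\R\iG{\I}$ via the idempotence of $\R\iG{\I}$ is a point the paper simply asserts as an equivalence, so your account is slightly more complete there.
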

\proof
Consider the diagram in the proposition. 
Let $\alpha\colon \Rfs\kappa'_*\R\iGp{\V}{\kappa'}^*\ush{f} \to \R\iG{\I}$ be the map obtained
by composing maps along the route in the diagram which starts at the northwest corner, travelling
south and then east. Let $\beta\colon \Rfs\kappa'_*\R\iGp{\V}{\kappa'}^*\ush{f} \to \R\iG{\I}$ be 
the composition which starts in the easterly direction and then moves south. Let 
$\psi\colon \R\iG{\I}\to {\bf 1}_{\D(\Y)}$ be the natural map. We have to show that $\alpha=\beta$. 
This is equivalent to showing
\stepcounter{thm}
\begin{equation*}\label{eq:alpha-beta-psi}\tag{\thethm}
\psi\smcirc\alpha=\psi\smcirc\beta .
\end{equation*}
We now proceed to prove \eqref{eq:alpha-beta-psi}.
In what follows we identify ${\kappa'}^*$ with $\ush{\kappa'}$ and $\kappa^*$ with $\ush{\kappa}$.
Recall that the isomorphism ${\kappa'}^*\colon\ush{f} \iso \ush{f'}\kappa^*$ mentioned in the theorem
can be interpreted in two ways, and the
two interpretations agree: (a) as a base change isomorphism,  and (b) as the composite 
\stepcounter{thm}
\begin{equation*}\label{diag:trans-kappa}\tag{\thethm}
{\kappa'}^*\ush{f} = \ush{\kappa'}\ush{f} \iso \ush{(f\kappa')} = \ush{(\kappa f')}\iso \ush{f'}
\ush{\kappa} = \ush{f'}\kappa^*.
\end{equation*}
We point out the trace map $\Tr{\kappa}\colon \kappa_*\R\iGp{\U}\kappa^* \to {\bf}_{\D(\Y)}$ 
under the identification $\kappa^*=\ush{\kappa}$ 
is the composite $\kappa_*\R\iGp{\U}\kappa^* \iso \R\iG{\I} \to {\bf 1}_{\D(\Y)}$.
Similarly, $\Tr{\kappa'}\colon \kappa'_*\R\iGp{\V}{\kappa'}^* \to\ R\iGp{\X}$ is the
composite $\kappa'_*\R\iGp{\V}{\kappa'}^* \iso \R\iG{\I\co_\X+\J} \to \Rfs\R\iG{\J} = \R\iGp{\X}$.

From the definition of the isomorphism in \eqref{diag:trans-kappa} it follows that the following diagram 
commutes:
\[
{\xymatrix{
\Rfs\kappa'_*\R\iGp{\V}{\kappa'}^*\ush{f} \ar[rr]^{\Iso}_{\text{via \eqref{diag:trans-kappa}}}
\ar[d]_{\,\rotatebox{-90}{\makebox[-0.1cm]{\Iso}}}  && 
\kappa_*\R f'_*\R\iGp{\V}\ush{(f')}\kappa^* \ar[d]^{\kappa_*\Tr{f'}} \\
\Rfs\R\iG{\I\co_\X+\J}\ush{f} \ar[d] &&  \kappa_*\R\iGp{\U}\kappa^* 
\ar[d]^{\,\rotatebox{-90}{\makebox[-0.1cm]{\Iso}}}\\
\Rfs\R\iG{\J}\ush{f} \ar@{=}[d] &&  \R\iG{\I} \ar[d]^{\psi} \\
\Rfs\R\iGp{\X}\ush{f} \ar[rr]_{\Tr{f}} && {\bf 1}_{\D(\Y)}
}}
\]
Let $\theta\colon \Rfs\kappa'_*\R\iGp{\V}{\kappa'}^*\ush{f} \to {\bf 1}_{\D(\Y)}$ be the map
obtained from taking any route from the top left corner to the bottom right corner in the above
commutative diagram. Note that $\theta= \psi\smcirc\beta$.
It is therefore enough to show that $\theta=\psi\smcirc\alpha$. Consider the following
diagram where the arrow in the top row and the second map in the second row 
arise from the natural maps $\R\iG{\I\co_\X+\J} \to \R\iG{\J}$ and
$\R\iG{\I\co_\X} \to {\bf 1}_{\D(\X)}$ respectively:
\[
{\xymatrix{
& \Rfs\R\iG{\I\co_\X+\J}\ush{f} \ar[d]_{\,\rotatebox{-90}{\makebox[-0.1cm]{\Iso}}} \ar[r] & 
\Rfs \R\iG{\I}\ush{f} \ar@{=}[d]\\
\R\iG{\I}\Rfs\R\iGp{\X}\ush{f} \ar[r]^-{\Iso} \ar[d]_{\R\iG{\I}(\Tr{f})}& \Rfs\R\iG{\I\co_\X}\R\iGp{\X}\ush{f}
\ar[r] & \Rfs\R\iGp{\X}\ush{f} \ar[d]^{\Tr{f}} \\
\R\iG{\I} \ar[rr]_\psi && {\bf 1}_{\D(\Y)}
}}
\]
We claim this diagram commutes. The sub-rectangle on the top clearly commutes. 
According to \cite[Proposition 5.2.8 (d)]{dfs}, the composite of the two arrows in the second
row is the natural map arising from $\psi\colon \R\iG{\I}\to {\bf 1}_{\D(\Y)}$. 
It follows that the rectangle
at the bottom also commutes, whence the whole diagram commutes. This proves that
$\theta=\psi\smcirc\alpha$. Thus $\psi\smcirc\alpha = \theta =\psi\smcirc\beta$, establishing
\eqref{eq:alpha-beta-psi}.
\qed

\subsection{} Suppose $f\colon X\to Y$ is a map of ordinary schemes in $\bbG$
and $Z\hookrightarrow X$ is a
closed subscheme such that $Z\to Y$ is {\em proper}. Let $\kappa\colon \X=X_{/Z}\to X$ be
the formal completion of $X$ along $Z$ and $\wid{f}\colon \X\to Y$ the composition 
$\wid{f}=f\smcirc\kappa$. Then $\wid{f}$ is {\em pseudo-proper}. The isomorphism 
$\kappa^*\iso \ush{\kappa}$ of \eqref{eq:gm} gives us an isomorphism 
$\kappa^*\ush{f} \iso \ush{\wid{f}}$, and hence an isomorphism 
$\alpha\colon\Rfs \kappa_*\R\iGp{\X}\kappa^*\ush{f} \iso \R{\wid{f}}_*\R\iGp{\X}\ush{\wid{f}}$. On the
other hand we have $\beta\colon  \Rfs \kappa_*\R\iGp{\X}\kappa^*\ush{f}  \iso \Rfs \R\iG{Z}\ush{f}$
induced by \eqref{eq:kappa-gamma}.
We thus have an isomorphism
\stepcounter{thm}
\begin{equation*}\label{iso:iGZ-iGpX}\tag{\thethm}
\alpha\smcirc\beta^{-1}\colon \Rfs \R\iG{Z}\ush{f} \iso \R{\wid{f}}_*\R\iGp{\X}\ush{\wid{f}}.
\end{equation*}
If $u\colon X\to X'$ is an open immersion of finite type $Y$-schemes, with
$g\colon X'\to Y$ the structure map, then the natural isomorphism 
\[\Rfs\R\iG{Z}\ush{f}\iso \Rfs\R\iG{Z}u^*\ush{g} = \R g_*\R\iG{u(Z)}\ush{g}\]
fits into a commutative diagram
\stepcounter{thm}
\[
\begin{aligned}\label{diag:iGZ-iGpX}
\xymatrix{
\Rfs\R\iG{Z}\ush{f} \ar[dr]^{\eqref{iso:iGZ-iGpX}} 
\ar[d]_{\,\rotatebox{-90}{\makebox[-0.1cm]{\Iso}}} & \\
\R g_*\R\iG{u(Z)}\ush{g}  \ar[r]_{\eqref{iso:iGZ-iGpX}}&\R{\wid{f}}_*\R\iGp{\X}\ush{\wid{f}}
}
\end{aligned}\tag{\thethm}
\]

If $f$ is {\em proper}, then the isomorphism $\ush{\kappa}\ush{f} \iso \ush{\wid{f}}$
is the one adjoint to the composite 
\[
\Rfs\kappa_*\R\iGp\X\ush{\kappa}\ush{f} \xrightarrow{\Rfs(\Tr{\kappa})} \Rfs\ush{f} \to {\bf 1}
\]
and so the isomorphism $\kappa^*\!\ush{f}\iso \ush{\wid{f}}$ is characterised by
the commutativity of the following diagram.
\stepcounter{thm}
\[
\begin{aligned}\label{diag:alpha-beta}
\xymatrix{
\Rfs\R\iG{Z}\ush{f} \ar[d]_{\text{natural}} & \Rfs\kappa_*\R\iGp{\X}\kappa^*\ush{f} \ar[l]_-{\Iso}^-{\beta}
\ar[r]^-{\Iso}_-{\alpha} & \R{\wid{f}}_*\R\iGp{\X}\ush{\wid{f}} \ar[d]^{\Tr{\wid{f}}} \\
\Rfs\ush{f} \ar[rr]_{\Tr{f}} && {\bf 1}
}
\end{aligned}\tag{\thethm}
\]

In general, when $f$ is not necessarily proper, it is still separated (being in
$\bbG$) and hence we do have a {\em compactification} of $f$, i.e.,
an open immersion  of $Y$-schemes $u\colon X\to {\bar{X}}$, such that
the structure map ${\bar{f}}\colon {\bar{X}}\to Y$ is {\em proper}. We have a
commutative diagram:
\stepcounter{thm}
\[
\begin{aligned}\label{diag:nonproper}
\xymatrix{
\X \ar[r]^{\kappa}  \ar[dr]_{\wid{f}} & X   \ar[d]_f \ar@{^(->}[r]^u & {\bar X} 
\ar[dl]^{\bar{f}}\\
& Y &
}
\end{aligned}\tag{\thethm}
\]
We then have the following lemma.

\begin{lem}
\label{lem:indep1}  
Under the assumptions and notation of \eqref{diag:nonproper}, the following
diagram commutes:
\[
\xymatrix{
\Rfs\R\iG{Z}\ush{f} \ar[rr]^{\Iso}_{\eqref{iso:iGZ-iGpX}} 
\ar[d]_-{\,\rotatebox{-90}{\makebox[-0.1cm]{\Iso}}}
&& \R{\wid{f}}_*\iGp{\X}\ush{\wid{f}} \ar[d]^{\Tr{\wid{f}}}  \\
 \R{\bar{f}}_*\R\iG{u(Z)}\ush{\bar{f}} \ar[r] & \R{\bar{f}}_*\ush{\bar{f}}
 \ar[r]_{\Tr{\bar{f}}} & {\bf 1}
}
\]
In particular, the composite 
\[
\Rfs\R\iG{Z}\ush{f} \iso \R{\bar{f}}_*\R\iG{u(Z)}\ush{\bar{f}} \to 
\R{\bar{f}}_*\ush{\bar{f}} \xrightarrow{\Tr{\bar{f}}} {\bf 1} 
\]
is independent of the compactification $(u,\,\bar{f})$ of $f$.
\end{lem}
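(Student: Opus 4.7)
The plan is to reduce the commutativity to two already-established assertions: \eqref{diag:alpha-beta} applied to the proper map $\bar{f}$ (with $u(Z)$ playing the role of $Z$), together with the compatibility triangle \eqref{diag:iGZ-iGpX} applied to the open immersion $u\colon X\to\bar X$ and the structure map $\bar{f}$.

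First I would set up an auxiliary completion on $\bar X$. Since $Z\to Y$ is proper and $\bar{f}$ is separated, the composite $Z\to\bar X$ is a closed immersion, so $u(Z)$ is closed in $\bar X$ and is contained in the open subscheme $u(X)$. Hence the completion of $\bar X$ along $u(Z)$ is canonically identified with $\X=X_{/Z}$; let $\bar\kappa\colon\X\to\bar X$ denote this completion map, so that $\bar\kappa=u\smcirc\kappa$ and $\bar{f}\smcirc\bar\kappa=\wid{f}$. In this guise $\wid{f}$ is pseudo-proper as the composition of the completion $\bar\kappa$ with the proper map $\bar{f}$, and pseudofunctoriality guarantees that $\ush{\wid{f}}$ and $\Tr{\wid{f}}$ are intrinsic to $\wid{f}$, independent of whether one views it as $f\smcirc\kappa$ or as $\bar{f}\smcirc\bar\kappa$.

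Next I would invoke \eqref{diag:alpha-beta} for the proper map $\bar{f}$, the closed subscheme $u(Z)$, and the completion $\bar\kappa$. This yields commutativity of
\[
\xymatrix{
\R\bar{f}_*\R\iG{u(Z)}\ush{\bar{f}} \ar[rr]^{\Iso}_{\eqref{iso:iGZ-iGpX}} \ar[d]_{\text{natural}}
&& \R{\wid{f}}_*\R\iGp{\X}\ush{\wid{f}} \ar[d]^{\Tr{\wid{f}}} \\
\R\bar{f}_*\ush{\bar{f}} \ar[rr]_{\Tr{\bar{f}}} && {\bf 1}.
}
\]
On the other hand, by \eqref{diag:iGZ-iGpX} applied to $(u,\bar{f})$, the top isomorphism \eqref{iso:iGZ-iGpX} in the statement of the lemma factors as the left vertical isomorphism of that statement, $\Rfs\R\iG{Z}\ush{f}\iso\R\bar{f}_*\R\iG{u(Z)}\ush{\bar{f}}$, followed by the top arrow of the diagram just displayed. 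Splicing in this factorization exhibits the diagram to be proved as the concatenation of a trivial rectangle (two copies of the left vertical joined by an equality) with the diagram above, so it commutes.

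The ``In particular'' clause is then immediate: by the commutativity just proved, the bottom-route composite $\Tr{\bar{f}}\smcirc(\text{natural})\smcirc(\text{left vertical})$ of the statement equals the top-route composite $\Tr{\wid{f}}\smcirc\eqref{iso:iGZ-iGpX}$, an expression involving only $\wid{f}$ and $\kappa$, hence only $f$ and $Z$, with no mention of the compactification. The entire argument is essentially formal; the only bookkeeping point of any substance is the compatibility of $\Tr{\wid{f}}$ across the two factorizations of $\wid{f}$, but this is automatic from the pseudofunctorial setup of $\ush{(-)}$ and of the trace map.
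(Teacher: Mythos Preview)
Your proposal is correct and follows essentially the same approach as the paper's proof: both reduce the diagram to the triangle \eqref{diag:iGZ-iGpX} (applied to the open immersion $u$) and the rectangle \eqref{diag:alpha-beta} (applied to the proper map $\bar{f}$ with closed subscheme $u(Z)$). You spell out a bit more carefully why $u(Z)$ is closed in $\bar X$ and why the completion of $\bar X$ along $u(Z)$ coincides with $\X$, whereas the paper leaves this implicit in its citations, but the logical structure is identical.
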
 

\proof We expand the diagram to
\[
\xymatrix{
\Rfs\R\iG{Z}\ush{f} \ar[rr]^{\Iso}_{\eqref{iso:iGZ-iGpX}} 
\ar[d]_-{\,\rotatebox{-90}{\makebox[-0.1cm]{\Iso}}}
&& \R{\wid{f}}_*\iGp{\X}\ush{\wid{f}} \ar[rr]^{\Tr{\wid{f}}}  && {\bf 1}\\
 \R{\bar{f}}_*\R\iG{u(Z)}\ush{\bar{f}} \ar[rr] 
 \ar[urr]_-{\eqref{iso:iGZ-iGpX}}^{\,\rotatebox{25}{\makebox[-0.1cm]{\Iso}}}
 && \R{\bar{f}}_*\ush{\bar{f}}
 \ar[rru]_{\Tr{\bar{f}}} &&
}
\]
The triangle on the left commutes by \eqref{diag:iGZ-iGpX}. The parallelogram is
simply \eqref{diag:alpha-beta}, for $\alpha\smcirc\beta^{-1}=\eqref{iso:iGZ-iGpX}$.
\qed

\section{Closed immersions and completions}\label{ss:flat-sharp}

\subsection{}
\label{subsec:closed-imm}
Let $i \colon \Z \to \X$ be a closed immersion of noetherian formal schemes. 
We use~$\bar{i}$ to denote the flat map of ringed spaces $(\Z, \co_{\Z}) \to (\X, i_*\co_{\Z})$.
We define the functor $i^{\flat} \colon \D(\X) \to \D(\Z)$ by
\[
i^{\flat} \set {\bar i}^*\R\sHomb_{\X}(i_*\co_{\Z},\,\boldsymbol{-}).
\]
The functor $i^{\flat}$ enjoys the following properties (see \cite[Examples 6.1.3(4)]{dfs}).

1) $i^{\flat}(\Dqc^+(\X)) \subset \Dqc^+(\Z)$ and $i^{\flat}(\Dc^+(\X)) \subset \Dc^+(\Z)$.
This follows from the fact that $i_*\co_{\Z}$ is coherent $\co_\X$-module.

2) There is a natural isomorphism $i^{\flat}\R\iGp{\X} \iso \R\iG{\Z}i^{\flat}$ whose composition 
with the natural map $\R\iG{\Z}i^{\flat} \to i^{\flat}$ is the natural map $i^{\flat}\R\iGp{\X} \to i^{\flat}$.

3) Using 2) we also obtain that $i^{\flat}(\Dqct^+(\X)) \subset \Dqct^+(\Z)$. Hence we also deduce that
$i^{\flat}(\wDqcp(\X)) \subset \wDqcp(\Z)$.

4) There is a canonical trace map on $\D(\X)$, namely
\stepcounter{thm}
\begin{equation*}\label{def:Tr-i-flat}\tag{\thethm}
\Tr{i}^\flat\colon i_*i^\flat = \R\sHomb_{\X}(i_*\co_\Z,\,\boldsymbol{-}) \lra {\bf 1},
\end{equation*}
which is given by ``evaluation at 1'', and which induces a natural map of functors from 
$i^{\flat} \colon \wDqcp(\X) \to \wDqcp(\Z)$ to the right adjoint $i^{\times}$ of $i_* \colon \wDqcp(\Z) \to \wDqcp(\X)$.
Moreover, this induced map $i^{\flat} \to i^{\times}$ is an isomorphism. 
Keeping in mind that the values of $(-)^!$ range in $\Dqct^+$, 
we deduce that for any $\eF \in \wDqcp(\X)$, there is a natural isomorphism
\[
i^{\flat}\R\iGp{\X}\eF \iso \R\iG{\Z}i^{\flat}\eF \xrightarrow[\text{via }\Tr{i}]{\Iso} i^!\eF
\]
and hence for $\eF \in \wDqcp(\X)$, there is also a natural isomorphism
\[
\BL_{\Z}i^{\flat}\eF \iso \ush{i}\eF
\]
where the corresponding trace map $\Tr{i}$ is the natural composite
\[
i_*\R\iGp{\Z}\BL_{\Z}i^{\flat} \iso i_*\R\iGp{\Z}i^{\flat} \to  i_*i^{\flat} \xrightarrow{\Tr{i}^\flat}  1.
\]
In particular, if $\eF \in \Dc^+(\X)$, or if $\Z$ is an ordinary scheme, then we have a canonical 
isomorphism 
\stepcounter{thm}
\begin{equation*}
\label{iso:flat-sharp}\tag{\thethm}
i^{\flat}\eF \iso \ush{i}\eF.
\end{equation*}

\subsection{} 
\label{subsec:i-j-k}
Suppose $X$ is an ordinary scheme, $\I$ a 
coherent ideal sheaf on $X$, $Z$ the closed subscheme of $X$ defined by $\I$, and
$\kappa\colon \X=X_{/Z} \to X$ the completion of $X$ along $Z$. We then have a 
commutative diagram with $i$ and $j$ closed immersions:
\[
\xymatrix{
Z\phantom{X} \ar@{^(->}[r]^{j} \ar@{_(->}[dr]_{i} & \X \ar[d]^{\kappa} \\
& X
}
\]
We define $\bar{i}$ and ${\bar j}$ as in \ref{subsec:closed-imm} above, and 
it follows that if $\eF$ is a $j_*\co_Z$-module, then ${\bar{i}}^*\kappa_*\eF={\bar j}^*\eF$. 
We also define $i^\flat, j^\flat$ as in~\ref{subsec:closed-imm} and in what follows we will drop the symbols 
$i_*, j_*$ occurring in the definition of $i^\flat, j^\flat$ respectively.
Finally note that, since~$Z$ is an ordinary scheme so that~$\iGp{Z}$ is the 
identity functor, $\ush{i}$ and~$\ush{j}$ are right adjoint to~$i_*$ and~$j_*$ respectively.

The natural map
\stepcounter{thm}
\begin{equation*}\label{map:ij-flat}\tag{\thethm}
\R\sHomb_X(\co_Z,\,\boldsymbol{-}) \lra \kappa_*\R\sHomb_\X(\co_Z,\,\kappa^*\boldsymbol{-})
\end{equation*}
is an isomorphism, whence we have an isomorphism
\stepcounter{thm}
\begin{equation*}\label{iso:ij-flat}\tag{\thethm}
i^\flat \iso j^\flat\kappa^*
\end{equation*}
given by
\[
{\bar i}^*\R\sHomb_X(\co_Z,\,\boldsymbol{-}) \xrightarrow[{\bar i}^*\eqref{map:ij-flat}]{\Iso}
{\bar i}^*\kappa_*\R\sHomb_\X(\co_Z,\,\kappa^*\boldsymbol{-})=
{\bar j}^*\R\sHomb_\X(\co_Z,\,\kappa^*\boldsymbol{-}).
\]

The essential content of the following lemma is that, \eqref{iso:ij-flat} is, up to canonical identifications, 
the inverse of the canonical isomorphism 
$\ush{j}\ush{\kappa}\iso \ush{(\kappa j)}=\ush{i}$.

\begin{lem}\label{lem:ij-kappa} The following diagram commutes
\[
\xymatrix{
j^\flat\kappa^* \ar[r]^{\Iso}_{\eqref{iso:flat-sharp}} & \ush{j}\kappa^* \ar[r]^{\Iso}_{\eqref{eq:gm}}
& \ush{j}\ush{\kappa} 
\ar[d]^{\,\rotatebox{-90}{\makebox[-0.1cm]{\Iso}}}\\
i^\flat \ar[u]^{\eqref{iso:ij-flat}}_{\,\rotatebox{90}{\makebox[-0.1cm]{\Iso}}} 
\ar[rr]^{\Iso}_{\eqref{iso:flat-sharp}}& & \ush{i} 
}
\]
where the unlabelled isomorphism $\ush{j}\ush{\kappa}\iso \ush{i}$ is the canonical one. 
\end{lem}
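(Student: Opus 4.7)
The plan is to verify commutativity of the diagram by passing to adjoints. Each of the four arrows is an isomorphism of functors $\Dc^+(X) \to \Dc^+(Z)$, and each realizes a comparison between two descriptions of a right adjoint (or of a component of one). Since $i_*\colon \Dc^+(Z) \to \Dc^+(X)$ has right adjoint $\ush{i}$ with counit $\Tr{i}$ (no torsion intervenes, as $X$ and $Z$ are ordinary), two parallel arrows $\eH \rightrightarrows \ush{i}\eG$ agree if and only if they do so after applying $i_* = \kappa_*j_*$ and postcomposing with $\Tr{i}$. By the defining property of \eqref{iso:flat-sharp}, the bottom route yields $\Tr{i}^\flat\colon i_*i^\flat = \R\sHomb_X(\co_Z,-) \to \mathbf{1}$ (see \eqref{def:Tr-i-flat}), namely evaluation at $1$. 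It remains to show the top route produces the same map.

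For the top route I work backwards through $\Tr{i}$, using the pseudofunctorial identification $\ush{j}\ush{\kappa} \iso \ush{i}$. That identification is characterized by the fact that its composition with $\Tr{i}$ is the counit of the composite adjunction $i_* \dashv \ush{j}\ush{\kappa}$. Since $j_*$ lands in $\R\iGp{\X}$-torsion complexes, this counit takes the explicit form
\[
\kappa_*j_*\ush{j}\ush{\kappa} \iso \kappa_*\R\iGp{\X}j_*\ush{j}\ush{\kappa}
\xrightarrow{\kappa_*\R\iGp{\X}\Tr{j}} \kappa_*\R\iGp{\X}\ush{\kappa}
\xrightarrow{\Tr{\kappa}} \mathbf{1}.
\]
By naturality one slides $\Tr{j}$ past the third arrow of the top row, which is induced by \eqref{eq:gm}. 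Then \Lref{lem:kappa} identifies the resulting composite $\kappa_*\R\iGp{\X}\kappa^* \to \kappa_*\R\iGp{\X}\ush{\kappa} \xrightarrow{\Tr{\kappa}} \mathbf{1}$ with the canonical $\kappa_*\R\iGp{\X}\kappa^* \iso \R\iG{\I} \to \mathbf{1}$ coming from \eqref{eq:kappa-gamma}, while the characterization of \eqref{iso:flat-sharp} for $j$ identifies the composite of $\Tr{j}$ with the second arrow of the top row as $\Tr{j}^\flat$. Combining these rewritings, the top route collapses to
\[
\R\sHomb_X(\co_Z,-) \xrightarrow{\eqref{map:ij-flat}} \kappa_*\R\sHomb_\X(\co_Z, \kappa^*-)
\xrightarrow{\kappa_*\Tr{j}^\flat} \kappa_*\R\iGp{\X}\kappa^*(-) \iso \R\iG{\I}(-) \to (-).
\]

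The last step is to check this composite equals $\Tr{i}^\flat = \mathrm{ev}_1$. This is a direct unwinding: \eqref{map:ij-flat} sends a local section $\phi\colon \co_Z \to \eG$ to the induced $\kappa^*\phi\colon \co_Z = \kappa^*\co_Z \to \kappa^*\eG$; evaluation at $1$ returns the section $\phi(1) \in \kappa^*\eG$, which, being supported on $Z$, lies canonically in $\R\iGp{\X}\kappa^*\eG$, and whose image under \eqref{eq:kappa-gamma} and then $\R\iG{\I}\to\mathbf{1}$ is $\phi(1) \in \eG$. The principal obstacle is keeping correct track of where the torsion functor $\R\iGp{\X}$ must be inserted: the adjunction $j_* \dashv \ush{j}$ does not require it, whereas $\kappa_*\R\iGp{\X} \dashv \ush{\kappa}$ does, and the composite adjunction $i_* \dashv \ush{j}\ush{\kappa}$ forces the torsion functor to appear between the two trace maps. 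Once this bookkeeping is organized correctly, each identification reduces to naturality together with the content of \Lref{lem:kappa}.
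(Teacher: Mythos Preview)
Your proposal is correct and follows essentially the same approach as the paper: both pass to the adjoint via $i_* \dashv \ush{i}$ and verify that each route, after applying $i_*$ and composing with $\Tr{i}$, yields $\Tr{i}^\flat$. The only difference is presentational—you unwind the identifications narratively and invoke \Lref{lem:kappa} explicitly for the $\kappa$-step, while the paper encodes the same checks in a single commutative diagram of natural maps.
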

\begin{proof}
Keeping in mind that the 
canonical maps $j_*j^{\flat} \to 1$ and $j_*\ush{j} \to 1$ factor through
$\R\iGp{\X} \to 1$ and that the canonical map $i_*i^{\flat} \to 1$ factors through $\R\iG{Z} \to 1$
we see that the diagram of the lemma corresponds, via 
adjointness of $\ush{i}$ to $i_*$, to the outer border of the following commutative diagram
of obvious natural maps.
\[
\xymatrix{
i_*j^{\flat}\kappa^* \ar[rd] \ar[rr] && i_*\ush{j}\kappa^* \ar[d] \ar[r] & i_*\ush{j}\ush{\kappa} \ar[d] \\
& \kappa_*j_*j^{\flat}\kappa^* \ar@{=}[d]  \ar[r]   & \kappa_*j_*\ush{j}\kappa^* \ar[d] \ar[r] 
& \kappa_*j_*\ush{j}\ush{\kappa} \ar[d] \\
& \kappa_*\R\sHomb_\X(\co_Z,\,\kappa^*\(\boldsymbol{-})) \ar[d] \ar[r] 
& \kappa_*\R\iGp{\X}\kappa^* \ar[d] \ar[r] & \kappa_*\R\iGp{\X}\ush{\kappa} \ar[d] \\
i_*i^{\flat} \ar[uuu] \ar@{=}[r] & \R\sHomb_X(\co_Z,\,\boldsymbol{-}) \ar[r] & \R\iG{Z} \ar[r] & 1
}
\]
\end{proof}

\section{Koszul complexes}\label{s:kosz}
Since our goal is to understand Verdier's 
isomorphism explicitly, we have to lay out our conventions for maps between complexes,
especially the fundamental local isomorphism which is at the heart of explicit
formulas for residues, and hence integrals (i.e., traces). 

\subsection{Our version of Koszul complexes}\label{ss:kosz}
Let $R$ be a noetherian ring. For $t\in R$, we write
$K_\bullet(t)$ for the homology complex 
\[ 0 \lra K_1(t) \lra K_0(t) \lra 0\]
where $K_1(t)=K_0(t)=R$ and the arrow between them is multiplication by $t$.
For a sequence of elements ${\bf t} = (t_1, \dots, t_r)$ in $R$, we set $K_\bullet({\bf t})$
to be the complex:
\[K_\bullet({\bf t}) = K_\bullet(t_1)\otimes_R \dots\otimes_R K_\bullet(t_r).\]
For an $R$-module $M$ and an integer $i$, we write $K^i({\bf t},\,M) = \Hom_R(K_i({\bf t},\,M)$
and define $\partial^i\colon K^i({\bf t},\,M) \to K^{i+1}({\bf t},\,M)$ to be the transpose
of the differential $K_{i+1}({\bf t}) \to K_i({\bf t})$, without the intervention of
any signs. Then $K^\bullet({\bf t},\,M)$ together with
$\partial^\bullet$ is a cohomology complex, and this is what we will call the 
{\emph{Koszul (cohomology) complex}} on $M$ and ${\bf t}$. We write $K^\bullet({\bf t})$
for $K^\bullet({\bf t},\,R)$. We refer the reader to \cite[pp.\,17--18]{conrad} for a discussion
of various versions of Koszul complexes and the relationship between them. Here are three basic
properties:

1) $K^\bullet({\bf t}, M)$ is bounded by degrees $0$ and $r$, with
$K^0({\bf t},\,M) = K^r({\bf t}, M) =M$.

2) $K^\bullet({\bf t}, M) = M\otimes_R K^\bullet({\bf t})$.

3) $K^i({\bf t}, M)$ is the direct sum of $\binom{n}{i}$ copies of $M$.

The reason we use this version is the relationship with a certain  \v{C}ech complex
associated to an affine open cover of $\Spec{\,R}\smallsetminus Z$, where $Z$ is the
closed subscheme defined by the vanishing of the $t_i$'s (see \Ssref{ss:stabkoz}). 
The homology complex $K_\bullet({\bf t})$ is also called a Koszul complex, and to
distinguish it from $K^\bullet({\bf t})$, we will call it the {\emph{Koszul homology complex}}
on ${\bf t}$.

There is a well known way in which these Koszul complexes vary with
respect to ${\bf t}$. Let $I$ be the ideal generated by ${\bf t}$. Let $J$ be an ideal in $R$
such that $I\subset J$, and such that $J$ is generated by ${\bf g}=(g_1,\dots, g_r)$.
Since $I\subset J$ we have $u_{ij}\in R$ such that 
\[t_i=\sum_{j=1}^r u_{ij}g_j \qquad (i=1,\dots, r).\]
As is well-known, one has a map of homology Koszul complexes
\[U_\bullet\colon K_\bullet({\bf t}) \lra K_\bullet({\bf g})\]
such that
\begin{itemize}
\item[--] $H_0(U_\bullet)\colon R/I \to R/J$ is the natural surjection.
\item[--] $R=K_0({\bf t}) \xrightarrow{U_0} K_0({\bf g})=R$ is the identity map on $R$.
\item[--] $R=K_n({\bf t}) \xrightarrow{U_n} K_n({\bf g})=R$ is the map $x\mapsto \det(u_{ij})\cdot x$.
\end{itemize}
Taking transposes and tensoring with $M$ we get a map on (cohomology) Koszul complexes:
\stepcounter{thm}
\begin{equation*}\label{map:KU}\tag{\thethm}
U^\bullet=U^\bullet_M\colon K^\bullet({\bf g},\,M) \lra K^\bullet({\bf t},\,M)
\end{equation*}
such that $U^0$ is the identity map on $M$ and
\stepcounter{thm}
\begin{equation*}\label{map:KUn}\tag{\thethm}
U^n\colon M\to M\end{equation*}
is the map $m\mapsto \det(u_{ij})\cdot m$.

\subsection{The Fundamental Local Isomorphism}\label{ss:fli}
With $R$ and $M$ as above, suppose ${\bf t} = (t_1, \dots, t_r)$ is an $R$-sequence,
$I$ the ideal generated by $\{t_1, \dots, t_r\}$, and $A=R/I$. Then

1) The ideal $I$ is the image of the coboundary map from $K^{r-1}({\bf t})$ to $K^r({\bf t})=R$, 
and the resulting map of complexes
$K^\bullet({\bf t})\to A[-r]$ 
is a quasi-isomorphism.
Thus we have an isomorphism in $\D({\mathrm{Mod}}_R)$:
\stepcounter{thm}
\begin{equation*}\label{iso:kosz-r1}\tag{\thethm}
K^\bullet({\bf t}) \iso A[-r].
\end{equation*}
Since $K^\bullet({\bf t}, M)$ is a (bounded) complex of free modules, for every 
complex $M^\bullet$ we have an isomorphism in $\D({\mathrm{Mod}}_R)$
\stepcounter{thm}
\begin{equation*}\label{iso:kosz-r2}\tag{\thethm}
M^\bullet\otimes_R K^\bullet({\bf t}) \iso M^\bullet{\overset{\bL}{\otimes}}_R(A[-r])
={\overline M}^\bullet{\overset{\bL}{\otimes}}_A(A[-r])
\end{equation*}
where ${\overline{M}}^\bullet=M^\bullet\otimes_RA$.

2) We have $\Hom_R(A,\,M)=\ker{(K^0({\bf t},\, M)\to K^1({\bf t},\,M))}$ where 
$M = K^0({\bf t},\, M)$ and 
$\Hom_R(A,\,M)$ is identified with the submodule of $I$-torsion elements of $M$ 
namely $(0{\<\<\underset{{{}^M}}{\textup{:}}}\<\<I)$ in the usual way (i.e., by ``evaluation at $1$").
We thus have a map of complexes $\Hom_R(A,\,M)[0] \to K^\bullet({\bf t},\,M)$. If $M$ is an
{\em injective} $R$-module then this map is a quasi-isomorphism. It follows that if $M^\bullet$
is a bounded-below complex, and $M^\bullet \to E^\bullet$ is an injective resolution with $E^{\bullet}$
a bounded-below complex, then we have
quasi-isomorphisms $M^\bullet\otimes_RK^\bullet({\bf t}) \to E^\bullet\otimes_RK^\bullet({\bf t})$
and $\Hom_R(A,\,E^\bullet)\to E^\bullet\otimes_RK^\bullet({\bf t})$ so that  
in $\D({\mathrm{Mod}}_R)$ we have an isomorphism
\stepcounter{thm}
\begin{equation*}\label{iso:kosz-rhom}\tag{\thethm}
M^\bullet\otimes_RK^\bullet({\bf t}) \iso \R\Homb_R(A,\, M^\bullet)
\end{equation*}
fitting into a commutative diagram in $\D({\mathrm{Mod}}_R)$ as follows.
\[
\xymatrix{
M^\bullet\otimes_RK^\bullet({\bf t}) 
\ar[d]^{\,\rotatebox{-90}{\makebox[-0.1cm]{\Iso}}}_{\eqref{iso:kosz-rhom}}
\ar[r]^{\Iso} 
& E^\bullet\otimes_RK^\bullet({\bf t})  \\
\R\Homb_R(A,\, M^\bullet)\ar@{=}[r] & \Hom_R(A,\, E^\bullet) 
\ar[u]_{\,\rotatebox{90}{\makebox[-0.1cm]{\Iso}}}
}
\]
In particular we have an isomorphism 
\[\psi_{\bf t}\colon M^\bullet\overset{\bL}{\otimes}_R(A[-r]) \iso \R\Homb_R(A,\,M^\bullet)\]
where $\psi_{\bf t}=\eqref{iso:kosz-rhom}\smcirc\eqref{iso:kosz-r2}^{-1}$.

3) Let  $\frac{\bf 1}{\bf t}$ (or ${\bf 1}/{\bf t}$ for typographical convenience)
be the element of $\wI{A}{I}$ defined in \eqref{def:1/t}.
Then $\wI{A}{I}$ is a free
$A$ module of rank one, with $\frac{\bf 1}{\bf t}$ as a generator. One therefore has an
isomorphism:
\stepcounter{thm}
\begin{equation*}\label{iso:lambda-t}\tag{\thethm}
\lambda_{\bf t}\colon A \iso \wI{A}{I}, 
\end{equation*}
given by $1 \mapsto (-1)^r{\bf 1}/{\bf t}$. The reason for the sign $(-1)^r$ will be clear later.
We thus get an isomorphism, 
\stepcounter{thm}
\begin{equation*}\label{iso:etaRA}\tag{\thethm}
\eta_{R,A}(M^\bullet) \colon M^\bullet\overset{\bL}{\otimes}_R(\wI{A}{I}[-r]) \iso 
\R\Homb_R(A,\,M^\bullet)
\end{equation*}
with $\eta_{R,A} = \psi_{\bf t}\smcirc(\lambda_{\bf t}[-r])^{-1}$. The crucial property here
is that {\em $\eta_{R,A}$ does not depend on ${\bf t}$, even though 
$\psi_{\bf t}$ and $\lambda_{\bf t}$
do.}

The data above fits into the following commutative diagram
\stepcounter{thm}
\[
\begin{aligned}\label{diag:eta-K}
\xymatrix{
M^\bullet\overset{\bL}{\otimes}_R(A[-r]) 
\ar[d]^{\,\rotatebox{-90}{\makebox[-0.1cm]{\Iso}}}_{\text{via $\lambda_{\bf t}$}}
\ar[dr]_{\psi_{\bf t}}
& M^\bullet\otimes_RK^\bullet({\bf t}) \ar[l]_{\Iso}^{\eqref{iso:kosz-r2}}
\ar[d]^{\,\rotatebox{-90}{\makebox[-0.1cm]{\Iso}}}_{\eqref{iso:kosz-rhom}}
\ar[r]^{\Iso} 
& E^\bullet\otimes_RK^\bullet({\bf t})  \\
M^\bullet\overset{\bL}{\otimes}_R(\wI{A}{I}[-r]) \ar[r]^-{\Iso}_-{\eta_{R,A}}
& \R\Homb_R(A,\, M^\bullet)\ar@{=}[r] 
& \Hom_R(A,\, E^\bullet) 
\ar[u]_{\,\rotatebox{90}{\makebox[-0.1cm]{\Iso}}}
}
\end{aligned}\tag{\thethm}
\]
Let $M$ be an $R$-module. Our version of the {\em fundamental local isomorphism} is the isomorphism
\stepcounter{thm}
\begin{equation*}\label{iso:fli}\tag{\thethm}
\phi_{R,A}(M)\colon M\otimes_R\wI{A}{I} \iso \ext^r_R(A,\,M)
\end{equation*}
given by 
\[\phi_{R,A}(M)=\Hr^0(\eta_{R,A}(M[r])).\]

Let us globalize this construction.
Let $\X$ be a formal scheme,
and $\I$ a coherent ideal sheaf such that the resulting closed immersion
$i: \Z\hookrightarrow \X$ is a regular immersion of codimension $r$, i.e., it is given
locally by a regular sequence of length~$r$. Let us write~$\eN_i$ for the normal bundle
of~$\Z$ in~$\X$, i.e. $\eN_i=(\I/\I^2)^*$ and set
\stepcounter{thm}
\begin{equation*}\label{map:wnor}\tag{\thethm}
\wnor{i}\set \wedge^r\eN_i =\wI{\co_\Z}{\I}.
\end{equation*}
There is a natural isomorphism
\[
\wnor{i} = \wI{\co_\Z}{\I} \iso {\bar i}^*\Ext^r_{\co_{\X}}(\co_{\Z},\, \co_{\X})
= H^ri^{\flat}\co_{\X}
\]
obtained by locally gluing the isomorphisms coming from~\eqref{iso:fli} in view of the fact
that~$\eta_{R,A}$ is independent of the choice of~$\bf t$.
Since $i^{\flat}\co_{\X}$ has homology only in degree~$r$ as is obvious locally from~\eqref{iso:etaRA}, 
we obtain a natural isomorphism
\stepcounter{thm} \begin{equation*}\label{iso:fli-global}\tag{\thethm}
 \wnor{i}[-r] \iso i^{\flat}\co_{\X} = {\bar i}^*\R\sHomb_{\X}(\co_{\Z},\, \co_{\X}).
\end{equation*}


Set
\stepcounter{thm}
\begin{equation*}\label{def:btrg}\tag{\thethm}
i^\btrg \set \bL i^*(\boldsymbol{-})\overset{\bL}{\otimes}_{\co_\Z} (\wnor{i}[-r]).
\end{equation*}
Then for $\eF\in \Dqc(\X)$ we have an isomorphism
\stepcounter{thm}
\begin{equation*}\label{thm:eta-i}\tag{\thethm}
\eta_i(\eF) \colon i^\btrg\eF \iso i^\flat\eF
\end{equation*}
given by the composite
\stepcounter{thm}
\[
\begin{aligned}\label{iso:trg-flat}
i^\btrg \eF = \bL i^*(\eF)\overset{\bL}{\otimes}_{\co_\Z} (\wnor{i}[-r]) 
&\iso \bL i^*\eF \overset{\bL}{\otimes}_{\co_\Z} {\bar i}^*\R\sHomb_{\X}(\co_{\Z},\, \co_{\X}) \\
&\iso {\bar i}^*(\eF \overset{\bL}{\otimes}_{\co_\X} \R\sHomb_{\X}(\co_{\Z},\, \co_{\X})) \\
&\iso {\bar i}^*\R\sHomb_{\X}(\co_{\Z},\, \eF) = i^\flat\eF
\end{aligned}\tag{\thethm}
\]
where the first isomorphism is given by \eqref{iso:fli-global} while the third one results from the fact that $i_*\co_{\Z}$
is coherent and has finite tor dimension over $\co_{\X}$.

For $\eF \in \Dc^+(\X)$, let 
\stepcounter{thm}
\begin{equation*}\label{iso:eta'-i}\tag{\thethm}
\eta'_i(\eF) \colon i^\btrg\eF \iso \ush{i}\eF
\end{equation*}
be the composite $\eta'_i = \eqref{iso:flat-sharp}\smcirc\eta_i$.

\begin{rem}\label{rem:Tr-tensor} In the above, the isomorphism $i^\btrg\co_\X \iso i^\flat\co_\X$ in \eqref{iso:fli-global} 
is what drives the isomorphism \eqref{thm:eta-i}. In slightly greater detail, for $\eF\in\Dc^+(\X)$,
we have (by definition of $i^\btrg$):
 \[i^\btrg\eF=\bL i^*(\eF)\otimes_{\co_\Z} i^\btrg(\co_\X).\]
We also have an isomorphism (whose inverse is the composite of the last two maps
 in \eqref{iso:trg-flat}) 
\[\bL i^*(\eF)\otimes_{\co_\Z} i^\flat(\co_\X) \iso i^\flat(\eF).\] 
Applying $i^\btrg\co_\X \iso i^\flat\co_\X$ (from \eqref{iso:fli-global})
to the two isomorphisms above, we get $\eta_i(\eF)$.

The isomorphism $\bL i^*(\eF)\otimes_{\co_\Z} i^\flat(\co_\X) \iso i^\flat(\eF)$ above is
such that ``evaluation at $1$" is respected. In greater detail if
$\Tr{i}^\flat\colon i_*i^\flat \to {\bf 1}$ is as in \eqref{def:Tr-i-flat}, then the composite
$\eF\otimes_{\co_\X}i_*i^\flat\co_\X \iso i_*(\bL i^*(\eF)\otimes_{\co_\Z} i^\flat(\co_\X))
\iso i_*i^\flat(\eF) \xrightarrow{\Tr{i}^\flat}(\eF)$ is equal to $1\otimes \Tr{i}^\flat(\co_\X)$.
This means that if $\Tr{i}^\btrg\colon i_*i^\btrg \to {\bf 1}$ is defined by the formula 
\[\Tr{i}^\btrg =\Tr{i}^\flat\smcirc i_* \eta_i,\]
then the following diagram commutes
\stepcounter{sth}
\[
\begin{aligned}\label{diag:1-tensor-Tr}
{\xymatrix{
i_*(\bL i^*\eF\otimes_{\co_\Z}i^\btrg\co_\X) \ar@{=}[rr] 
&& i_*i^\btrg \ar[dd]^{\Tr{i}^\btrg(\eF)}\\
\eF\otimes_{\co_\X} i_*i^\btrg\co_{\X} \ar[d]_{{\bf 1}\otimes\Tr{i}^\btrg(\co_\X)} 
\ar[u]_{\>\rotatebox{90}{\makebox[-0.1cm]{\Iso}}}^{{\text{projection formula}}} && \\
\eF\otimes_{\co_\X} \co_\X \ar@{=}[rr] && \eF
}}
\end{aligned}\tag{\thesth}
\]
\end{rem}

\setcounter{subsubsection}{\value{thm}}
\subsubsection{}\label{sss:Tr-concrete} \stepcounter{thm}
If $\X=X$ is an ordinary scheme, so that $\ush{i}=i^!$, then the maps
$\eta_i(\eF)$ and $\eta_i'(\eF)$ above can be extended to
isomorphisms for $\eF\in\Dqc(X)$, without any 
boundedness hypotheses on $\eF$. In greater detail, recall that a complex $\eF$
of $\co_X$-modules is called {\emph{perfect}} if there exist $a, b \in\mathbb{Z}$,
 $a \le b$, and locally $\eF$ is
$\D(X)$-isomorphic to a complex $E$ of finite rank free $\co_X$-modules with $E^n=0$ for
$n\notin [a, b]$. The map $i_*$ takes perfect complexes to perfect complexes (locally use
appropriate Koszul complexes!). In other words $i$ is a {\emph{quasi-perfect map}} (see
\cite[p.\,192, Definition 4.7.2]{notes}). According to a result of Neeman in \cite{bous} and
Bondal and van den Bergh in \cite{bb}, since $i_*$ takes perfect complexes to perfect
complexes, one has a unique isomorphism (with $Z=\Z$)
\[\bL i^*(\eF)\overset{\bL}{\otimes}_{\co_Z}i^!\co_X \iso i^!\eF\]
such that Diagram \eqref{diag:1-tensor-Tr} commutes with $i^\btrg$ replaced by $i^!$,
$\Tr{i}^\btrg$ by $\Tr{i}$, the equality on the top row by $i_*$ of the isomorphism
displayed above, and allowing $\eF$ to vary $\Dqc(X)$ rather than in $\Dc^+(X)$. It is
now clear that one can extend $\eta'_i$ to an isomorphism of functors on $\Dqc(X)$.
As for $\eta_i$, see \cite[p.\,53, (2.5.3)]{conrad}, keeping in mind the differing sign
conventions for $K^\bullet({\bf t})$ as well as the order of the tensor product. In fact
the isomorphism ${\bar i}^*(\eF \overset{\bL}{\otimes}_{\co_X} \R\sHomb_{X}(\co_{Z},\, \co_{X}))
\iso {\bar i}^*\R\sHomb_{X}(\co_{Z},\, \eF) $ in \eqref{iso:trg-flat} works for
$\eF\in\Dqc(X)$ when $X$ is an ordinary scheme.

In view of \eqref{diag:1-tensor-Tr}, in order to understand $\Tr{i}^\btrg$ it is enough to
understand $\Tr{i}^\btrg(\co_X)$. We give an explicit representation of $\Tr{i}^\btrg$ when
$X=\Spec{\,R}$, $Z=\Spec{\,A}$, and the $I=\ker{R\twoheadrightarrow A}$ is generated
by a quasi-regular sequence ${\bf t}=(t_1, \dots, t_r)$i, i.e., the situation we have been
with for most of this section. Let $N=\Gamma(X,\,\wnor{i})$.
In this case, the quasi-isomorphism of
complexes of $R$-modules $K^\bullet({\bf t})\to A[-r]$
in \eqref{iso:kosz-r1}, is the map defined by 
$K^r({\bf t})= R \xrightarrow{{\text{natural}}}\mathrel{\mkern-14mu}\rightarrow R/I = (A[-r])^r$.

Using the isomorphism $A\iso N$ given by $1 \mapsto {\bf 1}/{\bf t}$ we get a quasi-isomorphism
\[\varphi_{\bf t}\colon K^\bullet({\bf t}) \lra N[-r],\]
where $\varphi_{\bf t}$ is defined by 
$\varphi_{\bf t}^r\colon K^r({\bf t}) = R \to N= (N[-r])^r$, the arrow $R\to N$ 
being $1\mapsto {\bf 1}/{\bf t}$. 
For a complex of $R$-modules $M^\bullet$, let
$\Tr{A/R}^\btrg(M^\bullet) \colon M^\bullet\otimes_R N[-r] \to M^\bullet$ and
$\Tr{A/R}^\flat(M^\bullet)\colon \R\Homb_R(A, M^\bullet) \to M^\bullet$ be the maps
corresponding to $\Tr{i}^\btrg(\wit{M}^\bullet)$ and $\Tr{i}^\flat(\wit{M}^\bullet)$.
By definition of \eqref{thm:eta-i}, we have a commutative diagram in the category
$\D({\mathrm{Mod}}_R)$ with {\emph{isomorphisms}} bordering the triangle on the right:
\[
{\xymatrix{
 N[-r] \ar[rrd]_{\eta_i} \ar[d]_{\Tr{A/R}^\btrg(R)} & & K^\bullet({\bf t}) \ar[ll]_{\varphi_{\bf t}} 
\ar[d]^{\>\>\eqref{iso:kosz-rhom}} & \\
 R & & \R\Homb_R(A,\,R) \ar[ll]^-{\Tr{A/R}^\flat(R)}
}}
\]
The composite $\Tr{A/R}^\flat(R)\smcirc\eqref{iso:kosz-rhom}$ is the natural projection
\[\pi_{\bf t}\colon 
K^\bullet({\bf t}) \xrightarrow{\phantom{XX}}\mathrel{\mkern-14mu}\rightarrow K^0({\bf t})=R\]
which is a map of complexes, since $K^\bullet({\bf t})$ has no negative terms. Thus
\stepcounter{sth}
\begin{equation*}\label{map:Tr-concrete}\tag{\thesth}
\Tr{A/R}^\btrg(R) = \pi_{\bf t}\smcirc\varphi_{\bf t}^{-1}.
\end{equation*}

\subsection{Compatibility with completions} In view of the above, \Lref{lem:ij-kappa}
has a useful re-interpretation in the special case where the two
closed immersions of $Z$ into $X$ and $\X$ are regular immersions of codimension $r$. 
In greater detail, suppose as in \Sref{subsec:i-j-k}, we have a commutative diagram
\[
\xymatrix{
Z\phantom{X} \ar@{^(->}[r]^{j} \ar@{_(->}[dr]_{i} & \X \ar[d]^{\kappa} \\
& X
}
\]
with $X$ an ordinary scheme, but with {\em $i$, $j$ regular closed immersions},
$\X=X_{/Z}$ the completion of $X$ along $Z$, $\kappa$ the completion map, and let $\I$ and
$\J=\I\co_{\X}$ be the ideal sheaves for $Z$ in $X$ and $\X$ respectively. Now regarding $\I/\I^2$ and
$\J/\J^2$ as invertible sheaves on $Z$, we have an obvious identification $\I/\I^2=\J/\J^2$, whence
the identification $j^\btrg\kappa^*=i^\btrg$.
Then the following is an easy corollary to \Lref{lem:ij-kappa}.
\begin{lem}\label{lem:eta-kappa} 
The following diagram commutes.
\[
\xymatrix{
j^\btrg \kappa^*  
\ar[rr]^-{\Iso}_-{\eta'_j} && \ush{j}\kappa^* \ar[r]^{\Iso}_{\eqref{eq:gm}}
& \ush{j}\ush{\kappa} 
\ar[d]^{\,\rotatebox{-90}{\makebox[-0.1cm]{\Iso}}}\\
i^\btrg 
\ar@{=}[u]
\ar[rrr]^-{\Iso}_-{\eta'_i}& & & \ush{i} 
}
\]
where the unlabelled isomorphism $\ush{j}\ush{\kappa}\iso \ush{i}$ is the canonical one. 
\end{lem}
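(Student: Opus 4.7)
The plan is to reduce Lemma~\ref{lem:eta-kappa} to Lemma~\ref{lem:ij-kappa} by factoring through the $(-)^\flat$-variants, and then to verify the residual compatibility between the two instances of the fundamental local isomorphism by a direct Koszul calculation. Writing $\eta'_i=\eqref{iso:flat-sharp}\smcirc\eta_i$ and $\eta'_j=\eqref{iso:flat-sharp}\smcirc\eta_j$ and appealing to Lemma~\ref{lem:ij-kappa}, the outer square of Lemma~\ref{lem:eta-kappa} commutes if and only if the inner square
\[
\xymatrix{
j^\btrg\kappa^* \ar@{=}[d] \ar[r]^{\eta_j\kappa^*} & j^\flat\kappa^* \\
i^\btrg \ar[r]_{\eta_i} & i^\flat \ar[u]_{\,\rotatebox{-90}{\makebox[-0.1cm]{\Iso}}}^{\eqref{iso:ij-flat}}
}
\]
commutes, the left vertical equality being the canonical identification $\wnor{j}=\wnor{i}$ induced by $\J/\J^2=\kappa^*(\I/\I^2)$ on $Z$.

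Next, unwinding the composite \eqref{iso:trg-flat} that defines $\eta_i$ (and analogously $\eta_j$), I would isolate its essential content as the fundamental local isomorphism \eqref{iso:fli-global}, the remaining steps being tensor-product and $\R\sHomb$-projection isomorphisms that commute with $\kappa^*$ for formal reasons (since $\bar j^*\kappa^*$ agrees with $\bar i^*$ on $j_*\co_Z$-modules, and the definition \eqref{iso:ij-flat} is literally obtained by applying $\bar i^*$ to the isomorphism \eqref{map:ij-flat}). In this way the problem reduces to showing that \eqref{iso:ij-flat} evaluated at $\co_X$ identifies the fundamental local isomorphism $\wnor{j}[-r]\iso j^\flat\co_\X$ with $\wnor{i}[-r]\iso i^\flat\co_X$ under $\wnor{j}=\wnor{i}$.

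Finally, this last assertion is local on $X$, so I would take $X=\Spec R$, $Z=\Spec A$, $\I=(t_1,\ldots,t_r)R$ with ${\bf t}$ an $R$-regular sequence; then $\X=\Spf\widehat R$ for the $I$-adic completion $\widehat R$, $\J=(t_1,\ldots,t_r)\widehat R$, and $K^\bullet({\bf t},\widehat R)=K^\bullet({\bf t},R)\otimes_R\widehat R$. By flatness of $\widehat R$ over $R$, \eqref{map:ij-flat} at $\co_X$ becomes the Koszul-induced quasi-isomorphism $\R\Hom_R(A,R)\otimes_R\widehat R\iso \R\Hom_{\widehat R}(A,\widehat R)$. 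The distinguished generator $\mathbf{1}/\mathbf{t}$ of $\wI{A}{I}=\wI{A}{J}$ is the same element on both sides, so the two fundamental local isomorphisms \eqref{iso:fli-global} agree under \eqref{iso:ij-flat}; the independence of $\eta_{R,A}$ from the choice of ${\bf t}$ noted after \eqref{iso:etaRA} is precisely what makes this matching formal. The main obstacle will be purely notational — tracking the canonical identifications $\J/\J^2=\kappa^*(\I/\I^2)$ and $\bar j^*\smcirc\kappa^*=\bar i^*$ coherently through the several tensor-product and $\R\sHomb$ isomorphisms packaged into \eqref{iso:trg-flat}.
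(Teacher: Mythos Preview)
Your proposal is correct and follows the same approach as the paper: the paper simply states that Lemma~\ref{lem:eta-kappa} is ``an easy corollary to Lemma~\ref{lem:ij-kappa}'', and your reduction to the inner square involving $\eta_i$, $\eta_j$, and \eqref{iso:ij-flat} is exactly what that sentence means. You go further than the paper by spelling out why the inner square commutes via the local Koszul calculation, which is the implicit ``easy'' step the paper leaves to the reader.
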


\subsection{Compatibility between the flat-base-change isomorphisms of~${\boldsymbol{-}}^\btrg$ 
and of~$\ush{-}$}

Suppose we have a cartesian diagram $\mathfrak s$ of formal schemes
\stepcounter{thm}
\[
\begin{aligned}\label{diag:bc0}
\xymatrix{
\W' \ar@{}[dr]|{\square} \ar[d]_{\kappa} \ar[r]^j & \W \ar[d]^{\kappa_{{}_0}}\\
\X' \ar[r]_i & \X
}
\end{aligned}\tag{\thethm}
\]
such that $i$ is a regular immersion (i.e., given locally by the vanishing of a regular sequence)
and $\kappa_{{}_0}$ is the completion of $\X$ with respect to a closed subscheme given by
a coherent ideal.
By \eqref{iso:eta'-i}, for any $\eF \in \Dc^+(\X)$ and $\eG \in \Dc^+(\W)$ there are natural isomorphisms 
\[
i^\btrg\eF \iso \ush{i}\eF, \qquad \qquad j^\btrg\eG \iso \ush{j}\eG.
\]

Now, on one hand we have the flat base-change isomorphism 
\[
\ush{\beta_{\mathfrak s}} \colon \kappa^*\ush{i} \iso \ush{j}\kappa_{{}_0}^*
\]
of \eqref{iso:bc-sharp} while on the other we have an isomorphism
\stepcounter{thm}
\begin{equation*}\label{iso:BCf}\tag{\thethm}
\kappa^*i^\btrg \iso j^\btrg\kappa_{{}_0}^*
\end{equation*}
given by the composite
\begin{align*}
\kappa^*((\bL i^*({\boldsymbol{-}})\overset{\bL}\otimes \wnor{i}[-r])
&\iso (\bL j^*\bL\kappa_{{}_0}^*({\boldsymbol{-}}))\overset{\bL}\otimes\kappa^* \wnor{i}[-r]\\
&\iso (\bL j^*\kappa_{{}_0}^*({\boldsymbol{-}}))\overset{\bL}\otimes \wnor{j}[-r]
\end{align*}
where the second isomorphism is the one that arises from the canonical isomorphism
$\kappa^*\eN_{\<\<\<\<{}_i} \iso \eN_{\<\<\<\<{}_j}$.
Fortunately these two flat-base-change isomorphisms are compatible:

\begin{prop}
\label{prop:bc-!f} 
For the diagram $\mathfrak s$ in \eqref{diag:bc0}, for any $\eF \in \Dc^+(\X)$ 
the following diagram commutes.
\stepcounter{sth}
\[
\begin{aligned}\label{diag:bc}
\xymatrix{
\kappa^*i^\btrg\eF \ar[d]^{\,\rotatebox{-90}{\makebox[-0.1cm]{\Iso}}}_{\eta'_i} \ar[r]^\Iso_{\eqref{iso:BCf}} & 
j^\btrg\kappa_{{}_0}^*\eF \ar[d]_{\,\rotatebox{-90}{\makebox[-0.1cm]{\Iso}}}^{\eta'_j} \\
\kappa^*\ush{i}\eF \ar[r]_{\ush{\beta_{\mathfrak s}}} & \ush{j}\kappa_{{}_0}^*\eF
}
\end{aligned}\tag{\thesth}
\]
\end{prop}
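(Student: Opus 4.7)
The plan is to decompose the square \eqref{diag:bc} into two vertically stacked squares by inserting the intermediate column $\kappa^*i^\flat\eF\to j^\flat\kappa_{{}_0}^*\eF$, where the horizontal map is the natural $\flat$-level base-change isomorphism
\[
\kappa^*i^\flat\eF = \kappa^*\bar i^{\>*}\R\sHomb_\X(\co_{\X'},\,\eF)
\iso \bar j^{\>*}\R\sHomb_\W(\co_{\W'},\,\kappa_{{}_0}^*\eF) = j^\flat\kappa_{{}_0}^*\eF,
\]
which exists because $\kappa_{{}_0}$ is flat and $i_*\co_{\X'}$ is perfect on $\X$ (as $i$ is a regular immersion); a local check via Koszul resolutions shows that this base-change map on $\flat$ is obtained by applying $\kappa^*$ to the defining formula for $i^\btrg$ and matching the resulting identification $\kappa^*\wnor{i}\iso\wnor{j}$, so on the top half of the expanded diagram it is precisely the source of \eqref{iso:BCf}. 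Since $\eta'_i$ is by definition the composite of $\eta_i$ with the isomorphism \eqref{iso:flat-sharp}, this splits \eqref{diag:bc} into (i)~a top square involving $\eta_i,\eta_j$ together with the $\flat$-level base-change, and (ii)~a bottom square involving \eqref{iso:flat-sharp} on both sides together with the $\flat$- and $\ush{(-)}$-level base-change isomorphisms.

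For the top square, commutativity is a local question on $\X'$, so I would reduce to the affine situation $\X=\Spf R$ and $\X'$ cut out by an ideal $I=(t_1,\ldots,t_r)$ with ${\bf t}$ a regular $R$-sequence. After base change, ${\bf t}$ remains a regular sequence in $\Gamma(\W,\co_\W)$ generating the pulled-back ideal (flatness of $\kappa_{{}_0}$). Unwinding \eqref{iso:trg-flat}, the isomorphism $\eta_i$ is the composite of the fundamental local isomorphism $\wnor{i}[-r]\iso i^\flat\co_\X$ of \eqref{iso:fli-global} with natural projection-formula-type isomorphisms. The latter are manifestly compatible with $\kappa_{{}_0}^{\>*}$, while \eqref{iso:fli-global} is locally given by $\eta_{R,R/I}$ of \eqref{iso:etaRA}, whose \emph{independence} from the chosen generating sequence ${\bf t}$ (as emphasised in \Ssref{ss:fli}) is exactly what makes it commute with the base-change $R\to\Gamma(\W,\co_\W)$.

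For the bottom square, the key simplification is that $\eF\in\Dc^+(\X)$, so Greenlees--May duality (as recalled in \Sref{s:notations}) implies that $\BL$ acts as the identity on all relevant complexes; consequently $\ush{\beta_{\mathfrak s}}$ identifies canonically with $\beta^!$ on $\Dc^+$. The assertion thus reduces to checking that $\beta^!$ intertwines the isomorphisms $i^\flat\iso i^!$ and $j^\flat\iso j^!$ of \eqref{iso:flat-sharp}. Each of the latter is characterised by the ``evaluation at~$1$'' trace $\Tr{i}^\flat,\Tr{j}^\flat$, while $\beta^!$ is characterised (via \Lref{lem:kappa}, which applies because $\kappa$ and $\kappa_{{}_0}$ are completion maps) by its compatibility with $\Tr{i}$ and the natural identification $\kappa_*\R\iGp{\W'}\kappa^* \iso \R\iG{\X'}$. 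Taking adjoints around the bottom square reduces the claim to a diagram of trace-map compatibilities, which commutes by construction.

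The principal obstacle will be in the top square: one must verify that the locally-defined Koszul description of \eqref{iso:fli-global} is genuinely canonical and compatible with the identification $\kappa^*\wnor{i}\iso\wnor{j}$, including the sign $(-1)^r$ in \eqref{iso:lambda-t}. This compatibility is essentially the content of the independence of $\eta_{R,A}$ on ${\bf t}$, combined with the observation that both the sign and the normalisation depend only on the codimension. Once this is secured, the rest of the argument is diagram-chasing requiring no ingredients beyond those established in the appendix.
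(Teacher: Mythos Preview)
Your approach is essentially the same as the paper's: split the square via the $\flat$-level base-change map $\beta^\flat\colon\kappa^*i^\flat\eF\iso j^\flat\kappa_0^*\eF$ induced by \eqref{eq:explicit-basech}, declare the top half (with $\eta_i,\eta_j$) straightforward from the local Koszul description, and handle the bottom half (with \eqref{iso:flat-sharp} on each side) by passing to the $\ush{j}$-adjoint diagram and comparing traces. The paper writes out the adjoint diagram for the bottom square explicitly, with the key subdiagram labelled $\boxminus$ commuting by the very definition of the map $i^\flat\to\ush{i}$ in \eqref{iso:flat-sharp}.

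One small point: your invocation of \Lref{lem:kappa} in the bottom square is not quite on target. That lemma concerns the identification $\kappa^*\iso\ush{\kappa}$ for a completion map, whereas what you actually need is the adjoint characterisation of the base-change isomorphism $\ush{\beta_{\mathfrak s}}$ for the \emph{pseudoproper} map $i$ over the flat map $\kappa_0$ (given at the start of \S\ref{ss:more-base-ch}). The fact that $\kappa_0$ happens to be a completion map plays no special role here beyond flatness; the paper's diagram uses only the natural map $j_*\R\iGp{\W'}\kappa^*\to\kappa_0^*i_*\R\iGp{\X'}$ together with $\Tr{i}$ and $\Tr{j}^\flat$. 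Once you drop the reference to \Lref{lem:kappa} and instead appeal directly to that adjoint description, your argument lines up exactly with the paper's.
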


\begin{proof}
As per the definition of $\eta'$ in \eqref{iso:eta'-i}, the diagram in \eqref{diag:bc} expands as follows
\[
\xymatrix{
\kappa^*i^\btrg\eF \ar[d]^{\,\rotatebox{-90}{\makebox[-0.1cm]{\Iso}}}_{\eta_i} \ar[r]^\Iso_{\eqref{iso:BCf}} 
& j^\btrg\kappa_{{}_0}^*\eF \ar[d]_{\,\rotatebox{-90}{\makebox[-0.1cm]{\Iso}}}^{\eta_j} \\
\kappa^*i^{\flat}\eF \ar[r]_{\beta^\flat} \ar[d]^{\,\rotatebox{-90}{\makebox[-0.1cm]{\Iso}}}_{\eqref{iso:flat-sharp}}
& j^{\flat}\kappa_{{}_0}^*\eF \ar[d]_{\,\rotatebox{-90}{\makebox[-0.1cm]{\Iso}}}^{\eqref{iso:flat-sharp}} \\
\kappa^*\ush{i}\eF \ar[r]_{\ush{\beta_{\mathfrak s}}} & \ush{j}\kappa_{{}_0}^*\eF 
}
\]
where $\beta^\flat$ is induced by the natural isomorphism
\stepcounter{thm}
\begin{equation*}\label{eq:explicit-basech}\tag{\thethm}
\kappa_{{}_0}^*\R\sHomb_{\co_\X}(i_*\co_{\X'}, \eF) \iso \R\sHomb_{\co_\W}(j_*\co_{\W'}, \kappa_{{}_0}^*\eF).
\end{equation*}
It is straightforward to check that the top rectangle commutes. For the bottom one, using the 
adjointness property of~$\ush{j}$, it suffices to check that the outer border of the 
following diagram commutes where, as before, $\iGp{\X} = \R\iGp{\X}$ etc..
\[
\xymatrix{  
j_*\iGp{\W'}\kappa^*i^{\flat}  \ar[dddd]_{i^{\flat} \cong \ush{i}} \ar[rrd]  \ar[rrr]^{\text{via }\beta^\flat} \ar[rdd]_{a_1} 
& & & j_*\iGp{\W}j^\flat\kappa_{{}_0}^* \ar[dd]  \\
& & j_*\kappa^*i^{\flat} \ar[d] \ar[rd]^{\text{via }\beta^\flat}  \\
& \kappa_{{}_0}^*i_*\iGp{\X'}i^\flat \ar[dd]_{i^{\flat} \cong \ush{i}}^{\hspace{3em}\boxminus}  \ar[r]  
& \kappa_{{}_0}^*i_*i^\flat \ar[rdd]_{\text{via }\Tr{i}^\flat} \ar[r]_{\eqref{eq:explicit-basech}} 
& j_*j^\flat\kappa_{{}_0}^* \ar[dd]^{\text{via }\Tr{j}^\flat} \\
\\
j_*\iGp{\W'}\kappa^*\ush{i} \ar[r]_{a_2} & \kappa_{{}_0}^*i_*\iGp{\X'}\ush{i} \ar[rr]_{\text{via } \Tr{i}}  
& & \kappa_{{}_0}^*
}  
\]  
Here the maps $a_i$ are induced by the composite of natural maps 
\[
j_*\R\iGp{\W'}\kappa^* \to j_*\kappa^*\R\iGp{\X'} \iso \kappa_{{}_0}^*i_*\iGp{\X'}.
\]
The unlabelled maps are the obvious natural ones. The diagram $\scriptstyle{\boxminus}$ 
commutes by definition of the map $i^{\flat} \to \ush{i}$ in~${\eqref{iso:flat-sharp}}$.
Commutativity of the remaining parts is easy to check.
\end{proof}

\subsection{Stable Koszul complexes and generalized fractions}\label{ss:stabkoz} 
Let $R$, $I$, $A$ be
as above, and let ${\bf t}=(t_1,\ldots, t_d)$ be generators for $I$. {\em Note that, for now, we are not
requiring ${\bf t}$ to be a quasi-regular sequence.}
We now recall the relationship between $K^\bullet({\bf t},\,M)$ and the
local cohomology of $M$ and relate the above discussion to generalized fractions leading
to the explicit formula in \Lref{lem:gen-frac} below.
For an $r$-tuple of positive integers $\boldsymbol{\alpha}=(\alpha_1,\dots,\,\alpha_r)$, let 
${\bf t}^{\boldsymbol{\alpha}}= (t_1^{\alpha_1}, \dots, t_r^{\alpha_r})$. Let
\stepcounter{thm}
\[
\begin{aligned}\label{def:kos-cech}
K^\bullet_\infty({\bf t}) \set & \dirlm{{\boldsymbol{\alpha}}}K^\bullet({\bf t}^{\boldsymbol{\alpha}}) \\
K^\bullet_\infty({\bf t},\,M) \set &\dirlm{{\boldsymbol{\alpha}}}K^\bullet({\bf t}^{\boldsymbol{\alpha}},\,M)
=M\otimes_R K_\infty^\bullet({\bf t}).
\end{aligned}\tag{\thethm}
\]
The complex $K^\bullet_\infty({\bf t},\,M)$ is called the {\em stable Koszul complex of $M$
associated to~${\bf t}$} and it has a well known relationship with the \v{C}ech complex
${\mathrm C}^\bullet={{\mathrm C}}^\bullet(\mathfrak{U},\,\wit{M})$ associated
with the open cover $\mathfrak{U}= \{\{t_i\neq 0\}\mid i=1,\dots, r\}$ of the scheme
$U\set \Spec{\,R}\smallsetminus V(I)$. The relationship is that 
${\mathrm C}^i=K^{i+1}_\infty({\bf t},\,M)$
for $i\ge 0$ and in this range
and the coboundary maps 
${\mathrm C}^i\to {\mathrm C}^{i+1}$ and $K^{i+1}_\infty({\bf t},\,M) \to K^{i+2}_\infty({\bf t},\,M)$
are equal. We also note that the natural map $K^r({\bf t},\,M) \to K^r_\infty({\bf t}, M)={\mathrm C}^{r-1}$, is the
map $M\to M_{t_1\dots t_r}$ given by $m\mapsto m/t_1\dots t_r$.

We point out that there is an obvious commutative diagram
\[
\xymatrix{
\dirlm{{\boldsymbol{\alpha}}}\Hom_R(R/{\bf t}^{\boldsymbol{\alpha}},\,M) \ar[r] \ar@{=}[d] & 
K^0_\infty({\bf t},\,M)  \ar@{=}[d] \\
\Gamma_I(M) \, \ar@{^(->}[r] & M 
}
\]
where the horizontal arrow in the top row is the one obtained by applying a direct limit to the
map of direct systems 
$\Hom_R(R/{\bf t}^{\boldsymbol{\alpha}},\,M) \to K^0({\bf t}^{\boldsymbol{\alpha}},\,M)$ and the
horizontal arrow in the bottom row is the natural inclusion. If, as before, $M\to E^\bullet$ is an
injective resolution of $M$, we have 
$\Hom_R(R/{\bf t}^{\boldsymbol{\alpha}}R,\, E^\bullet)\iso 
E^\bullet\otimes_RK^\bullet({\bf t}^{\boldsymbol{\alpha}})$ whence an isomorphism
\[
\dirlm{{\boldsymbol{\alpha}}}\Hom_R(R/{\bf t}^{\boldsymbol{\alpha}}R,\, E^\bullet)
\iso E^\bullet\otimes_R K^\bullet_\infty({\bf t}).
\]
We then we have a diagram of isomorphisms in $\D({\mathrm{Mod}}_R)$:
\[
\xymatrix{
K^\bullet_\infty({\bf t},\,M) \ar@{=}[r]
\ar@{.>}[d]^{\,\rotatebox{-90}{\makebox[-0.1cm]{\Iso}}}
& M^\bullet\otimes_RK^\bullet_\infty({\bf t})
 \ar[d]^{\,\rotatebox{-90}{\makebox[-0.1cm]{\Iso}}} \\ 
 \R\Gamma_I(M) \ar@{=}[d]& E^\bullet\otimes_R K^\bullet_\infty({\bf t}) \\
 \Gamma_I(E^\bullet) \ar@{=}[r]
 & \dirlm{{\boldsymbol{\alpha}}}\Hom_R(R/{\bf t}^{\boldsymbol{\alpha}}R,\, E^\bullet)
 \ar[u]_{\,\rotatebox{90}{\makebox[-0.1cm]{\Iso}}}
 }
\]
Since all solid arrows in this diagram are isomorphisms, we can fill the dotted arrow, i.e.,
we have a unique isomorphism
\stepcounter{thm}
\begin{equation*}\label{iso:k-infty-gam}\tag{\thethm}
K^\bullet_\infty({\bf t},\,M) \iso \R\Gamma_I(M)
\end{equation*}
which fills the dotted arrow to make the diagram commute. 
Since $K^j_\infty({\bf t},\,M)=0$
for $j > r$, we have a surjective map $M_{t_1\dots t_r}=K^r_\infty({\bf t},\,M) \to \Hr^r_I(M)$.
The image of $m/t_1^{\alpha_1}\dots t_r^{\alpha_r}\in M_{t_1\dots t_r}$ is denoted by
the generalised fraction 
$\bigl \{\begin{smallmatrix} m\\ t^{\alpha_1}_1,\,\dots,\,t^{\alpha_r}_r \end{smallmatrix}\bigr \}$.

Now, standard excision arguments give us a map $\Hr^{r-1}(U,\,\wit{M})\to H^r_I(M)$ which
is an isomorphism when $r\ge 2$ and surjective when $r=1$.
In the \v{C}ech complex ${\mathrm C}^\bullet$, we have ${\mathrm C}^j=0$ for $j\ge r$.
We thus have a composition of surjective maps
\[ M_{t_1\dots t_r}={\mathrm C}^{r-1} \twoheadrightarrow \Hr^{r-1}(\mathfrak{U},\,\wit{M}) \iso
\Hr^{r-1}(U,\,\wit{M}) \twoheadrightarrow \Hr^r_I(M).\]
The image of $\frac{m}{t_1^{\alpha_1}\dots t_r^{\alpha_r}} \in M_{t_1\dots t_r}$ is denoted
by the generalized fraction 
$\bigl [\begin{smallmatrix} m\\ t^{\alpha_1}_1,\,\dots,\,t^{\alpha_r}_r \end{smallmatrix}\bigr ]$.
The two generalized fractions are related by the formula 
\stepcounter{thm}
\begin{equation*}\label{eq:2gen-fracs}\tag{\thethm}
\begin{bmatrix} m\\ t^{\alpha_1}_1,\,\dots,\,t^{\alpha_r}_r \end{bmatrix}
= (-1)^r \begin{Bmatrix} m\\ t^{\alpha_1}_1,\,\dots,\,t^{\alpha_r}_r \end{Bmatrix}
\end{equation*}
(see \cite[p.47, Lemma\,4.1.1]{lns}).

\begin{lem}\label{lem:gen-frac} Suppose the sequence ${\bf t}$ above is a quasi-regular sequence in $R$.
Let $M$ be an $R$-module. Then the composite map (with $\phi_{R,A}(M)$ as in \eqref{iso:fli})
\stepcounter{sth}
\begin{equation*}\label{foo}\tag{\thesth}
M\otimes_R\wI{A}{I} \xrightarrow[\phi_{R,A}(M)]{\Iso} \ext^r_R(A,\,M) \longrightarrow \Hr^r_I(M)
\end{equation*}
is given by
\[
m\otimes \frac{\bf 1}{\bf t} \mapsto \begin{bmatrix} m\\ t_1,\,\dots,\,t_r \end{bmatrix} \qquad (m\in M)
\]
Where $\frac{\bf 1}{\bf t}$ is as in \eqref{def:1/t}.
\end{lem}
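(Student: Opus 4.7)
The plan is to unwind the definition of $\phi_{R,A}(M)$ via the diagram \eqref{diag:eta-K} and track the element $m\otimes \frac{\bf 1}{\bf t}$ through each step, then compare the result to the generalized fraction using \eqref{eq:2gen-fracs}. Recall that $\phi_{R,A}(M) = H^0(\eta_{R,A}(M[r]))$ where $\eta_{R,A} = \psi_{\bf t}\smcirc(\lambda_{\bf t}[-r])^{-1}$, with $\lambda_{\bf t}(1) = (-1)^r \frac{\bf 1}{\bf t}$ and $\psi_{\bf t} = \eqref{iso:kosz-rhom}\smcirc\eqref{iso:kosz-r2}^{-1}$.

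First, I would replace $\wI{A}{I}$ by $A$ via $\lambda_{\bf t}$: the element $m\otimes \frac{\bf 1}{\bf t}$ corresponds to $(-1)^r m\otimes 1$ under $\lambda_{\bf t}^{-1}$. Under $\eqref{iso:kosz-r2}^{-1}\colon M\overset{\bL}\otimes_R(A[-r]) \iso M\otimes_R K^\bullet({\bf t})$, applied after the shift by $[r]$, the element $m\otimes 1$ (viewed in degree $0$) is sent to the class of $m\in M = K^r({\bf t}, M)$ (viewed as a $0$-cocycle after shifting by $[r]$): this uses the explicit quasi-isomorphism $K^\bullet({\bf t}) \to A[-r]$ given by the natural surjection $R = K^r({\bf t}) \twoheadrightarrow R/I = A$.

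Next, using the commutative triangle of \eqref{diag:eta-K}, to evaluate $\psi_{\bf t}$ on this class it suffices to trace it through $\eqref{iso:kosz-rhom}$, and then compose with the natural map $\ext^r_R(A,M)\to H^r_I(M)$. Using the identification from the beginning of \S\ref{ss:stabkoz} between the truncation of the Koszul cochain complex and the \v{C}ech complex computing local cohomology, the class of $m \in K^r({\bf t}, M)$ maps, under the canonical map $K^r({\bf t}, M) \to K^r_\infty({\bf t}, M) = M_{t_1\cdots t_r}$, to $m/(t_1\cdots t_r)$. By the definition of the curly-brace generalized fraction, this represents $\bigl\{\begin{smallmatrix} m\\t_1, \ldots, t_r\end{smallmatrix}\bigr\}$ in $H^r_I(M)$.

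Putting the pieces together, the composite \eqref{foo} sends $m\otimes \frac{\bf 1}{\bf t}$ to $(-1)^r \bigl\{\begin{smallmatrix} m\\t_1, \ldots, t_r\end{smallmatrix}\bigr\}$, which by \eqref{eq:2gen-fracs} equals $\bigl[\begin{smallmatrix} m\\t_1, \ldots, t_r\end{smallmatrix}\bigr]$, as required. The only technical obstacle is being careful about sign conventions: the factor $(-1)^r$ built into $\lambda_{\bf t}$ was placed precisely so that it cancels the corresponding sign in \eqref{eq:2gen-fracs} relating the two generalized fraction notations, yielding the square-bracket version that is compatible with the \v{C}ech-theoretic conventions used throughout the paper.
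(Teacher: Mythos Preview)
Your proposal is correct and follows essentially the same route as the paper: the paper organizes the argument via the commutative diagram \eqref{diag:foo-M} (which augments \eqref{diag:eta-K} with a column for the stable Koszul complex and $\R\Gamma_I$), sets $M^\bullet = M[r]$, applies $H^0$, and then performs exactly the element chase you describe---north via $\lambda_{\bf t}^{-1}$ to pick up $(-1)^r$, east to land on $m/(t_1\cdots t_r)$ in $K^r_\infty({\bf t},M)$, and finally invokes \eqref{eq:2gen-fracs}. Your write-up is slightly more informal in that it invokes the \v{C}ech description rather than working purely with the stable Koszul complex, but the content is identical.
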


\proof For an arbitrary bounded complex $M^\bullet$  consider the following commutative diagram
\stepcounter{sth}
\stepcounter{subsubsection}
\[
\begin{aligned}\label{diag:foo-M}
\xymatrix{
M^\bullet\overset{\bL}{\otimes}_R(A[-r])  
\ar[ddd]^{\,\rotatebox{-90}{\makebox[-0.1cm]{\Iso}}}_{\lambda_{\bf t}[-r]} 
& M^\bullet\otimes_RK^\bullet({\bf t}) \ar[l]_{\Iso}  \ar[r]
\ar[d]^{\,\rotatebox{-90}{\makebox[-0.1cm]{\Iso}}} 
& M^\bullet\otimes_RK^\bullet_\infty({\bf t})
 \ar[d]^{\,\rotatebox{-90}{\makebox[-0.1cm]{\Iso}}} \\
 & E^\bullet\otimes_RK^\bullet({\bf t}) \ar[r] & E^\bullet\otimes_RK^\bullet_\infty({\bf t})\\
& \Hom_R(A,\,E^\bullet)  \ar[u]_{\,\rotatebox{90}{\makebox[-0.1cm]{\Iso}}} \ar[r]  \ar@{=}[d]
& \Gamma_I(E^\bullet) \ar[u]_{\,\rotatebox{90}{\makebox[-0.1cm]{\Iso}}} \ar@{=}[d] \\
M^\bullet\overset{\bL}{\otimes}_R(\wI{A}{I}[-r]) \ar[r]^-{\Iso}_-{\eta_{R,A}} 
& \R\Homb_R(A,\,M^\bullet) \ar[r] & \R\Gamma_I(M^\bullet)
}
\end{aligned}\tag{\thesth}
\]
Set $M^\bullet=M[r]$ in the above and apply the cohomology functor $\Hr^0({\boldsymbol{-}})$.
We get a commutative diagram
\[
\xymatrix{
M\otimes_RA \ar[dd]^{\,\rotatebox{-90}{\makebox[-0.1cm]{\Iso}}}_{\lambda_{\bf t}} \ar@{=}[rr]
&& M/IM \ar[rr] 
& &\Hr^r(M\otimes_R\K^\bullet_\infty({\bf t})) 
\ar[dd]_{\,\rotatebox{-90}{\makebox[-0.1cm]{\Iso}}} \\
&&&&\\
M\otimes_R\wI{A}{I} \ar[rr]_{\phi_{R,A}(M)}^{\Iso} 
&& \ext^r_R(A,\,M) \ar[rr]^{\text{natural}}
&& \Hr^r_I(M)
}
\]
Let us write $[x]$ for the image of $x\in M_{t_1\dots t_r}=M\otimes_RK^r_\infty({\bf t})$ in the
module $\Hr^r(M\otimes_RK^\bullet_\infty({\bf t}))$. Then chasing an element 
$m\otimes({\bf 1}/{\bf t}) \in M\otimes_R\wI{A}{I}$ by first going north (via $\lambda_{\bf t}^{-1}$)
and then east along the above rectangle, we arrive at the element
$(-1)^r[m/t_1\dots t_r]\in \Hr^r(M\otimes_RK^\bullet_\infty({\bf t}))$. The assertion follows from
\eqref{eq:2gen-fracs}.
\qed

\subsection{Duality for composite for closed immersions} Let  $R$ be a noetherian ring, 
$I\subset R$ an ideal, $A=R/I$ and $i\colon \Spec{\,A}\hookrightarrow \Spec{\,R}$ 
the closed immersion corresponding to the natural surjection $R\twoheadrightarrow R/I=A$.
Let $M^\bullet$ be a bounded below complex of $A$-modules. Consider the 
``evaluation at $1$" map:
\[{\bf ev}_{\<\<{}_I}\colon \R\Homb_R(A,\,M^\bullet) \lra M^\bullet.\]
As is well-known (and easy to verify from the definitions) the following diagram commutes
\[
{\xymatrix{
i_*i^\flat\wit{M}^\bullet \ar[rr]_{\eqref{iso:flat-sharp}}^{\Iso}\ar@/_.75pc/[drr]_{{\bf ev}_{\<\<{}_I}}
&& i_*i^!\wit{M}^\bullet \ar[d]^{\Tr{i}} \\
&& \wit{M}^\bullet
}}
\]
Now suppose $\bar{L} \subset A$ is an ideal, and $L\subset R$ the unique $R$-ideal such that
$L\supset I$ and $L/I=\bar{L}$. We then have the standard isomorphism
\stepcounter{thm}\stepcounter{subsubsection}
\begin{equation*}\label{iso:I-L-it}\tag{\thethm}
\R\Homb_A(B,\,\R\Homb_R(A,\,M^\bullet)) \iso \R\Homb_R(B,\,M^\bullet)
\end{equation*}
which, after replacing $M^\bullet$ by a complex of injective modules if necessary, amounts to
the observation that elements in an $R$-module which are killed by $I$ and also by $\bar{L}$ are
the exactly elements which are killed by $L$. The following diagram clearly commutes
\[
{\xymatrix{
\R\Homb_A(B,\,\R\Homb_R(A,\,M^\bullet)) \ar[r]^-{\Iso} \ar[d]_{{\bf ev}_{\<\<{}_{\bar{L}}}} 
& \R\Homb_R(B,\,M^\bullet) \ar[d]^{{\bf ev}_{\<\<{}_L}} \\
\R\Homb_R(A,\,M^\bullet) \ar[r]_-{{\bf ev}_{\<\<{}_I}} & M^\bullet
}}
\]
This means that the following diagram commutes (with $j\colon \Spec{\,B}\to \Spec{\,A}$
the natural inclusion):
\stepcounter{thm}\stepcounter{subsubsection}
\[
\begin{aligned}\label{diag:ij-it}
{\xymatrix{
j^\flat i^\flat\wit{M}^\bullet \ar[rr]^{\Iso}_{\eqref{iso:I-L-it}} \ar[d]^{\,\rotatebox{-90}{\makebox[-0.1cm]{\Iso}}}_{\eqref{iso:flat-sharp}} 
&& (ij)^\flat\wit{M}^\bullet  \ar[d]_{\,\rotatebox{-90}{\makebox[-0.1cm]{\Iso}}}^{\eqref{iso:flat-sharp}}\\
j^!i^!\wit{M}^\bullet \ar[rr]^{\Iso}_{\text{natural}} && (ij)^!\wit{M}^\bullet
}}
\end{aligned}\tag{\thethm}
\]

Suppose $I$ is generated by ${\bf t}=(t_1,\ldots,t_d)$, $\bar{L}$ is generated by
$\bar{\bf u}= (\bar{u}_1,\,\ldots,\,\bar{u}_e)$, and $u_i\in L$ are lifts of ${\bar u}_i$ for
$i=1,\ldots, e$. Set ${\bf u}=(u_1,\,\ldots,\,u_e)$. Suppose $({\bf t},\,{\bf u})$ is a quasi-regular
sequence in $R$ (so that $\bar{\bf u}$ is quasi-regular in $A$). The map
\stepcounter{thm}
\begin{equation*}\label{map:e-t}\tag{\thethm}
{\bf e_t}\colon M^\bullet\otimes_R K^\bullet({\bf t}) \lra M^\bullet
\end{equation*}
corresponding to ${\bf ev}_{\<\<{}_I}$ under the isomorphism 
$ M^\bullet\otimes K^\bullet({\bf t}) \iso \R\Homb_R(A,\,M^\bullet)$ 
(cf.\,\eqref{iso:kosz-rhom})) is the map which in degree $n$ is
\stepcounter{thm}
\begin{equation*}\label{map:e-t2}\tag{\thethm}
(M^\bullet\otimes_RK^\bullet({\bf t}))^n =\bigoplus_{{p+q}=n}M^p\otimes_RK^q({\bf t})
\xrightarrow{\text{projection}} M^n\otimes_RK^0({\bf t}) = M^n.
\end{equation*}
(Note that the map ${\bf e_t}$ is defined even if ${\bf t}$ is not quasi-regular, so
that in particular ${\bf e_u}$ makes sense.) The following diagram clearly commutes
\[
{\xymatrix{
M^\bullet\otimes_RK^\bullet({\bf t})\otimes_RK^\bullet({\bf u}) \ar@{=}[rr] \ar[d]_{\bf e_u} 
&& M^\bullet\otimes_RK^\bullet({\bf t,u}) \ar[d]^{\bf e_{(t,u)}} \\
M^\bullet\otimes_RK^\bullet({\bf t}) \ar[rr]_{{\bf e_u}} && M^\bullet
}}
\]
An obvious re-interpretation of this, in our case, is that the following diagram commutes
(we are implicitly using the fact that if $N$ is an $A$-module, then ${\bf e_u}={\bf e_{\bar u}}$
on $N\otimes_RK^\bullet({\bf u})=N\otimes_AK^\bullet(\bar{\bf u})$):
\stepcounter{thm}\stepcounter{subsubsection}
\[
\begin{aligned}\label{diag:koz-flat-it}
{\xymatrix{
M^\bullet\otimes_RK^\bullet({\bf t,u}) \ar@{=}[d] \ar[rr]^{\Iso}_{\eqref{iso:kosz-rhom}} 
&& \R\Homb_R(B,\,M^\bullet) \\
M^\bullet\otimes_RK^\bullet({\bf t})\otimes_RK^\bullet({\bf u}) 
\ar[d]^{\,\rotatebox{-90}{\makebox[-0.1cm]{\Iso}}}_{\eqref{iso:kosz-rhom}}  & & \\
\R\Homb_R(A,\,M^\bullet)\otimes_RK^\bullet({\bf u}) \ar@{=}[d] && \\
\R\Homb_R(A,\,M^\bullet)\otimes_AK^\bullet(\bar{\bf u}) \ar[rr]^{\Iso}_{\eqref{iso:kosz-rhom}}
&& \R\Homb_A(B,\,\R\Homb_R(A,\,M^\bullet)) 
\ar[uuu]^{\,\rotatebox{-90}{\makebox[-0.1cm]{\Iso}}}_{\eqref{iso:I-L-it}}
}}
\end{aligned}\tag{\thethm}
\]

Setting $n=d+e$ we have
an isomorphism of rank one free $A$-modules
\[\alpha\colon (\wedge^d_R I/I^2)^*\otimes_R(\wedge_A^e{\bar L}/{\bar L}^2)^* \iso 
 (\wedge^n_AL/L^2)^*\]
given by $1/{\bf t}\otimes 1/\bar{\bf u}\mapsto 1/({\bf t,u})$.

The role of the hyptheses on the ${\mathrm{Tor}}(\boldsymbol{-}, \bullet)$ functors in the
statement of \Pref{prop:trg-trans} below is the following: 
Suppose $S$ is a ring, $P$, $Q$ $S$-modules such that
${\mathrm{Tor}}^S_i(P,\,Q)=0$ for $i\neq 0$. Then $P\overset{\bL}{\otimes}_SQ$ is canonically
isomorphic to $P\otimes_SQ$ and we treat this as an identity, i.e., in this case we write
$P\overset{\bL}{\otimes}_SQ=P\otimes_SQ$. In particular if $J$ is an $S$-ideal generated by
a quasi-regular sequence, and ${\mathrm{Tor}}^S_i(P,\,S/J)=0$,
then  we have 
$P\overset{\bL}{\otimes}_S(\wedge^m_{S/J} J/J^2)^*=P\otimes_S(\wedge^m_{S/J} J/J^2)^*
=(P\otimes_SS/J)\otimes_{S/J}(\wedge^m_{S/J}J/J^2)^*$. In other words, if $\eG=\wit{P}$, the quasi-coherent sheaf on $W={\Spec{\,S}}$ corresponding to $P$,
and $u\colon Z=\Spec{\,S/J}\hookrightarrow W$ the natural closed immersion, we have
\begin{align*}
u^\btrg\eG[m] & =\bL  u^*\eG[m]\otimes_{\co_Z}(\eN^m[-m)] \\
& = \bL u^*\eG\otimes_{\co_Z}(\eN^m[-m])[m] \\
&=(\bL  u^*\eG\otimes_{\co_Z}\eN^m)[0]\\
&= (u^*\eG\otimes_{\co_Z}\eN^m_u)[0].
\end{align*}

\begin{prop}\label{prop:trg-trans} Let $R$, $A$, $B$, $I$, $L$, ${\bar L}$, ${\bf t}$, ${\bf u}$,
$i$, $j$, be as above with $({\bf t,u})$ being a quasi-regular sequence in $R$. Let $M$ be
an $R$-module,   $\eF=\wit{M}$, the quasi-coherent $\co_{\Spec{\,R}}$-module corresponding to $M$. Suppose we have
 ${\mathrm{Tor}}^R_i(M,A)={\mathrm{Tor}}^R_i(M,B)={\mathrm{Tor}}^A_i(M/IM, B) =0$ for
 $i\neq 0$.  Then the following diagram commutes
\[
 {\xymatrix{
 (j^*i^*(\eF)\otimes\eN^n_{ij})[0] 
  \ar@{=}[r] & (ij)^\btrg(\eF[n])  \ar[r]^{\Iso}_{\eta'_{ij}} & (ij)^!(\eF[n])  \\
 (j^*i^*(\eF)\otimes j^*\eN^d_i\otimes\eN^e_j)[0] \ar@{=}[r] 
 \ar[u]^{{\mathrm{via}}\,\alpha}_-{\,\rotatebox{-90}{\makebox[-0.1cm]{\Iso}}}
 & j^{\btrg}(i^{\btrg}(\eF[d])[e])  
 \ar[d]^{\,\rotatebox{-90}{\makebox[-0.1cm]{\Iso}}}_{{\text{{\em via $\eta'_j$ and $\eta'_i$}}}} & \\
 & j^!(i^!(\eF[d])[e]) \ar[r]^{\Iso} & j^!i^!(\eF[n]) \ar[uu]_{\,\rotatebox{-90}{\makebox[-0.1cm]{\Iso}}}
 }}
\]
\end{prop}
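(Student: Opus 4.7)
The plan is to reduce the claim to a statement about Koszul complexes in which all derived tensor products become ordinary ones, and then verify a sign identity for the $\lambda$-maps. First, observe that $\eta' = \eqref{iso:flat-sharp}\smcirc \eta$ by definition, and by diagram \eqref{diag:ij-it} the natural isomorphism $j^!i^! \iso (ij)^!$ on the right-hand side corresponds under \eqref{iso:flat-sharp} to the natural isomorphism $j^\flat i^\flat \iso (ij)^\flat$ of \eqref{iso:I-L-it}. Hence the diagram in the statement will commute if and only if the analogous diagram with $\eta'_?$ replaced by $\eta_?$ and $(-)^!$ by $(-)^\flat$ does. This reduces the problem to an affine assertion in $\D(\mathrm{Mod}_R)$ involving the functors $M^\bullet \mapsto \R\Homb_R(A,M^\bullet)$ and $\R\Homb_R(B,M^\bullet)$.

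Next I would realize each $\eta$ via Koszul complexes using the commutative diagram \eqref{diag:eta-K}, choosing ${\bf t}$ for $\eta_i$, $\bar{\bf u}$ for $\eta_j$, and $({\bf t,u})$ for $\eta_{ij}$; this is legitimate because $\eta_{R,A}$ is independent of the regular sequence. Under the Tor-vanishing hypotheses one has $M\otimes_RK^\bullet({\bf t}) = M^{\bL}\otimes_R(A[-d])$ and $(M/IM)\otimes_AK^\bullet(\bar{\bf u}) = (M/IM)^{\bL}\otimes_A(B[-e])$, with the derived and underived tensor products identified; moreover $M\otimes_RK^\bullet({\bf u}) = (M/IM)\otimes_AK^\bullet(\bar{\bf u})$ as complexes of $A$-modules. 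Diagram \eqref{diag:koz-flat-it}, applied with $M^\bullet = \eF$, then shows that the iterated $\R\Hom$-identification \eqref{iso:I-L-it} is computed, after passing through $\psi_{\bf t}$ and $\psi_{\bar{\bf u}}$, by the tautological identification $K^\bullet({\bf t,u}) = K^\bullet({\bf t})\otimes_R K^\bullet({\bf u})$. Consequently, in the Koszul picture the iterated map on the left of the diagram is represented by the same complex-level isomorphism as the top map, up to the comparison of the $\lambda$-morphisms.

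The remaining verification is therefore the sign identity
\[
\lambda_{({\bf t,u})} \;=\; \alpha\smcirc\bigl(\lambda_{\bf t}\otimes \lambda_{\bar{\bf u}}\bigr) \colon B \iso \wedge^n_B(L/L^2)^*,
\]
which is the statement $(-1)^d\cdot (-1)^e = (-1)^n$ together with $\alpha(1/{\bf t}\otimes 1/\bar{\bf u}) = 1/({\bf t,u})$, both of which are immediate from the definitions \eqref{def:1/t}, \eqref{iso:lambda-t} and the description of $\alpha$. Feeding this into diagram \eqref{diag:eta-K} gives the required commutativity for the $\eta$-version of the diagram, and then \eqref{diag:ij-it} transports it back to the $\eta'$-version asserted in the proposition. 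The main obstacle I anticipate is purely bookkeeping: keeping track of the shifts $[d]$, $[e]$, $[n]$ and the order of tensor factors when transferring \eqref{diag:koz-flat-it} and \eqref{diag:eta-K} into the present context, and checking that the Tor-vanishing hypotheses really do kill every potential derived-tensor correction so that the calculation can be carried out on the level of honest complexes of modules.
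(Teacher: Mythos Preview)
Your proposal is correct and follows essentially the same route as the paper: reduce from $\eta'$ to $\eta$ via \eqref{diag:ij-it}, realize each $\eta$ through Koszul complexes using \eqref{diag:eta-K}, invoke \eqref{diag:koz-flat-it} to identify the iterated $\R\Hom$ with the tensor decomposition $K^\bullet({\bf t,u})=K^\bullet({\bf t})\otimes K^\bullet({\bf u})$, and finish with the sign identity $(-1)^d(-1)^e=(-1)^n$ together with $\alpha(1/{\bf t}\otimes 1/\bar{\bf u})=1/({\bf t,u})$. The only difference is packaging: the paper absorbs the shift bookkeeping you flag as the main obstacle into explicit complex-level maps $w_{S,{\bf v},P}\colon P[m]\otimes_S K^\bullet({\bf v})\to P\otimes_S(\wedge^m J/J^2)^*[0]$ defined on $0$-cochains by $x\mapsto(-1)^m x\otimes 1/{\bf v}$, so that the final check becomes a direct comparison of $0$-cochains rather than an identity of $\lambda$-maps; the content is identical.
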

\stepcounter{subsubsection}
\proof For any noetherian ring $S$ and $S$-ideal $J$ generated by a quasi-regular sequence
${\bf v}=(v_1,\ldots, v_m)$, and every $S$-module $P$, we have, with ${\overline{S}}=S/J$, a map
of complexes 
\[w_{S, {\bf v}}=w_{S,{\bf v},P} \colon P[m]\otimes_S K^\bullet({\bf v}) \to 
P\otimes_S\wedge^m_{{}_{\overline{S}}}(J/J^2)^*[0]\]
defined on $0$-cochains by 
\[\phantom{XXXXX} x\mapsto (-1)^mx\otimes 1/{\bf v}, \qquad\,\,x\in P=(P[m]\otimes_SK^\bullet({\bf v}))^0.\]
Since $P[m]\otimes_SK^\bullet({\bf v})$ is a complex which is zero in positive degrees
and the complex $P\otimes_S\wedge^m_{{}_{\overline{S}}}(J/J^2)^*[0]$ is concentrated in degree 0,
the above recipe defines $w_{S,{\bf v}}$. Moreover, if ${\mathrm{Tor}}^i_S(P, \overline{S})=0$ for
$j\neq 0$, then in $\D({\mathrm{Mod}}_S)$ the image of the map $w_{S,{\bf v}}$ under
the localization functor is the composite
\begin{align*}
P[m]\otimes_S K^\bullet({\bf v}) \iso P[m]\overset{\bL}{\otimes}_S \overline{S}[-m]
 & \iso P[m]\overset{\bL}{\otimes}_S 
 (\wedge^m_{{}_{\overline{S}}}(J/J^2)^*[-m]) \\
 & \,\,\,\, = \,\,\, 
 P[m]\otimes_S  (\wedge^m_{{}_{\overline{S}}}(J/J^2)^*[-m])  \\
 & \,\,\,\, = \,\,\,
 P\otimes_S (\wedge^m_{{}_{\overline{S}}}(J/J^2)^*)[0]
\end{align*}
where the first arrow is {\eqref{iso:kosz-r1}} and the second arrow $1\otimes \lambda_{\bf v}[-m]$.
In other words the following diagram in $\D({\mathrm{Mod}}_S)$, consisting of isomorphisms, 
commutes :
\[
{\xymatrix{
& P[m]\otimes_SK^\bullet({\bf v})\ar[ddl]_{w_{S,{\bf v}}}^{\rotatebox{35}{\makebox[-0.1cm]{\Iso}}} \ar[ddr]^{\eqref{iso:kosz-rhom}}_{\rotatebox{-35}{\makebox[-0.1cm]{\Iso}}} & \\
& & \\
P\otimes_S (\wedge^m_{{}_{\overline{S}}}(J/J^2)^*)[0] \ar[rr]_{\eta_{S,\overline{S}}}^{\Iso} && 
\R\Hom_S(\overline{S},\,P[m])
}}
\]
(see \eqref{iso:etaRA} for the definition of  $\eta_{S,\overline{S}}$).
In view of these observations, as well the commutativity of \eqref{diag:ij-it} and 
\eqref{diag:koz-flat-it}, 
we are done if we show that the following diagram commutes where for convenience we
use $N$ to denote $\wedge^d_A(I/I^2)^*$.
\[
{\xymatrix{
M[n]\otimes_R K^\bullet({\bf t}, {\bf u}) \ar@{=}[d] \ar[rr]^{w_{R,({\bf t,u})}} 
& & M\otimes_R(\wedge^n_BL/L^2)^*[0]  \\
(M[d]\otimes_RK^\bullet({\bf t}))[e]\otimes_RK^\bullet({\bf u}) 
\ar[d]_{w_{R,{\bf t}}}^{\,\rotatebox{-90}{\makebox[-0.1cm]{\Iso}}} && \\
(M\otimes_R N)[e]\otimes_AK^\bullet(\bar{\bf u}) \ar[rr]_{w_{A,\bar{\bf{u}}}} 
&& M\otimes_R N\otimes_A(\wedge_B^e{\bar L}/{\bar L}^2)^*[0]
\ar[uu]_{(1\otimes\alpha)[0]}^{\,\rotatebox{-90}{\makebox[-0.1cm]{\Iso}}}
}}
\]
Indeed, we only have to check on $0$-cochains as we argued earlier. Let $m\in M$ be an
element. Regard it as a $0$-cochain of the complex on the northwest corner. Its image in 
$M\otimes_R(\wedge^d_A I/I^2)^*\otimes_A(\wedge_B^e{\bar L}/{\bar L}^2)^*$ in the southeast
corner under the composite $w_{A,\,\bar{\bf u}}\smcirc w_{R,\,{\bf t}}$ is
 $(-1)^nm\otimes 1/{\bf t}\otimes 1/\bar{\bf u}$, and its image in $M\otimes_R(\wedge^n_BL/L^2)^*$ 
 in the northeast corner (via $w_{R,({\bf t,u})}$) is $(-1)^nm\otimes 1/(\bf{t,u})$. This
 proves our assertion.
\qed

\subsection{}
This is a slightly different but related exploration of duality for compositions of closed immersions. 
So, as before, suppose $R$ is a noetherian ring, $I, J$ ideals in $R$, $I\subset J$, $I$ (resp.~$J$)
generated by a regular sequence $\{t_1,\dots, t_r\}$ (resp.~$\{g_1,\dots,g_r\}$). Note that the 
number of $t$'s equals the number of $g$'s. 

In this set-up, let $t_i=\sum_j u_{ij}g_j$, $A=R/I$, $B=R/J = A/{\overline{J}}$, where $\overline{J}=JA$.
Let $i\colon \Spec{\,A}\hookrightarrow \Spec{\,R}$, $j\colon \Spec{\,B}\hookrightarrow\Spec{\,R}$,
and $h\colon \Spec{\,B}\hookrightarrow \Spec{\,A}$ be the closed immersions corresponding
respectively to the surjections $R\twoheadrightarrow A$, $R\twoheadrightarrow B$, and
$A\twoheadrightarrow B$. Then $i\smcirc h=j$. We have a composite
\[\phi^!_h\colon h_*j^!\iso h_*h^!i^! \xrightarrow{\Tr{h}} i^!.\]
Using \eqref{diag:ij-it} and \eqref{iso:eta'-i} (the latter for $i$ and $j$), this corresponds to a map
\[\phi_h^\btrg\colon h_*j^\btrg \to i^\btrg.\]
In particular, for an $R$-module $M$, $H^n(\phi^\btrg_h(M))$ gives us a map
\stepcounter{thm}
\begin{equation*}\label{eq:dir-im}\tag{\thethm}
\phi_h \colon M\otimes_R\wI{B}{J} \lra M\otimes_R\wI{A}{I}.
\end{equation*}
From the definition of $U^\bullet(M)$ in \eqref{map:KU}, it is straightforward that for an
$R$-module $M$, the following diagram commutes.
\[
{\xymatrix{
\R\Homb_R(A,\,M)  \ar[d]_{\eqref{iso:kosz-rhom}} & \R\Homb_R(B,\,M)
\ar[d]^{\eqref{iso:kosz-rhom}} \ar[l] \\
M\otimes_RK^\bullet({\bf t})& M\otimes_RK^\bullet({\bf g}) \ar[l]^{U^\bullet} 
}}
\]
commutes. The unlabelled arrow is the one arising from the map $A\twoheadrightarrow B$. 
Unwinding all the definitions, we see that $\phi_h(m\otimes {\bf 1}/{\bf g}) = 
\det(u_{ij})m\otimes{\bf 1}/{\bf t}$. Moreover if $\Hr^r_J(M) \to \Hr^r_I(M)$ is the natural map
arising from the inclusion $\Gamma_J\hookrightarrow \Gamma_I$, the element
$\bigl[\begin{smallmatrix}m \\ g_1,\dots,g_r \end{smallmatrix}\bigr]$ maps to
$\bigl[\begin{smallmatrix}\det(u_{ij})m \\ t_1,\dots,t_r \end{smallmatrix}\bigr]$.

These are a well-known results (see, e.g., \cite[\S3,\,pp.71--72]{hk1} or 
\cite[Chap.\,III,\,\S7, pp.59--60]{ast117}). We record them for completeness.
It should be pointed out that  (ii) and (iii) in \Tref{thm:dir-im} do not need
${\bf g}$ or ${\bf t}$ to be regular sequences.

\begin{thm}\label{thm:dir-im} Let $R$, $I$, $J$, ${\bf t}$, ${\bf g}$ and $u_{ij}$ be as above.
\begin{enumerate}
\item Let $\phi_h\colon M\otimes \wI{B}{J} \to M\otimes \wI{A}{I}$ be as in \eqref{eq:dir-im}. Then
\[\phi_h\biggl(m\otimes\frac{\bf 1}{\bf g}\biggr) = \det(u_{ij})\,m\otimes\frac{\bf 1}{\bf t}.\]
\item If $\psi\colon\Hr^r_J(M) \to \Hr^r_I(M)$ is the natural map arising from the inclusion 
$\Gamma_J\hookrightarrow \Gamma_I$, then
\[
\psi\Biggl(\begin{bmatrix} m\\ g_1,\dots, g_r\end{bmatrix}\Biggr) =
\begin{bmatrix}\det(u_{ij})m\\t_1,\dots,t_r\end{bmatrix}
\]
\item If $\sqrt{I}=\sqrt{J}$, so that $\Hr^r_I(M)=\Hr^r_J(M)$, then
\[
\begin{bmatrix} m\\ g_1,\dots, g_r\end{bmatrix} =
\begin{bmatrix}\det(u_{ij})m\\t_1,\dots,t_r\end{bmatrix}
\]
\end{enumerate}
\end{thm}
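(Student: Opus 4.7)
I would structure the proof as a straightforward unwinding of the definitions, treating (i) as the main case and then deriving (ii) and (iii) from it by passing to direct limits.

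\textbf{Part (i).} The plan is to compute $\phi_h$ explicitly by tracing $m\otimes\mathbf{1}/\mathbf{g}$ through the definition. Recall that $\phi_h$ is obtained by applying $H^n$ to $\phi_h^\btrg \colon h_*j^\btrg \to i^\btrg$, which, via the isomorphisms $\eta'_i$ and $\eta'_j$ of \eqref{iso:eta'-i}, corresponds to the composite
\[
\R\Homb_R(B,M) \xrightarrow{\text{natural}} \R\Homb_R(A,M)
\]
induced by the surjection $A\twoheadrightarrow B$ (via $R\twoheadrightarrow A\twoheadrightarrow B$). The commutative diagram stated just before the theorem identifies this with the map $U^\bullet\colon M\otimes_R K^\bullet(\mathbf{g}) \to M\otimes_R K^\bullet(\mathbf{t})$ on Koszul complexes. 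In top degree $r$, by \eqref{map:KUn}, $U^r$ is multiplication by $\det(u_{ij})$. Combining this with the recipe for $\eta_{R,A}$ in \eqref{diag:eta-K} (in particular the intervention of $\lambda_{\mathbf{t}}$ sending $1\mapsto (-1)^r\mathbf{1}/\mathbf{t}$, whose sign cancels with the $(-1)^r$ arising on both sides via $\lambda_{\mathbf{g}}$), yields the asserted formula $\phi_h(m\otimes\mathbf{1}/\mathbf{g}) = \det(u_{ij})\,m\otimes\mathbf{1}/\mathbf{t}$.

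\textbf{Part (ii).} Here the generators need not form a regular sequence, so I cannot appeal to (i) directly. Instead I would argue via the stable Koszul complexes of \Ssref{ss:stabkoz}. By \eqref{iso:k-infty-gam}, $\R\Gamma_I(M)$ and $\R\Gamma_J(M)$ are computed by $M\otimes_R K^\bullet_\infty(\mathbf{t})$ and $M\otimes_R K^\bullet_\infty(\mathbf{g})$ respectively. The natural transformation $\Gamma_J \hookrightarrow \Gamma_I$ is realized on these resolutions by a map $U^\bullet_\infty \colon K^\bullet_\infty(\mathbf{g})\to K^\bullet_\infty(\mathbf{t})$ obtained by passing to the direct limit of the $U^\bullet$'s for powers $(\mathbf{t}^{\boldsymbol{\alpha}},\mathbf{g}^{\boldsymbol{\alpha}})$, using the obvious substitution matrices (observe that one does \emph{not} need $t_i^{n}=\sum u_{ij}^{(n)} g_j^{n}$ at each level, only compatibility of transition maps up to homotopy). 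In top degree, $K^r_\infty(\mathbf{g})\otimes M = M_{g_1\cdots g_r}$, and $U^r_\infty$ sends $m/(g_1\cdots g_r)$ to $\det(u_{ij})\,m/(t_1\cdots t_r)$, now viewed inside $M_{t_1\cdots t_r}$. Taking $H^r$ and using the defining recipe for generalized fractions $[\cdot]$ gives
\[
\psi\begin{bmatrix} m\\ g_1,\dots, g_r\end{bmatrix} = \begin{bmatrix}\det(u_{ij})m\\ t_1,\dots,t_r\end{bmatrix}.
\]

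\textbf{Part (iii).} If $\sqrt{I}=\sqrt{J}$ then $\Gamma_I=\Gamma_J$, so $\psi$ is the identity on $\Hr^r_I(M)=\Hr^r_J(M)$, and (iii) is immediate from (ii).

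\textbf{Main obstacle.} The only genuinely delicate point is (ii): one must check that the naive direct-limit map $U^\bullet_\infty$ really does compute the inclusion $\Gamma_J\subset\Gamma_I$ at the level of derived functors. This reduces to verifying that $U^\bullet$ is compatible with the transition maps of the direct systems, which is a standard but notation-heavy Koszul computation (essentially the content of the cited results \cite[\S3]{hk1} and \cite[Chap.\,III,\,\S7]{ast117}). The sign bookkeeping between $\{\cdot\}$ and $[\cdot]$ (via \eqref{eq:2gen-fracs}) must also be tracked, but this is the same $(-1)^r$ appearing on both sides of the formula and so cancels.
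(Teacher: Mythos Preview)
Your argument for part~(i) is correct and matches the paper exactly: both unwind the definition of $\phi_h$ through the commutative square relating $\R\Homb_R(B,M)\to\R\Homb_R(A,M)$ to $U^\bullet$, and read off $\det(u_{ij})$ from $U^r$ in \eqref{map:KUn}.

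For part~(ii) the approaches genuinely diverge. The paper does \emph{not} pass to the stable Koszul complex directly; instead it uses the commutative diagram
\[
\xymatrix{
j_*j^\flat \ar[dd] \ar[r]^{\Iso} & i_*h_*h^\flat i^\flat \ar[dr] & \\
& & i_*i^\flat \ar[dl] \\
\R\Gamma_W \ar@{^(->}[r] & \R\Gamma_Z &
}
\]
to identify the natural map $\R\Gamma_W\to\R\Gamma_Z$ with the map coming from $\Tr{h}^\flat$, and then simply invokes~(i) together with \Lref{lem:gen-frac}. This is cleaner and shorter, but it sits inside the regular-sequence framework of the section. Your route via the direct-limit map $U^\bullet_\infty\colon K^\bullet_\infty(\mathbf{g})\to K^\bullet_\infty(\mathbf{t})$ is the argument of the cited references \cite[\S3]{hk1}, \cite[III,\,\S7]{ast117}; it has the advantage of working without any regularity hypothesis on~$\mathbf{t}$ or~$\mathbf{g}$ (consistent with the paper's remark just before the theorem), at the cost of the compatibility check you flag as the main obstacle. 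That check is real but routine: one uses that $t_i^N$ lies in the ideal $(g_1^n,\dots,g_r^n)$ for $N$ large relative to~$n$, so the finite-level maps $U^\bullet$ assemble into a map of direct systems cofinally.

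Part~(iii) is handled identically in both.
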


\proof Part\,(i) has been established. We elaborate a bit on (ii) and (ii). 
Let $Z= \Spec{\,A}$ and $W=\Spec{\,B}$. The natural
maps $i_*i^\flat\to \R\Gamma_Z$ and $j_*j^\flat\to \R\Gamma_W$ fit into a commutative
diagram, as the reader can readily verify:
\[
{\xymatrix{
j_*j^\flat \ar[dd] \ar[r]^{\Iso} & i_*h_*h^\flat i^\flat \ar[dr]^{{\text{via}}\,\Tr{h}^\flat} & \\
& & i_*i^\flat \ar[dl] \\
\R\Gamma_W \ar@{^(->}[r] & \R\Gamma_Z &
}}
\]
Here $\Tr{i}^\flat$ is the map in \eqref{def:Tr-i-flat}. Parts (ii) and (iii) then follow from (i).
\qed

\begin{ack} We owe a huge debt of gratitude to Joseph Lipman. He is the driving force behind
getting us to write this paper and its sequel \cite{fub-verd}. We benefited enormously
from his detailed comments on many previous versions of this paper and its sequel.
We also benefitted from stimulating conversations with him on these
results.
\end{ack}


\end{document}